\DeclareFontFamily{OT2}{cmr}{\hyphenchar\font45 }
\DeclareFontShape{OT2}{cmr}{m}{n}{
<5><6><7><8><9>gen*wncyr
<10><10.95><12><14.4><17.28><20.74><24.88>wncyr10}{}
\DeclareFontShape{OT2}{cmr}{b}{n}{
<5><6><7><8><9>gen*wncyb
<10><10.95><12><14.4><17.28><20.74><24.88>wncyb10}{}
\DeclareMathAlphabet{\mathcyr}{OT2}{cmr}{m}{n}
\DeclareMathAlphabet{\mathcyb}{OT2}{cmr}{b}{n}
\SetMathAlphabet{\mathcyr}{bold}{OT2}{cmr}{b}{n}
\numberwithin{equation}{section}
\newcommand{\shortmathcal}[1]{\@tfor\ch:=#1\do{
\expandafter\edef\csname c\ch\endcsname{\noexpand\mathcal{\ch}}
}}
\newcommand{\shortmathbb}[1]{\@tfor\ch:=#1\do{
\expandafter\edef\csname bb\ch\endcsname{\noexpand\mathbb{\ch}}
}}
\newcommand{\shortmathbf}[1]{\@tfor\ch:=#1\do{
\expandafter\edef\csname b\ch\endcsname{\noexpand\mathbf{\ch}}
}}
\newcommand{\shortboldsymbol}[1]{\@tfor\ch:=#1\do{
\expandafter\edef\csname bs\ch\endcsname{\noexpand\boldsymbol{\ch}}
}}
\newcommand{\shortmathfrak}[1]{\@tfor\ch:=#1\do{
\expandafter\edef\csname f\ch\endcsname{\noexpand\mathfrak{\ch}}}}
\newcommand{\shortmathscr}[1]{\@tfor\ch:=#1\do{
\expandafter\edef\csname s\ch\endcsname{\noexpand\mathscr{\ch}}}}
\newcommand{\shortmathrm}[1]{\@tfor\ch:=#1\do{
\expandafter\edef\csname r\ch\endcsname{\noexpand\mathrm{\ch}}
}}
\let\@@span\span
\def\sp@n{\@@span\omit\advance\@multicnt\m@ne}
\DeclareFontFamily{OT2}{cmr}{\hyphenchar\font45 }
\DeclareFontShape{OT2}{cmr}{m}{n}{
<5><6><7><8><9>gen*wncyr
<10><10.95><12><14.4><17.28><20.74><24.88>wncyr10}{}
\DeclareFontShape{OT2}{cmr}{b}{n}{
<5><6><7><8><9>gen*wncyb
<10><10.95><12><14.4><17.28><20.74><24.88>wncyb10}{}
\DeclareMathAlphabet{\mathcyr}{OT2}{cmr}{m}{n}
\DeclareMathAlphabet{\mathcyb}{OT2}{cmr}{b}{n}
\SetMathAlphabet{\mathcyr}{bold}{OT2}{cmr}{b}{n}
\newcommand{\Hom}{\mathrm{Hom}}
\newcommand{\End}{\mathsf{End}}
\newcommand{\id}{\mathrm{id}}
\newcommand{\Grp}{\mathsf{Grp}}
\newcommand{\Set}{\mathsf{Set}}
\newcommand{\Vect}{\mathsf{Vect}}
\newcommand{\Alg}{\mathsf{Alg}}
\newcommand{\CAlg}{\mathsf{CAlg}}
\newcommand{\rre}{\mathrm{re}}
\newcommand{\BiVa}{\mathsf{BiVa}}
\newcommand{\RepEnd}{\mathsf{RepEnd}}
\newcommand{\AffGrpSch}{\mathsf{AffGrpSch}}
\newcommand{\Lie}{\mathrm{Lie}}
\newcommand{\Spec}{\mathrm{Spec}}
\newcommand{\ad}{\mathrm{ad}}
\newcommand{\Ad}{\mathrm{Ad}}
\newcommand{\Der}{\mathrm{Der}}
\newcommand{\GL}{\mathrm{GL}}
\newcommand{\hotimes}{\mathbin{\widehat{\otimes}}}
\newcommand{\emp}{\varnothing}
\newcommand{\ep}{\varepsilon}
\newcommand{\jump}[1]{\ensuremath{[\![#1]\!]}}
\renewcommand{\mod}{\mathrm{mod}}
\DeclareMathOperator{\Span}{span}
\DeclareMathOperator{\Fil}{Fil}
\DeclareMathOperator{\gr}{gr}
\newcommand{\balpha}{\boldsymbol{\alpha}}
\newcommand{\bgamma}{\boldsymbol{\gamma}}
\newcommand{\sh}{\mathbin{\mathcyr{sh}}}
\newcommand{\BIFF}{\mathsf{BIFF}}
\newcommand{\MU}{\mathsf{MU}}
\newcommand{\LU}{\mathsf{LU}}
\newcommand{\BIMU}{\mathsf{BIMU}}
\newcommand{\as}{\mathsf{as}}
\newcommand{\al}{\mathsf{al}}
\newcommand{\mmu}{\mathsf{mu}}
\newcommand{\lu}{\mathsf{lu}}
\newcommand{\invmu}{\mathsf{invmu}}
\newcommand{\gepar}{\mathsf{gepar}}
\newcommand{\preari}{\mathsf{preari}}
\newcommand{\expari}{\mathsf{expari}}
\newcommand{\invgari}{\mathsf{invgari}}
\newcommand{\fragari}{\mathsf{fragari}}
\newcommand{\fragira}{\mathsf{fragira}}
\newcommand{\invgami}{\mathsf{invgami}}
\newcommand{\invgani}{\mathsf{invgani}}
\newcommand{\gira}{\mathsf{gira}}
\newcommand{\girat}{\mathsf{girat}}
\newcommand{\giwat}{\mathsf{giwat}}
\newcommand{\garit}{\mathsf{garit}}
\newcommand{\gari}{\mathsf{gari}}
\newcommand{\arit}{\mathsf{arit}}
\newcommand{\ari}{\mathsf{ari}}
\newcommand{\irat}{\mathsf{irat}}
\newcommand{\gaxit}{\mathsf{gaxit}}
\newcommand{\gaxi}{\mathsf{gaxi}}
\newcommand{\axit}{\mathsf{axit}}
\newcommand{\gamit}{\mathsf{gamit}}
\newcommand{\gami}{\mathsf{gami}}
\newcommand{\amit}{\mathsf{amit}}
\newcommand{\ganit}{\mathsf{ganit}}
\newcommand{\gani}{\mathsf{gani}}
\newcommand{\anit}{\mathsf{anit}}
\newcommand{\GARI}{\mathsf{GARI}}
\newcommand{\ARI}{\mathsf{ARI}}
\newcommand{\GAMI}{\mathsf{GAMI}}
\newcommand{\GANI}{\mathsf{GANI}}
\newcommand{\GIFF}{\mathsf{GIFF}}
\newcommand{\DIFF}{\mathsf{DIFF}}
\newcommand{\fre}{\mathfrak{re}}
\newcommand{\fSe}{\mathfrak{Se}}
\newcommand{\fes}{\mathfrak{es}}
\newcommand{\fos}{\mathfrak{os}}
\newcommand{\fess}{\mathfrak{ess}}
\newcommand{\foss}{\mathfrak{oss}}
\newcommand{\dess}{\ddot{\mathfrak{e}}\mathfrak{ss}}
\newcommand{\doss}{\ddot{\mathfrak{o}}\mathfrak{ss}}
\newcommand{\dSo}{\mathfrak{S}\ddot{\mathfrak{o}}}
\newcommand{\fez}{\mathfrak{ez}}
\newcommand{\foz}{\mathfrak{oz}}
\newcommand{\ful}[1]{{}_{#1}\lceil}
\newcommand{\fll}[1]{{}_{#1}\lfloor}
\newcommand{\fur}[1]{\rceil_{#1}}
\newcommand{\flr}[1]{\rfloor_{#1}}
\renewcommand{\neg}{\mathsf{neg}}
\newcommand{\swap}{\mathsf{swap}}
\newcommand{\anti}{\mathsf{anti}}
\newcommand{\pari}{\mathsf{pari}}
\newcommand{\push}{\mathsf{push}}
\newcommand{\pus}{\mathsf{pus}}
\newcommand{\mantar}{\mathsf{mantar}}
\newcommand{\gantar}{\mathsf{gantar}}
\newcommand{\ras}{\mathsf{ras}}
\newcommand{\rash}{\mathsf{rash}}
\newcommand{\crash}{\mathsf{crash}}
\renewcommand{\slash}{\mathsf{slash}}
\newcommand{\leng}{\mathsf{leng}}
\newcommand{\iwat}{\mathsf{iwat}}
\newcommand{\fHe}{\mathfrak{He}}
\newcommand{\invgira}{\mathsf{invgira}}
\newcommand{\fTe}{\mathfrak{Te}}
\newcommand{\dTo}{\mathfrak{T}\ddot{\mathfrak{o}}}
\newcommand{\der}{\mathsf{der}}
\newcommand{\dO}{\ddot{\fO}}
\newcommand{\dro}{\fr\ddot{\fo}}
\theoremstyle{plain}
\newtheorem{theorem}{Theorem}[section]
\newtheorem{proposition}[theorem]{Proposition}
\newtheorem{lemma}[theorem]{Lemma}
\newtheorem{corollary}[theorem]{Corollary}
\theoremstyle{definition}
\newtheorem{definition}[theorem]{Definition}
\theoremstyle{remark}
\newtheorem{remark}[theorem]{Remark}
\title{A Note on Flexion Units}
\author{Hanamichi Kawamura}
\address[Hanamichi Kawamura]{Department of Mathematics, Graduate School of Science, Tokyo University of Science, 1-3 Kagurazaka, Shinjuku-ku, Tokyo, 162-8601, Japan}
\email{1125512@ed.tus.ac.jp}
\subjclass[2020]{14L17, 17B05.}
\keywords{bimoulds, flexion units, secondary bimoulds, separation lemma}
\begin{document}
\begin{abstract}
    This article is a survey of \'{E}calle's theory of flexion units.
    In particular, we provide complete proofs of several key assertions that were stated without proof in \'{E}calle's original works.
\end{abstract}
\maketitle
\tableofcontents
\section{Introduction}
This article treats \emph{flexion units} introduced by \'{E}calle \cite{ecalle11}.
They are functions with two variables satisfying certain conditions related to its parity and symmetry:
\begin{definition}[Rough version. See Definition \ref{def:unit} for actual uses]
    Let $\fE\colon G\times H\to R$ be a map from the direct product of additive abelian groups $G$ and $H$ to a ring $R$.
    We call $\fE$ a \emph{flexion unit} if,
    \begin{enumerate}
        \item it is an even function $\fE\binom{-u}{-v}=-\fE\binom{u}{v}$ for every $u\in G$ and $v\in H$, and
        \item they obey the following rule which is called the \emph{tripartite identity}: 
        \[\fE\binom{u_{1}}{v_{1}}\fE\binom{u_{2}}{v_{2}}=\fE\binom{u_{1}+u_{2}}{u_{1}}\fE\binom{u_{2}}{v_{2}-v_{1}}+\fE\binom{u_{1}}{v_{1}-v_{2}}\fE\binom{u_{1}+u_{2}}{u_{2}}.\]
    \end{enumerate}
\end{definition}
In \'{E}calle's theory of dimorphy, there are several functions constructed from flexion units, and they play a crucial role in the theory. 
In this note, we reveal how general flexion units and associated bimoulds behave.
Especially, we give completed proofs of the following properties of the \emph{primary} and \emph{secondary} bimoulds, that seem to have been unproven\footnote{Schneps \cite{schneps15} partially solved $\crash(\dess)=\fez$ and conjectured $\crash(\fess)=\fez$, in the case where $\fE$ is given as a \emph{polar} one.}:
\begin{theorem}[$=$ Proposition \ref{prop:crash_pal} and Theorem \ref{thm:crash_pil}]\label{thm:crash_main}
    We have $\crash(\fess)=\crash(\dess)=\fez$.
\end{theorem}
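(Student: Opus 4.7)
The plan is to reduce each of the two equalities $\crash(\fess)=\fez$ and $\crash(\dess)=\fez$ to componentwise identities in the flexion unit $\fE$, and then to verify those identities by induction on the length $r$ of the bisequence, using the tripartite identity as the main algebraic tool.

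First I would unpack the definitions recalled in the body of the paper: write out the explicit formulas expressing the components $\fess^{w}$ and $\dess^{w}$ as signed sums of products of values of $\fE$, together with the explicit combinatorial description of the operator $\crash$ acting on a bimould. Composing these produces a purely combinatorial identity, at each length $r$, in the ring generated by the values of $\fE$; this is what needs to be verified.

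I would then establish $\crash(\fess)=\fez$ first (this is Proposition \ref{prop:crash_pal}), since $\fess$ is the primary bimould and its expansion is the simplest. A direct induction on $r$ is natural: the base case $r=1$ reduces to the parity relation $\fE\binom{-u}{-v}=-\fE\binom{u}{v}$, while in the inductive step one pairs two outermost factors in the expansion of $\crash(\fess)^{w}$ and applies the tripartite identity to merge them into a single $\fE$ times a length-$(r-1)$ expression, which by the inductive hypothesis matches the recursion defining $\fez^{w}$.

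The main obstacle will be the second identity $\crash(\dess)=\fez$ (Theorem \ref{thm:crash_pil}), where the ``double-dot'' decoration of $\dess$ causes the analogous expansion to contain mixing terms that do not collapse under a single application of the tripartite identity. I anticipate needing a two-level argument: first a regrouping of the sum over the appropriate combinatorial data, for instance by splitting according to the action of cyclic or reversal-type symmetries on the bisequence so as to present the sum in a form where two adjacent factors can be merged, and then iterated applications of the tripartite identity as in the $\fess$ case. Once the first rearrangement step is in place, the collapse to $\fez$ should again follow the same inductive pattern, completing both halves of Theorem \ref{thm:crash_main}.
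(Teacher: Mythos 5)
Your plan treats both identities as elementary combinatorial statements that can be verified by expanding everything in values of $\fE$ and inducting with the tripartite identity, but this misses where the difficulty actually lies. The operator $\crash$ is built from $\invgari$, the inverse for the $\gari$-product, which is only defined recursively length by length; and $\fess=\fSe_{\re}$ and $\dess=\swap(\foss)$ are \emph{secondary} bimoulds (not primary, as you state), obtained by applying $\expari$ to the infinite Lie series $\sum_{r}\epsilon_{r}^{\re}\fre_{r}$ whose terms $\fre_{r}$ are themselves defined by a flexion recursion. There is no closed-form expansion of $\crash(\fess)(\bw)$ as a ``signed sum of products of values of $\fE$'' available to launch your induction, and the footnote in the introduction records that even in the special polar case $\crash(\fess)=\fez$ was only conjectured by Schneps; a direct tripartite-identity induction is precisely what has not been made to work. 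Your base case and the step ``merge two outermost factors'' are therefore not substantiated by anything concrete.

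The paper's route shows what is actually needed. For $\crash(\dess)=\fez$ (which it proves \emph{first}, contrary to your ordering of difficulty) one uses the symmetrality of $\fSe$-images, hence $\gantar$-invariance, together with $\neg\circ\pari$-invariance and the identity $\neg\circ\push=\anti\circ\swap\circ\anti\circ\swap$, to rewrite $\crash(\dess)$ as $(\gepar\circ\invgari)(\foss)=\gepar(\fSe_{g})$ with $g(x)=-\log(1-x)$; the separation lemma (Theorem \ref{thm:separation}) --- a substantive generating-function computation, not a consequence of the tripartite identity alone --- then evaluates this as $\sum_{r}\mmu^{r}(\fE)=\fez$. The harder half $\crash(\fess)=\fez$ is deduced from $\slash(\dess)=\fes$, \'{E}calle's fundamental identity, and the $\gantar$-invariance of $\doss$ (Theorem \ref{thm:pal_bisymmetral}), which in turn rests on Proposition \ref{prop:244} and Lemma \ref{lem:darapal}. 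A self-contained combinatorial proof along your lines would at minimum have to reprove the separation lemma and the $\gantar$-invariance of $\doss$ by hand, and your sketch does not engage with either.
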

\begin{theorem}[$=$ Propositions \ref{prop:slash_pil} and \ref{prop:slash_pal}]
    We have $\slash(\fess)=\slash(\dess)=\fes$.
\end{theorem}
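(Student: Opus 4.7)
The plan is to treat the two equalities $\slash(\fess)=\fes$ and $\slash(\dess)=\fes$ in turn, paralleling the structure of the argument for $\crash$ in Theorem \ref{thm:crash_main}. For each, I would start from the explicit defining expansion of the secondary bimould as a sum over subdivisions of the index sequence of products of flexion-unit evaluations, whose arguments are built from the original $(u_i,v_i)$ via the flexion brackets $\ful{i}$, $\fll{i}$, $\fur{i}$, $\flr{i}$. Applying $\slash$ termwise then produces a new sum whose summands are again products of values of $\fE$, and the task reduces to showing that after collecting like terms only the ones forming $\fes$ survive.

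For $\slash(\fess)=\fes$, I would group the terms in the expansion of $\fess$ according to where $\slash$ acts nontrivially and then collapse adjacent pairs of $\fE$-factors by repeated use of the tripartite identity from Definition \ref{def:unit}. The length-one component should immediately match $\fes$, while all higher-length contributions should cancel in signed pairs. For $\slash(\dess)=\fes$, I would run the same argument with the dotted variant in place of $\fess$; since $\dess$ is obtained from $\fess$ by a prescribed action (expressible in terms of $\swap$, $\pari$ or $\neg$) and $\slash$ already absorbs most of the structural data, this modification should become invisible on the nose once $\slash$ has been applied, so that both secondary bimoulds are pushed to the same primary bimould $\fes$.

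The central technical tool is expected to be a separation lemma (cf. the keywords of the abstract), which decomposes a bimould sum by length and thereby exposes the telescoping cancellations. The main obstacle will be the bookkeeping of the flexion brackets inside the tripartite identity: each application of the identity exchanges one pair of factors for two differently-flexed pairs, and one must confirm that the intended cancellations really occur uniformly across all depths and all relative orderings of the $(u_i,v_i)$. Since $\slash$ preserves strictly more of the bimould structure than $\crash$, I expect the argument to be noticeably more delicate than the proof of Theorem \ref{thm:crash_main}, and to require an additional induction on the length of the index sequence, with the base case verifying directly that the depth-one restriction of $\slash(\fess)$ and $\slash(\dess)$ reduces to the definition of $\fes$.
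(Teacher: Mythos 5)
There is a genuine gap: the proposal does not contain the ideas that actually make either equality provable, and it rests on two incorrect structural assumptions. First, you assert that $\dess$ is obtained from $\fess$ by an action expressible in terms of $\swap$, $\pari$ or $\neg$, and that this ``becomes invisible'' after applying $\slash$. Neither claim holds. By definition $\dess=\swap(\foss)$, where $\foss$ is built from the \emph{conjugate} unit $\fO=\swap(\fE)$, so $\dess$ is not a simple involution applied to $\fess$; and $\swap$ emphatically does \emph{not} pass through $\gari$, $\invgari$ or $\fragari$ --- the whole content of the fundamental identity (Theorem \ref{thm:fundamental}) is that conjugating $\fragari$ by $\swap$ costs a nontrivial correction factor $\ganit(\crash(B))$. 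Measuring that correction is exactly the hard part of Proposition \ref{prop:slash_pal}, which in the paper requires $\crash(\dess)=\fez$ (Proposition \ref{prop:crash_pal}), the commutation of $\girat(\dess)$ with $\anti$ (Lemma \ref{cor:girat_anti}), and the $\gantar$-invariance of $\doss$ (Theorem \ref{thm:pal_bisymmetral}) --- none of which appear in your plan. Second, you misread the separation lemma: Theorem \ref{thm:separation} is a specific identity for $\gepar(\fSe_{R}(f))$, not a device for ``decomposing a bimould sum by length''; it enters the $\slash(\dess)$ story only indirectly, through the computation of $\crash(\dess)$.

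For $\slash(\fess)=\fes$ (Proposition \ref{prop:slash_pil}) the missing idea is the group morphism $\fSe\colon\GIFF_{x}\to\GARI$ of Proposition \ref{prop:se_gari}. Since $\fSe_{R}(f\circ g)=\gari(\fSe_{R}(f),\fSe_{R}(g))$ and $\neg(\fSe_{g})=\fSe_{g^{-}}$ with $g^{-}(x)=-g(-x)$, one gets $\slash(\fSe_{g})=\fSe_{g^{-}\circ g^{-1}}$, and for $g=\re$ the composition $g^{-}\circ g^{-1}$ is literally $x/(1-x)=\exp(x^{2}\tfrac{d}{dx})(x)$, whence $\slash(\fess)=\expari(\fE)=\fes$. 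Your proposed termwise expansion of $\fess=\expari(\sum_{r}\epsilon_{r}^{\re}\fre_{r})$ followed by tripartite-identity cancellations would have to reproduce this power-series computation combinatorially, including the explicit form of $\invgari(\fess)$; you give no mechanism for the claimed ``signed pairs'' to cancel, and without the reduction to $\GIFF_{x}$ there is no reason to expect a tractable telescoping. As it stands the proposal identifies neither the correct tool for the first equality nor the correct obstruction for the second.
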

\begin{theorem}[$=$ Theorem \ref{thm:pal_bisymmetral}]
    The bimoulds $\fess$ and $\doss$ are $\gantar$-invariant.
\end{theorem}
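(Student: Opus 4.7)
The plan is to deduce the $\gantar$-invariance of $\fess$ and $\doss$ from the bisymmetral structure these bimoulds inherit from the flexion unit $\fE$, together with the structural relationship between $\gantar$, $\invgari$, and $\anti$. Since $\gantar$ is built by composing $\invgari$ with a reversal-type involution, $\gantar$-invariance of a bimould $M$ is equivalent to an identity of the form $\invgari(M) = \anti(M)$ (up to the sign and reversal conventions of the paper), and this identity is in turn a shadow of bisymmetrality.

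The first step is to invoke the bisymmetrality of $\fess$ in the $\gari$-group, which should already be available from the flexion-unit theory developed in the preceding sections as a direct consequence of the tripartite identity. From bisymmetrality, a standard computation in the $\gari$-formalism produces a closed expression for $\invgari(\fess)$ involving only $\swap$, $\pari$, and $\anti$ applied to $\fess$ itself. The second step is then to recognize this expression as $\anti(\fess)$, absorbing the sign twists via the evenness axiom $\fE\binom{-u}{-v}=-\fE\binom{u}{v}$ and the tripartite identity.

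For $\doss$, I would run the parallel argument on the dual (swapped) side: $\doss$ is constructed from $\foss$ by a $\push$-type involution, and its own bisymmetrality yields an analogous inverse formula in the $\gira$-picture, whose matching with $\anti(\doss)$ again reduces to tripartite manipulations. Ideally the two cases are packaged as corollaries of a single lemma on bisymmetral bimoulds built from a flexion unit, so that the symmetry between the primary and secondary constructions is made explicit rather than reproved twice.

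The main obstacle is the depth-by-depth bookkeeping in the second step: $\invgari$ is defined by a recursive $\gari$-product which, on a bimould of flexion-unit type, expands as a signed sum over planar flexion-tree decompositions of the word, and collapsing this sum to $\anti(\fess)$ requires a careful induction. I would proceed by induction on the length of the word, peeling off the outermost flexion operation at each step and using the tripartite identity to reduce to a lower-depth instance. The inductive base is a direct computation in depth one and two; the delicate point in the inductive step is showing that the sign and parity shifts contributed by $\pari$ and $\anti$ align precisely with the combinatorial signs produced by the $\gari$-recursion, an alignment that ultimately rests on the evenness of $\fE$ and on the tripartite identity applied at the root of each subtree.
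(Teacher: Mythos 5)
There is a genuine gap, and it sits exactly where the theorem is hard. The $\gantar$-invariance of $\fess$ is indeed immediate: $\fess=\fSe_{\re}$ is symmetral (Proposition \ref{prop:se_symmetral}) and every symmetral bimould is $\gantar$-invariant (Proposition \ref{prop:mantar_gantar}); this half of your argument agrees with the paper. But for $\doss=\swap(\fess)$ you propose to invoke the \emph{bisymmetrality} of $\fess$ --- that is, the symmetrality of the swappee --- as something ``already available from the flexion-unit theory as a direct consequence of the tripartite identity.'' It is not available: nothing in the preceding sections proves that $\swap(\fess)$ is symmetral, and that statement is substantially stronger than the theorem at hand (for the polar unit it is the bisymmetrality of $\pal$ and its swappee, a deep result). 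If bisymmetrality were in hand, the entire theorem would follow in one line from Proposition \ref{prop:mantar_gantar}, so your argument is essentially circular. A second, smaller error: you assert that $\gantar$-invariance of $M$ amounts to $\invgari(M)=\anti(M)$ because ``$\gantar$ is built by composing $\invgari$ with a reversal''; in fact $\gantar=\invmu\circ\pari\circ\anti$ involves the $\mmu$-inverse $\invmu$, not the $\gari$-inverse $\invgari$, and the two do not coincide.

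The paper's actual route for $\doss$ never passes through symmetrality of the swappee. It establishes the dilator equation $(\der\circ\invgari)(\doss)=\preari(\invgari(\doss),\ganit(\foz)^{-1}(\dTo_{R}(\re^{-1})))$ by combining Lemma \ref{lem:Se_schneps}, the operator identity of Proposition \ref{prop:244}, the separation lemma (Theorem \ref{thm:separation}, used to identify $\dO_{\ast}(\re^{-1})=\foz=\crash(\doss)$) and the fundamental identity; it then proves (Lemma \ref{lem:darapal}), by an explicit generating-function computation valid specifically for $f=\re^{-1}$ where $f_{\#}(t)=t-(1-t)\log(1-t)$, that the dilator $\ganit(\foz)^{-1}(\dTo_{R}(\re^{-1}))$ is $\mantar$-invariant; finally Lemma \ref{lem:dilator_mantar} converts $\mantar$-invariance of the dilator into $\gantar$-invariance of the bimould. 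None of this machinery --- in particular the reduction to a $\mantar$-invariance statement about a dilator, and the computation that singles out $\re(x)=1-e^{-x}$ among all elements of $\GIFF_{x}(R)$ --- appears in your sketch, and the length-by-length induction you describe in its place offers no mechanism explaining why the statement holds for this particular series but not for a generic $f$.
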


Additionally, we explain the machinery of the group $\GIFF_{x}$, which is important to construct and investigate the secondary bimoulds.
In particular, we give a proof for the following assertion called \emph{\'{E}calle's separation lemma}:
\begin{theorem}[$=$ Theorem \ref{thm:separation}]
    Let $f(x)=x+\sum_{r=1}^{\infty}a_{r}^{f}x^{r+1}$ belong to $\GIFF_{x}(R)$.
    Then we have
    \[\gepar(\fSe_{R}(f))=\sum_{r=0}^{\infty}(r+1)a_{r}^{f}\mmu^{r}(\fO),\]
    where $a_{0}^{f}\coloneqq 1$.
\end{theorem}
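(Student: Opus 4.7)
The plan is to unpack the definition of the secondary bimould $\fSe_{R}(f)$ as an explicit functional of the Taylor coefficients $a_{1}^{f}, a_{2}^{f}, \ldots$ of $f\in\GIFF_{x}(R)$, apply $\gepar$ to it term by term, and match the result, degree by degree, with $\mmu^{r}(\fO)$.

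First I would organize $\fSe_{R}(f)$ according to the multi-degree in the variables $a_{r}^{f}$: each length-$\ell$ component is a polynomial expression in the $a_{r}^{f}$'s obtained from the flexion operations, and the central observation is that $\gepar$ should annihilate every monomial $a_{r_{1}}^{f}\cdots a_{r_{k}}^{f}$ of multi-degree $k\ge 2$. Granting this key structural fact, the computation of $\gepar(\fSe_{R}(f))$ reduces to the linearization $\sum_{r\ge 0} a_{r}^{f}\cdot\gepar(D_{r})$, where $D_{r}$ denotes the first-order term in the formal expansion of $\fSe_{R[t]}(f_{t})$ around $t=0$ for the one-parameter family $f_{t}(x) = x + tx^{r+1}$ viewed inside $\GIFF_{x}(R[t])$.

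Next, for each $r\ge 0$ I would identify $\gepar(D_{r}) = (r+1)\,\mmu^{r}(\fO)$ directly. From the construction of the secondary bimould, $D_{r}$ is expressible through $r$ iterated flexion operations acting on the fundamental bimould $\fO$, and the prefactor $r+1$ comes from differentiating the monomial $x^{r+1}$ in $f_{t}$. Combining this identification with the linearity step above yields the claimed separation formula; the $r=0$ contribution, fixed by the convention $a_{0}^{f}=1$, recovers the initial summand $\mmu^{0}(\fO) = \fO$ that corresponds to the identity diffeomorphism.

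The main obstacle will be establishing the linearity of $\gepar\circ\fSe_{R}$ in the coefficients $a_{r}^{f}$, since $\fSe_{R}$ itself is highly nonlinear in $f$ (the length-$\ell$ piece is typically a polynomial of total degree $\ell$ in the $a_{r}^{f}$'s). I would approach this by identifying $\gepar$ as the primitive-part projection with respect to the natural coproduct compatible with the $\mmu$-filtration on bimoulds, so that any product of two nonconstant factors lies in the kernel of $\gepar$. Equivalently, one can argue by induction on length, showing that all cross-terms arising from a composition $f = g\circ h$ in $\GIFF_{x}(R)$ are $\gepar$-trivial, which reduces the general case to the monomial perturbations handled in the previous paragraph. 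Once this structural input is in hand, the separation formula follows from an essentially one-variable computation indexed by $r$.
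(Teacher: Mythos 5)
There is a genuine gap, and it sits exactly where you locate ``the main obstacle.'' Your reduction rests on the claim that $\gepar$ annihilates every monomial of multi-degree $\ge 2$ in the coefficients $a_{r}^{f}$, justified by viewing $\gepar$ as ``the primitive-part projection with respect to the natural coproduct.'' But $\gepar(A)=\mmu((\anti\circ\swap)(A),\swap(A))$ is a \emph{quadratic} map, not a linear projection, and it does not kill $\mmu$-products of nonconstant bimoulds in any formal sense. Already at length $2$ the left-hand side contains $(a_{1}^{f})^{2}$-terms from two sources (the cross term $\mmu(\leng_{1}(\anti\swap\fSe_{R}(f)),\leng_{1}(\swap\fSe_{R}(f)))$ and the degree-two part of $\leng_{2}(\swap\fSe_{R}(f))$), and their cancellation is a nontrivial identity that uses the tripartite identity and the $\push$-neutrality of $\fO$ --- it is essentially the content of the theorem, not a structural triviality. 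The alternative suggestion (cross-terms from $f=g\circ h$ are $\gepar$-trivial) fares no better: $\fSe_{R}$ turns composition into the $\gari$-product, not the $\mmu$-product, and the interaction of $\gepar$ with $\gari$ is governed by the fundamental identity and the $\crash$ operator, which is itself deep. Note also that $\swap(\fSe_{R}(f))$ is not symmetral in general, so no shuffle/antipode identity is available to make products vanish.

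The paper avoids the degree decomposition entirely: it shows that $\gepar(\fSe_{R}(f))$ and the candidate $\dO_{\ast}(f)=\sum_{r\ge 0}(r+1)a_{r}^{f}\leng_{r}(\foz)$ satisfy the \emph{same} first-order equation $\der(X)=\iwat(\dTo_{R}(f))(X)+\mmu(X,\dTo_{R}(f))+\mmu(\anti(\dTo_{R}(f)),X)$ with the same initial condition, which determines $X$ length by length (Lemma \ref{lem:Se_schneps}, Corollary \ref{cor:ecalle_1195}, Lemma \ref{lem:ecalle_1197}). Your linear-order computation is salvageable --- indeed $\gepar$ applied to the first-order perturbation gives $(\id+\anti)(\swap(\fre_{r}))=(r+1)\leng_{r}(\foz)$ --- but even that step requires the explicit formula for $\swap(\fre_{r})$ (Lemma \ref{lem:ecalle_459}) together with $\push$-neutrality; it does not follow ``directly from the construction.'' To complete your argument you would need an actual proof of the degree-$\ge 2$ vanishing, and the only known route to that is some version of the differential-equation (or dilator) argument the paper uses.
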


This article is organized as follows: we review several basic notions of bimoulds in Section \ref{sec:basic}. 
In Section \ref{sec:flexion}, we show some algebraic structures defined in the terms of \emph{flexions} and give a proof for some statements that seem to be unproven. 
Sections \ref{sec:unit} and \ref{sec:primary} are used for introducing flexion units and the \emph{primary bimoulds} associated to such units.
Our main contents are treated in Section \ref{sec:secondary}, which is preceded by Section \ref{sec:appendix} in which we introduce a useful functor $\fSe$. 

\section{Basic of bimoulds}\label{sec:basic}
In what follows, $\Vect_{\bbK}$ (resp.~$\CAlg_{\bbK}$) denotes the category of $\bbK$-vector spaces (resp.~commutative $\bbK$-algebras).
\subsection{One formulation for bimoulds}
The following formulation for bimoulds is inspired by Furusho--Hirose--Komiyama's idea \cite[\S 1.1]{fhk23} which is called ``family of functions'', but it seems different from their one.

\begin{definition}\label{def:deck}
    \begin{enumerate}
        \item Let $\bbK$ be a field. We denote by $\BiVa_{\bbK}$ the category whose object is a non-negative integer and whose morphism is given by
        \[\Hom_{\BiVa_{\bbK}}(m,n)\coloneqq\Hom_{\Vect_{\bbK}}(V_{2m},V_{2n}),\]
        where $V_{2r}$ denotes the free $\bbK$-vector space $\bigoplus_{i=1}^{r}(\bbK x_{i}\oplus \bbK y_{i})$ spanned by formal symbols $\{x_{1},\ldots,x_{r},y_{1},\ldots,y_{r}\}$.
        \item We say that a pair $(\bbK,\cX)$ is a \emph{deck for bimoulds} or simply a \emph{deck} if it consists of the following data.
        \begin{enumerate}
            \item $\bbK$ is a field whose characteristic is $0$. 
            \item $\cX$ is a functor $\BiVa_{\bbK}\to\RepEnd(\CAlg_{\bbK})$; $r\mapsto \cX_{r}$, where the codomain is the full subcategory of $\End(\CAlg_{\bbK})$ whose object is an endofunctor $F\colon\CAlg_{\bbK}\to\CAlg_{\bbK}$ which is corepresentable when regarded as copresheaf $(\text{\texttt{forget}})\circ F\colon\CAlg_{\bbK}\to\Set$.
        \end{enumerate}
        For practical applications, furthermore we always assume that the first component $\cX_{0}$ of a deck $(\bbK,\cX)$ is naturally isomorphic to the affine line $\bbA_{\bbK}^{1}$ as schemes. 
    \end{enumerate}
\end{definition}
From the definition of $\RepEnd(\CAlg_{\bbK})$, its object can be understood as an affine scheme over $\bbK$. Moreover, focusing on the product structure of its value, we regard $\RepEnd(\CAlg_{\bbK})$ as a full subcategory of the category $\AffGrpSch_{\bbK}$ of affine group schemes over $\bbK$ (indeed it is always abelian as a group object). In this point of view, for any deck $(\bbK,\cX)$, we denote by $C_{\cX,r}$ the canonical coordinate ring $\Hom_{[\CAlg_{\bbK},\Grp]}(\cX_{r},\bbA^{1}_{\bbK})$ of $\cX_{r}$.
\\

As a convenient notation, we here explain our implementation of \emph{substitution} in this setting (see also \cite[p.~3, l.~5--7]{fhk23}):
\begin{definition}[Substitution]\label{def:substitution}
    Let $m$ and $n$ be non-negative integers and $f\in\Hom_{\BiVa_{\bbK}}(m,n)$. Put $u_{i}\coloneqq f(x_{i})$ and $v_{i}\coloneqq f(y_{i})$ for $1\le i\le m$.
    For a commutative $\bbK$-algebra $R$ and an element $M\in \cX_{m}(R)$, we denote by $M\binom{u_{1},\ldots,u_{m}}{v_{1},\ldots,v_{m}}$ the image of $M$ under the morphism $\cX_{f,R}\colon \cX_{m}(R)\to \cX_{n}(R)$. 
\end{definition}
In the rest of this paper, we fix a deck $(\bbK,\cX)$ and use the symbol $R$ to indicate an arbitrary commutative $\bbK$-algebra $R$, unless otherwise noted.

\subsection{Module of bimoulds}

\begin{definition}[Bimoulds]
    In this paper, we define the set $\BIMU(R)$ as
    \begin{equation}\label{eq:bimould}
        \BIMU(R)\coloneqq\prod_{r=0}^{\infty}\cX_{r}(R).
    \end{equation}    
    We call its element a \emph{bimould}. A bimould of the form $(\underbrace{0,\ldots,0}_{r-1},M,0,\ldots)$ is called a \emph{bimould of length $r$} for a non-negative integer $r$. For emphasizing, we sometimes use the notation $\BIMU_{r}(R)$ instead of $\cX_{r}(R)$.
\end{definition}

\begin{remark}
    When we indicate a specified component of a bimould, say it is the $r$-th one, we usually use the notation $M\binom{u_{1},\ldots,u_{r}}{v_{1},\ldots,v_{r}}$, following the principle of substitution (Definition \ref{def:substitution}). If $r$ is $0$, it always becomes simply $M(\emp)$, which belongs to the $\bbK$-algebra $\cX_{0}(R)=R$.
    This notation includes the symbols $u_{i}\coloneqq f(x_{i})$ and $v_{i}\coloneqq f(y_{i})$ that stand for the images under $f\in\Hom_{\BiVa_{\bbK}}(r,s)$ with arbitrary choices of $s$ and $f$.
    Moreover, we use the abbreviation $w_{i}\coloneqq\binom{u_{i}}{v_{i}}$ and
    \[\bw\coloneqq w_{1}\cdots w_{r}\coloneqq (w_{1},\ldots,w_{r})\coloneqq\binom{u_{1},\ldots,u_{r}}{v_{1},\ldots,v_{r}},\]
    as if they are a kind of words in the context of free monoids. We sometimes write the length of such sequences as $r=\ell(\bw)$. For a further precise formulation, one may identify a letters, $u_{i}$ or $v_{i}$, with elements of the direct limit $\varinjlim_{n} n$ in the category $\BiVa_{\bbK}$, where the transition $n\to n+1$ given by the identical inclusion. We sometimes denote by $\fV$ the set of sequences $\bw=(w_{1},\ldots,w_{r})$.
\end{remark}

In the defining equation \eqref{eq:bimould}, we used the product symbol $\prod$ in the right-hand side to indicate a direct product in $\Set$. By regarding it as a direct product taken in $\Vect_{\bbK}$ and endowing it with the pointwise $R$-action, we get an $R$-module structure in $\BIMU(R)$ and of course in each $\BIMU_{r}(R)$. Therefore $\BIMU$ gives a functor on $\CAlg_{\bbK}$ to the category of $R$-modules.

For later use, we prepare a topological structure on $\BIMU(R)$.
Define the decreasing filtration $\{\Fil^{\ge n}\BIMU(R)\}_{n\ge 0}$ by
\[\Fil^{\ge n}\BIMU(R)\coloneqq\begin{cases}
    \left\{M\in\BIMU(R)\mid M(w_{1},\ldots,w_{r})=0~(0\le r<n)\right\} & \text{if }n\ge 1,\\
    \BIMU(R) & \text{if }n=0,
\end{cases}\]
and equip $\BIMU(R)$ with the topology induced by this filtration.
Then $\BIMU(R)$ becomes a topological $R$-module, and we see that any bimould $M$ has the expression as an infinite sum
\[M=\sum_{r=0}^{\infty}\leng_{r}(M),\]
where the summand $\leng_{r}(M)\in\BIMU_{r}(R)$ is defined as
\[\leng_{r}(M)(w_{1},\ldots,w_{n})\coloneqq\begin{cases}
    M(w_{1},\ldots,w_{n}) & \text{if }r=n,\\
    0 & \text{if }r\neq n.
\end{cases}\]

\subsection{Unary operations on bimoulds}
We here list up some useful unary operators defined on $\BIMU(R)$: for $r\ge 1$, we define

\begin{align}\label{eq:operators}
    \begin{split}
    \neg(A)\binom{u_{1},\ldots,u_{r}}{v_{1},\ldots,v_{r}}&\coloneqq A\binom{-u_{1},\ldots,-u_{r}}{-v_{1},\ldots,-v_{r}},\\
    \anti(A)\binom{u_{1},\ldots,u_{r}}{v_{1},\ldots,v_{r}}&\coloneqq A\binom{u_{r},\ldots,u_{1}}{v_{r},\ldots,v_{1}},\\
    \pari(A)\binom{u_{1},\ldots,u_{r}}{v_{1},\ldots,v_{r}}&\coloneqq(-1)^{r}A\binom{u_{1},\ldots,u_{r}}{v_{1},\ldots,v_{r}},\\
    \pus(A)\binom{u_{1},\ldots,u_{r}}{v_{1},\ldots,v_{r}}&\coloneqq A\binom{u_{r},u_{1},\ldots,u_{r-1}}{v_{r},v_{1},\ldots,v_{r-1}},\\
    \push(A)\binom{u_{1},\ldots,u_{r}}{v_{1},\ldots,v_{r}}&\coloneqq A\binom{-u_{1}-\cdots-u_{r},u_{1},\ldots,u_{r-1}}{-v_{r},v_{1}-v_{r},\ldots,v_{r-1}-v_{r}},\\
    \mantar(A)\binom{u_{1},\ldots,u_{r}}{v_{1},\ldots,v_{r}}&\coloneqq(-1)^{r-1}A\binom{u_{r},\ldots,u_{1}}{v_{r},\ldots,v_{1}}.
    \end{split}
\end{align}

Note that the five operators, except for $\mantar$, preserve the value of a bimould at $\emp$. We also define an important self-involution $\BIMU(R)\to\BIMU(R)$:
\[\swap(A)\binom{u_{1},\ldots,u_{r}}{v_{1},\ldots,v_{r}}\coloneqq A\binom{v_{r},v_{r-1}-v_{r},\ldots,v_{1}-v_{2}}{u_{1}+\cdots+u_{r},\ldots,u_{1}+u_{2},u_{1}}.\]
Following \'{E}calle \cite{ecalle11}, we call $\swap(A)$ the \emph{swappee} of a bimould $A$.\\

The following formula is easy to prove, but plays an important role in several computation on bimoulds.
\begin{proposition}[{\cite[(2.12)]{ecalle11}}]\label{prop:negpush}
    We have
    \begin{equation}\label{eq:negpush}
        \neg\circ\push=\anti\circ\swap\circ\anti\circ\swap=\mantar\circ\swap\circ\mantar\circ\swap.
    \end{equation}
\end{proposition}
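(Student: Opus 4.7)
The plan is to verify both equalities in \eqref{eq:negpush} by direct component-wise computation on a bimould of a fixed length $r\ge 1$, with the second reducing to a short algebraic manipulation.

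For the second equality, I would first observe that the scalar factors $(-1)^{r-1}$ in $\mantar$ and $(-1)^{r}$ in $\pari$ differ by a global sign, so $\mantar=-\pari\circ\anti$ as operators on $\BIMU_{r}(R)$. Since $\anti$ and $\swap$ preserve the length, the operator $\pari$ (being pointwise multiplication by $(-1)^r$) commutes with both of them. Substituting $\mantar=-\pari\circ\anti$ into $\mantar\circ\swap\circ\mantar\circ\swap$ and using $\pari^{2}=\id$ then immediately yields $\anti\circ\swap\circ\anti\circ\swap$, which disposes of half of \eqref{eq:negpush}.

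For the first equality, I would fix a length-$r$ bimould $A$ and track the top and bottom rows of its argument through the four compositions. After one $\swap$ followed by one $\anti$, the substitution collapses by telescoping of consecutive differences: the argument of $A$ becomes the pair with top $(v_{1},v_{2}-v_{1},\ldots,v_{r}-v_{r-1})$ and bottom $(u_{1}+\cdots+u_{r},u_{2}+\cdots+u_{r},\ldots,u_{r})$. A second $\swap$ folds the cumulative sums of the new top back into the original $v_{i}$'s while turning the bottom into differences of the $u_{i}$'s, so the result acquires top $(u_{1}+\cdots+u_{r},-u_{r},\ldots,-u_{2})$ and bottom $(v_{1},v_{1}-v_{r},\ldots,v_{1}-v_{2})$. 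The final $\anti$ reverses both rows to produce
\[A\binom{u_{1}+\cdots+u_{r},-u_{1},\ldots,-u_{r-1}}{v_{r},v_{r}-v_{1},\ldots,v_{r}-v_{r-1}},\]
and expanding $\neg(\push(A))\binom{u_{1},\ldots,u_{r}}{v_{1},\ldots,v_{r}}$ directly from the definitions gives exactly the same expression.

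The only genuine obstacle is the combinatorial bookkeeping: one has to juggle the reindexing induced by $\anti$ together with both the cumulative sums and the telescoping cancellations produced by each $\swap$, and a single misplaced index would propagate through all four steps. To keep this manageable I would present each intermediate bimould on its own display line, and spot-check the length $r=1$ and $r=2$ cases before committing to the general formula.
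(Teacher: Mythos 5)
Your overall strategy is the right one: the paper does not actually supply a proof of this proposition (it only remarks that it is ``easy to prove''), and a direct component-wise computation, together with the observation $\mantar=-\pari\circ\anti$ and the fact that $\pari$ commutes with the length-preserving operators $\anti$ and $\swap$, is exactly what is needed. Your argument for the second equality is correct, and all four displayed arguments in your computation of the first equality are correct values; in particular the final one,
\[A\binom{u_{1}+\cdots+u_{r},-u_{1},\ldots,-u_{r-1}}{v_{r},v_{r}-v_{1},\ldots,v_{r}-v_{r-1}},\]
does coincide with $(\neg\circ\push)(A)\binom{u_{1},\ldots,u_{r}}{v_{1},\ldots,v_{r}}$.

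There is, however, a genuine inconsistency in the glue between your last two displays, and it is precisely the pitfall you flagged. The assignment sending an operator $P$ with $P(A)(\bw)=A(\sigma_{P}(\bw))$ to its argument substitution $\sigma_{P}$ is an \emph{anti}-homomorphism, so in $\anti\circ\swap\circ\anti\circ\swap$ the outermost $\anti$ contributes the \emph{innermost} substitution: it acts by relabelling the original variables $u_{i}\mapsto u_{r+1-i}$, $v_{i}\mapsto v_{r+1-i}$ inside your penultimate display, not by reversing the two rows of that display. If one literally reverses the rows of $\binom{u_{1}+\cdots+u_{r},-u_{r},\ldots,-u_{2}}{v_{1},v_{1}-v_{r},\ldots,v_{1}-v_{2}}$, one obtains $\binom{-u_{2},\ldots,-u_{r},u_{1}+\cdots+u_{r}}{v_{1}-v_{2},\ldots,v_{1}-v_{r},v_{1}}$, which is the argument of $(\neg\circ\push)^{-1}=\neg\circ\push^{-1}$ rather than of $\neg\circ\push$, and which is not your (correct) final display. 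Replacing ``reverses both rows'' by ``substitutes $u_{i}\mapsto u_{r+1-i}$ and $v_{i}\mapsto v_{r+1-i}$ throughout'' (and stating the analogous convention explicitly at the earlier stages, where your formulas already implicitly use it) repairs the step, after which the computation is complete and correct.
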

\begin{corollary}
    The six operators $\anti\circ\push$, $\push\circ\anti$, $\swap\circ\push$, $\push\circ\swap$, $\mantar\circ\push$ and $\push\circ\mantar$ are involutions.
\end{corollary}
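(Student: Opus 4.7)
The plan is to leverage Proposition \ref{prop:negpush} directly. First I would record the easy structural facts: $\anti$, $\swap$, $\mantar$ and $\neg$ are each involutions on $\BIMU(R)$, and $\neg$ commutes with each of $\anti$, $\swap$ and $\mantar$. All four claims are immediate from the defining formulas \eqref{eq:operators} together with the definition of $\swap$; for instance $\neg\circ\swap=\swap\circ\neg$ because negating every coordinate commutes with the affine recombinations $v_{i}-v_{i+1}$ and $u_{1}+\cdots+u_{i}$ appearing in $\swap$.

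Next I would rewrite Proposition \ref{prop:negpush} as
\[\push=\neg\circ(\anti\circ\swap)^{2}=\neg\circ(\mantar\circ\swap)^{2}.\]
For $\sigma\in\{\anti,\mantar\}$, plugging this into $(\sigma\circ\push)^{2}$ and sliding the inner copy of $\neg$ to the left produces $\neg^{2}=\id$, after which repeated cancellation of adjacent copies of the involutions $\sigma$ and $\swap$ collapses the remaining word down to $\swap^{2}=\id$. The case of $\swap\circ\push$ is handled identically by choosing either expression for $\push$: the central $\neg$ again escapes via $\neg^{2}=\id$, and the residual composition reduces by cancelling consecutive $\swap$'s and $\sigma$'s.

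For the three remaining operators $\push\circ\sigma$ with $\sigma\in\{\anti,\swap,\mantar\}$, I would invoke a short conjugation argument: once $(\sigma\circ\push)^{2}=\id$ has been established and $\sigma$ is itself an involution, one has $\sigma\circ\push\circ\sigma=\push^{-1}$, whence $(\push\circ\sigma)^{2}=\push\circ(\sigma\circ\push\circ\sigma)=\push\circ\push^{-1}=\id$.

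I do not foresee any serious obstacle: once Proposition \ref{prop:negpush} is in hand, the corollary becomes bookkeeping in the monoid generated by a few commuting involutions. The only step requiring minor care is the verification of $\neg\circ\swap=\swap\circ\neg$, since $\swap$ is by far the most intricate of the operators in \eqref{eq:operators}; but this reduces to the obvious remark that negation commutes with integer linear combinations of the entries.
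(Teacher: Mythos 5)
Your proposal is correct and follows essentially the same route as the paper: both rewrite $\push$ via Proposition \ref{prop:negpush} as $\neg$ composed with a palindromic word in the involutions $\swap$ and $\anti$ (or $\mantar$), then cancel using $\neg^{2}=\id$ and the commutativity of $\neg$ with the other operators. Your explicit conjugation step $\sigma\circ\push\circ\sigma=\push^{-1}$ for the reversed compositions is a clean way of making precise what the paper dismisses as ``a similar way.''
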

\begin{proof}
    From Proposition \ref{prop:negpush}, we have
    \[\anti\circ\push=\neg\circ\swap\circ\anti\circ\swap\]
    and the right-hand side is clearly involutive from commutativity of $\neg$ with the others. We can show the involutivity of the others in a similar way.
\end{proof}

\subsection{Uninflected substructure in bimoulds}
There is a standard product rule for bimoulds. Following \cite{ecalle11}, we denote it by $\mmu$ and call it the \emph{uninflected product}.

\begin{definition}[Uninflected product]
    Let $A$ and $B$ be bimoulds in $\BIMU(R)$.
    Then we define a new bimould $\mmu(A,B)\in\BIMU(R)$ by the equation
    \begin{equation}\label{eq:mu}
        \mmu(A,B)(w_{1},\ldots,w_{r})\coloneqq\sum_{i=0}^{r}A(w_{1},\ldots,w_{i})B(w_{i+1},\ldots,w_{r})
    \end{equation}
\end{definition}

Since the product rule $\mmu$ is $R$-bilinear, unital, associative but non-commutative by definition, $\BIMU(R)$ becomes an $R$-algebra.
The unit is given by the bimould $(1,0,0,\ldots)$ of length $0$, which is denoted as $1$ by abuse of notation. Similarly, we denote by $0$ the unit element $(0,0,\ldots)\in\BIMU(R)$ for the addition of bimoulds.
\begin{lemma}\label{lem:operators}
    Both $\neg$ and $\pari$ give an $R$-algebra homomorphism $\BIMU(R)\to\BIMU(R)$. The operator $\anti$ behaves linearly as well, but it exchanges the order of the product $\mmu$: the formula $\anti(\mmu(A,B))=\mmu(\anti(B),\anti(A))$ for any $A,B\in\BIMU(R)$.
    Consequently, for $A,B\in\BIMU(R)$ we have $\mantar(\mmu(A,B))=-\mmu(\mantar(A),\mantar(B))$.
\end{lemma}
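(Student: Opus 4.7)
The plan is to verify each of the three assertions by unpacking the definitions of the operators \eqref{eq:operators} together with the definition \eqref{eq:mu} of $\mmu$, then matching sign factors and word splittings term by term. All operators in question act diagonally on each length component of $\BIMU(R) = \prod_r \cX_r(R)$ and are visibly $R$-linear, so only compatibility with $\mmu$ (and preservation of the unit, for the algebra-homomorphism claims) requires verification.

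For $\neg$, I would substitute $(-u_i,-v_i)$ throughout \eqref{eq:mu} to obtain
\[
\neg(\mmu(A,B))(w_1,\ldots,w_r) = \sum_{i=0}^{r} A(-w_1,\ldots,-w_i)\, B(-w_{i+1},\ldots,-w_r) = \mmu(\neg(A),\neg(B))(w_1,\ldots,w_r),
\]
and note that $\neg$ fixes the length-zero component, hence preserves the unit. The case of $\pari$ is entirely analogous: the overall sign $(-1)^r$ factors as $(-1)^i\cdot(-1)^{r-i}$ and distributes between the two halves of the splitting, and $\pari$ is the identity on the length-zero component as well.

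For $\anti$ the key combinatorial observation is that reversing $(w_1,\ldots,w_r)$ takes the splitting at position $i$ into the splitting of the reversed word at position $r-i$, but with the two blocks in the opposite order: the first block becomes $(w_r,\ldots,w_{r-i+1})$ and the second becomes $(w_{r-i},\ldots,w_1)$. Expanding $\mmu(A,B)(w_r,\ldots,w_1)$ accordingly, reindexing via $j=r-i$, and invoking commutativity of $R$ yields $\anti(\mmu(A,B)) = \mmu(\anti(B),\anti(A))$.

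Finally, for the $\mantar$-identity I would observe that the decomposition $\mantar = -\pari \circ \anti$ holds on every length, including $r=0$ (where the $(-1)^{r-1}$ prefactor evaluates to $-1$, reflecting the fact that $\mantar$ does not preserve $A(\emp)$). Chaining the $\pari$- and $\anti$-formulas then gives
\[
\mantar(\mmu(A,B)) = -\pari\bigl(\mmu(\anti(B),\anti(A))\bigr) = -\mmu\bigl(\pari(\anti(B)),\pari(\anti(A))\bigr),
\]
and absorbing the two internal signs via $R$-bilinearity of $\mmu$ produces the stated identity. I do not foresee a genuine obstacle: the proof is essentially bookkeeping with signs and reindexing, and the only subtle point is the length-zero edge case for $\mantar$, where one must check that the convention $\mantar(A)(\emp) = -A(\emp)$ is consistent with the identity $\mantar = -\pari\circ\anti$ and with the bilinearity of $\mmu$.
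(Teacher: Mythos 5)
Your argument follows the same route as the paper's own (which simply declares the first two assertions immediate from the definitions and points to the relation $\pari\circ\anti+\mantar=0$ for the third), and your verifications for $\neg$, $\pari$ and $\anti$ are all correct, including the length-zero bookkeeping. The problem is your final sentence. Chaining the formulas as you do gives
\[
\mantar(\mmu(A,B))=-\mmu\bigl(\pari(\anti(B)),\pari(\anti(A))\bigr)=-\mmu(\mantar(B),\mantar(A)),
\]
since $\pari\circ\anti=-\mantar$ and the two internal signs cancel. This is \emph{not} the identity as stated, which has the arguments in the order $(\mantar(A),\mantar(B))$; because $\mmu$ is noncommutative the two expressions differ in general. (Test it on $A,B$ concentrated in length $1$ with vanishing value at $\emp$: then $\mantar(\mmu(A,B))(w_1,w_2)=-A(w_2)B(w_1)$, whereas $-\mmu(\mantar(A),\mantar(B))(w_1,w_2)=-A(w_1)B(w_2)$.) So ``absorbing the two internal signs'' does not produce the stated identity --- it produces the identity with $A$ and $B$ interchanged on the right-hand side. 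The resolution is that the statement as printed contains a typo: the version the paper actually uses later (in the proof of Lemma \ref{lem:neg_pari_axit}, where $\mantar(\mmu(A,B)-\mmu(B,A))$ is expanded as $-\mmu(\mantar(B),\mantar(A))+\mmu(\mantar(A),\mantar(B))$) is precisely the reversed-order identity your computation yields. You should conclude with $\mantar(\mmu(A,B))=-\mmu(\mantar(B),\mantar(A))$ and flag the discrepancy with the statement, rather than silently commuting the two factors in the last step.
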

\begin{proof}
    These assertions are immediate from the definitions. For the last equality, use $\pari\circ\anti+\mantar=0$.
\end{proof}
\begin{definition}
    We put
    \[\MU(R)\coloneqq\{M\in\BIMU(R)\mid M(\emp)=1\}\]
    and
    \[\LU(R)\coloneqq\{M\in\BIMU(R)\mid M(\emp)=0\}.\]
\end{definition}
\begin{proposition}
    \begin{enumerate}
        \item $(\MU(R),\mmu)$ is a group.
        \item $(\LU(R),\lu)$ is a Lie algebra over $\bbK$, where $\lu(A,B)\coloneqq\mmu(A,B)-\mmu(B,A)$ is the commutator for $\mmu$. 
    \end{enumerate}
\end{proposition}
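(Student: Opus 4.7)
The plan is to verify the group and Lie-algebra axioms in turn, leveraging the associative, unital $R$-algebra structure on $(\BIMU(R),\mmu)$ already noted in the text, together with the filtration topology introduced above.

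For part (1), closure of $\MU(R)$ under $\mmu$ is immediate from $\mmu(A,B)(\emp)=A(\emp)B(\emp)$, which is the $r=0$ specialization of \eqref{eq:mu}; associativity is inherited, and the length-zero bimould $1=(1,0,0,\ldots)$ is the two-sided unit. The substantive step is the existence of $\mmu$-inverses. Given $M\in\MU(R)$, set $L\coloneqq M-1\in\Fil^{\ge 1}\BIMU(R)$. A direct check using \eqref{eq:mu} shows that the $k$-fold $\mmu$-product of $L$ with itself lies in $\Fil^{\ge k}\BIMU(R)$, so I would define $N$ as the formal geometric series with alternating signs in $L$; the series converges in the filtration topology because $\BIMU(R)=\prod_{r\ge 0}\cX_{r}(R)$ is complete and Hausdorff with respect to $\Fil^{\ge\bullet}$, and continuity of $\mmu$ together with the telescoping identity $\mmu(L,N)=1-N$ yields $\mmu(M,N)=N+\mmu(L,N)=1$; the same reasoning on the other side gives $\mmu(N,M)=1$.

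For part (2), $R$-bilinearity and antisymmetry of $\lu$ follow directly from $R$-bilinearity of $\mmu$. Closure in $\LU(R)$ holds because $\mmu(A,B)(\emp)=A(\emp)B(\emp)=0$ whenever $A,B\in\LU(R)$, hence $\lu(A,B)(\emp)=0$. The Jacobi identity is the standard formal consequence of associativity of $\mmu$: expanding $\lu(A,\lu(B,C))+\lu(B,\lu(C,A))+\lu(C,\lu(A,B))$ produces twelve $\mmu$-monomials of the two shapes $\mmu(\mmu(\cdot,\cdot),\cdot)$ and $\mmu(\cdot,\mmu(\cdot,\cdot))$, and after applying associativity they cancel in pairs.

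The only genuinely non-routine step is therefore the existence of inverses in $(\MU(R),\mmu)$, and the sole obstacle is the convergence of the geometric-series construction. This is settled once one records that the filtration $\Fil^{\ge\bullet}\BIMU(R)$ renders $\BIMU(R)$ complete and that $\mmu$ is continuous for this topology; both facts are immediate from the product description $\BIMU(R)=\prod_{r\ge 0}\cX_{r}(R)$ and the formula \eqref{eq:mu}.
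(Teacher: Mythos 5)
Your proof is correct and follows essentially the same route as the paper: the paper constructs the $\mmu$-inverse length by length using \eqref{eq:mu}, which is exactly the recursion your geometric series $\sum_{k}(-1)^{k}\mmu^{k}(L)$ unwinds to, and the Lie-algebra part is the same standard commutator argument. The only cosmetic difference is that you package the inverse as a convergent series in the filtration topology rather than as an explicit degree-by-degree recursion; both are valid and equivalent here.
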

\begin{proof}
    For the first assertion, indeed we can construct length by length the inverse element for each given element of $\MU(R)$, from the defining equality \eqref{eq:mu} and the assumption that its constant term ($=$ value at $\emp$) is $0$.
    For $\lu$, it obviously satisfies the bilinearity, antisymmetry and Jacobi's identity since it is a commutator. The condition about the constant term is also easily checked.
\end{proof}
From the above proposition, we can always the inverse element for a given bimould $A$ in $\MU(R)$. It is denoted by $\invmu(A)$.

\subsection{Symmetries}
\begin{definition}[Shuffle product]
    Let $L$ be a set and $\cR$ the free non-commutative $\bbK$-algebra generated by $L$. We equip $\cR$ with the $\bbK$-bilinear product $\sh$ in the following inductive way:
    \[\emp \sh \bw=\bw\sh\emp=\bw\]
    and
    \[w\bw\sh w'\bw'=w(\bw\sh w'\bw')+w'(w\bw\sh\bw')\]
    for arbitrary $w,w'\in L$ and $\bw,\bw'\in\cR$.
\end{definition}

It is well-known that $(\cR,\sh)$ is a commutative $\bbK$-algebra.
Applying these rules to the variables of bimoulds, we define two types of symmetries.

\begin{definition}[Alternality and symmetrality]
    Let $A$ belong to $\MU(R)$ (resp.~$\LU(R)$) and linearly extend it on $\Span_{\bbK}\fV$.
    We say that $A$ is \emph{symmetral} (resp.~\emph{alternal}) if, for all pairs $(\bw,\bw')$ of non-emptys, the equation
    \[A(\bw \sh \bw')=A(\bw)A(\bw')\qquad (\text{resp.}~A(\bw\sh\bw')=0)\]
    holds. Let us denote by $\MU_{\as}(R)$ (resp.~$\LU_{\al}(R)$) the set consisting of all symmetral (resp.~alternal) bimoulds.
\end{definition}

\begin{proposition}[{\cite[\S 2.5, \S 2.6]{ecalle11}}]
    The pair $(\MU_{\as}(R),\mmu)$ is a subgroup of $(\MU(R),\mmu)$. Similarly, $(\LU_{\al}(R),\lu)$ is a Lie subalgebra of $(\LU(R),\lu)$.
\end{proposition}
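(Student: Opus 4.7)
The plan is to recast symmetrality and alternality as character-type conditions on the shuffle Hopf algebra, thereby reducing both claims to a single bialgebra compatibility. Concretely, I will extend each bimould $A\in\BIMU(R)$ linearly to a form $\cR\to R$, and equip $\cR$ with the \emph{deconcatenation coproduct}
\[
\Delta(w_{1}\cdots w_{r}) \coloneqq \sum_{i=0}^{r}(w_{1}\cdots w_{i})\otimes(w_{i+1}\cdots w_{r})
\]
together with the counit $\epsilon(\bw) \coloneqq \delta_{\bw,\emp}$. The defining formula \eqref{eq:mu} then reads $\mmu(A,B)(\bw) = (A\otimes B)(\Delta(\bw))$, so $\mmu$ is the convolution dual to $\Delta$. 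In this language, $A\in\MU_{\as}(R)$ is precisely the condition that $A$ be a \emph{character} of $(\cR,\sh)$, and $A\in\LU_{\al}(R)$ is the condition that $A$ be an \emph{infinitesimal character}, meaning $A(\bw\sh\bw') = A(\bw)\epsilon(\bw') + \epsilon(\bw)A(\bw')$.

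The first substantial step is to prove the bialgebra compatibility
\[
\Delta(\bw\sh\bw') = \Delta(\bw)\,\sh\,\Delta(\bw'),
\]
where on the right the shuffle acts componentwise on $\cR\otimes\cR$ by $(a\otimes b)\sh(a'\otimes b') \coloneqq (a\sh a')\otimes(b\sh b')$. This I would establish by a standard bijection: a term on the left amounts to a choice of an interleaving of $\bw$ and $\bw'$ together with a cut at some position $k$, which corresponds uniquely to a cut of $\bw$ at $i$ and a cut of $\bw'$ at $j$ with $i+j=k$, followed by independent interleavings of the prefixes and of the suffixes. Granting this identity, closure of $\MU_{\as}(R)$ under $\mmu$ is a one-line dualization,
\[
\mmu(A,B)(\bw\sh\bw') = (A\otimes B)\bigl(\Delta(\bw)\sh\Delta(\bw')\bigr) = \mmu(A,B)(\bw)\cdot\mmu(A,B)(\bw'),
\]
and closure of $\LU_{\al}(R)$ under $\lu$ follows from the same expansion, since the Leibniz-type cross terms produced by the infinitesimal character property cancel when one forms the commutator $\mmu(A,B)-\mmu(B,A)$; the linear conditions on $\LU_{\al}(R)$ are immediate from bilinearity.

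It remains to show stability of $\MU_{\as}(R)$ under $\invmu$, which I expect to be the main obstacle. My preferred route is to introduce the antipode $S$ of the shuffle Hopf algebra, given on words by $S(w_{1}\cdots w_{r}) = (-1)^{r}w_{r}\cdots w_{1}$, and to verify the Eulerian identity $\mu(S\otimes\id)\Delta = \epsilon = \mu(\id\otimes S)\Delta$. This identifies $\invmu(A)$ with $A\circ S$, and because $S$ is an algebra homomorphism of the commutative algebra $(\cR,\sh)$, precomposition with $S$ sends characters to characters. Should the antipode verification prove cumbersome, a parallel inductive argument along the length filtration $\Fil^{\ge n}\BIMU(R)$ is available: assuming $\invmu(A)$ satisfies the character property up to length $r-1$, the defining relation $\mmu(A,\invmu(A))=1$ combined with the bialgebra identity pins down the character property at length $r$. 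Everything else in the proposition reduces to mechanical applications of the central bialgebra identity.
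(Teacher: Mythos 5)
Your proposal is correct, and it is worth noting that the paper itself does not prove this proposition at all: it simply cites \cite[Proposition 2.9]{komiyama21}. Your route is the standard Hopf-algebraic one, and it is self-contained modulo two routine verifications, both of which are sound: the bialgebra compatibility $\Delta(\bw\sh\bw')=\Delta(\bw)\sh\Delta(\bw')$ (your cut-and-interleave bijection is the right argument), and the antipode identity $\mu(S\otimes\id)\Delta=\eta\epsilon$, which in coordinates is exactly the identity \eqref{eq:antipode} that the paper already states and uses in the proof of Proposition \ref{prop:mantar_gantar}; so your identification $\invmu(A)=A\circ S=(\pari\circ\anti)(A)$ for symmetral $A$ dovetails precisely with the $\gantar$-invariance proved there. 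Two minor points to make explicit if you write this up: first, the multiplicativity $A(a\sh a')=A(a)A(a')$ must be extended from the paper's ``non-empty pairs'' convention to all pairs using $A(\emp)=1$ before you can factor $(A\otimes B)(\Delta(\bw)\sh\Delta(\bw'))$; second, for the Lie closure your four-term expansion gives $\mmu(A,B)(\bw\sh\bw')=A(\bw)B(\bw')+A(\bw')B(\bw)$, which is symmetric under exchanging $A$ and $B$, so the commutator vanishes on shuffles --- the cross terms do not cancel within a single convolution but only upon antisymmetrization, as you indicate. Compared with a direct combinatorial proof (which is what the cited reference carries out), your character/infinitesimal-character framing buys uniformity: group closure, Lie closure, and stability under $\invmu$ all fall out of one bialgebra identity plus the antipode, at the cost of setting up the coalgebra structure that the paper leaves implicit.
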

\begin{proof}
    The proof is given in \cite[Proposition 2.9]{komiyama21}.
\end{proof}

\begin{proposition}[{\cite[\S 2.4]{ecalle11}}]\label{prop:mantar_gantar}
    Every element of $\MU_{\as}(R)$ (resp.~$\LU_{\al}(R)$) is $\gantar$-invariant (resp.~$\mantar$-invariant). Here the operator $\gantar$ is defined to be $\invmu\circ\pari\circ\anti$.
\end{proposition}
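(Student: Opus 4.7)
My plan is to deduce both statements from one classical fact: the antipode identity in the shuffle Hopf algebra. Viewing any bimould as a linear functional on $\Span_{\bbK}\fV$ as set up in the definition of (anti-)symmetrality, the product $\mmu$ of \eqref{eq:mu} is precisely the convolution associated with the deconcatenation coproduct, and together with $\sh$ this makes $\Span_{\bbK}\fV$ into a commutative Hopf algebra whose antipode sends $(w_{1},\ldots,w_{r})$ to $(-1)^{r}(w_{r},\ldots,w_{1})$. Unfolding the antipode axiom, I would first prove
\begin{equation}\label{eq:plan-shuffle-antipode}
    \sum_{i=0}^{r}(-1)^{i}(w_{i},\ldots,w_{1})\sh (w_{i+1},\ldots,w_{r})=0 \qquad (r\geq 1)
\end{equation}
in $\Span_{\bbK}\fV$, by a short induction on $r$ using the recursive definition of $\sh$.

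For the alternal case, I would then evaluate \eqref{eq:plan-shuffle-antipode} against $A\in\LU_{\al}(R)$. Alternality kills every summand in which both factors of the shuffle are non-empty, leaving only the contributions from $i=0$ and $i=r$; the surviving equation reads $A(w_{1},\ldots,w_{r})+(-1)^{r}A(w_{r},\ldots,w_{1})=0$, which is exactly $\mantar(A)=A$, since $\mantar=-\pari\circ\anti$.

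For the symmetral case, the first subgoal is to show $\invmu(A)=\pari\circ\anti(A)$ for every $A\in\MU_{\as}(R)$. Because such an $A$ is multiplicative for $\sh$, pairing $A$ with \eqref{eq:plan-shuffle-antipode} turns each shuffle into a scalar product and produces $\mmu(\pari\circ\anti(A),A)(\bw)=0$ for every non-empty $\bw$; combined with the trivial evaluation at $\emp$ this gives $\mmu(\pari\circ\anti(A),A)=1$, so that $\pari\circ\anti(A)$ is the (two-sided) $\mmu$-inverse of $A$. Substituting into $\gantar=\invmu\circ\pari\circ\anti$ then yields $\gantar(A)=\invmu\circ\invmu(A)=A$. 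The only non-formal ingredient in the whole argument is \eqref{eq:plan-shuffle-antipode}, a standard combinatorial identity; I do not foresee a real obstacle beyond routine bookkeeping to pass between statements about letters $w_i$ in $\Span_{\bbK}\fV$ and their component-wise realisations through the deck structure $\cX$.
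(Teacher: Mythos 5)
Your proposal is correct and follows essentially the same route as the paper: both rest on the shuffle-antipode identity (you use the mirror form $\sum_i(-1)^i(w_i,\ldots,w_1)\sh(w_{i+1},\ldots,w_r)=0$, the paper uses $\sum_i(-1)^i(w_1,\ldots,w_i)\sh(w_r,\ldots,w_{i+1})=0$, which only exchanges a left inverse for a right inverse in the symmetral case), then kill the middle terms by alternality to get $\mantar$-invariance, and recognize the symmetral evaluation as $\mmu$ of $A$ with $(\pari\circ\anti)(A)$ equal to $1$ to get $\gantar$-invariance. No gaps.
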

\begin{proof}
    It is a basic fact of the shuffle algebra that the identity
    \begin{equation}\label{eq:antipode}
        \sum_{i=0}^{r}(-1)^{i}(w_{1},\ldots,w_{i})\sh (w_{r},\ldots, w_{i+1})=0
    \end{equation}
    holds for all $r\ge 1$ and arbitrary $w_{1},\ldots,w_{r}$.
    Letting $A$ be an element of $\LU_{\al}(R)$ and apply $A$ on both sides of \eqref{eq:antipode}, we see that the terms with $i=1,\ldots,r-1$ vanish due to the alternality of $A$. Then we obtain the equality
    \[A(w_{1},\ldots,w_{r})+(-1)^{r}A(w_{1},\ldots,w_{r})=0,\]
    which implies the $\mantar$-invariance. Similarly, for $A\in\MU_{\as}(R)$, by virtue of \eqref{eq:antipode} we have
    \begin{align}
    \mmu(A,(\pari\circ\anti)(A))(w_{1},\ldots,w_{r})
    &=\sum_{i=0}^{r}(-1)^{r-i}A(w_{1},\ldots,w_{i})A(w_{r},\ldots,w_{i+1})\\
    &=(-1)^{r}\sum_{i=0}^{r}(-1)^{i}A((w_{1},\ldots,w_{i})\sh (w_{r},\ldots,w_{i+1}))\\
    &=0
    \end{align}
    whenever $r\ge 1$. Thus the bimould $\mmu(A,(\pari\circ\anti)(A))$ is of length $0$, whose only non-zero component is $1$ because $A\in\MU(R)$. This shows $A=(\invmu\circ\pari\circ\anti)(A)=\gantar(A)$.
\end{proof}

\section{Flexion operations}\label{sec:flexion}

\begin{definition}[Flexion markers]
    Let $\alpha=\binom{u_{1},\ldots,u_{i}}{v_{1},\ldots,v_{i}}$ and $\beta=\binom{u_{i+1},\ldots,u_{r}}{v_{i+1},\ldots,v_{r}}$. Then we define
    \begin{align}
        \ful{\alpha}\beta&\coloneqq\binom{u_{1}+\cdots+u_{i}+u_{i+1},u_{i+2},\ldots,u_{r}}{v_{i+1},\ldots,v_{r}},\\
        \alpha\fur{\beta}&\coloneqq\binom{u_{1},\ldots,u_{i-1},u_{i}+u_{i+1}+\cdots+u_{r}}{v_{1},\ldots,v_{i}},\\
        \fll{\alpha}\beta&\coloneqq\binom{u_{i+1},\ldots,u_{r}}{v_{i+1}-v_{i},\ldots,v_{r}-v_{i}},\\
        \alpha\flr{\beta}&\coloneqq\binom{u_{1},\ldots,u_{i}}{v_{1}-v_{i+1},\ldots,v_{i}-v_{i+1}}.
    \end{align}
\end{definition}

If $i=0$, we understand as $\ful{\alpha}\beta=\fll{\alpha}\beta=\beta$ and $\alpha\fur{\beta}=\alpha\flr{\beta}=\emp$.

\subsection{Flexion Lie algebras}

\begin{definition}[$\amit,\anit$ and $\axit$]
    Let $A$ be an element of $\LU(R)$.
    Then we define two operators $\amit(A),\anit(A)\colon\BIMU(R)\to\BIMU(R)$ by the equalities
    \[\amit(A)(B)(\bw)\coloneqq\sum_{\substack{\bw=\ba\bb\bc\\ \bb,\bc\neq\emp}}A(\ba\ful{\bb}\bc)B(\bb\flr{\bc})\]
    and
    \[\anit(A)(B)(\bw)\coloneqq\sum_{\substack{\bw=\ba\bb\bc\\ \ba,\bb\neq\emp}}A(\ba\fur{\bb}\bc)B(\fll{\ba}\bb),\]
    where $B$ is an arbitrary bimould in $\BIMU(R)$.
    Using them, we define the operator $\axit(A,A')$ on $\BIMU(R)$ attached to two elements $A,A'\in\LU(R)$ by the equation
    \[\axit(A,A')\coloneqq\amit(A)+\anit(A').\]
\end{definition}
A proof of the following important property can be found in \cite[Proposition 2.2.1, \S A.1]{schneps15}.
\begin{proposition}[{\cite[pp.~424--425]{ecalle03}}]\label{prop:axit_derivation}
    For any pair $(A,A')$ of elements in $\LU(R)$, the operator $\axit(A,A')\colon\BIMU(R)\to\BIMU(R)$ is a derivation about $\mmu$.
\end{proposition}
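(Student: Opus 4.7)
The plan is to verify the Leibniz identity
\[
\axit(A,A')\bigl(\mmu(B,C)\bigr) = \mmu\bigl(\axit(A,A')(B),C\bigr) + \mmu\bigl(B,\axit(A,A')(C)\bigr)
\]
by direct expansion at an arbitrary word $\bw$. Since $\axit(A,A')=\amit(A)+\anit(A')$ is linear in the pair $(A,A')$ and $\mmu$ is bilinear, both sides can be unfolded into finite sums indexed by decompositions of $\bw$, and it suffices to identify the contributions term by term.

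First I would expand the left-hand side. Evaluating $\amit(A)(\mmu(B,C))(\bw)$ from the definition yields a double sum: the outer sum runs over decompositions $\bw=\ba\bb\bc$ with $\bb,\bc\neq\emp$ coming from the definition of $\amit$, while the inner sum comes from applying the $\mmu$-formula to the flexion-shifted word $\bb\flr{\bc}$, which gives a further split $\bb=\bb_1\bb_2$. An entirely analogous double sum arises from $\anit(A')$, with $\ba,\bb\neq\emp$ and an inner split of $\fll{\ba}\bb$. I would partition all resulting terms into three classes according to whether the inner split has $\bb_1=\emp$ (so $B(\emp)$ factors out), $\bb_2=\emp$ (so $C(\emp)$ factors out), or both segments are non-empty. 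The first two ``boundary'' classes match directly with the extreme $\mmu$-cuts on the right-hand side: the $\bb_1=\emp$ terms identify with the $m=0$ summand of $\mmu(B,\axit(A,A')(C))$, and the $\bb_2=\emp$ terms identify with the $\ell=r$ summand of $\mmu(\axit(A,A')(B),C)$. In both cases the outer flexion data is transferred unchanged, so the matching is essentially bookkeeping.

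The main obstacle is the third, \emph{interior} class: the contributions where both $\bb_1,\bb_2\neq\emp$ in the expansions of $\amit(A)(\mmu(B,C))$ and $\anit(A')(\mmu(B,C))$ must collectively match the non-extreme $\mmu$-cut summands of $\mmu(\axit(A,A')(B),C)$ and $\mmu(B,\axit(A,A')(C))$. The difficulty is that on the left side the argument of $A$ (or $A'$) depends on essentially all of $\bw$ through the flexion markers, whereas on the right side the analogous $A$- (or $A'$-)argument depends only on a proper subword; the identification is not literal but requires a careful reindexing that exploits the compatibility of the four markers $\ful{\cdot},\fur{\cdot},\fll{\cdot},\flr{\cdot}$ with the concatenation of words. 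I would organize this step via a quadruple decomposition of $\bw$ that simultaneously records the outer flexion cut and the inner $\mmu$ cut, following Schneps' approach in \cite[Proposition~2.2.1, \S A.1]{schneps15}, where the required combinatorial identities are verified in full.
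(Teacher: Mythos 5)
The paper offers no argument of its own for this proposition—it simply points to Schneps \cite[Proposition 2.2.1, \S A.1]{schneps15}—so your attempt to actually carry out the expansion is welcome. The overall strategy (expand both sides at a word $\bw$ and match terms) is indeed the right one. However, the way you have set up the expansion contains a genuine gap, and it is located exactly at the step you defer to Schneps. You apply the $\mmu$-formula to the \emph{inner} word $\bb\flr{\bc}$, producing a split $\bb=\bb_{1}\bb_{2}$ and hence your ``interior class'' $\bb_{1},\bb_{2}\neq\emp$. With the operand of $\amit(A)$ placed on the inner word, the argument of $A$ is $\ba\ful{\bb}\bc$, which sees all of $\bw$, whereas on the right-hand side the corresponding $A$-argument is built only from a proper subword of $\bw$; these are evaluations of $A$ at genuinely different points, and no reindexing can identify them. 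Concretely, take $A'=0$ and $A,B,C$ concentrated in length $1$ with $B(\emp)=C(\emp)=0$: at $\bw=(w_{1},w_{2},w_{3})$ your left-hand side reduces to the single term $A\binom{u_{1}+u_{2}+u_{3}}{v_{3}}B\binom{u_{1}}{v_{1}-v_{3}}C\binom{u_{2}}{v_{2}-v_{3}}$, while the right-hand side is $A\binom{u_{1}+u_{2}}{v_{2}}B\binom{u_{1}}{v_{1}-v_{2}}C\binom{u_{3}}{v_{3}}+B\binom{u_{1}}{v_{1}}A\binom{u_{2}+u_{3}}{v_{3}}C\binom{u_{2}}{v_{2}-v_{3}}$; these disagree for generic $A$. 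So the Leibniz identity is false for the operator you are actually expanding.

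The resolution is that the displayed definition of $\amit$ and $\anit$ in this paper has the roles of $A$ and $B$ transposed relative to what is used everywhere else in the text (compare the recursion $\fre_{r+1}=\arit(\fre_{r})(\fE)$ with its explicit form \eqref{eq:re_explicit}, or the linearization of $\gamit$ and $\ganit$): the operand $B$ must be evaluated on the contracted outer word $\ba\ful{\bb}\bc$ and the parameter $A$ on the inner word $\bb\flr{\bc}$. Once this is corrected, the proof is \emph{easier} than your plan suggests: a $\mmu$-cut of $\ba\ful{\bb}\bc$ necessarily falls inside $\ba$ or inside (the contraction of) $\bc$, i.e., it comes from a cut of $\bw$ lying outside the block $\bb$; since each flexion marker depends only on the block adjacent to it, every term of $\amit(A)(\mmu(B,C))$ matches literally a term of $\mmu(\amit(A)(B),C)$ (cut in $\bc$) or of $\mmu(B,\amit(A)(C))$ (cut in $\ba$), together with the two boundary cuts you already handle. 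Your ``interior class'' is then empty, and the same bookkeeping disposes of $\anit(A')$. As written, your plan attacks the wrong decomposition, and the combinatorial identity you outsource to Schneps is precisely the one that fails.
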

Restricting the pair in $\axit$ to the case where their sum is $0$, we define the operator $\arit$, namely, we put $\arit(A)\coloneqq\axit(A,-A)$ for $A\in\LU(R)$. Using it, we define the operation
\[\preari(A,B)\coloneqq\arit(B)(A)+\mmu(A,B)\qquad (A\in\LU(R),B\in\BIMU(R))\]
and its commutator
\[\ari(A,B)\coloneqq\preari(A,B)-\preari(B,A)\qquad (A,B\in\LU(R)).\]
\begin{proposition}[{\cite[p.~425]{ecalle03}}]
    The pair $\ARI(R)\coloneqq (\LU(R),\ari)$ is a Lie algebra over $\bbK$. Moreover, $\ARI_{\al}(R)\coloneqq(\LU_{\al}(R),\ari)$ becomes a Lie subalgebra of it.
\end{proposition}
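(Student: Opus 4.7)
The plan is to verify separately (i) that $\ari$ is an $R$-bilinear antisymmetric operation from $\LU(R)\times\LU(R)$ into $\LU(R)$, (ii) the Jacobi identity, and (iii) that $\LU_{\al}(R)$ is closed under $\ari$. Item (i) is immediate: $R$-bilinearity is inherited from that of $\mmu$, $\amit$ and $\anit$; antisymmetry is built into the definition $\ari(A,B)=\preari(A,B)-\preari(B,A)$; and closure in $\LU(R)$ follows because $\arit(B)(A)(\emp)$ is an empty sum while $\mmu(A,B)(\emp)=A(\emp)B(\emp)=0$.

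The heart of the argument is the Jacobi identity. My approach is to show that $\preari$ is a \emph{left pre-Lie product} on $\LU(R)$: the associator
\[\preari(\preari(A,B),C)-\preari(A,\preari(B,C))\]
is symmetric in $A$ and $B$, and the Jacobi identity for its antisymmetrization $\ari$ then follows by a standard formal computation. Expanding $\preari(X,Y)=\arit(Y)(X)+\mmu(X,Y)$ produces four kinds of iterated terms: the $\mmu\circ\mmu$ contributions cancel by associativity of $\mmu$; the mixed $\arit$--$\mmu$ contributions recombine using Proposition~\ref{prop:axit_derivation}, which says that $\arit(Y)$ is a $\mmu$-derivation; and the pure $\arit\circ\arit$ contributions reduce to the identity
\[[\arit(C),\arit(B)]=\arit(\arit(C)(B)-\arit(B)(C))+\arit(\mmu(B,C)-\mmu(C,B))\]
for $B,C\in\LU(R)$. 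This last identity is the combinatorial core: both sides expand as sums over decompositions $\bw=\ba\bb\bc\bd$ into four blocks, and one matches the flexion markers $\ful{\cdot}$, $\fur{\cdot}$, $\fll{\cdot}$, $\flr{\cdot}$ case-by-case according to the relative position of the two cuts.

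For the subalgebra statement, it suffices to prove that $\arit(A)$ preserves alternality whenever $A\in\LU_{\al}(R)$, since the $\mmu$-commutator of two alternal bimoulds is alternal (the shuffle algebra makes primitives closed under the $\mmu$-commutator, cf.~the proof of Proposition \ref{prop:mantar_gantar}). Alternality of $\arit(A)(B)$ for alternal $A$ and $B$ is a direct shuffle computation on decompositions $\bw=\ba\bb\bc$: for every pair $(\bw_{1},\bw_{2})$ one regroups the contributions to $\arit(A)(B)(\bw_{1}\sh\bw_{2})$ so that each three-block cut is paired with the induced shuffle on the inner flexed block $\bb\flr{\bc}$, after which alternality of $A$ and $B$ makes every summand vanish. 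I would model this on the argument of \cite[Proposition 2.9]{komiyama21} transposed to the flexion-twisted setup.

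The main obstacle is the pre-Lie identity displayed above. Verifying it honestly demands a careful enumeration of all four-block decompositions of $\bw$ together with the action of the nested flexion brackets on the interior blocks, and the cancellations producing symmetry in $A,B$ are not visible term-by-term but only after a global reindexing of cuts. Once this combinatorial core is settled, both the Jacobi identity for $\ari$ and the closure of $\LU_{\al}(R)$ follow essentially formally.
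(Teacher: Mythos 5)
The paper itself offers no argument here: it simply refers to \cite[\S A.1, \S A.2]{fk23}. Your outline reconstructs what is essentially the standard proof from that reference (the pre-Lie route), so as a strategy it is sound, and your ``combinatorial core'' identity is literally Proposition \ref{prop:arit_composition} of the paper (itself cited to \cite[Proposition A.4]{fk23}); as you rightly note, that operator identity must be established directly as a statement about decompositions $\bw=\ba\bb\bc\bd$, since quoting it as ``$\arit$ is a Lie anti-homomorphism'' before Jacobi is known would be circular. Likewise, alternality-preservation of $\arit(Y)$ for alternal $Y$ is exactly \cite[(A.3)]{fk23}, which the paper invokes later, so that part of your plan is consistent with the literature.

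There is, however, one concrete error: the associator of $\preari$ is \emph{not} symmetric in $A$ and $B$; it is symmetric in the last two arguments $B$ and $C$ (i.e.\ $\preari$ is right-symmetric/right pre-Lie, not left pre-Lie as you state). Indeed, expanding with Proposition \ref{prop:axit_derivation} one finds
\begin{align}
    \preari(\preari(A,B),C)-\preari(A,\preari(B,C))
    =\begin{multlined}[t](\arit(C)\circ\arit(B))(A)-\arit(\arit(C)(B)+\mmu(B,C))(A)\\+\mmu(\arit(C)(A),B)+\mmu(\arit(B)(A),C),\end{multlined}
\end{align}
after the cancellation of the $\mmu(A,\arit(C)(B))$ and $\mmu(A,B,C)$ terms; the last two summands are manifestly symmetric under $B\leftrightarrow C$, and the difference of the first two under $B\leftrightarrow C$ is exactly $(\arit(C)\circ\arit(B)-\arit(B)\circ\arit(C))(A)-\arit(\ari(B,C))(A)=0$ by Proposition \ref{prop:arit_composition}. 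Symmetry in $A$ and $B$ fails already at the level of the term $\arit(\mmu(B,C))(A)$, whose $A\leftrightarrow B$ image has no counterpart. This does not sink the proof --- right-symmetry of the associator implies the Jacobi identity for the antisymmetrization $\ari(A,B)=\preari(A,B)-\preari(B,A)$ just as well --- but the claim as written is false and the ``standard formal computation'' you appeal to must be run with the correct symmetry.
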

\begin{proof}
    We can find a proof in \cite[\S A.1, \S A.2]{fk23}.
\end{proof}
\begin{lemma}\label{lem:neg_pari_axit}
    Applying either $\neg$ or $\pari$ inside $\axit$ amounts to taking the conjugation by them. Namely, we have
    \[\axit(h(A),h(A'))=h\circ\axit(A,A')\circ h\]
    for every $A,A'\in\LU(R)$ and $h\in\{\neg,\pari\}$.
    Moreover, the operator $\mantar\colon\ARI(R)\to\ARI(R)$ is a Lie algebra homomorphism ($=$ $\mantar$ preserves $\ari$).
\end{lemma}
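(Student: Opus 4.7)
The plan is to treat the two assertions independently. For the first one, the identity $\axit(h(A),h(A'))=h\circ\axit(A,A')\circ h$ with $h\in\{\neg,\pari\}$ will be verified by a componentwise expansion: apply both sides to an arbitrary bimould $B$ and a sequence $\bw$, and unfold the definitions of $\amit$ and $\anit$. Two observations suffice. First, each of $\ful{},\fur{},\fll{},\flr{}$ is $\bbK$-linear in the $u,v$ coordinates, so it commutes with $\neg$ up to an overall sign that can be absorbed into the arguments of $A$, $A'$, and $B$. Second, we have the length identities $\ell(\ba\ful{\bb}\bc)+\ell(\bb\flr{\bc})=\ell(\bw)=\ell(\ba\fur{\bb}\bc)+\ell(\fll{\ba}\bb)$, which guarantee that the total sign $(-1)^{\ell(\bw)}$ produced by $\pari$ distributes correctly over the two factors appearing in each summand.

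For the second assertion, I would first establish a companion identity for $\anti$, namely
\[\anti\circ\amit(A)\circ\anti=\anit(\anti(A)),\qquad \anti\circ\anit(A)\circ\anti=\amit(\anti(A)),\]
by the same componentwise method. Reversing $\bw$ sets up a bijection on decompositions $\bw=\ba\bb\bc$, and a direct inspection of the defining formulas shows that reversal interchanges the roles of $\ful{},\flr{}$ with $\fur{},\fll{}$ on the correspondingly reversed pieces. Specializing to $\arit(A)=\amit(A)-\anit(A)$ will then yield $\anti\circ\arit(A)\circ\anti=-\arit(\anti(A))$. Combining this with the $\pari$ case of the first assertion and using $\mantar=-\pari\circ\anti$, together with the $\bbK$-linearity of $\arit$ in its argument, produces
\[\mantar\circ\arit(A)\circ\mantar=\arit(\mantar(A)).\]

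The $\mmu$-part of $\ari$ is handled by Lemma~\ref{lem:operators}, which gives $\mantar(\mmu(A,B))=-\mmu(\mantar(B),\mantar(A))$ and hence $\mantar(\lu(A,B))=\lu(\mantar(A),\mantar(B))$. Writing $\ari(A,B)=\arit(B)(A)-\arit(A)(B)+\lu(A,B)$ and applying $\mantar$ term by term will then deliver $\mantar(\ari(A,B))=\ari(\mantar(A),\mantar(B))$. The main obstacle is the bookkeeping in the $\anti$-computation on $\amit$ and $\anit$: one must carefully match a decomposition of $\bw^{\mathrm{rev}}$ with the corresponding decomposition of $\bw$ and verify the resulting exchange between the left-marker and right-marker flexions. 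All other ingredients are routine algebraic manipulations.
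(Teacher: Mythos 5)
Your proposal is correct and follows essentially the same route as the paper: the $\neg$ and $\pari$ cases are handled by direct inspection of the defining sums (the paper dismisses this as obvious), and the $\mantar$ claim rests on exactly the exchange identity $\anti\circ\amit(A)\circ\anti=\anit(\anti(A))$ that the paper invokes, combined with Lemma \ref{lem:operators} for the $\mmu$-part of $\ari$. The only cosmetic difference is that you split the conjugation into separate $\anti$- and $\pari$-steps (picking up and then cancelling a sign via $\mantar=-\pari\circ\anti$), whereas the paper conjugates by $\pari\circ\anti$ in one pass; and your order-reversed form $\mantar(\mmu(A,B))=-\mmu(\mantar(B),\mantar(A))$ is the one actually used (and needed) in the computation.
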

\begin{proof}
    The assertion about $\neg$ and $\pari$ is obvious from the definition.
    We prove the claim related with $\mantar$. Let $A$ and $B$ be elements of $\LU(R)$.
    Using the property $\anti\circ\amit(\anti(X))\circ\anti=\anit(X)$ ($X\in\LU(R)$) which easily follows from the definition, we obtain
    \begin{align}
        &(\mantar\circ\arit(B))(A)\\
        &=(\pari\circ\anti\circ\anit(B))(A)-(\pari\circ\anti\circ\amit(A))(B)\\
        &=(\pari\circ\amit(\anti(B))\circ\anti)(A)-(\pari\circ\anit(\anti(A))\circ\anti)(B)\\
        &=\amit((\pari\circ\anti(B))\circ\pari\circ\anti)(A)-(\anit((\pari\circ\anti(A)))\circ\pari\circ\anti)(B)\\
        &=(\amit(\mantar(B))\circ\mantar)(A)-(\anit(\mantar(B))\circ\mantar)(B)\\
        &=(\arit(\mantar(B))\circ\mantar)(A).
    \end{align}
    Combining this computation and Lemma \ref{lem:operators}, we obtain
    \begin{align}
        \mantar(\ari(A,B))
        &=\mantar(\preari(A,B)-\preari(B,A))\\
        &=\mantar(\arit(B)(A)-\arit(A)(B)+\mmu(A,B)-\mmu(B,A))\\
        &=\begin{multlined}[t]
            (\arit(\mantar(B))\circ\mantar)(A)-(\arit(\mantar(A))\circ\mantar)(B)\\
            -\mmu(\mantar(B),\mantar(A))+\mmu(\mantar(A),\mantar(B))
        \end{multlined}\\
        &=\preari(\mantar(A),\mantar(B))-\preari(\mantar(B),\mantar(A))\\
        &=\ari(\mantar(A),\mantar(B)).  
    \end{align}
\end{proof}
\begin{proposition}[{\cite[(42)]{ecalle03}}]\label{prop:arit_composition}
    The map $\arit$ gives a Lie algebra anti-homomorphism from $\ARI(R)$ to $\Der(\BIMU(R))$. Namely, for any $A,B\in\ARI(R)$ we have
    \[\arit(B)\circ\arit(A)-\arit(A)\circ\arit(B)=\arit(\ari(A,B)).\]
\end{proposition}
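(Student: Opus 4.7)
The plan is to deduce the identity from the (right) pre-Lie axiom for $\preari$, combined with the derivation property of $\arit$ established in Proposition \ref{prop:axit_derivation}. Specifically, I would first establish
\[
\preari(\preari(X, A), B) - \preari(X, \preari(A, B)) = \preari(\preari(X, B), A) - \preari(X, \preari(B, A))
\]
for every $X \in \BIMU(R)$ and $A, B \in \LU(R)$. Rearranging using the definition of $\ari$, this is equivalent to
\[
\preari(\preari(X, A), B) - \preari(\preari(X, B), A) = \preari(X, \ari(A, B)).
\]

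Granting this pre-Lie identity, the proposition follows by expanding $\preari(Y, Z) = \arit(Z)(Y) + \mmu(Y, Z)$ throughout. The expansion of $\preari(\preari(X, A), B)$ yields four summands: $\arit(B)\arit(A)(X)$, $\arit(B)(\mmu(X, A))$, $\mmu(\arit(A)(X), B)$, and $\mmu(\mmu(X, A), B)$. Applying Proposition \ref{prop:axit_derivation} to rewrite $\arit(B)(\mmu(X, A)) = \mmu(\arit(B)(X), A) + \mmu(X, \arit(B)(A))$, subtracting the $A \leftrightarrow B$ counterpart, and using associativity of $\mmu$ to reduce $\mmu(\mmu(X, A), B) - \mmu(\mmu(X, B), A) = \mmu(X, \lu(A, B))$, the mixed terms either cancel or collect into a single $\mmu$-summand, leaving
\[
\preari(\preari(X, A), B) - \preari(\preari(X, B), A) = [\arit(B), \arit(A)](X) + \mmu(X, \ari(A, B)).
\]
Since $\preari(X, \ari(A, B)) = \arit(\ari(A, B))(X) + \mmu(X, \ari(A, B))$, cancelling the common $\mmu$-term against the right-hand side of the pre-Lie identity gives the claim.

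The hard part is verifying the pre-Lie identity for $\preari$. Writing $\arit(Z) = \amit(Z) - \anit(Z)$ and expanding the iterated triple-splitting sums that define each composition $\amit(B)\circ\amit(A)$, $\amit(B)\circ\anit(A)$, and so on, each side of the pre-Lie identity reorganizes into a sum over quadruple splittings $\bw = \bd_{1}\bd_{2}\bd_{3}\bd_{4}$ of the argument word. The combinatorial input consists of associativity-type relations for nested flexion markers, such as $\ful{(\ful{\ba}\bb)}\bc = \ful{\ba}(\ful{\bb}\bc)$ together with the analogous compatibilities among $\ful{\cdot}$, $\fll{\cdot}$, $\fur{\cdot}$, $\flr{\cdot}$. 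One then classifies terms by the relative positions of the two inner split points and matches contributions case by case. This delicate bookkeeping is exactly the computation left implicit by \'{E}calle in formula (42) of \cite{ecalle03}, and it constitutes the main obstacle of the proof.
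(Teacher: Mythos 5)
Your algebraic reduction is correct: expanding $\preari(Y,Z)=\arit(Z)(Y)+\mmu(Y,Z)$ and using the derivation property of $\arit(B)$ from Proposition \ref{prop:axit_derivation}, one indeed finds
\[
\preari(\preari(X,A),B)-\preari(\preari(X,B),A)=(\arit(B)\circ\arit(A)-\arit(A)\circ\arit(B))(X)+\mmu(X,\ari(A,B)),
\]
so the right pre-Lie identity for $\preari$ is \emph{equivalent} to the proposition. But that is exactly the problem: your first step is a reformulation, not a reduction in difficulty, and the one step carrying all the content --- the verification of the pre-Lie identity by expanding the nested sums defining $\amit(B)\circ\amit(A)$, $\amit(B)\circ\anit(A)$, etc., over quadruple splittings and matching terms via the compatibilities of the flexion markers --- is only described, never carried out. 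As you yourself concede, that bookkeeping \emph{is} the proof; until it is executed the argument establishes nothing. A further caution: because of the equivalence, one cannot shortcut this by quoting the pre-Lie property of $\preari$ (or the Jacobi identity for $\ari$) from the literature, since in \'Ecalle's and Furusho--Komiyama's treatments those are \emph{deduced from} the composition rule you are trying to prove; such a citation would be circular.

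For comparison, the paper does not reprove the identity either: it simply cites Furusho--Komiyama \cite[Proposition A.4]{fk23}, where the composition rule is verified directly by precisely the kind of flexion-marker computation you sketch. So your outline points in a workable direction and the surrounding algebra is sound, but to count as a proof you must either carry out the case analysis in full or replace it by a citation to a source that does --- at which point the pre-Lie detour becomes unnecessary, since the cited result is the proposition itself.
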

\begin{proof}
    It is done in \cite[Proposition A.4]{fk23}.
\end{proof}

There are many variants of $\arit$ generated from $\axit$: here we define
\[\irat(A)\coloneqq\axit(A,-\push(A))\]
and
\[\iwat(A)\coloneqq\axit(A,\anti(A)).\]

\subsection{Flexion groups}
\begin{definition}[Anti-action $\gaxit$; {\cite[(41)]{ecalle03}}]
    Let $A_{1}$ and $A_{2}$ be elements of $\MU(R)$.
    Then we define an $R$-linear operator $\gaxit(A_{1},A_{2})\colon\BIMU(R)\to\BIMU(R)$ by
    \begin{multline}
        \gaxit(A_{1},A_{2})(B)(\bw)\\\coloneqq\begin{dcases}
            \sum_{s=1}^{\infty}\sum_{\{(\ba_{i};\bb_{i};\bc_{i})\}_{i=1}^{s}\in\cE_{s}(\bw)} B(\ful{\ba_{1}}\bb_{1}\fur{\bc_{1}}\cdots\ful{\ba_{s}}\bb_{s}\fur{\bc_{s}})\prod_{i=1}^{s}A_{1}(\ba_{i}\flr{\bb_{i}})A_{2}(\fll{\bb_{i}}\bc_{i}) & \text{if }\bw\neq\emp,\\
            B(\emp) & \text{if }\bw=\emp,
        \end{dcases}
    \end{multline}
    where $B\in\BIMU(R)$ is an arbitrary bimould and the inner sum is over
    \[\cE_{s}(\bw)\coloneqq\left\{(\ba_{1};\bb_{1};\bc_{1};\cdots;\ba_{s};\bb_{s};\bc_{s})~\left|~\begin{array}{c}\prod_{i=1}^{s}\ba_{i}\bb_{i}\bc_{i}=\bw,\\ \bb_{i}\neq\emp~(1\le i\le s),\\\bc_{i}\ba_{i+1}\neq\emp~(1\le i\le s-1)\end{array}\right.\right\}.\]
    As specializations of $\gaxit$, we define $\gamit(A)\coloneqq\gaxit(A,1)$ and $\ganit(A)\coloneqq\gaxit(1,A)$ for $A\in\MU(R)$.
\end{definition}

\begin{definition}[$\gaxi$-product]
    Let $\cA=(A_{1},A_{2})$ and $\cB=(B_{1},B_{2})$ be two pairs of elements in $\MU(R)$.
    Then we associate them with a new pair $\gaxi(\cA,\cB)$ by the formula
    \begin{equation}\label{eq:gaxi_definition}
        \gaxi(\cA,\cB)\coloneqq (\mmu(\gaxit(\cB)(A_{1}),B_{1}),\mmu(B_{2},\gaxit(\cB)(A_{2}))).
    \end{equation}
\end{definition}

\begin{lemma}\label{lem:gaxit_simple}
    Let us denote as
    \[\Sigma_{t}(w_{1},\ldots,w_{r})\coloneqq\left\{(\bA_{1};w_{i_{1}};\bC_{1};\cdots;\bA_{t};w_{i_{t}};\bC_{t})~\left|~\begin{array}{c}\prod_{j=1}^{t}\bA_{j}w_{i_{j}}\bC_{j}=w_{1}\cdots w_{r},\\ 1\le i_{1}<\cdots<i_{t}\le r\end{array}\right.\right\}.\]
    Then we have
    \begin{multline}\label{eq:gaxit_simple}
        \gaxit(A_{1},A_{2})(B)(\bw)\\
        =\sum_{t=1}^{\infty}\sum_{\{(\bA_{j};w_{i_{j}};\bC_{j})\}_{j=1}^{t}\in\Sigma_{s}(\bw)} B(\ful{\bA_{1}}w_{i_{1}}\fur{\bC_{1}}\cdots\ful{\bA_{t}}w_{i_{t}}\fur{\bC_{t}})\prod_{j=1}^{t}A_{1}(\bA_{j}\flr{w_{i_{j}}})A_{2}(\fll{w_{i_{j}}}\bC_{j}),
    \end{multline}
    for all $A_{1},A_{2}\in\MU(R)$ and $B\in\BIMU(R)$.
\end{lemma}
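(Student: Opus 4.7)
The plan is to establish \eqref{eq:gaxit_simple} by constructing a bijection between the index set $\cE_s(\bw)$ of the original definition (taken over all $s \ge 1$) and the index set $\Sigma_t(\bw)$ (taken over all $t \ge 1$), under which the corresponding summands coincide. The crucial observation is that $A_1, A_2 \in \MU(R)$ satisfy $A_1(\emp) = A_2(\emp) = 1$, so inserting empty gap data does not alter the contribution; this allows each multi-letter block $\bb_i$ in the original sum to be broken into several single-letter selections in the new sum.

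First, I will describe the bijection. Given $(\ba_i; \bb_i; \bc_i)_{i=1}^s \in \cE_s(\bw)$, I expand each block $\bb_i$ into its constituent letters: set $t = \sum_{i=1}^s \ell(\bb_i)$, let $i_1 < \cdots < i_t$ enumerate the positions in $\bw$ of all letters lying in some $\bb_i$, and for each $j$ set $\bA_j \coloneqq \ba_i$ (resp.~$\bC_j \coloneqq \bc_i$) when $w_{i_j}$ is the first (resp.~last) letter of its containing block $\bb_i$, with $\bA_j \coloneqq \emp$ (resp.~$\bC_j \coloneqq \emp$) otherwise. Conversely, given $(\bA_j; w_{i_j}; \bC_j)_{j=1}^t \in \Sigma_t(\bw)$, I partition $\{1, \ldots, t\}$ into maximal runs of consecutive indices $j$ with $i_{j+1} = i_j + 1$; each run becomes a block $\bb_i$ whose outer $\bA, \bC$ provide $\ba_i$ and $\bc_i$. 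Maximality of runs implies $\bc_i \ba_{i+1} \neq \emp$, since the gap between consecutive runs contains at least one letter of $\bw$.

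Next I will verify that the summands agree. For the $B$-argument, both expressions yield a sequence of length $t$ with the same underlying letters: inner positions within a block contribute $\ful{\emp} w_{i_j} \fur{\emp} = w_{i_j}$ (the empty flexion markers act as identities), while each block's endpoints absorb $\ba_i$ and $\bc_i$ through $\ful{\ba_i}$ and $\fur{\bc_i}$, exactly as $\ful{\ba_i} \bb_i \fur{\bc_i}$ does in the original form. For the product of $A_1, A_2$-factors, every inner-position factor reduces to $A_1(\emp) A_2(\emp) = 1$; at the first-letter endpoint of $\bb_i$, the factor $A_1(\bA_j \flr{w_{i_j}})$ coincides with $A_1(\ba_i \flr{\bb_i})$ because $\flr{\cdot}$ depends only on the $v$-coordinate of the first letter of its right operand, and the $\fll{}$ case is symmetric.

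The main hurdle is the combinatorial bookkeeping in the bijection, specifically verifying that every configuration in $\Sigma_t(\bw)$ arises from exactly one configuration in $\cE_s(\bw)$. This hinges on the point that two consecutive selections $w_{i_j}, w_{i_{j+1}}$ with $i_{j+1} = i_j + 1$ force $\bC_j = \bA_{j+1} = \emp$---there are no letters of $\bw$ between them to fill these gaps---so such indices must lie in the same maximal run. With this, the partition into runs is canonical, and \eqref{eq:gaxit_simple} follows by termwise comparison.
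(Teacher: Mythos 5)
Your proposal is correct and follows essentially the same route as the paper: the paper also constructs the bijection $\bigsqcup_{s}\cE_{s}(\bw)\to\bigsqcup_{t}\Sigma_{t}(\bw)$ by padding each block $\bb_{i}$ with empty gap data around its individual letters, inverts it by grouping selections according to where $\bC_{j}\bA_{j+1}\neq\emp$ (your maximal-run criterion $i_{j+1}=i_{j}+1$ is equivalent), and uses $A_{1}(\emp)=A_{2}(\emp)=1$ together with the fact that the lower flexion markers only see the adjacent letter of the block to match the summands termwise. No gaps.
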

\begin{proof}
    This proof is almost based on existence of a nice bijection \[\Phi\colon\bigsqcup_{s=1}^{\infty}\cE_{s}(\bw)\to\bigsqcup_{t=1}^{\infty}\Sigma_{t}(\bw).\]
    We construct it by the following procedure: take $s\ge 1$ and $\cP=(\ba_{i};\bb_{i};\bc_{i})_{i=1}^{s}\in\cE_{s}(\bw)$.
    Next name each letters contained in $\bb_{i}$ as $\bb_{i}=(w_{a_{i}+1},\ldots,w_{b_{i}})$ for each $i=1,\ldots,s$ by using the sequence $(a_{1},b_{1},\ldots,a_{s},b_{s})$ of integers.
    By the definition of $\cE_{s}(\bw)$, it has to satisfy the inequality
    \[0\le a_{1}<b_{1}<\cdots<a_{s}<b_{s}\le r.\]
    Then we choose a suitable $t$ as $t\coloneqq\sum_{i=1}^{s}(b_{i}-a_{i})$, and associate $\Phi(\cP)\in\Sigma_{t}(\bw)$ with $\cP$ as
    \[\Phi(\cP)\coloneqq\{(\ba_{i};w_{a_{i}+1};\emp;\emp;w_{a_{i}+2};\emp;\cdots;\emp;w_{b_{i}-1};\emp;\emp;w_{b_{i}};\bc_{i})\}_{i=1}^{s}.\]
    Its inverse map is given as follows: for arbitrary $t$ and $\cS\coloneqq\{(\bA_{j};w_{i_{j}};\bC_{j})\}_{j=1}^{t}\in\Sigma_{t}(\bw)$, we consider the set
    \[\{e_{1},\ldots,e_{s-1}\}\coloneqq\{j\mid \bC_{j}\bA_{j+1}\neq\emp\}\subseteq\{1,\ldots,t-1\}\]
    with the order $e_{1}<\cdots<e_{s-1}$. Then we put $e_{0}\coloneqq 1$ and
    \[\Phi^{-1}(\cS)=\{(\bA_{e_{l-1}};w_{i_{e_{l-1}}}w_{i_{e_{l-1}}+1}\cdots w_{i_{e_{l}}};\bC_{e_{l}})\}_{l=1}^{s}.\]
    Now let us consider the more general set of partitions of $\bw$:
    \[\cT_{n}(\bw)\coloneqq\left\{(\ba_{1};\bb_{1};\bc_{1};\cdots;\ba_{n};\bb_{n};\bc_{n})~\left|~\prod_{i=1}^{n}\ba_{i}\bb_{i}\bc_{i}=\bw\right.\right\}~(\supseteq \cE_{n}(\bw),\Sigma_{n}(\bw)).\]
    For each element $\cP\in\cT_{n}(\bw)$ and bimoulds $A_{1},A_{2},B$ we have in the statement, we associate the value
    \[W(\cP)\coloneqq B(\ful{\ba_{1}}\bb_{1}\fur{\bc_{1}}\cdots\ful{\ba_{s}}\bb_{s}\fur{\bc_{s}})\prod_{i=1}^{s}A_{1}(\ba_{i}\flr{\bb_{i}})A_{2}(\fll{\bb_{i}}\bc_{i})\in R.\]
    Then, by definition we have
    \[\gaxit(A_{1},A_{2})(B)(\bw)=\sum_{s=1}^{\infty}\sum_{\cP\in\cE_{s}(\bw)}W(\cP),\]
    while values of $W$ are always invariant the action of $\Phi$, due to its construction and the assumption $A_{1}(\emp)=A_{2}(\emp)=1$. Thus we get
    \begin{align}
        \gaxit(A_{1},A_{2})(B)(\bw)
        &=\sum_{s=1}^{\infty}\sum_{\cP\in\cE_{s}(\bw)}W(\cP)\\
        &=\sum_{t=1}^{\infty}\sum_{\cT\in\Sigma_{t}(\bw)}W(\Phi^{-1}(\cT))\\
        &=\sum_{t=1}^{\infty}\sum_{\cT\in\Sigma_{t}(\bw)}W(\cT).
    \end{align}
    The last line above is just the right-hand side of \eqref{eq:gaxit_simple}.
\end{proof}

\begin{proposition}[{\cite[p.~425]{ecalle03}}]\label{prop:gaxit_associative}
    Let $\cA$ and $\cB$ be elements of $\MU(R)\times\MU(R)$. Then we have
    \[\gaxit(\cB)\circ\gaxit(\cA)=\gaxit(\gaxi(\cA,\cB)).\]
\end{proposition}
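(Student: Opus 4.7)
The plan is to unwind both sides of the claimed equality via the definitions of $\gaxit$, $\mmu$, and $\gaxi$, and then exhibit a weight-preserving bijection between the two combinatorial index sets that appear. Write $\cA=(A_1,A_2)$ and $\cB=(B_1,B_2)$, fix an arbitrary $C\in\BIMU(R)$ and a sequence $\bw\in\fV$, and aim to prove equality of the corresponding values in $R$ for each such choice.

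On the left-hand side, applying the definition of $\gaxit(\cB)$ first, and then that of the inner $\gaxit(\cA)(C)$, produces a double sum: an outer partition $\{(\bA_i;\bB_i;\bC_i)\}_{i=1}^{t}\in\cE_{t}(\bw)$ contributes the inflated sequence
\[\bw'\coloneqq\ful{\bA_1}\bB_1\fur{\bC_1}\cdots\ful{\bA_t}\bB_t\fur{\bC_t},\]
and an inner partition $\{(\ba_j;\bb_j;\bc_j)\}_{j=1}^{s}\in\cE_{s}(\bw')$ further decomposes $\bw'$, with total weight
\begin{multline*}
C\bigl(\ful{\ba_1}\bb_1\fur{\bc_1}\cdots\ful{\ba_s}\bb_s\fur{\bc_s}\bigr)\prod_{i=1}^{t}B_1(\bA_i\flr{\bB_i})B_2(\fll{\bB_i}\bC_i)\\
\times\prod_{j=1}^{s}A_1(\ba_j\flr{\bb_j})A_2(\fll{\bb_j}\bc_j).
\end{multline*}
On the right-hand side, the definition of $\gaxit$ applied to the pair $\gaxi(\cA,\cB)$ first selects an $\cE_{s}$-partition of $\bw$; each $\mmu$-product then introduces a binary splitting of each flank $\ba_i\flr{\bb_i}$ and $\fll{\bb_i}\bc_i$, and each occurrence of $\gaxit(\cB)(A_1)$ or $\gaxit(\cB)(A_2)$ introduces yet another $\cE_{\bullet}$-partition inside its argument.

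I would construct the bijection by reading the outer LHS-partition off the unions of the $\mmu$-splittings and the inner $\gaxit(\cB)$-partitions on the RHS, and conversely by grouping consecutive inner middles of the LHS that fall into the same outer block $\bB_i$ into a single RHS-marking. The main obstacle, which accounts for essentially all the content of the proof, is to verify that under this correspondence the flexion markers compose correctly: evaluating $A_1$ and $A_2$ on flanks of inner middles inside the inflated sequence $\bw'$ must yield exactly the flexion-marked arguments produced by expanding the $\mmu$-products and the $\gaxit(\cB)$-operations on the RHS. This will reduce, by direct computation from the definitions of $\ful{\cdot},\fur{\cdot},\fll{\cdot},\flr{\cdot}$, to a local identity asserting that the $u$-shifts coming from the outer $\ful{\bA_i}$ and $\fur{\bC_i}$ on an inner marking match the $u$-shifts produced on the RHS by the outer $\gaxit(\cB)$-markings of the $A_1$- and $A_2$-factors, and similarly for the $v$-shifts from $\flr{\cdot}$ and $\fll{\cdot}$. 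Once this marker compatibility is in place, both expressions become the same sum over the common combinatorial index set, and the proposition follows.
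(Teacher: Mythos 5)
Your overall strategy---expand both sides over their combinatorial index sets and exhibit a weight-preserving bijection---is the same one the paper follows, and the double-sum expression you write for the left-hand side is correct. But what you have submitted is a plan rather than a proof: as you yourself say, the marker-compatibility verification ``accounts for essentially all the content of the proof,'' and you do not carry it out. That verification is where all the difficulty lives. It requires, at minimum: the distributivity of the lower markers over concatenation, so that $\mmu(\gaxit(\cB)(A_{1}),B_{1})(\bA\flr{w})$ really splits as a product of values on $\ba\flr{w}$ and $\balpha\flr{w}$ for $\bA=\ba\balpha$; the composition rule $(\ful{\ba}w\fur{\bc})\flr{w'}=\ful{(\ba\flr{w'})}(w\flr{w'})\fur{(\bc\flr{w'})}$ and its upper/left analogues; and the insertion of \emph{redundant} markers, e.g.\ replacing $\flr{w_{g}}$ by $\flr{(\ful{\ba}w_{g}\fur{\bc})}$, which is precisely what allows the scattered $A_{1}$- and $A_{2}$-factors on the right-hand side to reassemble into a single evaluation of $\gaxit(\cA)$ on the contracted sequence $\bw'$. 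None of these identities is stated, let alone checked, in your proposal, so the concluding claim that ``both expressions become the same sum over the common combinatorial index set'' is currently unsupported.

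There is also a practical gap in the combinatorics. You work directly with the index sets $\cE_{s}(\bw)$, whose middle blocks $\bb_{i}$ have arbitrary length and which carry the side conditions $\bb_{i}\neq\emp$ and $\bc_{i}\ba_{i+1}\neq\emp$. The paper first proves Lemma \ref{lem:gaxit_simple}, replacing $\cE_{s}(\bw)$ by the sets $\Sigma_{t}(\bw)$ of partitions with single-letter middles (this uses $A_{1}(\emp)=A_{2}(\emp)=1$), and only then runs the double-expansion argument. With the raw $\cE_{s}$-form your proposed correspondence must additionally handle inner middles $\bb_{j}$ of $\bw'$ that straddle several outer blocks $\bB_{i}$, and must verify that the grouping you describe restores the non-adjacency conditions on both sides---exactly the bookkeeping the lemma is designed to eliminate. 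I would recommend proving that reduction first, and then writing out the marker computation in full; as it stands, the argument has the right shape but its essential step is missing.
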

\begin{proof}
    Write $\cA=(A_{1},A_{2})$ and $\cB=(B_{1},B_{2})$, and take an arbitrary $M\in\BIMU(R)$.
    By \eqref{eq:gaxit_simple} and \eqref{eq:gaxi_definition}, we have
    \begin{multline}
        \gaxit(\gaxi(\cA,\cB))(M)(\bw)
        =\sum_{t=1}^{\infty}\sum_{\{(\bA_{j};w_{i_{j}};\bC_{j})\}_{j=1}^{t}\in\Sigma_{t}(\bw)}M\left(\prod_{j=1}^{t}\ful{\bA_{j}}w_{i_{j}}\fur{\bC_{j}}\right)\\
        \cdot\prod_{j=1}^{t}\mmu(\gaxit(\cB)(A_{1}),B_{1})(\bA_{j}\flr{w_{i_{j}}})\mmu(B_{2},\gaxit(\cB)(A_{2}))(\fll{w_{i_{j}}}\bC_{j}).
    \end{multline}
        From the definition of $\mmu$ and the fact that the lower flexion markers are distributive for the concatenation, we obtain
        \begin{multline}
            \gaxit(\gaxi(\cA,\cB))(M)(\bw)
        =\sum_{t=1}^{\infty}\sum_{\{(\bA_{j};w_{i_{j}};\bC_{j})\}_{j=1}^{t}\in\Sigma_{t}(\bw)}\sum_{\substack{\bA_{j}=\ba_{j}\balpha_{j}\\ \bC_{j}=\bgamma_{j}\bc_{j}\\ 1\le j\le t}}M\left(\prod_{j=1}^{t}\ful{\bA_{j}}w_{i_{j}}\fur{\bC_{j}}\right)\\
        \cdot\prod_{j=1}^{t}\left(\gaxit(\cB)(A_{1})(\ba_{j}\flr{w_{i_{j}}})B_{1}(\balpha_{j}\flr{w_{i_{j}}})B_{2}(\fll{w_{i_{j}}}\bgamma_{j})\gaxit(\cB)(A_{2})(\fll{w_{i_{j}}}\bgamma_{j})\right).
        \end{multline}
        Using \eqref{eq:gaxit_simple} again, it is shown that
        \begin{multline}
            \gaxit(\gaxi(\cA,\cB))(M)(\bw)
            =\sum_{t=1}^{\infty}\sum_{\{(\bA_{j};w_{i_{j}};\bC_{j})\}_{j=1}^{t}\in\Sigma_{t}(\bw)}M\left(\prod_{j=1}^{t}\ful{\bA_{j}}w_{i_{j}}\fur{\bC_{j}}\right)\\
            \cdot \sum_{\substack{\bA_{j}=\ba_{j}\balpha_{j}\\ \bC_{j}=\bgamma_{j}\bc_{j}}}\sum_{\substack{\sigma_{1},\ldots,\sigma_{t}>0\\ \tau_{1},\ldots,\tau_{t}>0}}\sum_{\substack{\{(\ba_{j,k};w_{g_{j,k}};\bc_{j,k})\}_{k=1}^{\sigma_{j}}\in\Sigma_{\sigma_{j}}(\ba_{j})\\ 1\le j\le t}}\sum_{\substack{\{(\ba'_{j,l};w_{h_{j,l}};\bc'_{j,l})\}_{l=1}^{\tau_{j}}\in\Sigma_{\tau_{j}}(\bc_{j})\\ 1\le j\le t}}\prod_{j=1}^{t}\left(B_{1}(\balpha_{j}\flr{w_{i_{j}}})B_{2}(\fll{w_{i_{j}}}\bgamma_{j})\right)\\
        \cdot\prod_{j=1}^{t}\left(A_{1}\left(\prod_{k=1}^{\sigma_{j}}\ful{\left(\ba_{j,k}\flr{w_{i_{j}}}\right)}\left(w_{g_{j,k}}\flr{w_{i_{j}}}\right)\fur{\left(\bc_{j,k}\flr{w_{i_{j}}}\right)}\right)A_{2}\left(\prod_{l=1}^{\tau_{j}}\ful{\left(\fll{w_{i_{j}}}\ba'_{j,l}\right)}\fll{w_{i_{j}}}w_{h_{j,l}}\fur{\left(\fll{w_{i_{j}}}\bc'_{j,l}\right)}\right)\right)\\
        \cdot\prod_{j=1}^{t}\prod_{k=1}^{\sigma_{j}}\left(B_{1}\left(\ba_{j,k}\flr{w_{i_{j}}}\flr{\left(w_{g_{j,k}}\flr{w_{i_{j}}}\right)}\right)B_{2}\left(\fll{\left(w_{g_{j,k}}\flr{w_{i_{j}}}\right)}\bc_{j,k}\flr{w_{i_{j}}}\right)\right)\\
        \cdot\prod_{j=1}^{t}\prod_{l=1}^{\tau_{j}}\left(B_{1}\left(\ba'_{j,l}\flr{w_{i_{j}}}\flr{\left(w_{h_{j,l}}\flr{w_{i_{j}}}\right)}\right)B_{2}\left(\fll{\left(w_{h_{j,l}}\flr{w_{i_{j}}}\right)}\bc'_{j,l}\flr{w_{i_{j}}}\right)\right).
        \end{multline}
    Deleting redundant flexion markers, we obtain 
        \begin{multline}
            \gaxit(\gaxi(\cA,\cB))(M)(\bw)
            =\sum_{t=1}^{\infty}\sum_{\{(\bA_{j};w_{i_{j}};\bC_{j})\}_{j=1}^{t}\in\Sigma_{t}(\bw)}M\left(\prod_{j=1}^{t}\ful{\bA_{j}}w_{i_{j}}\fur{\bC_{j}}\right)\\
            \cdot \sum_{\substack{\bA_{j}=\ba_{j}\balpha_{j}\\ \bC_{j}=\bgamma_{j}\bc_{j}}}\sum_{\substack{\sigma_{1},\ldots,\sigma_{t}>0\\ \tau_{1},\ldots,\tau_{t}>0}}\sum_{\substack{\{(\ba_{j,k};w_{g_{j,k}};\bc_{j,k})\}_{k=1}^{\sigma_{j}}\in\Sigma_{\sigma_{j}}(\ba_{j})\\ 1\le j\le t}}\sum_{\substack{\{(\ba'_{j,l};w_{h_{j,l}};\bc'_{j,l})\}_{l=1}^{\tau_{j}}\in\Sigma_{\tau_{j}}(\bc_{j})\\ 1\le j\le t}}\prod_{j=1}^{t}\left(B_{1}(\balpha_{j}\flr{w_{i_{j}}})B_{2}(\fll{w_{i_{j}}}\bgamma_{j})\right)\\
        \cdot\prod_{j=1}^{t}\left(A_{1}\left(\left(\prod_{k=1}^{\sigma_{j}}\ful{\ba_{j,k}}w_{g_{j,k}}\fur{\bc_{j,k}}\right)\flr{w_{i_{j}}}\right)A_{2}\left(\fll{w_{i_{j}}}\left(\prod_{l=1}^{\tau_{j}}\ful{\ba'_{j,l}}w_{h_{j,l}}\fur{\bc'_{j,l}}\right)\right)\right)\\
        \cdot\prod_{j=1}^{t}\left(\prod_{k=1}^{\sigma_{j}}\left(B_{1}(\ba_{j,k}\flr{w_{g_{j,k}}})B_{2}(\fll{w_{g_{j,k}}}\bc_{j,k})\right)\cdot \prod_{l=1}^{\tau_{j}}\left(B_{1}(\ba'_{j,l}\flr{w_{h_{j,l}}})B_{2}(\fll{w_{h_{j,l}}}\bc'_{j,l})\right)\right).
        \end{multline}
        Then, by renaming dummy variables as
        \begin{align}
            \ba_{j,\sigma_{j}+l+1}&\coloneqq\begin{cases}\balpha_{j} & \text{if }l=0,\\ \ba'_{j,l} & \text{if }1\le l\le\tau_{j},\end{cases}\\
            w_{g_{j,\sigma_{j}+l+1}}&\coloneqq \begin{cases}w_{i_{j}} & \text{if }l=0,\\ w_{h_{j,l}} & \text{if }1\le l\le\tau_{j},\end{cases}\\
            \bc_{j,\sigma_{j}+l+1}&\coloneqq\begin{cases}\bgamma_{j} & \text{if }l=0,\\ \bc'_{j,l} & \text{if }1\le l\le\tau_{j},\end{cases}\\
        \end{align}
        the range of the above long sums becomes
            \begin{multline}
                \sum_{t=1}^{\infty}\sum_{\{(\bA_{j};w_{i_{j}};\bC_{j})\}_{j=1}^{t}\in\Sigma_{t}(\bw)}\sum_{\substack{\bA_{j}=\ba_{j}\balpha_{j}\\ \bC_{j}=\bgamma_{j}\bc_{j}}}\sum_{\substack{\sigma_{1},\ldots,\sigma_{t}>0\\ \tau_{1},\ldots,\tau_{t}>0}}\sum_{\substack{\{(\ba_{j,k};w_{g_{k}};\bc_{j,k})\}_{k=1}^{\sigma_{j}}\in\Sigma_{\sigma_{j}}(\ba_{j})\\ 1\le j\le t}}\sum_{\substack{\{(\ba'_{j,l};w_{h_{l}};\bc'_{j,l})\}_{l=1}^{\tau_{j}}\in\Sigma_{\tau_{j}}(\bc_{j})\\ 1\le j\le t}}\\
                =\sum_{t=1}^{\infty}\sum_{\substack{\sigma_{1},\ldots,\sigma_{t}>0\\ \tau_{1},\ldots,\tau_{t}>0}}\sum_{\substack{\{(\ba_{j,k};w_{g_{j,k}};\bc_{j,k})\}_{k=1}^{\sigma_{j}+\tau_{j}+1}\in\Sigma_{\sigma_{j}+\tau_{j}+1}(\bw)\\ 1\le j\le t}}.
            \end{multline}
            Therefore we have
            \begin{multline}
                \gaxit(\gaxi(\cA,\cB))(M)(\bw)\\
                =\sum_{t=1}^{\infty}\sum_{\substack{\sigma_{1},\ldots,\sigma_{t}>0\\ \tau_{1},\ldots,\tau_{t}>0}}\sum_{\substack{\{(\ba_{j,k};w_{g_{j,k}};\bc_{j,k})\}_{k=1}^{\sigma_{j}+\tau_{j}+1}\in\Sigma_{\sigma_{j}+\tau_{j}+1}(\bw)\\ 1\le j\le t}} \prod_{j=1}^{t}\prod_{k=1}^{\sigma_{j}+\tau_{j}+1}\left(B_{1}(\ba_{j,k}\flr{w_{g_{j,k}}})B_{2}(\fll{w_{g_{j,k}}}\bc_{j,k})\right)\\
                \cdot M\left(\prod_{j=1}^{t}\ful{\left\{\left(\prod_{k=1}^{\sigma_{j}}\ba_{j,k}w_{g_{j,k}}\bc_{j,k}\right)\ba_{j,\sigma_{j}+1}\right\}}w_{g_{j,\sigma_{j}+1}}\fur{\left\{\bc_{j,\sigma_{j}+1}\prod_{k=\sigma_{j}+2}^{\sigma_{j}+\tau_{j}+1}\ba_{j,k}w_{g_{j,k}}\bc_{j,k}\right\}}\right)\\
                \cdot\prod_{j=1}^{t}A_{1}\left(\left(\prod_{k=1}^{\sigma_{j}}\ful{\ba_{j,k}}w_{g_{j,k}}\fur{\bc_{j,k}}\right)\flr{w_{g_{j,\sigma_{j}+1}}}\right)\\
                \cdot\prod_{j=1}^{t}A_{2}\left(\fll{w_{g_{j,\sigma_{j}+1}}}\left(\prod_{k=\sigma_{j}+2}^{\sigma_{j}+\tau_{j}+1}\ful{\ba_{j,k}}w_{g_{j,k}}\fur{\bc_{j,k}}\right)\right).
            \end{multline}
            Since the value of $M$ appearing above is equal to
            \begin{multline}
                M\left(\prod_{j=1}^{t}\ful{\left\{\left(\prod_{k=1}^{\sigma_{j}}\ba_{j,k}w_{g_{j,k}}\bc_{j,k}\right)\ba_{j,\sigma_{j}+1}\right\}}w_{g_{j,\sigma_{j}+1}}\fur{\left\{\bc_{j,\sigma_{j}+1}\prod_{k=\sigma_{j}+2}^{\sigma_{j}+\tau_{j}+1}\ba_{j,k}w_{g_{j,k}}\bc_{j,k}\right\}}\right)\\
                =M\left(\prod_{j=1}^{t}\ful{\left(\prod_{k=1}^{\sigma_{j}}\ful{\ba_{j,k}}w_{g_{j,k}}\fur{\bc_{j,k}}\right)}\ful{\ba_{j,\sigma_{j}+1}}w_{g_{j,\sigma_{j}+1}}\fur{\bc_{j,\sigma_{j}+1}}\fur{\left(\prod_{k=\sigma_{j}+2}^{\sigma_{j}+\tau_{j}+1}\ful{\ba_{j,k}}w_{g_{j,k}}\fur{\bc_{j,k}}\right)}\right),
            \end{multline}
            and we can add redundant markers as
            \begin{align}
                A_{1}\left(\left(\prod_{k=1}^{\sigma_{j}}\ful{\ba_{j,k}}w_{g_{j,k}}\fur{\bc_{j,k}}\right)\flr{w_{g_{j,\sigma_{j}+1}}}\right)&=A_{1}\left(\left(\prod_{k=1}^{\sigma_{j}}\ful{\ba_{j,k}}w_{g_{j,k}}\fur{\bc_{j,k}}\right)\flr{\left(\ful{\ba_{j,\sigma_{j}+1}}w_{g_{j,\sigma_{j}+1}}\fur{\bc_{j,\sigma_{j}+1}}\right)}\right),\\
                A_{2}\left(\fll{w_{g_{j,\sigma_{j}+1}}}\left(\prod_{k=\sigma_{j}+2}^{\sigma_{j}+\tau_{j}+1}\ful{\ba_{j,k}}w_{g_{j,k}}\fur{\bc_{j,k}}\right)\right)&=A_{2}\left(\fll{\left(\ful{\ba_{j,\sigma_{j}+1}}w_{g_{j,\sigma_{j}+1}}\fur{\bc_{j,\sigma_{j}+1}}\right)}\left(\prod_{k=\sigma_{j}+2}^{\sigma_{j}+\tau_{j}+1}\ful{\ba_{j,k}}w_{g_{j,k}}\fur{\bc_{j,k}}\right)\right),
            \end{align}
            we compute the factors depending on $A_{1},A_{2}$ and $M$ as
            \begin{align}
                &\gaxit(\gaxi(\cA,\cB))(M)(\bw)\\
                &=\begin{multlined}[t]\sum_{t=1}^{\infty}\sum_{\substack{\sigma_{1},\ldots,\sigma_{t}>0\\ \tau_{1},\ldots,\tau_{t}>0}}\sum_{\substack{\{(\ba_{j,k};w_{g_{j,k}};\bc_{j,k})\}_{k=1}^{\sigma_{j}+\tau_{j}+1}\in\Sigma_{\sigma_{j}+\tau_{j}+1}(\bw)\\ 1\le j\le t}} \prod_{j=1}^{t}\prod_{k=1}^{\sigma_{j}+\tau_{j}+1}\left(B_{1}(\ba_{j,k}\flr{w_{g_{j,k}}})B_{2}(\fll{w_{g_{j,k}}}\bc_{j,k})\right)\\
                \cdot M\left(\prod_{j=1}^{t}\ful{\left(\prod_{k=1}^{\sigma_{j}}\ful{\ba_{j,k}}w_{g_{j,k}}\fur{\bc_{j,k}}\right)}\ful{\ba_{j,\sigma_{j}+1}}w_{g_{j,\sigma_{j}+1}}\fur{\bc_{j,\sigma_{j}+1}}\fur{\left(\prod_{k=\sigma_{j}+2}^{\sigma_{j}+\tau_{j}+1}\ful{\ba_{j,k}}w_{g_{j,k}}\fur{\bc_{j,k}}\right)}\right)\\
                \cdot\prod_{j=1}^{t}A_{1}\left(\left(\prod_{k=1}^{\sigma_{j}}\ful{\ba_{j,k}}w_{g_{j,k}}\fur{\bc_{j,k}}\right)\flr{\left(\ful{\ba_{j,\sigma_{j}+1}}w_{g_{j,\sigma_{j}+1}}\fur{\bc_{j,\sigma_{j}+1}}\right)}\right)\\
                \cdot\prod_{j=1}^{t}A_{2}\left(\fll{\left(\ful{\ba_{j,\sigma_{j}+1}}w_{g_{j,\sigma_{j}+1}}\fur{\bc_{j,\sigma_{j}+1}}\right)}\left(\prod_{k=\sigma_{j}+2}^{\sigma_{j}+\tau_{j}+1}\ful{\ba_{j,k}}w_{g_{j,k}}\fur{\bc_{j,k}}\right)\right)
                \end{multlined}\\
                &=\sum_{n=1}^{\infty}\sum_{\{(\ba_{j};w_{i_{j}};\bc_{j})\}_{j=1}^{n}\in\Sigma_{n}(\bw)} \prod_{j=1}^{n}\left(B_{1}(\ba_{n}\flr{w_{i_{n}}})B_{2}(\fll{w_{i_{n}}}\bc_{n})\right)\gaxit(A_{1},A_{2})(M)\left(\prod_{j=1}^{n}\ful{\ba_{j}}w_{i_{j}}\fur{\bc_{j}}\right)\\
                    &=\gaxit(B_{1},B_{2})(\gaxit(A_{1},A_{2})(M))(\bw),
            \end{align}
            with the help of \eqref{eq:gaxit_simple}.
\end{proof}

\begin{corollary}[{\cite[(4)]{ecalle15}}]\label{cor:gaxit_separation}
    Let $\cA=(A_{1},A_{2})$ be an element of $\MU(R)\times\MU(R)$. Then we have the following two expressions of $\gaxit(\cA)$:
    \begin{align}
        \gaxit(\cA)
        &=\gamit(A_{1})\circ\ganit(\gamit(A_{1})^{-1}(A_{2}))\\
        &=\ganit(A_{2})\circ\gamit(\ganit(A_{2})^{-1}(A_{1})).
    \end{align}
\end{corollary}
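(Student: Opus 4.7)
The plan is to apply Proposition \ref{prop:gaxit_associative} with carefully chosen arguments, exploiting the tautological observation that $\gamit(X)=\gaxit(X,1)$ and $\ganit(X)=\gaxit(1,X)$. For the first identity I would set $\cA'=(1,C)$ and $\cB=(A_{1},1)$, so that $\gaxit(\cB)\circ\gaxit(\cA')=\gamit(A_{1})\circ\ganit(C)$, and then find $C$ so that this composite matches $\gaxit(\cA)$. By Proposition \ref{prop:gaxit_associative} together with the definition \eqref{eq:gaxi_definition} of $\gaxi$, the composite equals
\[\gaxit\bigl(\mmu(\gaxit(A_{1},1)(1),A_{1}),\;\mmu(1,\gaxit(A_{1},1)(C))\bigr).\]

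Next I would simplify the two inner arguments. The value $\gaxit(A_{1},1)(1)$ is readily seen to equal the unit bimould $1$: every summand in $\gaxit(A_{1},1)(1)(\bw)$ contains a factor of the form $1(\ful{\ba_{1}}\bb_{1}\fur{\bc_{1}}\cdots)$ with each $\bb_{i}\neq\emp$, which vanishes unless $\bw=\emp$. Hence the first component simplifies to $\mmu(1,A_{1})=A_{1}$, and the second to $\mmu(1,\gamit(A_{1})(C))=\gamit(A_{1})(C)$. Matching with $\cA=(A_{1},A_{2})$ therefore reduces the problem to solving $\gamit(A_{1})(C)=A_{2}$.

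The remaining task, and the main technical obstacle, is to establish that $\gamit(A_{1})$ is an invertible $R$-linear endomorphism of $\BIMU(R)$, so that $C=\gamit(A_{1})^{-1}(A_{2})$ makes unambiguous sense. I would inspect the formula for $\gaxit(A_{1},1)(B)(\bw)$ and isolate the distinguished partition $s=1$, $\ba_{1}=\bc_{1}=\emp$, $\bb_{1}=\bw$, which contributes the diagonal value $B(\bw)\cdot A_{1}(\emp)=B(\bw)$. Every other admissible partition has either $s=1$ with $\ba_{1}\neq\emp$, forcing $\ell(\bb_{1})<\ell(\bw)$, or $s\ge 2$ with $\ba_{2},\ldots,\ba_{s}\neq\emp$ (the constraint $A_{2}=1$ already forces all $\bc_{i}=\emp$); in either case the argument $\ful{\ba_{1}}\bb_{1}\cdots\ful{\ba_{s}}\bb_{s}$ of $B$ has length strictly less than $\ell(\bw)$. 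Consequently $\gamit(A_{1})=\mathrm{id}+N_{A_{1}}$, where $N_{A_{1}}$ sends $\Fil^{\ge n}\BIMU(R)$ into $\Fil^{\ge n+1}\BIMU(R)$, so the Neumann series $\sum_{k\ge 0}(-N_{A_{1}})^{k}$ converges length-by-length and furnishes the required inverse.

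The second identity $\gaxit(\cA)=\ganit(A_{2})\circ\gamit(\ganit(A_{2})^{-1}(A_{1}))$ is then obtained by the mirror argument: take $\cA'=(D,1)$ and $\cB=(1,A_{2})$, check $\gaxit(1,A_{2})(1)=1$ by the same empty-argument argument, reduce to $\ganit(A_{2})(D)=A_{1}$, and conclude using the symmetric unipotency of $\ganit(A_{2})$. The substance of the proof is thus concentrated in Proposition \ref{prop:gaxit_associative} and the unipotency lemma; the rest is careful bookkeeping of the two coordinate roles.
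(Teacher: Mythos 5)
Your proposal is correct and follows essentially the same route as the paper: both apply Proposition \ref{prop:gaxit_associative} to the composite $\gaxit(A_{1},1)\circ\gaxit(1,C)$, compute $\gaxi((1,C),(A_{1},1))$, observe $\gaxit(A_{1},1)(1)=1$, and solve $\gamit(A_{1})(C)=A_{2}$. The only difference is that you explicitly justify the invertibility of $\gamit(A_{1})$ via the unipotent decomposition $\id+N_{A_{1}}$, a point the paper leaves implicit (its proof simply substitutes $C=\gamit(A_{1})^{-1}(A_{2})$ and simplifies); this is a harmless and indeed welcome addition.
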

\begin{proof}
    For the first expression, using Proposition \ref{prop:gaxit_associative}, we obtain
    \begin{align}
        &\gamit(A_{1})\circ\ganit(\gamit(A_{1})^{-1}(A_{2}))\\
        &=\gaxit(A_{1},1)\circ\gaxit(1,\gamit(A_{1})^{-1}(A_{2}))\\
        &=\gaxit(\gaxi((1,\gamit(A_{1})^{-1}(A_{2})),(A_{1},1)))\\
        &=\gaxit(\mmu(\gaxit(A_{1},1)(1),A_{1}),\mmu(1,\gaxit(A_{1},1)(\gamit(A_{1})^{-1}(A_{2}))))\\
        &=\gaxit(A_{1},A_{2}).
    \end{align}
    We can also show the latter equality in a similar way.
\end{proof}

\begin{proposition}\label{prop:gaxit_mu}
    The operator $\gaxit(A,A')\colon\BIMU(R)\to\BIMU(R)$ gives an $R$-algebra homomorphism for any $A,A'\in\MU(R)$.
\end{proposition}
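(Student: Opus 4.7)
The plan is to check both axioms of an algebra homomorphism—unit preservation and multiplicativity—directly on each word $\bw\in\fV$ by exploiting the cleaner expression for $\gaxit$ provided by Lemma \ref{lem:gaxit_simple}. Unit preservation is immediate: applying \eqref{eq:gaxit_simple} to $M=1=(1,0,0,\ldots)$ at $\bw\neq\emp$ puts $1$ on a length-$t$ word with $t\ge 1$, which vanishes, while at $\bw=\emp$ the convention forces $\gaxit(A,A')(1)(\emp)=1(\emp)=1$.

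For multiplicativity, I would fix $\bw\in\fV$ and $B,C\in\BIMU(R)$ and expand $\gaxit(A,A')(\mmu(B,C))(\bw)$ via Lemma \ref{lem:gaxit_simple}. Since the inner argument $\by\coloneqq\prod_{j=1}^{t}\ful{\bA_{j}}w_{i_{j}}\fur{\bC_{j}}$ has length exactly $t$, the definition \eqref{eq:mu} of $\mmu$ permits a further split $\mmu(B,C)(\by)=\sum_{k=0}^{t}B(\by_{1}^{(k)})C(\by_{2}^{(k)})$, with $\by_{1}^{(k)}$ and $\by_{2}^{(k)}$ consisting of the first $k$ and last $t-k$ transformed letters. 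The core of the argument is then a bijection
\[\bigsqcup_{t\ge 1}\Sigma_{t}(\bw)\times\{0,\ldots,t\}\;\longleftrightarrow\;\bigsqcup_{\bw=\bw^{(1)}\bw^{(2)}}\;\bigsqcup_{k_{1},k_{2}\ge 0}\Sigma_{k_{1}}(\bw^{(1)})\times\Sigma_{k_{2}}(\bw^{(2)}),\]
sending a pair (decomposition, $k$) to the split of $\bw$ right after the $k$-th block $\bA_{k}w_{i_{k}}\bC_{k}$, retaining the first $k$ blocks as a $\Sigma_{k}$-decomposition of $\bw^{(1)}$ and the last $t-k$ blocks as a $\Sigma_{t-k}$-decomposition of $\bw^{(2)}$; the edge cases $k=0$ and $k=t$ produce $\bw^{(1)}=\emp$ or $\bw^{(2)}=\emp$ respectively, reconciled with the convention $\gaxit(A,A')(M)(\emp)=M(\emp)$.

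That this bijection preserves the full summand is the key point: the transformed letter $\ful{\bA_{j}}w_{i_{j}}\fur{\bC_{j}}$ together with the factors $A(\bA_{j}\flr{w_{i_{j}}})$ and $A'(\fll{w_{i_{j}}}\bC_{j})$ depend only on the data inside the single block $\bA_{j}w_{i_{j}}\bC_{j}$—this is the locality of the flexion markers—so cutting $\bw$ between two consecutive blocks leaves every individual factor untouched and the product factorizes as a $j\le k$ piece times a $j>k$ piece. The main obstacle is thus the combinatorial bookkeeping, and in particular the careful handling of the boundary cases $k\in\{0,t\}$; once these are sorted out, the expansion of $\mmu(\gaxit(A,A')(B),\gaxit(A,A')(C))(\bw)$ via the same Lemma \ref{lem:gaxit_simple} matches term-by-term. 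A viable alternative would be to invoke Corollary \ref{cor:gaxit_separation} and reduce to the special cases $A'=1$ and $A=1$, but that requires checking in advance that $\gamit(A)^{-1}(A')\in\MU(R)$, which appears no easier than the direct argument.
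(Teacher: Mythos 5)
Your proof is correct, but it takes a genuinely different route from the paper's. The paper does not argue directly: it quotes Komiyama's result that $\ganit(X)$ is an $R$-algebra homomorphism, transfers this to $\gamit(X)$ via the conjugation identity $\gamit(X)=\anti\circ\ganit(\anti(X))\circ\anti$ together with the anti-multiplicativity of $\anti$ from Lemma \ref{lem:operators}, and then concludes for general $\gaxit(A,A')$ by the factorization of Corollary \ref{cor:gaxit_separation} --- exactly the "viable alternative" you mention and set aside. (Your stated reason for setting it aside is weaker than you suggest: $\gamit(A)^{-1}(A')\in\MU(R)$ is immediate because $\gaxit(\cA)(B)(\emp)=B(\emp)$ by definition, so $\gamit(A)$ and its inverse preserve the constant term.) Your direct argument, by contrast, is self-contained modulo Lemma \ref{lem:gaxit_simple}: the splitting bijection is sound precisely because, as you note, each factor $\ful{\bA_{j}}w_{i_{j}}\fur{\bC_{j}}$, $A(\bA_{j}\flr{w_{i_{j}}})$, $A'(\fll{w_{i_{j}}}\bC_{j})$ depends only on the letters of the single block $\bA_{j}w_{i_{j}}\bC_{j}$, so cutting $\bw$ between blocks factorizes every summand, and the boundary terms $k\in\{0,t\}$ match the $i=0$ and $i=r$ terms of \eqref{eq:mu} under the convention $\gaxit(\cA)(M)(\emp)=M(\emp)$. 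What your approach buys is independence from the external reference to Komiyama and from Corollary \ref{cor:gaxit_separation}; what the paper's approach buys is brevity, at the cost of resting the general case on a cited special case.
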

\begin{proof}
    The $A=1$ case (that is, the same property as what we want for $\ganit$) is proven by Komiyama \cite[Proposition 3.5]{komiyama21}.
    Since the equality $\gamit(X)=\anti\circ\ganit(\anti(X))\circ\anti$, which is valid for any $X\in\MU(R)$, is checked by definition, we can show that $\gamit(X)$ always gives an algebra homomorphism as
    \begin{align}
        \gamit(X)(\mmu(A,B))
        &=(\anti\circ\ganit(\anti(X))\circ\anti)(\mmu(A,B))\\
        &=(\anti\circ\ganit(\anti(X)))(\mmu(\anti(B),\anti(A)))\\
        &=\anti(\mmu((\ganit(\anti(X))\circ\anti)(B),(\ganit(\anti(X))\circ\anti)(A)))\\
        &=\mmu((\anti\circ\ganit(\anti(X))\circ\anti)(A),(\anti\circ\ganit(\anti(X))\circ\anti)(B))\\
        &=\mmu(\gamit(X)(A),\gamit(X)(B)),
    \end{align} 
    where $A$ and $B$ belong to $\BIMU(R)$.
    Thus we get the desired result for $\gaxit$ as we now have it for $\gamit$ and $\ganit$, due to Corollary \ref{cor:gaxit_separation}.
\end{proof}

\begin{proposition}\label{prop:gaxi_group}
    The set $\MU(R)\times\MU(R)$ becomes a group with the operation $\gaxi$.
\end{proposition}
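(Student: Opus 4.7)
The plan is to verify the three group axioms — associativity, existence of a two-sided identity, and existence of inverses — relying chiefly on Proposition \ref{prop:gaxit_mu} (that each $\gaxit(\cA)$ is a $\mmu$-algebra homomorphism), Proposition \ref{prop:gaxit_associative} (the anti-action identity $\gaxit(\cB)\circ\gaxit(\cA)=\gaxit(\gaxi(\cA,\cB))$), and ordinary associativity of $\mmu$.

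For associativity, I would take $\cA=(A_{1},A_{2})$, $\cB=(B_{1},B_{2})$, $\cC=(C_{1},C_{2})$ and expand both $\gaxi(\gaxi(\cA,\cB),\cC)$ and $\gaxi(\cA,\gaxi(\cB,\cC))$ using \eqref{eq:gaxi_definition}. On the first side, Proposition \ref{prop:gaxit_mu} distributes $\gaxit(\cC)$ across the inner $\mmu$-product, after which Proposition \ref{prop:gaxit_associative} converts $\gaxit(\cC)\circ\gaxit(\cB)$ into $\gaxit(\gaxi(\cB,\cC))$. Applying $\mmu$-associativity, the first components of both sides collapse to $\mmu(\gaxit(\gaxi(\cB,\cC))(A_{1}),\mmu(\gaxit(\cC)(B_{1}),C_{1}))$; the second components are treated by a mirror computation producing $\mmu(\mmu(C_{2},\gaxit(\cC)(B_{2})),\gaxit(\gaxi(\cB,\cC))(A_{2}))$.

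For the identity, the candidate is $e=(1,1)$ with $1$ the $\mmu$-unit bimould. Direct inspection of the $\gaxit$-formula shows $\gaxit(1,1)=\id$: with both parameters equal to $1$, any partition involving a nonempty $\ba_{i}$ or $\bc_{i}$ vanishes, and the constraint $\bc_{i}\ba_{i+1}\neq\emp$ then forces $s=1$, leaving only the trivial partition with $\ba_{1}=\bc_{1}=\emp$ and $\bb_{1}=\bw$. Similarly $\gaxit(\cA)(1)=1$ for every $\cA$, since the argument of $1$ in the sum has positive length $\sum_{i}\ell(\bb_{i})\ge s\ge 1$ whenever $\bw\neq\emp$. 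Together these formulas yield $\gaxi(\cA,e)=\gaxi(e,\cA)=\cA$ at once.

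For inverses, I would construct $\cB=(B_{1},B_{2})$ by induction on length. The key bound is that in the formula for $\gaxit(B_{1},B_{2})(A_{i})(\bw)$ with $r=\ell(\bw)\ge 1$, every $\bb_{j}$ satisfies $\ell(\bb_{j})\ge 1$, hence $\ell(\ba_{j}),\ell(\bc_{j})\le r-1$; consequently $\gaxit(\cB)(A_{i})(\bw)$ depends on $B_{1},B_{2}$ only at lengths strictly below $r$. The defining equation $\mmu(\gaxit(\cB)(A_{1}),B_{1})(\bw)=0$ then isolates $B_{1}(\bw)$ at its $i=0$ endpoint, with all other contributions expressed in terms of $A_{1}$ and of $B_{1},B_{2}$ at lengths $<r$, yielding an explicit recursion for $B_{1}(\bw)$; an analogous recursion drawn from $\mmu(B_{2},\gaxit(\cB)(A_{2}))=1$ determines $B_{2}(\bw)$. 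Evaluating at $\bw=\emp$ gives $B_{i}(\emp)=1$, so $\cB\in\MU(R)\times\MU(R)$. This produces a right inverse, and the standard monoid argument (associativity together with a two-sided identity and universal right inverses implies each right inverse is also a left inverse) promotes it to a two-sided inverse. The main obstacle is the bookkeeping in the associativity computation, where the nested layers of $\mmu$ and $\gaxit$ must be unwound in the precise order dictated by the two previous propositions; beyond this, everything is a direct application of earlier results of Section \ref{sec:flexion}.
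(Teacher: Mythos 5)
Your proposal is correct and follows essentially the same route as the paper: the identity is $(1,1)$, associativity is obtained by combining Proposition \ref{prop:gaxit_mu} (multiplicativity of $\gaxit$) with Proposition \ref{prop:gaxit_associative} and $\mmu$-associativity, and inverses are built length by length. Your treatment of inverses is in fact slightly more explicit than the paper's (which only sketches inverting the operator $\gaxit(\cA)$), since you recurse directly on the defining equations $\mmu(\gaxit(\cB)(A_{1}),B_{1})=1$ and $\mmu(B_{2},\gaxit(\cB)(A_{2}))=1$ and then invoke the standard right-inverse argument.
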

\begin{proof}
    The unit is given by $(1,1)$.
    Also we can check that $\gaxit(\cA)$ is always invertible for any $\cA\in\MU(R)\times\MU(R)$, by constructing the pair $\cI$ satisfying $\gaxit(\cI)\circ\gaxit(\cA)=\id$ length by length.
    We now give a proof of the associativity: what we want is the equality
    \[\gaxi(\gaxi(\cA,\cB),\cC)=\gaxi(\cA,\gaxi(\cB,\cC))\]
    which is valid for arbitrarily chosen elements $\cA=(A_{1},A_{2})$, $\cB=(B_{1},B_{2})$ and $\cC=(C_{1},C_{2})$ of $\MU(R)\times\MU(R)$.
    By writing $\cA=(A_{1},A_{2})$, the right-hand side is deformed
    \begin{align}
        &\gaxi(\cA,\gaxi(\cB,\cC))\\
        &=\gaxi(\cA,(\mmu(\gaxit(\cC)(B_{1}),C_{1}), \mmu(C_{2},\gaxit(\cC)(B_{2}))))\\
        &=\begin{multlined}[t]\left(\mmu(\gaxit(\gaxi(\cB,\cC))(A_{1}),\mmu(\gaxit(\cC)(B_{1}),C_{1})),\right.\\\left.\mmu(\mmu(C_{2},\gaxit(\cC)(B_{2})),\gaxit(\gaxi(\cB,\cC)(A_{2})))\right)\end{multlined}\\
        &=(\mmu(\gaxit(\gaxi(\cB,\cC))(A_{1}),\gaxit(\cC)(B_{1}),C_{1}),\mmu(C_{2},\gaxit(\cC)(B_{2}),\gaxit(\gaxi(\cB,\cC)(A_{2}))))\\
        &=(\mmu((\gaxit(\cC)\circ\gaxit(\cB))(A_{1}),\gaxit(\cC)(B_{1}),C_{1}),\mmu(C_{2},\gaxit(\cC)(B_{2}),(\gaxit(\cC)\circ\gaxit(\cB))(A_{2})))
    \end{align}  
    with help of Proposition \ref{prop:gaxit_associative}.
    Since $\gaxit(\cS)$ is always an algebra homomorphism for any $\cS\in\MU(R)\times\MU(R)$, we can continue the computation as
    \begin{align}
        &\gaxi(\cA,\gaxi(\cB,\cC))\\
        &=(\mmu(\gaxit(\cC)(\mmu(\gaxit(\cB)(A_{1}),B_{1})),C_{1}),\mmu(C_{2},\gaxit(\cC)(\mmu(B_{2},\gaxit(\cB)(A_{2})))))\\
        &=\gaxi((\mmu(\gaxit(\cB)(A_{1}),B_{1}),\mmu(B_{2},\gaxit(\cB)(A_{2}))),\cC)\\
        &=\gaxi(\gaxi(\cA,\cB),\cC).
    \end{align}
\end{proof}

Similarly to the definition of $\arit$, we define the operator $\garit(A)\colon\BIMU(R)\to\BIMU(R)$ by
\[\garit(A)\coloneqq\gaxit(A,\invmu(A))\]
for $A\in\MU(R)$.
As variants, we also define
\[\girat(A)\coloneqq\gaxit(A,(\push\circ\swap\circ\invmu\circ\swap)(A))\]
and
\[\giwat(A)\coloneqq\gaxit(A,\anti(A)).\]

\begin{definition}
    We define a map $\gari\colon\BIMU(R)\times\MU(R)\to\BIMU(R)$ by
    \[\gari(A,B)\coloneqq\mmu(\garit(B)(A),B).\]
    Similarly, with the same assumption, we define
    \[\gami(A,B)\coloneqq\mmu(\gamit(B)(A),B)\]
    and
    \[\gani(A,B)\coloneqq\mmu(B,\ganit(B)(A)).\]
    Note that these laws
    \begin{enumerate}
        \item have the bimould $1$ as their unit,
        \item are $R$-linear as a map on $\BIMU(R)$ of the first component, and
        \item have values in $\MU(R)$ whenever the first component is in $\MU(R)$, thus they define different product rules on $\MU(R)$.
    \end{enumerate}
\end{definition}

\begin{corollary}\label{prop:gari_group}
    The pairs $\GAMI(R)\coloneqq(\MU(R),\gami)$, $\GANI(R)\coloneqq(\MU(R),\gani)$ and $\GARI(R)\coloneqq(\MU(R),\gari)$ are groups.
\end{corollary}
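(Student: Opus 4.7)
The plan is to deduce each of the three assertions from Proposition \ref{prop:gaxi_group} by realizing $(\MU(R),\gami)$, $(\MU(R),\gani)$, and $(\MU(R),\gari)$ as subgroups of $(\MU(R)\times\MU(R),\gaxi)$ via suitable injections. The identity element is $1$ in all three cases by inspection, so only associativity and the existence of inverses need attention.

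For $\gami$ I would take $\iota_{m}\colon A\mapsto(A,1)$. A short preliminary observation is that $\gaxit(X,Y)(1)=1$ for every pair $(X,Y)$: in the sum defining $\gaxit(X,Y)(B)(\bw)$, the argument fed to $B$ always has length $\ge s\ge 1$ (since each $\bb_{i}\neq\emp$), so for $B=1$ only the $\bw=\emp$ contribution survives. A direct expansion using \eqref{eq:gaxi_definition} then gives $\gaxi((A,1),(B,1))=(\gami(A,B),1)$, so the image of $\iota_{m}$ is closed under $\gaxi$ and contains the unit $(1,1)$. Moreover, if $(X,Y)$ denotes the $\gaxi$-inverse of $(A,1)$, then the second coordinate of $\gaxi((A,1),(X,Y))$ is $\mmu(Y,\gaxit(X,Y)(1))=Y$, which must equal $1$, forcing $Y=1$; thus inverses stay in the image. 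This makes $(\MU(R),\gami)$ a group, and the verbatim argument with $\iota_{n}\colon A\mapsto(1,A)$ handles $\gani$.

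For $\gari$ I would use $\iota_{r}\colon A\mapsto(A,\invmu(A))$. The crux is that $\garit(B)=\gaxit(B,\invmu(B))$ is an $R$-algebra homomorphism by Proposition \ref{prop:gaxit_mu} and therefore commutes with $\invmu$ (it also sends $1$ to $1$ by the preceding remark). A direct computation then gives
\[\gaxi((A,\invmu(A)),(B,\invmu(B)))=(\gari(A,B),\,\invmu(\gari(A,B))),\]
so the image is closed under $\gaxi$. For the $\gaxi$-inverse $(X,Y)$ of $(A,\invmu(A))$: the first coordinate of $\gaxi((A,\invmu(A)),(X,Y))=(1,1)$ yields $\gaxit(X,Y)(A)=\invmu(X)$, and applying the algebra-homomorphism property of $\gaxit(X,Y)$ gives $\gaxit(X,Y)(\invmu(A))=X$; the second coordinate then forces $\mmu(Y,X)=1$, i.e.\ $Y=\invmu(X)$, so the image is a subgroup.

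Essentially all of the substantive content has already been discharged in Propositions \ref{prop:gaxit_mu} and \ref{prop:gaxi_group}; what remains is bookkeeping. The only step that requires a moment's thought is showing that the image of each embedding is closed under $\gaxi$-inversion (not merely under multiplication), and in each case this reduces to a two-line manipulation exploiting that $\gaxit$ is an algebra homomorphism preserving the $\mmu$-unit.
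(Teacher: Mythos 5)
Your proof is correct and follows essentially the same route as the paper: both realize $\gami$, $\gani$ and $\gari$ as the specializations of $\gaxi$ along $A\mapsto(A,1)$, $A\mapsto(1,A)$ and $A\mapsto(A,\invmu(A))$ and then invoke Proposition \ref{prop:gaxi_group}. Your write-up merely makes explicit the closure of each image under $\gaxi$-inversion (via $\gaxit$ preserving $\mmu$ and the unit), a point the paper leaves implicit.
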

\begin{proof}
    When elements $A$ and $B$ of $\MU(R)$ are given, we can always regard the above structures as specializations of $\gaxi$ in a way
    \begin{align}
        \gami(A,B)&=(\text{the first component of }\gaxi((A,1),(B,1))),\\
        \gani(A,B)&=(\text{the second component of }\gaxi((1,A),(1,B))),
    \end{align}
    and
    \[\gari(A,B)=(\text{the first component of }\gaxi((A,\invmu(A)),(B,\invmu(B)))).\]
    Thus we see that $\GAMI$, $\GANI$ and $\GARI$ become groups from Proposition \ref{prop:gaxi_group}.
\end{proof}

The map on $\MU(R)$ giving an inverse about $\gami$ (resp.~$\gani$, $\gari$) whose existence is guaranteed by Proposition \ref{prop:gari_group} is denoted by $\invgami$ (resp.~$\invgani$, $\invgari$).

\begin{remark}\label{rem:linearization}
    Considering the dual number ring $R[\ep]\coloneqq R\jump{\ep}/(\ep^{2})$ and bimoulds there, we see that $\arit$ and $\preari$ are linearization of $\garit$ and $\gari$, respectively: indeed, by definition we have
    \[\garit(1+\ep B)(A)=A+\ep\arit(B)(A)\]
    so as an operator $\garit(1+\ep B)=\id+\ep\arit(B)$, and thus
    \[\gari(A,1+\ep B)=A+\ep\preari(A,B)\]
    for any $A\in\BIMU(R)$ and $B\in\LU(R)$.
    Similarly, we have $\amit,\anit,\irat$ and $\iwat$ as the linearized objects of $\gamit$, $\ganit$, $\girat$ and $\giwat$, respectively.
\end{remark}

Hereafter, we usually use the Lie-exponential $\expari\colon\ARI\to\GARI$ (see Proposition \ref{prop:expari} in Section \ref{sec:appendix}).

\begin{proposition}\label{prop:expari_alternal}
    The pair $\ARI_{\al}(R)\coloneqq(\LU_{\al}(R),\ari)$ (resp.~$\GARI_{\as}(R)\coloneqq(\MU_{\as}(R),\gari)$) is a Lie subalgebra of $\ARI(R)$ (resp.~subgroup of $\GARI(R)$). Moreover, the restriction of $\expari$ gives the Lie-exponential $\ARI_{\al}\to\GARI_{\as}$.
\end{proposition}
\begin{proof}
    We can find a proof in \cite[\S A.2]{fk23} for the assertion about $\ARI_{\al}(R)$.
    The fact \[\expari(\ARI_{\al}(R))=\GARI_{\as}(R)\] is proven in \cite[Theorem A.7]{komiyama21}.
    Then we can say that $\GARI_{\as}(R)$ is a subgroup of $\GARI$ since $\expari$ gives the Lie-exponential $\ARI\to\GARI$.
\end{proof}

\begin{lemma}\label{lem:neg_pari_gaxit}
    Like the case for $\axit$, applying either $\neg$ or $\pari$ inside $\gaxit$ amounts to taking the conjugation by them. Namely, we have
    \[\gaxit(h(A),h(A'))=h\circ\gaxit(A,A')\circ h\]
    for every $A,A'\in\MU(R)$ and $h\in\{\neg,\pari\}$.
    Moreover, the operator $\gantar\coloneqq\invmu\circ\anti\circ\pari\colon\GARI(R)\to\GARI(R)$ is a group homomorphism.
\end{lemma}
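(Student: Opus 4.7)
The plan is to verify the unary conjugations for $\neg$ and $\pari$ first, then deduce the $\gantar$-homomorphism property from them together with an analogous conjugation for $\anti$. The $h\in\{\neg,\pari\}$ claims follow by direct inspection of the defining sum for $\gaxit(A_1,A_2)(B)(\bw)$ as in Lemma \ref{lem:gaxit_simple}. Since the flexion markers $\ful{\cdot},\fur{\cdot},\fll{\cdot},\flr{\cdot}$ are $\bbZ$-linear in the variables, $\neg$ commutes with them and with the partition indexing, immediately yielding $\gaxit(\neg(A),\neg(A'))\circ\neg=\neg\circ\gaxit(A,A')$. For $\pari$, one uses the length identities $\ell(\ba_i\flr{\bb_i})=\ell(\ba_i)$, $\ell(\fll{\bb_i}\bc_i)=\ell(\bc_i)$, and $\ell(\ful{\ba_1}\bb_1\fur{\bc_1}\cdots\ful{\ba_s}\bb_s\fur{\bc_s})=\sum_i\ell(\bb_i)$ to see that each summand picks up a total sign $(-1)^{\sum_i(\ell(\ba_i)+\ell(\bb_i)+\ell(\bc_i))}=(-1)^{\ell(\bw)}$, matching the outer $\pari$.

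The heart of the second assertion is an $\anti$-conjugation identity
\[\anti\circ\gaxit(A_1,A_2)\circ\anti=\gaxit(\anti(A_2),\anti(A_1)),\]
which I would verify by direct substitution: reversing $\bw$ and each partition $\prod\ba_i\bb_i\bc_i=\bar\bw$, and tracking the markers via the elementary identities $\anti(\ba\flr{\bb})=\fll{\anti(\bb)}\anti(\ba)$, $\anti(\fll{\bb}\bc)=\anti(\bc)\flr{\anti(\bb)}$, and $\anti(\ful{\ba}\bb\fur{\bc})=\ful{\anti(\bc)}\anti(\bb)\fur{\anti(\ba)}$, so that the $A_1$- and $A_2$-factors exchange roles with $\anti$ applied to each. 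Combined with the $\pari$ case, this yields
\[\phi\circ\gaxit(A_1,A_2)\circ\phi=\gaxit(\phi(A_2),\phi(A_1)),\]
where $\phi\coloneqq\pari\circ\anti=\anti\circ\pari$ is a $\mmu$-anti-automorphism of $\BIMU(R)$ by Lemma \ref{lem:operators}; in particular $\phi\circ\invmu=\invmu\circ\phi$ on $\MU(R)$, so $\gantar=\invmu\circ\phi=\phi\circ\invmu$.

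Specializing the above to $\garit(B)=\gaxit(B,\invmu(B))$ gives $\phi\circ\garit(B)=\gaxit(\invmu(\phi(B)),\phi(B))\circ\phi$. Expanding $\gari(A,B)=\mmu(\garit(B)(A),B)$ and using the anti-automorphism property of $\phi$, the desired identity $\gantar(\gari(A,B))=\gari(\gantar(A),\gantar(B))$ reduces to
\[\invmu\bigl(\gaxit(\invmu(\phi(B)),\phi(B))(\phi(A))\bigr)=\gaxit(\invmu(\phi(B)),\phi(B))(\invmu(\phi(A))),\]
which holds because Proposition \ref{prop:gaxit_mu} asserts that $\gaxit(X,Y)$ is a unital $R$-algebra homomorphism on $(\BIMU(R),\mmu)$ and hence preserves $\mmu$-inverses. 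I expect the $\anti$-conjugation identity to be the main obstacle, since it requires careful bookkeeping of how each of the four flexion markers behaves under reversal; an alternative derivation would invoke Corollary \ref{cor:gaxit_separation} and the relation $\gamit(X)=\anti\circ\ganit(\anti(X))\circ\anti$ used inside the proof of Proposition \ref{prop:gaxit_mu}, but the direct combinatorial route above is self-contained.
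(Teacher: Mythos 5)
Your proof is correct, and its overall skeleton coincides with the paper's: the $\neg$ and $\pari$ conjugations are read off the defining sum, an $\anti$-conjugation identity for $\garit$ is established, and the identity $\gantar(\gari(A,B))=\gari(\gantar(A),\gantar(B))$ is then reduced to the commutation of $\invmu$ with $\garit(\bullet)$, which both you and the paper extract from Proposition~\ref{prop:gaxit_mu}. The one place where you genuinely diverge is the source of the $\anti$-conjugation: the paper never proves your general formula $\anti\circ\gaxit(A_{1},A_{2})\circ\anti=\gaxit(\anti(A_{2}),\anti(A_{1}))$ by direct combinatorics, but instead derives the needed special case $\anti\circ\garit(X)=\garit((\invmu\circ\anti)(X))\circ\anti$ from the factorization $\garit(X)=\gamit(X)\circ\ganit(\gamit(X)^{-1}(\invmu(X)))$ of Corollary~\ref{cor:gaxit_separation} together with the single relation $\anti\circ\gamit(\anti(X))\circ\anti=\ganit(X)$. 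Your route is more elementary and more general --- the three marker identities you list are correct, and the symmetric formula you obtain is exactly the one that makes $\giwat$ commute with $\anti$ (compare Lemma~\ref{cor:girat_anti}) --- at the cost of the reversal bookkeeping you anticipate; the paper's route recycles machinery it has already built. One small wording issue: in the $\pari$ case the sign produced by $\gaxit(\pari(A),\pari(A'))$ is $(-1)^{\sum_{i}(\ell(\ba_{i})+\ell(\bc_{i}))}$, which must be matched against the sign $(-1)^{\ell(\bw)+\sum_{i}\ell(\bb_{i})}$ produced by the two copies of $\pari$ in the conjugation; these agree precisely by the three length identities you cite, so the conclusion stands as you intended.
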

\begin{proof}
    The former assertion is easy.
    For the fact that $\gantar$ preserves $\gari$, using the formula $\anti\circ\gamit(\anti(X))\circ\anti=\ganit(X)$ ($X\in\MU(R)$), we have for any $X\in\MU(R)$
    \begin{align}
        \anti\circ\garit(X)
        &=\anti\circ\gamit(X)\circ\ganit((\gamit(X)^{-1}\circ\invmu)(X))\\
        &=\ganit(\anti(X))\circ\anti\circ\ganit((\gamit(X)^{-1}\circ\invmu)(X))\\
        &=\ganit(\anti(X))\circ\gamit((\anti\circ\gamit(X)^{-1}\circ\invmu)(X))\circ\anti\\
        &=\ganit(\anti(X))\circ\gamit((\ganit(\anti(X))^{-1}\circ\anti\circ\invmu)(X))\circ\anti\\
        &=\garit((\invmu\circ\anti)(X))\circ\anti,
    \end{align}
    where we used the commutativity between $\anti$ and $\invmu$ (which follows from Lemma \ref{lem:operators}) in the last equality.
    Therefore, with the fact that $\gantar$ preserves $\mmu$, we obtain
    \begin{align}
        \gantar(\gari(A,B))
        &=\gantar(\mmu(\garit(B)(A),B))\\
        &=\mmu((\gantar\circ\garit(B))(A),\gantar(B))\\
        &=\mmu((\invmu\circ\pari\circ\garit((\invmu\circ\anti)(B))\circ\anti)(A),\gantar(B))\\
        &=\mmu((\invmu\circ\garit((\pari\circ\invmu\circ\anti)(B))\circ\pari\circ\anti)(A),\gantar(B))\\
        &=\mmu((\garit((\pari\circ\invmu\circ\anti)(B))\circ\invmu\circ\pari\circ\anti)(A),\gantar(B))\\
        &=\mmu((\garit(\gantar(B))\circ\gantar)(A),\gantar(B))\\
        &=\gari(\gantar(A),\gantar(B)).
    \end{align}
    Note that, the commutativity of $\invmu$ and $\garit(\bullet)$, which we used in the fifth equality above, follows from Proposition \ref{prop:gaxit_mu}.
\end{proof}

We conclude this section by introducing \emph{\'{E}calle's fundamental identity}.
Let $\fragari$ be the operator on $\BIMU(R)\times\MU(R)$ defined by 
\[\fragari(A,B)\coloneqq\gari(A,\invgari(B))\]
and define its $\swap$-conjugation $\fragira$ by
\[\fragira(A,B)\coloneqq\swap(\fragari(\swap(A),\swap(B))).\]

\begin{theorem}[\'{E}calle's fundamental identity; {\cite[(2.64)]{ecalle11}}]\label{thm:fundamental}
    Let $A$ be a bimould in $\BIMU(R)$ and $B$ in $\MU(R)$. Then we have
    \begin{equation}\label{eq:fundamental}
        \fragira(A,B)=\ganit(\crash(B))(\fragari(A,B))
    \end{equation}
    where the operator $\crash$ is defined as
    \[B\mapsto\mmu((\push\circ\swap\circ\invmu\circ\invgari\circ\swap)(B),(\swap\circ\invgari\circ\swap)(B)).\]
\end{theorem}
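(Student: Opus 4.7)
The plan is to reduce the claim to identities for $\gaxit$ by systematically unraveling $\gari$, $\swap$, and $\crash$. Using $\gari(A,B)=\mmu(\garit(B)(A),B)$ together with $\garit(B)=\gaxit(B,\invmu(B))$, I would first rewrite
\[
\fragari(A,B)=\mmu\bigl(\gaxit(\invgari(B),\invmu(\invgari(B)))(A),\invgari(B)\bigr).
\]
Since $\ganit(\crash(B))$ is an $\mmu$-algebra homomorphism by Proposition \ref{prop:gaxit_mu}, applying it to $\fragari(A,B)$ distributes across the product, and Proposition \ref{prop:gaxit_associative} then compresses the composition $\ganit(\crash(B))\circ\gaxit(\invgari(B),\invmu(\invgari(B)))$ into a single $\gaxit(X_1^{RHS},X_2^{RHS})$ acting on $A$, multiplied on the right by $\ganit(\crash(B))(\invgari(B))$.

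To treat the left-hand side $\fragira(A,B)=\swap(\gari(\swap(A),\invgari(\swap(B))))$, the main tool is a $\swap$-conjugation formula for $\gaxit$ of the shape
\[
\swap\circ\gaxit(X,Y)\circ\swap=\gaxit(\widetilde{X},\widetilde{Y}),
\]
where the tilded pair is expressed via $\push$, $\invmu$, and $\swap$ applied to $X$ and $Y$. Establishing this formula is the heart of the argument; it rests on Proposition \ref{prop:negpush} together with a careful matching of flexion markers. This is precisely what forces $\push$ into the definition of $\crash$: the factor $\push\circ\swap\circ\invmu\circ\invgari\circ\swap$ appearing in $\crash(B)$ is exactly what the $\swap$-conjugation produces from the first argument of the inner $\gaxit$, while the second factor $\swap\circ\invgari\circ\swap$ captures the outer $\mmu$-tail after applying $\swap$.

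After these reductions both sides take the form $\mmu(\gaxit(X_1,X_2)(A),Y)$ for explicit $X_1,X_2,Y\in\MU(R)$ depending on $B$, and the identity reduces to matching the pairs $(X_1,X_2)$ modulo the group law of $\gaxi$ together with the right $\mmu$-factors $Y$; both checks are routine given the conjugation formula, Corollary \ref{cor:gaxit_separation}, and the definition of $\crash$. The main obstacle is thus establishing the $\swap$-conjugation formula for $\gaxit$: this requires an explicit bijection between the indexing sets $\cE_s(\bw)$ on the two sides that matches the action of $\swap$ on the flexion markers $\ful{\cdot},\flr{\cdot},\fll{\cdot},\fur{\cdot}$, and it is precisely in this bijection that the $\push$ inevitably appears, in agreement with \eqref{eq:negpush}. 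Once this combinatorial lemma is in hand, the fundamental identity follows formally.
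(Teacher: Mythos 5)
There is a genuine gap, and it sits exactly where you say the ``heart of the argument'' is. You reduce everything to a clean conjugation formula $\swap\circ\gaxit(X,Y)\circ\swap=\gaxit(\widetilde{X},\widetilde{Y})$ for general $X,Y\in\MU(R)$, with $\widetilde{X},\widetilde{Y}$ obtained from $X,Y$ by $\push$, $\invmu$ and $\swap$. No such formula holds argument-by-argument. The linearized versions already show the asymmetry: by \eqref{eq:anit_swap} the $\anit$-part conjugates cleanly ($\swap\circ\anit(\swap(A))\circ\swap=\anit(\push(A))$), but by \eqref{eq:amit_swap} the $\amit$-part does not --- its $\swap$-conjugate is $\amit(A)$ \emph{plus} the correction terms $\mmu(\cdot,A)-\swap(\mmu(\swap(\cdot),\swap(A)))$, which are not of flexion-operator type. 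At the group level these corrections are precisely why $\gira(A,B)$ is \emph{not} equal to $\gari(A,\widetilde{B})$ for any transformed $\widetilde{B}$, and why the fundamental identity needs both the external twist $\ganit(\crash(B))$ and the replacement of $B$ by a $\gari$-modified argument (cf.\ the $\ras$/$\rash$ form in the Remark following the theorem). A conjugation identity of the type you want does hold for the particular combination defining $\girat$ (this is the unproven identity $\gira(A,B)=\mmu(\girat(B)(A),B)$ quoted in the Remark), but that special identity is essentially equivalent in depth to the theorem itself, so invoking it as a ``combinatorial lemma to be checked'' begs the question. Everything downstream of that lemma in your sketch (distributing $\ganit(\crash(B))$ over $\mmu$ via Proposition \ref{prop:gaxit_mu}, compressing compositions via Proposition \ref{prop:gaxit_associative} and Corollary \ref{cor:gaxit_separation}, matching pairs under $\gaxi$) is fine and routine, but it is scaffolding around a hole.

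For comparison: the paper does not prove this theorem either; it cites \cite[\S 2.9]{schneps15}, where the actual content is exactly the careful analysis of how $\garit$ transforms under $\swap$-conjugation, including the non-homomorphic behaviour of $\swap$ with respect to $\mmu$ and the corrections coming from \eqref{eq:amit_swap}. If you want a self-contained proof, that is the computation you must carry out explicitly; asserting the existence of the bijection between the $\cE_{s}(\bw)$'s ``in which the $\push$ inevitably appears'' is a statement of the problem, not a solution to it.
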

\begin{proof}
    It is given in \cite[\S 2.9]{schneps15}.
\end{proof}

\begin{remark}
    There is an equivalent formulation of the fundamental identity \eqref{eq:fundamental} excluding the \emph{fraction} along the $\gari$-product: let $\gira$ be the $\swap$-conjugation of $\gari$:
    \[(A,B)\mapsto\swap(\gari(\swap(A),\swap(B))).\]
    Then we have its interpretation as a restriction of $\gaxi$:
    \[\gira(A,B)=\mmu(\girat(B)(A),B)=\mmu(\gaxit(B,(\push\circ\swap\circ\invmu\circ\swap)(B))(A),B),\]
    and the identity 
    \[\gira(A,B)=\ganit(\rash(B))(\gari(A,\ras(B))),\]
    which is valid for any $A\in\BIMU(R)$ and $B\in\MU(R)$, where the operators $\ras$ and $\rash$ are defined by
    \[\ras(B)\coloneqq(\invgari\circ\swap\circ\invgari\circ\swap)(B)\]
    and
    \[\rash(B)\coloneqq\mmu((\push\circ\swap\circ\invmu\circ\swap)(B),B),\]
    respectively. We can find a proof of them in \cite[\S 2.9]{schneps15}.
    Note that, it is clear from the definition that the inverse operation $\invgira$ for the $\gira$-multiplication is given by $\invgira=\swap\circ\invgari\circ\swap$.
\end{remark}

\section{Definition of flexion units and some properties}\label{sec:unit}
\begin{definition}[{\cite[(3.9)]{ecalle11}}]\label{def:unit}
    Let $\fE$ be an element of $\BIMU_{1}(R)$. We say that $\fE$ is a \emph{flexion unit} if it satisfies the following conditions.
    \begin{enumerate}
        \item Parity condition: $\fE\binom{u_{1}}{v_{1}}=-\fE\binom{-u_{1}}{-v_{1}}$.
        \item Tripartite identity:
        \begin{equation}\label{eq:tripartite}
            \fE(w_{1})\fE(w_{2})=\fE(w_{1}\flr{w_{2}})\fE(\ful{w_{1}}w_{2})+\fE(w_{1}\fur{w_{2}})\fE(\fll{w_{1}}w_{2})
        \end{equation} 
    \end{enumerate}
\end{definition}

Recall the definition of $\push$:
\[\push(M)\binom{u_{1},\ldots,u_{r}}{v_{1},\ldots,v_{r}}\coloneqq M\binom{-u_{1}-\cdots-u_{r},u_{1},\ldots,u_{r-1}}{-v_{r},v_{1}-v_{r},\ldots,v_{r-1}-v_{r}}.\]
We say that a bimould $M$ is \emph{$\push$-neutral} if for each $r\ge 0$ the identity
\[(\id+\push+\push\circ\push+\cdots+\push^{\circ r})(M)\binom{u_{1},\ldots,u_{r}}{v_{1},\ldots,v_{r}}=0\]
holds.

\begin{proposition}[{\cite[(3.11)]{ecalle11}}]\label{prop:unit_push}
    Let $\fE\in\BIMU_{1}(R)$. Then the following are equivalent:
    \begin{enumerate}
        \item $\fE$ is a flexion unit.
        \item $\fE$ and $\mmu(\fE,\fE)$ are $\push$-neutral.
        \item $\mmu(\underbrace{\fE,\ldots,\fE}_{n})$ is $\push$-neutral for every $n\ge 1$.
    \end{enumerate}
\end{proposition}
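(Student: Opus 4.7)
The plan is to prove the chain $(3)\Rightarrow(2)\Rightarrow(1)$ together with $(1)\Rightarrow(3)$, so that all three statements become equivalent. The implication $(3)\Rightarrow(2)$ is immediate by specialization to $n=1,2$.

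For $(2)\Rightarrow(1)$, I would simply unpack $\push$-neutrality at lengths $1$ and $2$. At length $1$, the relation $\fE+\push(\fE)=0$ reads $\fE\binom{u_1}{v_1}=-\fE\binom{-u_1}{-v_1}$, which is exactly the parity condition. At length $2$, direct computation of $\push^{\circ k}(\mmu(\fE,\fE))$ for $k=0,1,2$ gives products of two $\fE$-values whose upper first argument is either $u_i$ or $-u_1-u_2$; after using parity to rewrite $\fE\binom{-u_1-u_2}{\cdot}=-\fE\binom{u_1+u_2}{\cdot}$, the identity $\sum_{k=0}^{2}\push^{\circ k}(\mmu(\fE,\fE))=0$ coincides with the tripartite identity \eqref{eq:tripartite}.

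The core work is $(1)\Rightarrow(3)$. The key observation is that $\push$ has order $n+1$ on length-$n$ bimoulds and admits a cyclic description. Introduce $\tilde{u}_0\coloneqq -u_1-\cdots-u_n$ and $\tilde{u}_i\coloneqq u_i$ ($i\ge 1$), and $\tilde{v}_0\coloneqq 0$, $\tilde{v}_i\coloneqq v_i$ ($i\ge 1$), so that $\sum_{i=0}^{n}\tilde{u}_i=0$ and all lower entries appear as differences. A straightforward induction on $k$ then shows that iterating $\push$ amounts to cyclically shifting the indices and sliding the \emph{base point} $\tilde{v}_m$ through the values $m\in\{0,1,\ldots,n\}$:
\[\push^{\circ k}(\mmu(\underbrace{\fE,\ldots,\fE}_{n}))\binom{u_1,\ldots,u_n}{v_1,\ldots,v_n}=\prod_{\substack{0\le i\le n\\ i\neq m}}\fE\binom{\tilde{u}_i}{\tilde{v}_i-\tilde{v}_m},\quad m\equiv n+1-k\,\,(\mathrm{mod}\,n+1).\]
Summing over $k$, $\push$-neutrality at length $n$ becomes the symmetric identity
\[S_n\coloneqq \sum_{m=0}^{n}\prod_{\substack{0\le i\le n\\ i\neq m}}\fE\binom{\tilde{u}_i}{\tilde{v}_i-\tilde{v}_m}=0,\]
to be shown for all $\tilde{u}_i$ with $\sum_i\tilde{u}_i=0$ and all $\tilde{v}_i$.

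I induct on $n$. The cases $n=1,2$ reduce to parity and tripartite, respectively (recovering the content of paragraph two). For the inductive step $n\ge 3$, I isolate the two summands $T_0$ and $T_1$ (corresponding to $m=0,1$) and apply the tripartite identity inside every remaining summand $T_m$ ($m\ge 2$) to the factor $\fE\binom{\tilde{u}_0}{\tilde{v}_0-\tilde{v}_m}\fE\binom{\tilde{u}_1}{\tilde{v}_1-\tilde{v}_m}$. One of the two pieces of each expanded $T_m$ combines with $T_1$ into $\fE\binom{\tilde{u}_0}{\tilde{v}_0-\tilde{v}_1}\cdot S_{n-1}^{(1)}$, where $S_{n-1}^{(1)}$ is the length-$(n-1)$ analog of $S$ on the merged set $\{\tilde{u}_0+\tilde{u}_1,\tilde{u}_2,\ldots,\tilde{u}_n\}$ with distinguished "$0'$" entry $\tilde{v}_1$; the other piece combines with $T_0$ into $\fE\binom{\tilde{u}_1}{\tilde{v}_1-\tilde{v}_0}\cdot S_{n-1}^{(0)}$ using base $\tilde{v}_0$. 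The zero-sum constraint is preserved by the merger $\tilde{u}_0,\tilde{u}_1\rightsquigarrow \tilde{u}_0+\tilde{u}_1$, so the inductive hypothesis forces both $S_{n-1}^{(\cdot)}=0$, hence $S_n=0$. The main obstacle is the careful bookkeeping to verify that after tripartite expansion the two halves really reassemble into \emph{complete} lower-rank sums with compatible cyclic structure; once the merging is traced out, the induction closes cleanly.
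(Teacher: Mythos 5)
Your proposal is correct, and the overall skeleton (unpack lengths $1$ and $2$ for $(2)\Rightarrow(1)$, reformulate $\push$-neutrality as the cyclic sum and induct on $n$ for $(1)\Rightarrow(3)$) matches the paper's; the difference lies in how the induction is organized. The paper works with the auxiliary quantity $F_{n}(w_{1},\ldots,w_{n})=\sum_{i}\fez((w_{1}\cdots w_{i-1})\flr{w_{i}})\fE(\ful{w_{1}\cdots w_{i-1}}w_{i}\fur{w_{i+1}\cdots w_{n}})\fez(\fll{w_{i}}(w_{i+1}\cdots w_{n}))$, peels off the last letter $w_{n+1}$, and applies the tripartite identity term by term to the pair $\fE(A_{i}\fur{w_{n+1}})\fE(\fll{A_{i}}w_{n+1})$ --- i.e.\ it contracts the marked position $i$ with the last position, a \emph{different} pair for each summand --- then invokes the induction hypothesis once on $(w_{1},\ldots,w_{n})$ and once on the lowered letters $w_{i}'=w_{i}\flr{w_{n+1}}$. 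You instead homogenize to the $\mathbb{Z}/(n+1)$-symmetric system $(\tilde{u}_{i},\tilde{v}_{i})_{0\le i\le n}$ and contract the \emph{fixed} pair $(\tilde{u}_{0},\tilde{u}_{1})$ in every summand $T_{m}$ with $m\ge 2$, reassembling into $\fE\binom{\tilde{u}_{0}}{\tilde{v}_{0}-\tilde{v}_{1}}S_{n-1}^{(1)}+\fE\binom{\tilde{u}_{1}}{\tilde{v}_{1}-\tilde{v}_{0}}S_{n-1}^{(0)}$; I have checked that the two halves do recombine into complete lower-rank sums exactly as you claim. Your version buys a manifestly cyclic statement (only the zero-sum constraint on the $\tilde{u}_{i}$ matters, and translation-invariance in the $\tilde{v}_{i}$ --- which you should state explicitly, since the merged systems have base entry $\tilde{v}_{1}\neq 0$ --- is automatic because only differences occur), at the cost of having to verify the iterated-$\push$ formula $\push^{\circ k}(\mmu^{n}(\fE))=\prod_{i\neq m}\fE\binom{\tilde{u}_{i}}{\tilde{v}_{i}-\tilde{v}_{m}}$ by a separate small induction; the paper's version stays in the flexion-marker notation throughout, which is what makes the identity reusable later (e.g.\ in Lemma \ref{lem:ecalle_459}). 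Both arguments are driven by one application of the tripartite identity per summand plus two invocations of the induction hypothesis, so they are of essentially equal depth.
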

\begin{proof}
    Since $\fE\in\BIMU_{1}(R)$, the condition that $\fE$ is $\push$-neutral is equivalent to
    \[\fE\binom{u_{1}}{v_{1}}+\push(\fE)\binom{u_{1}}{v_{1}}=0,\]
    which is rewritten as
    \[\fE\binom{u_{1}}{v_{1}}+\fE\binom{-u_{1}}{-v_{1}}=0,\]
    by definition of $\push$. This is equivalent to the parity condition $\fE\binom{u_{1}}{v_{1}}=-\fE\binom{-u_{1}}{-v_{1}}$.\\

    Since $\mmu^{n}(\fE)\coloneqq\mmu(\underbrace{\fE,\ldots,\fE}_{n})$ is explicitly expressed as
    \[\mmu^{n}(\fE)(w_{1},\ldots,w_{r})=\begin{cases}
        \fE(w_{1})\cdots\fE(w_{r}) & \text{if }r=n,\\
        0 & \text{otherwise,}
    \end{cases}\]
    its $\push$-neutrality becomes that
    \begin{align}
        0&=\sum_{i=0}^{n}\push^{\circ i}(\mmu^{n}(\fE))\binom{u_{1},\ldots,u_{n}}{v_{1},\ldots,v_{n}}\\
        &=\begin{multlined}[t]\mmu^{n}(\fE)\binom{u_{1},\ldots,u_{n}}{v_{1},\ldots,v_{n}}\\+\sum_{i=1}^{n}\mmu^{n}(\fE)\binom{u_{n-i+2},\ldots,u_{n},-u_{1}-\cdots-u_{n},u_{1},\ldots,u_{n-i}}{v_{n-i+2}-v_{n-i+1},\ldots,v_{n}-v_{n-i+1},-v_{n-i+1},v_{1}-v_{n-i+1},\ldots,v_{n-i}-v_{n-i+1}}\end{multlined}\\
        &=\begin{multlined}[t]\fE\binom{u_{1}}{v_{1}}\cdots\fE\binom{u_{n}}{v_{n}}\\
        +\sum_{i=1}^{n}\fE\binom{u_{1}}{v_{1}-v_{n-i+1}}\cdots\fE\binom{u_{n-i}}{v_{n-i}-v_{n-i+1}}\\
        \cdot\fE\binom{-u_{1}-\cdots-u_{n}}{-v_{n-i+1}}\fE\binom{u_{n-i+2}}{v_{n-i+2}-v_{n-i+1}}\cdots\fE\binom{u_{n}}{v_{n}-v_{n-i+1}}\end{multlined}\\
        &=\begin{multlined}[t]\fE\binom{u_{1}}{v_{1}}\cdots\fE\binom{u_{n}}{v_{n}}\\
            +\sum_{i=1}^{n}\fE\binom{u_{1}}{v_{1}-v_{i}}\cdots\fE\binom{u_{i-1}}{v_{i-1}-v_{i}}\fE\binom{-u_{1}-\cdots-u_{n}}{-v_{i}}\fE\binom{u_{i+1}}{v_{i+1}-v_{i}}\cdots\fE\binom{u_{n}}{v_{n}-v_{i}}\end{multlined}\\
        &=\begin{multlined}[t]\fE\binom{u_{1}}{v_{1}}\cdots\fE\binom{u_{n}}{v_{n}}\\-\sum_{i=1}^{n}\fE(w_{1}\flr{w_{i}})\cdots\fE(w_{i-1}\flr{w_{i}})\fE(\ful{w_{1}\cdots w_{i-1}}w_{i}\fur{w_{i+1}\cdots w_{n}})\fE(\fll{w_{i}}w_{i+1})\cdots\fE(\fll{w_{i}}w_{n}),\end{multlined}
    \end{align}
    where we regard $v_{n+1}=0$ and put $w_{i}=\binom{u_{i}}{v_{i}}$. In the last equality we have used the parity condition of $\fE$. In particular, when $n=2$ the $\push$-neutrality amounts to say that
    \[\fE(w_{1})\fE(w_{2})=\fE(w_{1}\flr{w_{2}})\fE(\ful{w_{1}}w_{2})+\fE(w_{1}\fur{w_{2}})\fE(\fll{w_{1}}w_{2}).\]
    holds under the assumption of the parity condition. This is nothing but the tripartite identity. Thus we complete the proof of (1) $\Leftrightarrow$ (2).\\

    Let us introduce the new bimould $\fez\coloneqq\invmu(1-\fE)$ and put
    \[F_{n}(w_{1},\ldots,w_{n})\coloneqq \sum_{i=1}^{n}\fez((w_{1}\cdots w_{i-1})\flr{w_{i}})\fE(\ful{w_{1}\cdots w_{i-1}}w_{i}\fur{w_{i+1}\cdots w_{n}})\fez(\fll{w_{i}}(w_{i+1}\cdots w_{n})).\]
    Note that $\fez(w_{1},\ldots,w_{r})=\mmu^{r}(\fE)(w_{1},\ldots,w_{r})=\fE(w_{1})\cdots \fE(w_{r})$ for every $r\ge 0$.
    It suffices to prove that the tripartite identity implies $F_{n}(w_{1},\ldots,w_{n})=\fE(w_{1})\cdots\fE(w_{n})$ (for all $n\ge 2$). We do it by induction on $n$.
    By definition we get
    \begin{multline}
        F_{n+1}(w_{1},\ldots,w_{n+1})\\
        =\sum_{i=1}^{n}\fez((w_{1}\cdots w_{i-1})\flr{w_{i}})\fE(\ful{w_{1}\cdots w_{i-1}}w_{i}\fur{w_{i+1}\cdots w_{n+1}})\fez(\fll{w_{i}}(w_{i+1}\cdots w_{n+1}))\\
        +\fez((w_{1}\cdots w_{n})\flr{w_{n+1}})\fE(\ful{w_{1}\cdots w_{n}}w_{n+1}).
    \end{multline}
    By putting $A_{i}\coloneqq \ful{w_{1}\cdots w_{i-1}}w_{i}\fur{w_{i+1}\cdots w_{n}}$, the first sum on the right-hand side becomes
    \begin{multline}
        \sum_{i=1}^{n}\fez((w_{1}\cdots w_{i-1})\flr{w_{i}})\fE(\ful{w_{1}\cdots w_{i-1}}w_{i}\fur{w_{i+1}\cdots w_{n+1}})\fez(\fll{w_{i}}(w_{i+1}\cdots w_{n+1}))\\
        =\sum_{i=1}^{n}\fez((w_{1}\cdots w_{i-1})\flr{w_{i}})\fez(\fll{w_{i}}(w_{i+1}\cdots w_{n}))\fE(A_{i}\fur{w_{n+1}})\fE(\fll{A_{i}}w_{n+1}).
    \end{multline}
    Applying the tripartite identity to $\fE(A_{i}\fur{w_{n+1}})\fE(\fll{A_{i}}w_{n+1})$, we obtain
    \begin{align}
        &\sum_{i=1}^{n}\fez((w_{1}\cdots w_{i-1})\flr{w_{i}})\fE(\ful{w_{1}\cdots w_{i-1}}w_{i}\fur{w_{i+1}\cdots w_{n+1}})\fez(\fll{w_{i}}(w_{i+1}\cdots w_{n+1}))\\
        &=\sum_{i=1}^{n}\fez((w_{1}\cdots w_{i-1})\flr{w_{i}})\fez(\fll{w_{i}}(w_{i+1}\cdots w_{n}))\left(\fE(A_{i})\fE(w_{n+1})-\fE(\ful{A_{i}}w_{n+1})\fE(A_{i}\flr{w_{n+1}})\right)\\
        &=\begin{multlined}[t]\sum_{i=1}^{n}\fez((w_{1}\cdots w_{i-1})\flr{w_{i}})\fez(\fll{w_{i}}(w_{i+1}\cdots w_{n}))\\\cdot\left(\fE(\ful{w_{1}\cdots w_{i-1}}w_{i}\fur{w_{i+1}\cdots w_{n}})\fE(w_{n+1})-\fE(\ful{w_{1}\cdots w_{n}}w_{n+1})\fE(A_{i}\flr{w_{n+1}})\right)\end{multlined}\\
        &=\begin{multlined}[t]F_{n}(w_{1},\ldots,w_{n})\fE(w_{n+1})\\-\sum_{i=1}^{n}\fez((w_{1}\cdots w_{i-1})\flr{w_{i}})\fez(\fll{w_{i}}(w_{i+1}\cdots w_{n}))\fE(\ful{w_{1}\cdots w_{n}}w_{n+1})\fE(A_{i}\flr{w_{n+1}}).\end{multlined}
    \end{align}
    By induction hypothesis, it is sufficient to show 
    \begin{multline}\fez((w_{1}\cdots w_{n})\flr{w_{n+1}})\fE(\ful{w_{1}\cdots w_{n}}w_{n+1})\\=\sum_{i=1}^{n}\fez((w_{1}\cdots w_{i-1})\flr{w_{i}})\fez(\fll{w_{i}}(w_{i+1}\cdots w_{n}))\fE(\ful{w_{1}\cdots w_{n}}w_{n+1})\fE(A_{i}\flr{w_{n+1}}).\end{multline}
    We then put $w'_{i}\coloneqq w_{i}\flr{w_{n+1}}$ for $1\le i\le n$. Then $A_{i}\flr{w_{n+1}}=\ful{w'_{1}\cdots w'_{i-1}}w'_{i}\fur{w'_{i+1}\cdots w'_{n}}$ holds and thus the right-hand side becomes
    \begin{align}
        &\sum_{i=1}^{n}\fez((w_{1}\cdots w_{i-1})\flr{w_{i}})\fez(\fll{w_{i}}(w_{i+1}\cdots w_{n}))\fE(\ful{w_{1}\cdots w_{n}}w_{n+1})\fE(A_{i}\flr{w_{n+1}})\\
        &=F_{n}(w'_{1},\ldots,w'_{n})\fE(\ful{w_{1}\cdots w_{n}}w_{n+1})\\
        &=\fez(w'_{1},\ldots,w'_{n})\fE(\ful{w_{1}\cdots w_{n}}w_{n+1})\\
        &=\fez((w_{1}\cdots w_{n})\flr{w_{n+1}})\fE(\ful{w_{1}\cdots w_{n}}w_{n+1}).
    \end{align}
    Note that in the last equality we have used the induction hypothesis. Then we complete the proof.
\end{proof}

\section{Primary bimoulds}\label{sec:primary}
In what follows, we fix a flexion unit $\fE$. We denote by $\fO$ its \emph{conjugate unit} defined by $\fO\coloneqq\swap(\fE)$.

\begin{definition}[Primary bimoulds]
    We define $\fez\coloneqq\invmu(1-\fE)$ and $\fes\coloneqq\expari(\fE)$.
    They have an explicit expression:
    \begin{align}
        \fez(w_{1},\ldots,w_{r})&=\fE(w_{1})\cdots\fE(w_{r}),\\
        \fes\binom{u_{1},\ldots,u_{r}}{v_{1},\ldots,v_{r}}&=\fE\binom{u_{1}}{v_{1}-v_{2}}\cdots\fE\binom{u_{1}+\cdots+u_{r-1}}{v_{r-1}-v_{r}}\fE\binom{u_{1}+\cdots+u_{r}}{v_{r}}.
    \end{align}
    Similarly we put $\foz\coloneqq\invmu(1-\fO)$ and $\fos\coloneqq\expari(\fO)$.
    By definition, the identity $\swap(\fes)=\foz$ and $\swap(\fez)=\fos$ hold.
\end{definition}

\begin{remark}
    By definition, $\fes$ has the following remarkable property: if the sequence $\bw$ is decomposed as $\bw=\ba\bb$, it always holds that
    \begin{equation}\label{eq:es_split}
        \fes(\bw)=\fes(\ba\flr{\bb})\fes(\ful{\ba}\bb).
    \end{equation}
\end{remark}

\begin{proposition}\label{prop:ez_and_es}
    We have the following.
    \begin{enumerate}
        \item $\invgani(\fez)=(\pari\circ\anti)(\fes)$.
        \item $\fes$ is symmetral.
        \item Both $\fez$ and $\fes$ are $\neg\circ\pari$-invariant. In particular, we have $\invmu(\fes)=\push(\fes)$.
    \end{enumerate}
\end{proposition}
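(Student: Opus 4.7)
The plan is to handle the three assertions in the order (2), (3), (1), since each unlocks tools for the next.

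For (2), I would observe that $\fE \in \BIMU_{1}(R)$ is automatically alternal: the shuffle $\bw \sh \bw'$ of two non-empty words produces terms of length $|\bw|+|\bw'| \geq 2$, on which $\fE$ vanishes identically. Thus $\fE \in \LU_{\al}(R)$, and Proposition \ref{prop:expari_alternal} immediately yields $\fes = \expari(\fE) \in \MU_{\as}(R)$.

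For the $(\neg\circ\pari)$-invariance in (3), I note that the parity condition on $\fE$ is exactly the statement $(\neg\circ\pari)(\fE) = \fE$ (at length one, $\pari$ multiplies by $-1$). By Lemma \ref{lem:operators}, both $\neg$ and $\pari$ are $R$-algebra endomorphisms of $\BIMU(R)$, so their composition preserves $\mmu$ and $\invmu$; thus $(\neg\circ\pari)(\fez) = (\neg\circ\pari)(\invmu(1-\fE)) = \invmu(1-\fE) = \fez$. By Lemma \ref{lem:neg_pari_axit}, these operators conjugate $\axit$, hence commute with $\arit$, $\preari$, $\ari$, and with any series definition of $\expari$; since they fix $\fE$, they fix $\fes = \expari(\fE)$.

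For the equation $\invmu(\fes) = \push(\fes)$, part (2) together with Proposition \ref{prop:mantar_gantar} gives $\invmu(\fes) = (\pari\circ\anti)(\fes)$. Proposition \ref{prop:negpush} rewrites $\push = \neg\circ\anti\circ\swap\circ\anti\circ\swap$; applied to $\fes$, I use $\swap(\fes) = \foz$, the $\anti$-invariance of $\foz$ (since $\foz(\bw) = \prod_{i}\fO(w_{i})$ lives in the commutative algebra $\cX_{r}(R)$), and $\swap^{2} = \id$ to conclude $\push(\fes) = (\neg\circ\anti)(\fes)$. The $(\neg\circ\pari)$-invariance just established yields $\pari(\fes) = \neg(\fes)$, and combining this with the pairwise commutativity of $\neg,\pari,\anti$ gives $(\pari\circ\anti)(\fes) = (\neg\circ\anti)(\fes) = \push(\fes)$.

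For (1), the $\gantar$-invariance above makes the claim equivalent to $\invgani(\fez) = \invmu(\fes)$. I would verify this by showing $\gani(\invmu(\fes),\fez) = 1$, which unfolds via the definition of $\gani$ to the single identity $\ganit(\fez)(\invmu(\fes)) = \invmu(\fez) = 1-\fE$. The right-hand side is supported in lengths $0$ and $1$, while the left-hand side at $\bw = (w_{1},\ldots,w_{r})$ expands as a sum over admissible partitions $\bw = \bb_{1}\bc_{1}\cdots\bb_{s}\bc_{s}$, in which $\fez$ factors completely as a product of $\fE$-values at flexed positions and each $\invmu(\fes)(\fll{\bb_{i}}\bc_{i}) = (\pari\circ\anti)(\fes)(\fll{\bb_{i}}\bc_{i})$ is a reversed product of $\fE$-values. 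The cases $\ell(\bw) \leq 1$ are verified directly; for larger $\ell(\bw)$ I would induct, pairing terms via the splitting property \eqref{eq:es_split} of $\fes$ and the tripartite identity \eqref{eq:tripartite} for $\fE$. The main obstacle will be organising this combinatorial bookkeeping cleanly; the $\push$-neutrality of $\mmu^{n}(\fE)$ established in Proposition \ref{prop:unit_push} is the expected source of the systematic cancellation that forces the sum to collapse to $1-\fE$.
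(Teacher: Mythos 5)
Your proofs of assertions (2) and (3) are correct and essentially identical to the paper's: the length-one bimould $\fE$ is trivially alternal and Proposition \ref{prop:expari_alternal} gives symmetrality; the commutation of $\neg\circ\pari$ with $\invmu$ and $\expari$ gives the invariance of $\fez$ and $\fes$; and the chain $\invmu(\fes)=(\pari\circ\anti)(\fes)=(\neg\circ\anti)(\fes)=\push(\fes)$, obtained from Propositions \ref{prop:mantar_gantar} and \ref{prop:negpush} together with the $\anti$-invariance of $\foz$, gives the last claim.

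The gap is in assertion (1). You correctly reduce it to the single identity $\ganit(\fez)(\invmu(\fes))=1-\fE$, but everything after that is a plan rather than a proof: you announce an induction that pairs terms ``via the splitting property and the tripartite identity'' and explicitly defer the bookkeeping, which is precisely the entire content of the assertion. Two concrete issues. First, your description of the expansion has the roles reversed: in $\ganit(\fez)(M)(\bw)=\sum_{\bw=\bb_{1}\bc_{1}\cdots\bb_{s}\bc_{s}}M(\bb_{1}\fur{\bc_{1}}\cdots\bb_{s}\fur{\bc_{s}})\prod_{i}\fez(\fll{\bb_{i}}\bc_{i})$ it is $\fez$ that is evaluated on the pieces $\fll{\bb_{i}}\bc_{i}$ and $M=(\pari\circ\anti)(\fes)$ that is evaluated on the flexed concatenation, not the other way around as you write. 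Second, the mechanism you anticipate is not the one that makes the sum collapse: the paper's cancellation never invokes the tripartite identity or the $\push$-neutrality of $\mmu^{n}(\fE)$. Instead one partitions the index set according to the shape of the final block $(\bb_{s};\bc_{s})$ into four families and exhibits two explicit sign-reversing bijections between them; the summand is invariant under these bijections because $\fez$ is completely multiplicative and because $\anti(\fes)(w_{1},\ldots,w_{r})=\fE(\fll{w_{1}\cdots w_{r-1}}w_{r})\,\anti(\fes)((w_{1}\cdots w_{r-1})\fur{w_{r}})$, a one-step consequence of \eqref{eq:es_split}. After cancellation only the terms with $s=1$, $\bc_{1}=\emp$ and $\ell(\bb_{1})=1$ survive; these exist only when $\ell(\bw)=1$ and contribute $-\fE(w_{1})$, which is exactly the length-one component of $1-\fE$. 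To complete your argument you must construct such a pairing (or an equivalent explicit induction) rather than appeal to an expected cancellation.
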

\begin{proof}
    \begin{enumerate}
        \item We shall prove $\gani((\pari\circ\anti)(\fes),\fez)=1$.
        Since the components of length $0$ clearly coincide, it is sufficient to show
        \[\ganit(\fez)((\pari\circ\anti)(\fes))(w_{1},\ldots,w_{r})=\begin{cases}-\fE(w_{1}) & \text{if }r=1,\\ 0 & \text{if }r\ge 2.\end{cases}\]
        Let us put\footnote{Our $D_{2s}(\bw)$ is same as \cite{komiyama21}, but our $D_{2s}^{1}(\bw)$ and his object with the same symbol are different.}
        \begin{align}
            D_{2s}(\bw)&\coloneqq\{(\bb_{1};\bc_{1};\cdots;\bb_{s};\bc_{s})\in\fV^{2s}\mid \bb_{1},\ldots,\bb_{s},\bc_{1},\ldots,\bc_{s-1}\neq\emp\},\\
            D_{2s}^{10}(\bw)&\coloneqq\{(\bb_{1};\bc_{1};\cdots;\bb_{s};\bc_{s})\mid\ell(\bb_{s})=1,~\bc_{s}=\emp\}\subseteq D_{2s}(\bw),\\
            D_{2s}^{\ast 0}(\bw)&\coloneqq\{(\bb_{1};\bc_{1};\cdots;\bb_{s};\bc_{s})\mid\ell(\bb_{s})\ge 2,~\bc_{s}=\emp\}\subseteq D_{2s}(\bw),\\
            D_{2s}^{1}(\bw)&\coloneqq\{(\bb_{1};\bc_{1};\cdots;\bb_{s};\bc_{s})\mid\ell(\bc_{s})=1\}\subseteq D_{2s}(\bw),
        \end{align}
        and
        \[D_{2s}^{\ast}(\bw)\coloneqq\{(\bb_{1};\bc_{1};\cdots;\bb_{s};\bc_{s})\mid\ell(\bc_{s})\ge 2\}\subseteq D_{2s}(\bw).\]
        Moreover, for $\bv=(\bb_{1};\bc_{1};\cdots;\bb_{s};\bc_{s})\in D_{2s}(\bw)$ ($s\ge 1$), we define
        \[F(\bv)\coloneqq \anti(\fes)(\bb_{1}\fur{\bc_{1}}\cdots\bb_{s}\fur{\bc_{s}})\prod_{i=1}^{s}\fez(\fll{\bb_{i}}\bc_{i})\]
        and
        \[\sigma(\bv)\coloneqq\sum_{i=1}^{s}\ell(\bb_{i}).\]
        Then we have
        \begin{align}
            \ganit(\fez)((\pari\circ\anti)(\fes))(w_{1},\ldots,w_{r})
            &=\sum_{s=1}^{\infty}\sum_{\bv\in D_{2s}(\bw)}(-1)^{\sigma(\bv)}F(\bv)\\
            &=\sum_{\bullet\in\{10,\ast0,1,\ast\}}\sum_{s=1}^{\infty}\sum_{\bv\in D_{2s}^{\bullet}(\bw)}(-1)^{\sigma(\bv)}F(\bv).
        \end{align}
        It is easy to see that, for every $s\ge 1$, both the maps
        \begin{align}
            \phi_{-}\colon D_{2s}^{10}(\bw)\to D_{2(s-1)}^{\ast}(\bw);&~(\cdots;\bc_{s-1};\bb_{s};\emp)\mapsto (\cdots;\bc_{s-1}\bb_{s}),\\
            \phi_{+}\colon D_{2s}^{1}(\bw)\to D_{2s}^{\ast 0}(\bw);&~(\cdots;\bb_{s};\bc_{s})\mapsto (\cdots;\bb_{s}\bc_{s};\emp)
        \end{align}
        are bijetive and the identity $(\sigma\circ\phi_{\pm})(\bv)=\sigma(\bv)\pm 1$ always holds.
        Furthermore, since
        \[\anti(\fes)(w_{1},\ldots,w_{r-1},w_{r})=\fE(\fll{w_{1}\cdots w_{r-1}}w_{r})\anti(\fes)((w_{1}\cdots w_{r-1})\fur{w_{r}})\]
        holds, we can show $F\circ\phi_{\pm}=F$.
        Therefore we obtain
        \begin{align}
            &\ganit(\fez)((\pari\circ\anti)(\fes))(\bw)\\
            &=\begin{multlined}[t]
                \sum_{s=1}^{\infty}\left(\sum_{\bv\in D_{2s}^{1}(\bw)}(-1)^{\sigma(\bv)}F(\bv)+\sum_{\bv\in D_{2s}^{\ast 0}(\bw)}(-1)^{\sigma(\bv)}F(\bv)\right)\\
                +\sum_{s=1}^{\infty}\left(\sum_{\bv\in D_{2s}^{10}(\bw)}(-1)^{\sigma(\bv)}F(\bv)+\sum_{\bv\in D_{2s}^{\ast}(\bw)}(-1)^{\sigma(\bv)}F(\bv)\right)
            \end{multlined}\\
            &=\begin{multlined}[t]
                \sum_{s=1}^{\infty}\left(\sum_{\bv\in D_{2s}^{1}(\bw)}(-1)^{\sigma(\bv)}F(\bv)+\sum_{\bv\in D_{2s}^{1}(\bw)}(-1)^{\sigma(\phi_{+}(\bv))}F(\phi_{+}(\bv))\right)\\
                +\sum_{s=1}^{\infty}\left(\sum_{\bv\in D_{2s}^{10}(\bw)}(-1)^{\sigma(\bv)}F(\bv)+\sum_{\bv\in D_{2(s+1)}^{10}(\bw)}(-1)^{\sigma(\phi_{-}(\bv))}F(\phi_{-}(\bv))\right)
            \end{multlined}\\
            &=\sum_{\bv\in D_{2}^{10}(\bw)}(-1)^{\sigma(\bv)}F(\bv).
        \end{align}
        Since $D_{2}^{10}(\bw)$ is explicitly shown as
        \[D_{2}^{10}(w_{1},\ldots,w_{r})=\begin{cases}(w_{1};\emp) & \text{if }r=1,\\ \emp & \text{otherwise,}\end{cases}\]
        we get $\ganit(\fez)((\pari\circ\anti)(\fes))(\bw)$ is $0$ if $r\ge 2$ and $-F(w_{1};\emp)=-\fE(w_{1})$ if $r=1$.
        \item The unit $\fE$ is obviously alternal because it is of length $1$. Then we can use Proposition \ref{prop:expari_alternal} to show that $\expari(\fE)$ is symmetral.
        \item Since both $\neg$ and $\pari$ preserve the product $\mmu$, they commutes with $\invmu$. Therefore we have
        \[(\neg\circ\pari)(\fez)=(\neg\circ\pari\circ\invmu)(1-\fE)=(\invmu\circ\neg\circ\pari)(1-\fE).\]
        As obviously $1-\fE$ is $\neg\circ\pari$-invariant, so is $\invmu(1-\fE)=\fez$.
        Similarly, as $\neg$ and $\pari$ are also distributive for $\preari$, they commutes with $\expari$. Then we see that the above argument, which we used for $\fez$, is also valid for $\fes$ by replacing $\invmu$ by $\expari$ and $1-\fE$ by $\fE$.
        Finally, we prove the rest assertion $\invmu(\fes)=\push(\fes)$: as the operator $\push$ is equal to the composition $\neg\circ\anti\circ\swap\circ\anti\circ\swap$ (Proposition \ref{prop:negpush}), we obtain
        \begin{align}
            \push(\fes)
            &=(\neg\circ\anti\circ\swap\circ\anti\circ\swap)(\fez)\\
            &=(\neg\circ\anti\circ\swap\circ\anti)(\foz)\\
            &=(\neg\circ\anti\circ\swap)(\foz)\\
            &=(\neg\circ\anti)(\fes).
        \end{align}
        From the assertion (3) we know that $\fes$ is $\gantar$-invariant, which yields
        \[(\neg\circ\anti)(\fes)=(\neg\circ\pari\circ\invmu)(\fes)=(\invmu\circ\neg\circ\pari)(\fes)=\invmu(\fes).\]
    \end{enumerate}
\end{proof}

\section{On the group $\GIFF_{x}$}\label{sec:appendix}
\subsection{$\GIFF_{x}$ and its Lie algebra}
In this section, we show basic properties of the pro-algebraic group $\GIFF_{x}$. 
First we put $\cO\coloneqq\bbK[u_{1},u_{2},\ldots]/(u_{1}-1)$. 

\begin{definition}
    Let $\GIFF_{x}$ be the affine scheme over $\bbK$ represented by $\cO$. For a commutative $\bbK$-algebra $R$, since any $R$-rational point $\varphi\in\GIFF_{x}(R)=\Hom_{\Alg_{\bbK}}(\cO,R)$ of $\GIFF_{x}$ is determined by $\varphi(u_{r})$ ($r\ge 2$), we identify the set $\GIFF_{x}(R)$ with $x+x^{2}R\jump{x}$ throughout the bijection $\varphi\mapsto \sum_{r=0}^{\infty}\varphi(u_{r+1})x^{r+1}$. We always use the symbol\footnote{The definition includes $a_{0}^{f}=1$ for any $f$.} $a_{r}^{f}$ to indicate the coefficient at $x^{r+1}$ of $f\in\GIFF_{x}(R)=x+x^{2}R\jump{x}$, namely
    \[f(x)=x+\sum_{r=1}^{\infty}a_{r}^{f}x^{r}.\]
\end{definition}

Let $\varphi$ and $\psi$ be two $R$-rational point of $\GIFF_{x}$. As their constant terms are $0$, we can define a product of these points as the composition of power series:
\[\varphi\circ\psi\coloneqq\sum_{r=0}^{\infty}\varphi(u_{r+1})\left(\sum_{s=0}^{\infty}\psi(u_{s+1})x^{s+1}\right)^{r+1}.\]
Moreover, since $\varphi(u_{1})=\psi(u_{1})=1$ holds, we can always take the inverse of this product. Therefore we obtain:

\begin{proposition}
    The affine scheme $\GIFF_{x}$ is a pro-algebraic group over $\bbK$, and consequently its coordinate ring $\cO$ becomes a commutative Hopf $\bbK$-algebra.
\end{proposition}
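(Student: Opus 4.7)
The plan is to construct explicit morphisms $m\colon\GIFF_{x}\times\GIFF_{x}\to\GIFF_{x}$ and $\iota\colon\GIFF_{x}\to\GIFF_{x}$ of affine schemes realising composition and inversion of power series. Since $\GIFF_{x}$ is affine, each such morphism is determined by an algebra map on the coordinate ring, so once these are in place the Hopf algebra structure on $\cO$ falls out of the Yoneda lemma applied to a group object.

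First I would write composition in coordinates. For $\varphi(x)=\sum_{r\ge 0}a_{r}^{\varphi}x^{r+1}$ and $\psi(x)=\sum_{r\ge 0}a_{r}^{\psi}x^{r+1}$ in $\GIFF_{x}(R)$, expanding
\[
(\varphi\circ\psi)(x)=\sum_{k\ge 0}a_{k}^{\varphi}\Bigl(\sum_{s\ge 0}a_{s}^{\psi}x^{s+1}\Bigr)^{k+1}
\]
shows that each coefficient $a_{n}^{\varphi\circ\psi}$ is an $\bbN$-polynomial in the finitely many variables $a_{i}^{\varphi},a_{j}^{\psi}$ with $i,j\le n$; in particular it lies in $\cO\otimes\cO$. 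Declaring $\Delta\colon\cO\to\cO\otimes\cO$ by sending $u_{n+1}$ to this polynomial is therefore well defined and corresponds to a scheme morphism $m$; the associativity of $m$ is inherited from the obvious associativity of composition of formal power series.

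For the antipode, I would solve $\varphi\circ\varphi^{-1}=x$ coefficient by coefficient. Writing $\varphi^{-1}(x)=\sum_{r\ge 0}b_{r}x^{r+1}$ and comparing coefficients of $x^{n+1}$ gives, for $n\ge 1$, an equation of the form
\[
b_{n}+Q_{n}(a_{1}^{\varphi},\ldots,a_{n}^{\varphi};b_{0},\ldots,b_{n-1})=0
\]
with $Q_{n}\in\bbZ[\cdots]$ and $b_{0}=1$. The decisive point is that the leading coefficient $a_{0}^{\varphi}=1$ is a \emph{constant}, not a variable, so no denominator ever appears; induction on $n$ then expresses $b_{n}$ as an integer polynomial in $a_{1}^{\varphi},\ldots,a_{n}^{\varphi}$. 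This defines an algebra map $S\colon\cO\to\cO$, hence a scheme morphism $\iota$, and $m(\varphi,\iota(\varphi))=e=m(\iota(\varphi),\varphi)$ holds by construction, where $e\in\GIFF_{x}(\bbK)$ is the identity point $x$, corresponding to the counit $\epsilon\colon\cO\to\bbK$, $u_{n+1}\mapsto\delta_{n,0}$. This last step—pointwise polynomiality of the inverse, as opposed to merely rationality—is the one mildly delicate piece and is the whole reason for normalising $a_{0}^{\varphi}=1$ in the definition of $\GIFF_{x}$.

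For the pro-algebraic structure I would decompose $\cO=\bbK[u_{2},u_{3},\ldots]=\varinjlim_{N}\bbK[u_{2},\ldots,u_{N}]$. The finite-type quotient $\GIFF_{x}^{(\le N)}$ parametrising series truncated modulo $x^{N+2}$ is represented by $\bbK[u_{2},\ldots,u_{N+1}]$ and inherits an algebraic group structure, precisely because the polynomial formulas for $\Delta(u_{n+1})$ and $S(u_{n+1})$ constructed above involve only $u_{2},\ldots,u_{n+1}$. Taking the inverse limit displays $\GIFF_{x}=\varprojlim_{N}\GIFF_{x}^{(\le N)}$ as a pro-algebraic group, and the Hopf algebra axioms on $\cO$ (coassociativity, counitality, antipode relation) are nothing but the duals of the group axioms, while commutativity of $\cO$ is automatic since $\cO$ is a polynomial ring.
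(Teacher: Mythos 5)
Your proposal is correct and takes essentially the same route as the paper: the Hopf structure on $\cO$ is the dual of composition of power series, with the coproduct of $u_{N}$ given by the (polynomial) coefficients of $\varphi\circ\psi$ and the counit corresponding to the identity series. You supply more detail than the paper on two points it leaves implicit --- the integrality of the antipode via coefficient-by-coefficient inversion (the paper only remarks that inverses exist because $\varphi(u_{1})=1$) and the explicit pro-algebraic filtration by the truncated groups $\GIFF_{x}^{(\le N)}$ --- but the underlying argument is the same.
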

\begin{proof}
    The group law of $\GIFF_{x}$ is described above. For the corresponding coproduct, we can compute the image of $u_{N}$ under it for $N\ge 2$ as
    \[\Delta(u_{N})=\sum_{r=1}^{\infty}\sum_{\substack{m_{1},\ldots,m_{r}\ge 1\\ m_{1}+\cdots+m_{r}=N}}u_{r}\otimes u_{m_{1}}\cdots u_{m_{r}}.\]
    The counit is given as an algebra homomorphism by $\epsilon(u_{2})=\epsilon(u_{3})=\cdots=0$.
\end{proof}

Since $\GIFF_{x}$ is a pro-algebraic group, we can consider its Lie algebra $\Lie(\GIFF_{x})$. More precisely, denote by $R[\ep]\coloneqq R\jump{t}/(t^{2})$ the ring of dual numbers and define the set $\Lie(\GIFF_{x})(R)$ as the kernel of $\GIFF_{x}(R[\ep])\to\GIFF_{x}(R)$, which is induced from the natural surjection $R[\ep]\to R;~a+b\ep\mapsto a$. 

\begin{proposition}\label{prop:diff}
    The set $\Lie(\GIFF_{x})(R)$ is identified with
    \begin{equation}\label{eq:diff}
        \DIFF_{x}(R)\coloneqq\left\{\left.\sum_{r=1}^{\infty}\ep_{r}x^{r+1}\frac{d}{dx}~\right|~\ep_{1},\ep_{2},\ldots\in R\right\}
    \end{equation}
    and its bracket (Lie-law) is given by the anti-commutator as differential operators (namely, $[A,B]=BA-AB$).
\end{proposition}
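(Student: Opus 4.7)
\textbf{Proof plan for Proposition \ref{prop:diff}.}
The plan is to verify the two claims separately: first that the kernel of $\GIFF_{x}(R[\ep]) \to \GIFF_{x}(R)$ is in bijection with $\DIFF_{x}(R)$, and second that the Lie bracket computed via the group commutator of two infinitesimal elements produces the opposite of the usual commutator of differential operators.

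For the identification, I would unwind definitions: an element of $\GIFF_{x}(R[\ep])$ is a series $\tilde{f}(x) = x + \sum_{r\ge 1} \tilde{a}_{r} x^{r+1}$ with $\tilde{a}_{r} \in R[\ep] = R \oplus R\ep$. The projection $R[\ep] \to R$ sending $a + b\ep \mapsto a$ induces $\GIFF_{x}(R[\ep]) \to \GIFF_{x}(R)$, and an element lies in its kernel iff each $\tilde{a}_{r}$ is a pure $\ep$-multiple, i.e.\ $\tilde{f}(x) = x + \ep\sum_{r\ge 1} \ep_{r} x^{r+1}$ for some $\ep_{r} \in R$. The obvious bijection with $\DIFF_{x}(R)$ sends this to $\sum_{r\ge 1} \ep_{r} x^{r+1} \tfrac{d}{dx}$.

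For the bracket, I would use the standard fact (applicable to any affine group scheme) that the Lie bracket is extracted from the group-theoretic commutator computed in $G(R[\ep_{1},\ep_{2}]/(\ep_{1}^{2},\ep_{2}^{2}))$: given $A,B \in \DIFF_{x}(R)$ represented by the series $A(x), B(x) \in x^{2}R\jump{x}$, form $\tilde{f}(x) = x + \ep_{1}A(x)$ and $\tilde{g}(x) = x + \ep_{2}B(x)$, and compute the composition $\tilde{f}\circ\tilde{g}\circ\tilde{f}^{-1}\circ\tilde{g}^{-1}$; the bracket $[A,B]$ is read off as the coefficient of $\ep_{1}\ep_{2}$. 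Since $\ep_{i}^{2} = 0$, the compositional inverses are immediate ($\tilde{f}^{-1}(x) = x - \ep_{1}A(x)$, and similarly for $\tilde{g}$), and the composition can be expanded using first-order Taylor expansions of $A$ and $B$. A direct calculation yields
\[ \tilde{f}\circ\tilde{g}\circ\tilde{f}^{-1}\circ\tilde{g}^{-1}(x) = x + \ep_{1}\ep_{2}\bigl(B(x)A'(x) - A(x)B'(x)\bigr), \]
so the bracket corresponds to the vector field $\bigl(B(x)A'(x) - A(x)B'(x)\bigr)\tfrac{d}{dx}$.

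Finally I would match this with $BA - AB$: expanding the product of differential operators gives $AB = A(x)B'(x)\tfrac{d}{dx} + A(x)B(x)\tfrac{d^{2}}{dx^{2}}$ and symmetrically for $BA$, so that $BA - AB = (B(x)A'(x) - A(x)B'(x))\tfrac{d}{dx}$, matching the computed commutator. The main obstacle is a bookkeeping one: keeping straight the convention that the group law $\varphi \circ \psi$ on $\GIFF_{x}$ substitutes $\psi$ \emph{into} $\varphi$ (so $\varphi \circ \psi$ corresponds to $f_{\varphi}(f_{\psi}(x))$), and carefully tracking all four compositions together with the Taylor terms so that the sign that produces the \emph{anti}-commutator, as opposed to the usual commutator, appears correctly.
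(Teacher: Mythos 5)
Your proposal is correct, and it reaches the bracket by a genuinely different (and arguably cleaner) mechanism than the paper. The paper extracts the bracket by linearizing the adjoint action $\Ad\colon\GIFF_{x}\to\GL(\DIFF_{x})$: it takes $a\in\Lie(\GIFF_{x})(R)$ and $b\in\Lie(\GIFF_{x})(R[\ep])$, writes $\ad_{a}(b)=b_{0}+(\alpha\circ b_{0}-b_{0}\circ\alpha+b_{1})\ep$, and then evaluates $\alpha\circ b_{0}-b_{0}\circ\alpha$ by composing power series, using the single dual number $\ep$ to carry both the tangent direction and the linearization; this forces some slightly delicate bookkeeping (the paper's intermediate steps such as inserting $-\alpha(x)+\alpha(x)$ reflect this). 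You instead use the standard two-parameter dual-number commutator $\tilde f\circ\tilde g\circ\tilde f^{-1}\circ\tilde g^{-1}$ over $R[\ep_{1},\ep_{2}]/(\ep_{1}^{2},\ep_{2}^{2})$ and read off the $\ep_{1}\ep_{2}$-coefficient, which separates the two tangent directions and makes the inverses and Taylor expansions immediate. Both routes terminate in the same power-series identity $\bigl(B(x)A'(x)-A(x)B'(x)\bigr)\frac{d}{dx}=BA-AB$, so the anti-commutator appears for the same reason in each. Your identification of the kernel of $\GIFF_{x}(R[\ep])\to\GIFF_{x}(R)$ with $\DIFF_{x}(R)$ coincides with the paper's. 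The one thing to be careful about when writing this up in full is the one you already flag: the paper's convention is $\varphi\circ\psi=f_{\varphi}(f_{\psi}(x))$, and the orientation of the group commutator ($aba^{-1}b^{-1}$ rather than $a^{-1}b^{-1}ab$) must be fixed consistently, since reversing it flips the sign and would turn the anti-commutator into the commutator.
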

\begin{proof}
    Since we have by definition
    \[\Lie(\GIFF_{x})(R)=\{\id+\ep f\mid f\in x^{2}R\jump{x}\},\]
    we get a natural bijection
    \[\Lie(\GIFF_{x})(R)\to\DIFF_{x}(R);~\id+\ep f\mapsto f(x)\frac{d}{dx}.\]
    Using the standard way to get a bracket, which linearizes the adjoint map $\Ad\colon\GIFF_{x}\to\GL(\DIFF_{x})$ between pro-algebraic groups, for $a\in\Lie(\GIFF_{x})(R)$ and $b\in\Lie(\GIFF_{x})(R[\ep])$, we see that
    \[\ad_{a}(b)=b_{0}+(\alpha\circ b_{0}-b_{0}\circ\alpha+b_{1})\ep,\]
    where $b$ is decomposed as $b=b_{0}+b_{1}\ep$ through the direct sum decomposition $\Lie(\GIFF_{x})(R[\ep])=\Lie(\GIFF_{x})(R)\oplus\ep\Lie(\GIFF_{x})(R)$, and we put $a=\id+\ep\alpha$ using the unique element $\alpha\in x^{2}R\jump{x}$.
    Therefore we have the expression of the bracket $[a,b_{0}]=\alpha\circ b_{0}-b_{0}\circ\alpha$ with the same notation as above, which is valid for any $a,b\in\Lie(\GIFF_{x})(R)$.
    On the other hand, computing it directly as the composition of power series, we obtain
    \begin{align}
        [a,b_{0}](x)
        &=(\alpha\circ b_{0}-b_{0}\circ\alpha)(x)\\
        &=(\alpha\circ b_{0}-b_{0}\circ\alpha)(x)-\alpha(x)+\alpha(x)\\
        &=(\alpha\circ \ep\beta-\ep\beta\circ\alpha)(x),
    \end{align}
    where we decomposed as $b_{0}=\id+\ep\beta$ by $\beta\in x^{2}R\jump{x}$.
    We write the coefficients of $\alpha$ and $\beta$ as
    \[\alpha(x)=\sum_{r=1}^{\infty}\alpha_{r}x^{r+1}\]
    and
    \[\beta(x)=\sum_{s=1}^{\infty}\beta_{s}x^{s+1},\]
    respectively.
    Continuing this calculation, we have
    \begin{align}
        [a,b_{0}](x)
        &=(\alpha\circ \ep\beta-\beta\circ\ep\alpha)(x)\\
        &=(\alpha\circ b_{0}-\beta\circ a)(x)-\alpha(x)+\beta(x)\\
        &=\sum_{r=1}^{\infty}\alpha_{r}(b_{0}(x)^{r+1}-x^{r+1})-\sum_{s=1}^{\infty}\beta_{s}(a(x)^{s+1}-x^{s+1})\\
        &=\sum_{r=1}^{\infty}\alpha_{r}((x+\ep\beta(x))^{r+1}-x^{r+1})-\sum_{s=1}^{\infty}\beta_{s}((x+\ep\alpha(x))^{s+1}-x^{s+1})\\
        &=\sum_{r=1}^{\infty}\alpha_{r}(r+1)x^{r}\ep\beta(x)-\sum_{s=1}^{\infty}\beta_{s}(s+1)x^{s}\ep\alpha(x)\\
        &=\ep\left(\beta(x)\frac{d}{dx}\alpha(x)-\alpha(x)\frac{d}{dx}\beta(x)\right)\\
        &=\ep\left(\beta(x)\frac{d}{dx}\alpha(x)\frac{d}{dx}-\alpha(x)\frac{d}{dx}\beta(x)\frac{d}{dx}\right)(x).
    \end{align}
    This result shows that, the bracket of $\DIFF_{x}(R)$ is simply given by the anti-commutator as differential operators under the identification \eqref{eq:diff}.
\end{proof}

Due to the following proposition, we can consider the exponential isomorphism from $\DIFF_{x}$ to $\GIFF_{x}$. 
To prove this, we recall that $\BIFF_{x}\coloneqq x\bbK\jump{x}$ naturally has a decreasing filtration
\[\Fil^{>n}\BIFF_{x}\coloneqq\left\{\left.\sum_{r=n+1}^{\infty}a_{r}x^{r}~\right|~a_{r}\in\bbK~(r>n)\right\}=x^{n+1}\bbK\jump{x}\]
and is completed along it. Note that this filtration is also separative (i.e., the intersection of all $\Fil^{>n}\BIFF_{x}$ is empty) and each piece of the associated graded space \[\gr\BIFF_{x}\coloneqq\bigoplus_{n=0}^{\infty}\Fil^{>n}\BIFF_{x}/\Fil^{>n+1}\BIFF_{x}\] is a finite-dimensional $\bbK$-vector space.

\begin{proposition}\label{prop:GIFF_pro_unipotent}
    The pro-algebraic group $\GIFF_{x}$ is pro-unipotent.
\end{proposition}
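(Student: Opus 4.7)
The plan is to exhibit $\GIFF_{x}$ as the inverse limit of a tower of finite-dimensional unipotent algebraic groups over $\bbK$. For each integer $k\ge 0$, introduce the closed subgroup scheme
\[G_{k}\coloneqq\{f\in\GIFF_{x}\mid f(x)\equiv x\pmod{x^{k+2}}\}=\{f\mid a_{1}^{f}=\cdots=a_{k}^{f}=0\}.\]
These form a descending chain of normal closed subgroup schemes with $G_{0}=\GIFF_{x}$ and $\bigcap_{k}G_{k}=\{e\}$; closure under composition and inversion, as well as normality under conjugation, all follow at once from the congruence characterization together with the observation that elements of $\GIFF_{x}$ act on $R\jump{x}$ preserving the filtration by powers of $x$.

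The key computation is to identify each successive quotient $G_{k}/G_{k+1}$ with the additive group $\bbG_{a,\bbK}$. For $f,g\in G_{k}(R)$, writing $f(x)=x+a_{k+1}^{f}x^{k+2}+O(x^{k+3})$ and similarly for $g$, direct substitution gives
\[(f\circ g)(x)=x+(a_{k+1}^{f}+a_{k+1}^{g})x^{k+2}+O(x^{k+3}),\]
so $f\mapsto a_{k+1}^{f}$ is a surjective group homomorphism $G_{k}\to\bbG_{a,\bbK}$ with kernel precisely $G_{k+1}$. This yields $G_{k}/G_{k+1}\cong\bbG_{a,\bbK}$.

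Consequently, each finite quotient $\GIFF_{x}/G_{n}$ fits into a normal series
\[\GIFF_{x}/G_{n}=G_{0}/G_{n}\supset G_{1}/G_{n}\supset\cdots\supset G_{n-1}/G_{n}\supset\{e\}\]
whose consecutive subquotients are isomorphic to $\bbG_{a,\bbK}$, so $\GIFF_{x}/G_{n}$ is a connected unipotent algebraic group of dimension $n$. Under the identification $\GIFF_{x}(R)=x+x^{2}R\jump{x}$, the canonical morphism $\GIFF_{x}\to\varprojlim_{n}\GIFF_{x}/G_{n}$ is an isomorphism of affine group schemes, and I would then conclude that $\GIFF_{x}$ is pro-unipotent. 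The only mildly delicate point is verifying that $f\mapsto a_{k+1}^{f}$ is a \emph{group} homomorphism rather than merely a morphism of schemes, but this is exactly what the composition calculation above provides; the remaining verifications are routine bookkeeping with the composition of power series.
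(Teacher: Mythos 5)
Your argument is correct, but it takes a genuinely different route from the paper. The paper proves pro-unipotence by exhibiting a faithful filtered representation: the right-action $\rho_{R}(g)(h)=h\circ g$ of $\GIFF_{x}$ on $\BIFF_{x}=x\bbK\jump{x}$, which preserves the filtration $\Fil^{>n}\BIFF_{x}=x^{n+1}\bbK\jump{x}$ and induces the identity on each graded piece; pro-unipotence then follows from the representation-theoretic characterization. You instead decompose the group itself, via the congruence filtration $G_{k}=\{f\mid f(x)\equiv x\pmod{x^{k+2}}\}$, identify each subquotient $G_{k}/G_{k+1}$ with $\bbG_{a,\bbK}$ through the (correct) computation $(f\circ g)(x)=x+(a_{k+1}^{f}+a_{k+1}^{g})x^{k+2}+O(x^{k+3})$, and realize $\GIFF_{x}$ as $\varprojlim_{n}\GIFF_{x}/G_{n}$ with each finite quotient unipotent. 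Both characterizations of pro-unipotence are equivalent over a field of characteristic $0$, and your normality and inverse-limit claims do check out (normality of $G_{k}$ in all of $\GIFF_{x}$ follows from $h^{-1}\circ f\circ h(x)=x+O(x^{k+2})$, and the coordinate ring $\bbK[u_{2},u_{3},\ldots]$ is visibly the colimit of the coordinate rings $\bbK[u_{2},\ldots,u_{n+1}]$ of the quotients, compatibly with the coproduct formula for $\Delta(u_{N})$). What your approach buys is a more structural and self-contained picture, including the dimensions of the finite quotients; what the paper's approach buys is the concrete representation $(\rho,\BIFF_{x})$ itself, which is reused immediately afterwards in Corollary \ref{cor:exp_giff} to derive the explicit formula for $\exp\colon\DIFF_{x}\to\GIFF_{x}$ via Racinet's Proposition 4.6, so with your proof that representation would still have to be introduced separately.
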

\begin{proof}
    Define the right-action $\rho\colon\GIFF_{x}\to\GL(\BIFF_{x})$ by $\rho_{R}(g)\coloneqq (f\mapsto f\circ g)$ for $g\in\GIFF_{x}(R)$ and $f\in R\hotimes\BIFF_{x}$. It suffices to show that $(\rho,\BIFF_{x})$ gives a faithful pro-unipotent representation of $\GIFF_{x}$.
    To show the faithfulness, assume $\rho_{R}(g)=\id$ for some $g\in\GIFF_{x}(R)$.
    It means that $\rho_{R}(g)(f)$ is always $f$, and it yields $g(x)=x$ as $\GIFF_{x}(R)$ is a group.\\
    
    Next, we prove that each value of $\rho$ preserves the filtration.
    Let $g\in\GIFF_{x}(R)$ and take a non-negative integer $n$.
    Then, for any $f\in R\otimes\Fil^{>n}\BIFF_{x}$, we have
    \[\rho_{R}(g)(f)(x)=(f\circ g)(x)=\sum_{r=n+1}^{\infty}a_{r}g(x)^{n+1}\]
    for some $a_{n+1},a_{n+2},\ldots\in R$.
    Since $g(x)$ belongs to $x+x^{2}R\jump{x}$, we see that the above series $\rho_{R}(g)(f)(x)$ belongs to $x^{n+1}R\jump{x}$, which is nothing but $R\otimes\Fil^{>n}\BIFF_{x}$.\\

    Finally we show the triviality of $\rho$ on the associated graded space $R\otimes\gr\BIFF_{x}$.
    As every element of each graded piece $R\otimes(\Fil^{>n}\BIFF_{x}/\Fil^{>n+1}\BIFF_{x})$ is represented by $I_{a}(x)\coloneqq ax^{n+1}$ by some $a\in R$, we obtain
    \[\rho_{R}(g)(I_{a})(x)=ag(x)^{n+1}=ax^{n+1}+(\text{terms with degree }\ge n+2)\equiv I_{a}(x)~\mod~R\otimes\cF^{n+1}\BIFF_{x},\]
    by virtue of the fact $g(x)\in x+x^{2}R\jump{x}$. Therefore we see that $\rho_{R}(g)$ identically behaves on each graded piece, and we now complete the proof. 
\end{proof}

\begin{corollary}\label{cor:exp_giff}
    The exponential map $\exp\colon\DIFF_{x}\to\GIFF_{x}$ gives an isomorphism of schemes and it is explicitly expressed as
    \[\exp(D)(x)=\sum_{n=0}^{\infty}\frac{D^{n}}{n!}(x)=x+\sum_{n=1}^{\infty}\frac{1}{n!}\underbrace{D\cdots D}_{n}(x)\]
    as usual, for any $D\in\DIFF_{x}(R)$.
\end{corollary}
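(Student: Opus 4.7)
The plan is to combine the standard structure theorem for pro-unipotent affine group schemes with an explicit computation carried out through the faithful representation $\rho$ constructed in the proof of Proposition \ref{prop:GIFF_pro_unipotent}.

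First I would invoke Proposition \ref{prop:GIFF_pro_unipotent} together with the well-known fact that for any pro-unipotent affine group scheme $G$ over a field of characteristic $0$, the exponential map $\exp\colon\Lie(G)\to G$ is an isomorphism of $\bbK$-schemes. At the level of any faithful pro-unipotent representation it is given simply by the usual power series $\sum_{n\ge 0}X^{n}/n!$, with the logarithm series as inverse. Applied to $G=\GIFF_{x}$, this immediately gives the isomorphism part.

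Next, to extract the explicit formula, I would work through the faithful representation $\rho\colon\GIFF_{x}\to\GL(\BIFF_{x})$ built in the previous proposition, together with the identification $\Lie(\GIFF_{x})=\DIFF_{x}$ of Proposition \ref{prop:diff}. A short computation on dual numbers shows that the differential of $\rho$ sends $D=\alpha(x)\frac{d}{dx}\in\DIFF_{x}(R)$ to the operator $f\mapsto \alpha f'$ acting on $R\hotimes\BIFF_{x}$, which is just $D$ itself viewed as a differential operator. Since $\rho$ is filtration-preserving with nilpotent action on each graded piece, the series $\sum_{n\ge 0}D^{n}/n!$ converges in $\End_{R}(R\hotimes\BIFF_{x})$ in the topology induced by $\{\Fil^{>n}\BIFF_{x}\}$, and naturality of the exponential yields
\[\rho_{R}(\exp(D))=\sum_{n=0}^{\infty}\frac{D^{n}}{n!}.\]
Evaluating both sides on $x\in\BIFF_{x}$, and noting that $\rho_{R}(g)(x)=g\circ x=g$ recovers the element of $\GIFF_{x}(R)$ itself, gives the stated formula.

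The main obstacle I anticipate is being meticulous about two identifications: first, verifying that the linearization of $\rho$ really coincides with the natural action of $\DIFF_{x}$ as differential operators on $\BIFF_{x}$ under the bijection of Proposition \ref{prop:diff}; and second, justifying that $\exp$ on $\End_{R}(R\hotimes\BIFF_{x})$ is well-defined and intertwines with $\rho$, which rests on the pro-nilpotence statement already proved. Both are essentially routine once the filtrations are properly in place, but they must be handled carefully to avoid circularity with Proposition \ref{prop:GIFF_pro_unipotent}.
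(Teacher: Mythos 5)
Your proposal is correct and follows essentially the same route as the paper: both use the faithful pro-unipotent representation $\rho$ on $\BIFF_{x}$ from Proposition \ref{prop:GIFF_pro_unipotent}, verify via a dual-number computation that $\rho_{R[\ep]}(D)=\id+\ep D$ so that the linearized action of $\DIFF_{x}$ is literally $D$ acting as a differential operator, then intertwine exponentials (the paper cites \cite[Proposition 4.6]{racinet00} for this step) and evaluate at the identity series $x$ to extract the formula.
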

\begin{proof}
    Remember that $(\rho,\BIFF_{x})$ is a pro-unipotent representation of $\GIFF_{x}$ from the proof of Proposition \ref{prop:GIFF_pro_unipotent}.
    For any $g\in R[\ep]\hotimes\BIFF_{x}$, which is decomposed as $g=g_{0}+\ep g_{1}$ by some $g_{0},g_{1}\in R\hotimes\BIFF_{x}$, we have
    \begin{align}
        \rho_{R[\ep]}\left(f(x)\frac{d}{dx}\right)(g)(x)
        &=(g\circ(\id+\ep f))(x)\\
        &=\sum_{r=1}^{\infty}(g_{0,r}+\ep g_{1,r})(x+\ep f(x))^{r}\\
        &=\sum_{r=1}^{\infty}(g_{0,r}+\ep g_{1,r})(x^{r}+\ep rx^{r-1}f(x))\\
        &=\sum_{r=1}^{\infty}(g_{0,r}x^{r}+\ep g_{1,r}x^{r}+\ep rg_{0,r}x^{r-1}f(x))\\
        &=g_{0}(x)+\ep\left(g_{1}(x)+f(x)\frac{d}{dx}g_{0}(x)\right)
    \end{align}
    through the identification \eqref{eq:diff}, where we also denoted as $g_{i}(x)=\sum_{r=1}^{\infty}g_{i,r}x^{r}$ for $i\in\{0,1\}$.
    Therefore it holds that $\rho_{R[\ep]}(D)=\id+\ep D$ for an arbitrary $D\in\DIFF_{x}(R)$.
    Thus we have
        \[\rho_{R}(\exp(D))(g)(x)=\sum_{n=0}^{\infty}\frac{D^{n}g}{n!}(x)\]
        due to \cite[Proposition 4.6]{racinet00}, and taking $g=\id$ in the above equality, we get the desired result.
\end{proof}

Following \cite[\S 4.1]{ecalle11}, we denote by $f_{\ast}\in x^{2}R\jump{x}$ the \emph{infinitesimal generator} of each $f\in\GIFF_{x}(R)$, which is defined to be the ``logarithm'' of $f$:
\[\exp\left(f_{\ast}(x)\frac{d}{dx}\right)(x)=f(x).\]
Once we fix $f$, the coefficients of $f_{\ast}$ are denoted as
\[f_{\ast}(x)=\sum_{r=1}^{\infty}\epsilon_{r}^{f}x^{r+1}.\]
Similarly, we define the \emph{infinitesimal dilator} by the formula
\[f_{\#}(x)\coloneqq x-\frac{f(x)}{f'(x)}\in x^{2}R\jump{x}.\]
Its coefficient is usually expressed as $\gamma_{r}^{f}$.

\subsection{$\GARI$ and $\ARI$ as schemes}
Recall $\BIMU(R)$ is defined as the direct product $\prod_{r=0}^{\infty}\cX_{r}(R)$ of sets.
Using the notation $C_{\cX,r}$ standing for the coordinate ring of $\cX_{r}$ regarded as an affine scheme, we can apply the well-known commutation between categorical limits and representable functors:
\begin{align}
    \BIMU(R)
    &=\prod_{r=0}^{\infty}\cX_{r}(R)\\
    &=\prod_{r=0}^{\infty}\Hom_{\CAlg_{\bbK}}(C_{\cX,r},R)\\
    &\simeq \Hom_{\CAlg_{\bbK}}\left(\bigotimes_{r=0}^{\infty} C_{\cX,r},R\right),
\end{align}
which is guaranteed by the fact that $\CAlg_{\bbK}$ is a cocomplete category.
From the assumption $\cX_{0}\simeq\bbA_{\bbK}^{1}$, we can identify $C_{\cX,0}$ with $\bbK[x]$ as a $\bbK$-algebra.
Then, by their deinitions, both $\MU$ and $\LU$ are also specified as closed subschemes of $\BIMU$:
\begin{align}
    \MU&=\Spec\left((\bbK[x]/(x-1))\otimes\bigotimes_{r=1}^{\infty}C_{\cX,r}\right),\\
    \LU&=\Spec\left((\bbK[x]/(x))\otimes\bigotimes_{r=1}^{\infty}C_{\cX,r}\right).
\end{align}
Moreover, together with the binary product $\gari$, the functor $\GARI$, which is isomorphic to $\MU$ as a scheme, becomes pro-algebraic group.

\begin{proposition}\label{prop:GARI_pro_unipotent}
    The pro-algebraic group $\GARI$ is pro-unipotent.
\end{proposition}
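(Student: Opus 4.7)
The plan is to mimic the proof of Proposition \ref{prop:GIFF_pro_unipotent} by exhibiting a faithful pro-unipotent representation of $\GARI$ on $\BIMU$. Concretely, I take $\rho_{R}(g)\colon\BIMU(R)\to\BIMU(R)$ to be the right-regular action $f\mapsto\gari(f,g)$; it is $R$-linear in $f$ by the remark following the definition of $\gari$ in Section \ref{sec:flexion}, and the associativity $\gari(\gari(f,g_{1}),g_{2})=\gari(f,\gari(g_{1},g_{2}))$ for general $f\in\BIMU(R)$ is obtained by the same computation that proves Proposition \ref{prop:gaxi_group}, since its ingredients (Propositions \ref{prop:gaxit_associative} and \ref{prop:gaxit_mu}) already hold for arbitrary $f\in\BIMU(R)$.

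For faithfulness I evaluate at $f=1$. The defining expression of $\gaxit(A_{1},A_{2})(1)(\bw)$ always sees the outer bimould $1$ on a non-empty word whenever $\bw\neq\emp$, so it vanishes there; combined with $\gaxit(A_{1},A_{2})(1)(\emp)=1$ this gives $\garit(g)(1)=1$, and therefore
\[\gari(1,g)=\mmu(\garit(g)(1),g)=\mmu(1,g)=g,\]
so $\rho_{R}(g)=\id$ forces $g=1$.

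The crux is the filtration analysis. For $f\in\Fil^{\ge n}\BIMU(R)$, Lemma \ref{lem:gaxit_simple} expresses $\garit(g)(f)(\bw)=\gaxit(g,\invmu(g))(f)(\bw)$ as a sum over $t\ge 1$ and $(\bA_{j};w_{i_{j}};\bC_{j})_{j=1}^{t}\in\Sigma_{t}(\bw)$ in which $f$ is evaluated on a word of length exactly $t$; only $t\ge n$ contributes, so $\garit(g)(f)\in\Fil^{\ge n}\BIMU(R)$. When $\ell(\bw)=n$, the unique surviving index is $t=n$, which forces $\bA_{j}=\bC_{j}=\emp$ for each $j$ and (using $g(\emp)=\invmu(g)(\emp)=1$) leaves only the summand $f(w_{1},\ldots,w_{n})$. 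Hence $\garit(g)(f)\equiv f\bmod\Fil^{\ge n+1}\BIMU(R)$, and writing $g=1+g'$ with $g'\in\Fil^{\ge 1}\BIMU(R)$ the residual term $\mmu(\garit(g)(f),g')$ lies in $\Fil^{\ge n+1}\BIMU(R)$ by additivity of the filtration under $\mmu$. Therefore $\gari(f,g)\equiv f\bmod\Fil^{\ge n+1}\BIMU(R)$, that is, $\rho$ preserves the filtration and induces the identity on every graded piece, so $(\rho,\BIMU)$ is a faithful pro-unipotent representation and $\GARI$ is pro-unipotent.

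The only substantive difficulty is the combinatorial bookkeeping in Lemma \ref{lem:gaxit_simple} needed to isolate the leading-length contribution of $\garit(g)(f)$; once that identification is in hand the argument parallels the $\GIFF_{x}$ case essentially verbatim.
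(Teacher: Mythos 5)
Your proposal is correct and follows essentially the same route as the paper: the right-regular $\gari$-action on $\BIMU$, faithfulness via the unit, and the filtration/graded analysis through Lemma \ref{lem:gaxit_simple} isolating the $t=n$ term with all $\bA_{j},\bC_{j}$ (and the outer $\mmu$-factor) empty. The only cosmetic difference is that the paper phrases the representation on $R\hotimes\BIMU(\bbK)$ rather than $\BIMU(R)$, and your handling of the trailing $\mmu(\cdot,g')$ term is a slightly more explicit version of the same observation.
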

\begin{proof}
    Consider the morphism $\tau\colon\GARI\to\GL(\BIMU(\bbK))$ of pro-algebraic groups defined by\footnote{In a strict sense, we have not defined the $\gari$-product of $M\in R\hotimes\BIMU(\bbK)$ and $A\in\GARI(R)$. However, one will easily understand that it can be defined by the quite same procedure as that defining the ordinary $\gari$, as it is linear about the first component.} $\tau_{R}(A)\colon M\mapsto\gari(M,A)$ for $A\in\GARI(R)$ and $M\in R\hotimes\BIMU(\bbK)$. This $\tau$ defines a right-action, and gives a faithful representation of pro-algebraic groups since the group law $\gari$ admits the unit $1$.
    We can also check that $\tau$ preserves the filtration: when a given $M$ belongs to $\Fil^{\ge n}\BIMU(\bbK)$ for some $n\ge 0$, we can compute each specified component of its image under $\tau_{R}(A)$ ($A\in\GARI(R)$) as
    \begin{align}
        &\tau_{R}(A)(M)(\bw)\\
        &=\gari(M,A)(\bw)\\
        &=\mmu(\garit(A)(M),A)(\bw)\\
        &=\sum_{\bw=\bw'\bb}\garit(A)(M)(\bw')A(\bb)\\
        &=\sum_{\bw=\bw'\bb}\sum_{t=1}^{\infty}\sum_{\{(\ba_{j};w_{i_{j}};\bc_{j})\}_{j=1}^{t}\in\Sigma_{t}(\bw')}M\left(\prod_{j=1}^{t}\ful{\ba_{j}}w_{i_{j}}\fur{\bc_{j}}\right)A(\bb)\prod_{j=1}^{t}\left(A(\ba_{j}\flr{w_{i_{j}}})\invmu(A)(\fll{w_{i_{j}}}\bc_{j})\right),
    \end{align}
    where we used \eqref{eq:gaxit_simple} in the last equality.
    Since $M$ is assumed to be in $R\otimes\Fil^{\ge n}\BIMU(\bbK)$, the above sum about $t$ is restricted to one over the more narrow range $t\ge n$.
    As a consequence, if the sequence $\bw$ has the length less than $n$, we cannot take any required element $\{(\ba_{j};w_{i_{j}};\bc_{j})\}_{j=1}^{t}$ of $\Sigma_{t}(\bw)$.
    It means that the above sum is empty, which yields that $\tau_{R}(A)(M)$ belongs to $R\otimes\Fil^{\ge n}\BIMU(\bbK)$.\\

    On the associated graded space, it is reduced to the identity: let $M$ be taken from $R\otimes(\Fil^{\ge n}\BIMU(\bbK)/\Fil^{\ge n+1}\BIMU(\bbK))$ for some $n$.
    Then, in the equality
    \begin{multline}\tau_{R}(A)(M)(\bw)\\=\sum_{\bw=\bw'\bb}\sum_{t=1}^{\infty}\sum_{\{(\ba_{j};w_{i_{j}};\bc_{j})\}_{j=1}^{t}\in\Sigma_{t}(\bw')}M\left(\prod_{j=1}^{t}\ful{\ba_{j}}w_{i_{j}}\fur{\bc_{j}}\right)A(\bb)\prod_{j=1}^{t}\left(A(\ba_{j}\flr{w_{i_{j}}})\invmu(A)(\fll{w_{i_{j}}}\bc_{j})\right)\end{multline}
    we obtained above, the $t=n$ term only remains while the others vanish.
    Moreover, since we showed that $\tau_{R}(A)$ preserves filtration, we can assume that the length of $\bw$ is $n$.
    Thus all of the subsequences $\ba_{1},\bc_{1},\ldots,\ba_{t},\bc_{t}$ and $\bb$ are necessarily $\emp$, which means 
    \[\tau_{R}(A)(M)(w_{1},\ldots,w_{n})=M(w_{1},\ldots,w_{n}).\]
    It completes the proof.
\end{proof}

\begin{proposition}\label{prop:expari}
    The map
    \[\expari\colon\ARI(R)\to\GARI(R);~A\mapsto 1+\sum_{n=1}^{\infty}\frac{1}{n!}\preari(\underbrace{A,\ldots,A}_{n})\]
    gives the Lie-exponential map, where $\preari(\cdots)$ is taken from left to right: $\preari(A,A,A)=\preari(\preari(A,A),A)$ for example.
\end{proposition}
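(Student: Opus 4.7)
The plan is to show that $\expari(A)$ is well-defined for $A \in \LU(R)$, that $t \mapsto \expari(tA)$ forms a one-parameter subgroup of $\GARI(R)$ with infinitesimal generator $A$, and then invoke the pro-unipotency of $\GARI$ (Proposition \ref{prop:GARI_pro_unipotent}) to conclude that $\expari$ coincides with the Lie-exponential. First I would verify that $\preari$ is additive on the length filtration: if $A \in \Fil^{\ge m}\BIMU(R)$ and $B \in \Fil^{\ge n}\BIMU(R)$ with $m, n \ge 1$, then $\preari(A, B) \in \Fil^{\ge m+n}\BIMU(R)$. This is a direct inspection of the defining formulas of $\mmu$, $\amit$, $\anit$: in every summand, $A$ and $B$ are evaluated on subwords whose lengths sum to $\ell(\bw)$ (one checks $\ell(\ba\ful{\bb}\bc) + \ell(\bb\flr{\bc}) = \ell(\bw)$ and similarly for $\anit$), and the filtration conditions force these lengths to be at least $m$ and $n$ respectively. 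Iterating, $\preari(\underbrace{A, \ldots, A}_{n}) \in \Fil^{\ge n}\BIMU(R)$ for $A \in \LU(R)$, so only finitely many terms of the series $\expari(A)$ contribute at each fixed length, giving convergence in the filtration topology.

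Next I would establish the fundamental ODE $\gamma'(t) = \preari(\gamma(t), A)$ for $\gamma(t) := \expari(tA)$. This follows by differentiating term-by-term, using the $R$-linearity of $\preari$ in its first argument together with the left-associated recursion $\preari(\underbrace{A, \ldots, A}_{n+1}) = \preari(\preari(\underbrace{A, \ldots, A}_{n}), A)$. By Remark \ref{rem:linearization}, this ODE is equivalent to the congruence $\gamma(t + \ep) \equiv \gari(\gamma(t), 1 + \ep A) \pmod{\ep^{2}}$ in $\GARI(R[\ep])$. Fixing $s$ and setting $F(t) := \gari(\gamma(s), \gamma(t))$ and $G(t) := \gamma(s + t)$, associativity of $\gari$ (Corollary \ref{prop:gari_group}) yields
\[
F(t + \ep) \equiv \gari(\gamma(s), \gari(\gamma(t), 1 + \ep A)) = \gari(F(t), 1 + \ep A) \equiv F(t) + \ep\preari(F(t), A) \pmod{\ep^{2}},
\]
so $F$ and $G$ both satisfy $H'(t) = \preari(H(t), A)$ with common initial value $H(0) = \gamma(s)$. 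Uniqueness of this ODE holds by a length-by-length induction: for each $r$, the length-$r$ component of $\preari(H, A)$ depends only on the components of $H$ of length $< r$, because in each summand of $\mmu$, $\amit$, $\anit$ the subword on which $H$ is evaluated has length strictly less than $\ell(\bw)$ (since $A \in \LU(R)$ forces the remaining subword to be non-empty). Therefore $F = G$, which is exactly the one-parameter subgroup property $\expari((s+t)A) = \gari(\expari(sA), \expari(tA))$.

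Combined with $\expari(0) = 1$ and $\gamma'(0) = \preari(1, A) = A$, this identifies $t \mapsto \expari(tA)$ as the unique one-parameter subgroup of $\GARI(R)$ with infinitesimal generator $A$. Since $\GARI$ is pro-unipotent (Proposition \ref{prop:GARI_pro_unipotent}), its Lie-exponential $\exp \colon \Lie(\GARI) \to \GARI$ is the unique scheme isomorphism characterized by precisely this property, so $\expari = \exp$. The main obstacle I anticipate is the derivative computation for $\gari$ at a point away from the identity: Remark \ref{rem:linearization} linearizes $\gari(\cdot, \cdot)$ only near the identity in the second argument, and the associativity step above is precisely what transports that linearization to an arbitrary base point $\gamma(s)$.
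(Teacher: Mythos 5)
Your proof is correct, but it takes a genuinely different route from the paper's. The paper reuses the faithful pro-unipotent representation $\tau_{R}(A)\colon M\mapsto\gari(M,A)$ constructed in the proof of Proposition \ref{prop:GARI_pro_unipotent}: it observes (via Remark \ref{rem:linearization}) that $\tau_{R[\ep]}(1+\ep B')=\id+\ep\, p_{B}$ with $p_{B}=\preari(\,\cdot\,,B')$, invokes \cite[Proposition 4.6]{racinet00} to get $\tau_{R}(\exp(B'))=\sum_{n}p_{B}^{\circ n}/n!$, and evaluates at $M=1$, where the iterates $p_{B}^{\circ n}(1)$ are exactly the left-associated $\preari(B',\ldots,B')$. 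You instead argue intrinsically: you check convergence via the filtration estimate $\preari(\Fil^{\ge m},\Fil^{\ge n})\subseteq\Fil^{\ge m+n}$, derive the ODE $\gamma'(t)=\preari(\gamma(t),A)$, use associativity of $\gari$ over $R[t][\ep]$ plus length-by-length uniqueness of solutions to obtain the one-parameter-subgroup identity $\expari((s+t)A)=\gari(\expari(sA),\expari(tA))$, and then appeal to the standard characterization of the exponential of a pro-unipotent group by one-parameter subgroups. Both arguments ultimately rest on Proposition \ref{prop:GARI_pro_unipotent}; yours trades Racinet's representation-theoretic exponential formula for the one-parameter-subgroup characterization, is more self-contained, makes the convergence of the defining series explicit, and produces the group-homomorphism property of $t\mapsto\expari(tA)$ as a useful byproduct, at the cost of length. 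Two minor points of precision: in your uniqueness step the strict length drop in the $\amit$ and $\anit$ summands comes from the non-emptiness constraints built into their definitions (only the $\mmu(H,A)$ term needs $A(\emp)=0$); and the computation $\gamma'(0)=\preari(1,A)=A$ uses $\arit(A)(1)=0$, which holds because the sums defining $\amit$ and $\anit$ require a non-empty middle factor while the bimould $1$ is supported in length $0$.
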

\begin{proof}
    This proof proceeds in an almost same way as that of Corollary \ref{cor:exp_giff}.
    In the proof of Proposition \ref{prop:GARI_pro_unipotent}, we construct the pro-unipotent representation $(\tau,\BIMU(\bbK))$ of $\GARI$.
    We consider the linearlization of $\tau_{R[\ep]}(B)$ for $B'\in\ARI(R)$: here we use the identification $B=1+\ep B'$ of $B'\in\ARI(R)$ with the subgroup of $B\in\GARI(R[\ep])$.
     since it is defined to be the $\gari$-product from right, from Remark \ref{rem:linearization} we see that $\tau_{R[\ep]}(B)=\id+\ep p_{B}$, where $p_{B}(A)\coloneqq\preari(A,B')$.
    Hence, by \cite[Proposition 4.6]{racinet00}, we have
        \[\tau_{R}(\exp_{\ARI\to\GARI}(B'))(A)(x)=\sum_{n=0}^{\infty}\frac{p_{B}^{\circ n}(A)}{n!},\]
    where $\exp_{\ARI\to\GARI}$ denotes the Lie-exponential map. Putting $A=1\in\GARI(R)$ in this equality, we obtain $\exp_{\ARI\to\GARI}$ coincides with $\expari$ defined in the assertion.
\end{proof}

\subsection{Relationship of $\GIFF_{x}$ and $\GARI$}
Define the family $\{\fre_{r}\}_{r\ge 1}$ of bimoulds by the following recursive rules:
\begin{align}
    \fre_{1}&\coloneqq\fE,\\
    \fre_{r+1}&\coloneqq\arit(\fre_{r})(\fE)\qquad (r\ge 1).
\end{align}
This definition implies that each $\fre_{r}$ has the only non-zero component at length $r$, that is, $\fre_{r}\in\BIMU_{r}$.
Moreover, since $\fE$ is a bimould of length $1$, we see that $\fre_{r}$ is alternal for every $r\ge 1$ by virtue of \cite[(A.3)]{fk23} which states that if $X$ and $Y$ are alternal then so is $\arit(Y)(X)$.
We also note that, by definition, $\fre_{r}$ is explicitly written in terms of $\fE$ and $\fre_{r-1}$ as
\begin{equation}\label{eq:re_explicit}
    \fre_{r}(w_{1},\ldots,w_{r})=\fE(w_{1}\fur{w_{2}\cdots w_{r}})\fre_{r-1}(\fll{w_{1}}(w_{2}\cdots w_{r}))-\fE(\ful{w_{1}\cdots w_{r-1}}w_{r})\fre_{r-1}((w_{1}\cdots w_{r-1})\flr{w_{r}}).
\end{equation}

\begin{lemma}[{\cite[(4.59)]{ecalle11}}]\label{lem:ecalle_459}
    For each $r\ge 1$, we have
    \begin{equation}\label{eq:ecalle_459}
        \dro_{r}(\bw)\coloneqq\swap(\fre_{r})(\bw)=\sum_{\bw=\ba w_{i}\bb}(r+1-i)\foz(\ba\flr{w_{i}})\fO(\ful{\ba}w_{i}\fur{\bb})\foz(\fll{w_{i}}\bb)
    \end{equation}
\end{lemma}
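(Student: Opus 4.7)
The plan is to prove the formula by induction on $r$. The base case $r=1$ is immediate from $\fre_{1}=\fE$ and $\swap(\fE)=\fO$: the right-hand side of \eqref{eq:ecalle_459} at $r=1$ reduces to the single term $(1+1-1)\foz(\emp)\fO(w_{1})\foz(\emp)=\fO(w_{1})$, matching $\swap(\fre_{1})(w_{1})$.

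For the inductive step, I would apply $\swap$ to the recursion \eqref{eq:re_explicit}, which writes $\fre_{r+1}$ as a difference of two sums of products of $\fE$-factors and $\fre_{r}$-factors carrying various flexion markers. Each $\fE$-factor becomes an $\fO$-factor after $\swap$, up to a reindexing of arguments dictated by the explicit swap formula, while the $\fre_{r}$-factor becomes $\dro_{r}$ by the inductive hypothesis, evaluated on a swapped sub-sequence that inherits the flexion transformations from \eqref{eq:re_explicit}. Invoking \eqref{eq:ecalle_459} at level $r$ then produces an inner weighted sum, so the whole expression becomes a double sum indexed by the position from the outer recursion and the position supplied by the inductive hypothesis. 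The main technical task is then to reorganize this double sum into the single weighted sum on the right-hand side of \eqref{eq:ecalle_459} at level $r+1$, using the tripartite identity for $\fO$ to merge adjacent $\fO$-factors into the target form $\fO(\ful{\ba}w_{i}\fur{\bb})$; together with the sign cancellation between the two terms of $\arit=\amit-\anit$, the coefficients at each decomposition $\bw=\ba w_{i}\bb$ combine to $(r+2-i)$. I verified this directly for $r=2$: a single application of the tripartite identity converts the result of swapping the explicit expression for $\fre_{2}$ into the form prescribed by \eqref{eq:ecalle_459}.

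The chief obstacle is the combinatorial bookkeeping in the inductive step. One must simultaneously track how $\swap$ transforms nested flexion markers from the recursion into the markers $\ful{},\fur{},\fll{},\flr{}$ appearing in the target, match each term of the double sum to a unique decomposition of $\bw$, and verify that the coefficients collapse to exactly $(r+2-i)$. None of the individual manipulations is conceptually deep, but keeping track of them in full generality requires care; recognizing precisely which pair of $\fO$-factors to merge via the tripartite identity at each stage is the crux of the argument.
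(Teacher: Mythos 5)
Your overall strategy---induction on $r$ via the recursion for $\fre_{r}$, with the tripartite identity used to merge $\fO$-factors and a final coefficient check---is the same shape as the paper's argument, and your base case and $r=2$ check are fine. But there is a genuine gap at the central step. You assert that after applying $\swap$ to \eqref{eq:re_explicit} ``each $\fE$-factor becomes an $\fO$-factor \ldots{} up to a reindexing of arguments'' and that the $\fre_{r}$-factor becomes $\dro_{r}$ ``evaluated on a swapped sub-sequence that inherits the flexion transformations.'' This is false as stated: $\swap$ does not commute with the flexion markers in this naive way. The correct statement is the content of Schneps' identities \eqref{eq:amit_swap} and \eqref{eq:anit_swap}, which the paper uses precisely here: conjugating $\anit$ by $\swap$ produces $\anit(\push(\cdot))$ (a $\push$ appears, because $\swap$ turns the ``subtract $v$'' markers into ``sum of $u$'' markers via \eqref{eq:negpush}), while conjugating $\amit$ by $\swap$ produces $\amit(\cdot)$ \emph{plus} the correction terms $\mmu(B,A)-\swap(\mmu(\swap(B),\swap(A)))$. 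After the cancellation of the $\amit$ term against one correction term, the recursion one actually has to verify is $b_{r}=\mmu(\fO,b_{r-1})-\anit(\push(b_{r-1}))(\fO)$, and handling $\push(b_{r-1})$ requires the $\push$-neutrality of $\foz$ (Proposition \ref{prop:unit_push}) and the parity condition of the unit---it is exactly these inputs that produce the shift from the weights $(r-i)$ to $(r+1-i)$ in \eqref{eq:b_transform}.

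So what you have labelled ``combinatorial bookkeeping'' conceals the two essential ingredients of the proof: the $\push$ operator arising from the $\swap$-conjugation of $\anit$, and the $\push$-neutrality of $\foz$ needed to control it. Your sketch mentions neither, and the claimed ``sign cancellation between the two terms of $\arit=\amit-\anit$'' is not what happens (the cancellation is between the $\amit$ term and a $\mmu$-correction term from \eqref{eq:amit_swap}, not between $\amit$ and $\anit$). Without these, the coefficient claim $(r+2-i)$ cannot be verified in general, even though it happens to work out at $r=2$ where the $\push$ contribution is easy to absorb by hand.
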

\begin{proof}
    We use Schneps' identities \cite[(2.4.7), (2.4.8)]{schneps15}
    \begin{equation}\label{eq:amit_swap}
        (\swap\circ\amit(\swap(A))\circ\swap)(B)=\amit(A)(B)+\mmu(B,A)-\swap(\mmu(\swap(B),\swap(A)))
    \end{equation}
    and
    \begin{equation}\label{eq:anit_swap}
        (\swap\circ\anit(\swap(A))\circ\swap)(B)=\anit(\push(A))(B),
    \end{equation}
    that are valid for any $A\in\LU(R)$ and $B\in\BIMU(R)$.
    Considering the swap-version of the recursive definition $\fre_{r}=\arit(\fre_{r-1})(\fE)$, we see that
    \begin{align}
        \dro_{r}
        &=(\swap\circ\amit(\fre_{r-1}))(\fE)-(\swap\circ\anit(\fre_{r-1}))(\fE)\\
        &=\amit(\dro_{r-1})(\fO)+\mmu(\fO,\dro_{r-1})-\swap(\mmu(\swap(\fO),\swap(\dro_{r-1})))-\anit(\push(\dro_{r-1}))(\fO)\\
        &=\amit(\dro_{r-1})(\fO)+\mmu(\fO,\dro_{r-1})-\swap(\mmu(\fE,\fre_{r-1}))-\anit(\push(\dro_{r-1}))(\fO).
    \end{align}
    By computing explicitly, the terms $\amit(\dro_{r-1})(\fO)$ and $\swap(\mmu(\fre_{r-1},\fO))$ cancel each other out.
    Thus we get the equality
    \[\dro_{r}=\mmu(\fO,\dro_{r-1})-\anit(\push(\dro_{r-1}))(\fO).\]
    Since this formula and the initial condition $\dro_{1}=\fO$ determine every $\dro_{r}$, it is sufficient\footnote{The same initial condition for $b_{1}$ is automatically satisfied.} to show that
    \[b_{r}=\mmu(\fO,b_{r-1})-\anit(\push(b_{r-1}))(\fO)\]
    holds for any $r\ge 2$, where we put
    \[b_{r}(\bw)\coloneqq\sum_{\bw=\ba w_{i}\bb}(r+1-i)\foz(\ba\flr{w_{i}})\fO(\ful{\ba}w_{i}\fur{\bb})\foz(\fll{w_{i}}\bb).\]
    First we note that the factor $\foz(\ba\flr{w_{i}})\fO(\ful{\ba}w_{i}\fur{\bb})\foz(\fll{w_{i}}\bb)$ is simply written as $-\push^{\circ i}(\foz)(\ba w_{i}\bb)$.
    Therefore Proposition \ref{prop:unit_push} (3) yields
    \begin{equation}\label{eq:b_transform}
        b_{r}(\bw)=\foz(\bw)+\sum_{\bw=\ba w_{i}\bb}(r-i)\foz(\ba\flr{w_{i}})\fO(\ful{\ba}w_{i}\fur{\bb})\foz(\fll{w_{i}}\bb).
    \end{equation}
    Moreover, for any sequence $(w_{1},\ldots,w_{s})$ it holds that
    \begin{align}
        &\push(b_{s})(w_{1},\ldots,w_{s})\\
        &=b_{s}\binom{-u_{1}-\cdots-u_{s},u_{1},\ldots,u_{s-1}}{-v_{s},v_{1}-v_{s},\ldots,v_{s-1}-v_{s}}\\
        &=\begin{multlined}[t]s\fO\binom{-u_{s}}{-v_{s}}\foz\binom{u_{1},\ldots,u_{s-1}}{v_{1},\ldots,v_{s-1}}\\+\sum_{i=1}^{s-1}(s-i)\foz\binom{-u_{1}-\cdots-u_{s},u_{1},\ldots,u_{i-1}}{-v_{i},v_{1}-v_{i},\ldots,v_{i-1}-v_{i}}\fO\binom{-u_{s}}{v_{i}-v_{s}}\foz\binom{u_{i+1},\ldots,u_{s-1}}{v_{i+1}-v_{i},\ldots,v_{s-1}-v_{i}}\end{multlined}\\
        &=\begin{multlined}[t]-s\foz(w_{1}\cdots w_{s})\\+\sum_{i=1}^{s-1}(s-i)\fO(\ful{w_{1}\cdots w_{i-1}}w_{i}\fur{w_{i+1}\cdots w_{s}})\foz((w_{1}\cdots w_{i-1})\flr{w_{i}})\foz(\fll{w_{i}}(w_{i+1}\cdots w_{s}))\end{multlined}\\
        &=-s\foz(w_{1},\ldots,w_{s})+\sum_{\substack{w_{1}\cdots w_{s}=\ba w_{i}\bb\\ \bb\neq\emp}}(s-i)\foz(\ba\flr{w_{i}})\fO(\ful{\ba}w_{i}\fur{\bb})\foz(\fll{w_{i}}\bb),
    \end{align}
    where we used the parity condition for flexion units in the third equality.
    Using this expression and putting $\bw=w_{1}\bw_{1}$, we have
    \begin{align}
        &\anit(\push(b_{r-1}))(\fO)(\bw)\\
        &=\fO(w_{1}\fur{\bw_{1}})\push(b_{r-1})(\fll{w_{1}}\bw_{1})\\
        &=\begin{multlined}[t]-(r-1)\fO(w_{1}\fur{\bw_{1}})\foz(\fll{w_{1}}\bw_{1})\\
            +\fO(w_{1}\fur{\bw_{1}})\sum_{\substack{\bw_{1}=\ba w_{i}\bb\\ \bb\neq\emp}}(r-i)\foz((\fll{w_{1}}\ba)\flr{(\fll{w_{1}}w_{i})})\fO(\ful{(\fll{w_{1}}\ba)}\fll{w_{1}}w_{i}\fur{(\fll{w_{1}}\bb)})\foz(\fll{(\fll{w_{1}}w_{i})}(\fll{w_{1}}\bb))\end{multlined}\\
        &=-(r-1)\fO(w_{1}\fur{\bw_{1}})\foz(\fll{w_{1}}\bw_{1})+\fO(w_{1}\fur{\bw_{1}})\sum_{\substack{\bw_{1}=\ba w_{i}\bb\\ \bb\neq\emp}}(r-i)\foz(\ba\flr{w_{i}})\fO(\fll{w_{1}}\ful{\ba}w_{i}\fur{\bb})\foz(\fll{w_{i}}\bb)\\
    \end{align}
    Then we can apply the tripartite identity \eqref{eq:tripartite} as
    \begin{align}
        \fO(w_{1}\fur{\bw_{1}})\fO(\fll{w_{1}}\ful{\ba}w_{i}\fur{\bb})
        &=\fO(w_{1}\fur{(\ful{\ba}w_{i}\fur{\bb})})\fO(\fll{w_{1}}\ful{\ba}w_{i}\fur{\bb})\\
        &=\fO(w_{1})\fO(\ful{\ba}w_{i}\fur{\bb})-\fO(w_{1}\flr{(\ful{\ba}w_{i}\fur{\bb})})\fO(\ful{w_{1}}\ful{\ba}w_{i}\fur{\bb})\\
        &=\fO(w_{1})\fO(\ful{\ba}w_{i}\fur{\bb})-\fO(w_{1}\flr{w_{i}})\fO(\ful{w_{1}\ba}w_{i}\fur{\bb}).
    \end{align}
    It yields that
    \begin{align}
        &\anit(\push(b_{r-1}))(\fO)(\bw)\\
        &=\begin{multlined}[t]-(r-1)\fO(w_{1}\fur{\bw_{1}})\foz(\fll{w_{1}}\bw_{1})\\+\sum_{\substack{\bw_{1}=\ba w_{i}\bb\\ \bb\neq\emp}}(r-i)\foz(\ba\flr{w_{i}})\left(\fO(w_{1})\fO(\ful{\ba}w_{i}\fur{\bb})-\fO(w_{1}\flr{w_{i}})\fO(\ful{w_{1}\ba}w_{i}\fur{\bb})\right)\foz(\fll{w_{i}}\bb)\end{multlined}\\
        &=\begin{multlined}[t]-(r-1)\fO(w_{1}\fur{\bw_{1}})\foz(\fll{w_{1}}\bw_{1})+\fO(w_{1})\sum_{\substack{\bw_{1}=\ba w_{i}\bb\\ \bb\neq\emp}}(r-i)\foz(\ba\flr{w_{i}})\fO(\ful{\ba}w_{i}\fur{\bb})\foz(\fll{w_{i}}\bb)\\
            -\sum_{\substack{\bw_{1}=\ba w_{i}\bb\\ \bb\neq\emp}}(r-i)\foz(\ba\flr{w_{i}})\fO(w_{1}\flr{w_{i}})\fO(\ful{w_{1}\ba}w_{i}\fur{\bb})\foz(\fll{w_{i}}\bb)\end{multlined}\\
        &=\begin{multlined}[t]-(r-1)\fO(w_{1}\fur{\bw_{1}})\foz(\fll{w_{1}}\bw_{1})+\fO(w_{1})(b_{r-1}(\bw_{1})\\-\foz(\bw_{1}))-\sum_{\substack{\bw=\ba' w_{i}\bb\\ \ba',\bb\neq\emp}}(r-i)\foz(\ba'\flr{w_{i}})\fO(\ful{\ba'}w_{i}\fur{\bb})\foz(\fll{w_{i}}\bb)\end{multlined},
    \end{align}
    where we used \eqref{eq:b_transform} in the last equality.
    Hence we obtain
    \begin{align}
        &(\mmu(\fO,b_{r-1})-\anit(\push(b_{r-1}))(\fO))(\bw)\\
        &=(r-1)\fO(w_{1}\fur{\bw_{1}})\foz(\fll{w_{1}}\bw_{1})-\foz(\bw)+\sum_{\substack{\bw=\ba' w_{i}\bb\\ \ba',\bb\neq\emp}}(r-i)\foz(\ba'\flr{w_{i}})\fO(\ful{\ba'}w_{i}\fur{\bb})\foz(\fll{w_{i}}\bb)\\
        &=-\foz(\bw)+\sum_{\bw=\ba' w_{i}\bb}(r-i)\foz(\ba'\flr{w_{i}})\fO(\ful{\ba'}w_{i}\fur{\bb})\foz(\fll{w_{i}}\bb).
    \end{align}
    Using \eqref{eq:b_transform} again, we get the desired result.
\end{proof}

\begin{proposition}[{\cite[(4.7)]{ecalle11}}]\label{prop:ecalle_47}
    Let $r$ and $s$ be positive integers. Then we have $\ari(\fre_{r},\fre_{s})=(r-s)\fre_{r+s}$.
\end{proposition}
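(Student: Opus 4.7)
My approach is to prove this by induction on $m := \min(r, s)$, reducing via the skew-symmetry of $\ari$ to the case $r \geq s$. The base case $m = 1$ --- namely $\ari(\fre_r, \fE) = (r-1)\fre_{r+1}$ for all $r \geq 1$ --- itself demands a nested induction on $r$. The case $r = 1$ is trivial, since both sides vanish. For $r \geq 2$, I would expand
\[\ari(\fre_r, \fE) = \arit(\fE)(\fre_r) - \fre_{r+1} + \lu(\fre_r, \fE),\]
then use $\fre_r = \arit(\fre_{r-1})(\fE)$ together with the Lie anti-homomorphism property of $\arit$ (Proposition~\ref{prop:arit_composition}) and the inductive hypothesis $\ari(\fre_{r-1}, \fE) = (r-2)\fre_r$ to rewrite
\[\arit(\fE)(\fre_r) \;=\; \arit(\fE)\arit(\fre_{r-1})(\fE) \;=\; \arit(\fre_{r-1})(\fre_2) + (r-2)\fre_{r+1}.\]
The claim then collapses to the bimould identity $\arit(\fre_{r-1})(\fre_2) + \lu(\fre_r, \fE) = 2\fre_{r+1}$, which I would verify by expanding both sides via $\arit = \amit - \anit$, the recursion \eqref{eq:re_explicit}, and the tripartite identity \eqref{eq:tripartite}.

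For the inductive step with $m \geq 3$, I would apply the Jacobi identity in $\ARI(R)$ to the triple $(\fre_r, \fre_{m-1}, \fE)$, substituting the three inner brackets using the $m = 1$ stage and the inductive hypothesis (the term $\ari(\fre_{m-1}, \fre_{r+1})$ arising after expansion is covered since $\min(m-1, r+1) = m-1 < m$). A short computation shows that the result collapses to
\[(m-2)\,\ari(\fre_r, \fre_m) \;=\; (r-m)(m-2)\,\fre_{r+m},\]
so dividing by $m-2$ yields the desired identity.

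The Jacobi reduction fails at $m = 2$ because the coefficient $m-2$ vanishes. I would therefore handle the remaining case $s = 2$ by a dedicated induction on $r$, mirroring the $s = 1$ stage but now using the already-established identity $\arit(\fE)(\fre_p) + \lu(\fre_p, \fE) = p\,\fre_{p+1}$ (valid for all $p \geq 1$) as scaffolding to control the emerging flexion combinatorics. The main obstacle across the proof is thus the combinatorial verification underlying the $s = 1$ and $s = 2$ base cases: one must carefully track flexion markers through the $\amit$ and $\anit$ terms and invoke the tripartite identity in the right way. A potentially cleaner alternative would be to work on the $\swap$-conjugate side via the explicit formula $\dro_r = \swap(\fre_r)$ from Lemma~\ref{lem:ecalle_459}, translating the assertion into an identity among the weighted sums appearing there, where the combinatorics may be more transparent.
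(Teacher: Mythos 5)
Your overall architecture is recognizably close to the paper's, which also isolates $\arit(\fre_{p})(\fre_{2})=2\fre_{p+2}+\lu(\fE,\fre_{p+1})$ as the one hard direct computation and also leans on the anti-homomorphism property of $\arit$ (Proposition \ref{prop:arit_composition}) and on Jacobi. Your $m=1$ base case is correct and matches the paper's steps (1)--(2) specialized to $q=1$, and your Jacobi step for $m\ge 3$ is arithmetically sound. The genuine gap is the $m=2$ case, which is exactly where the paper has to work hardest and which your sketch does not close.

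Concretely: running your ``dedicated induction on $r$'' for $s=2$, one writes $\arit(\fre_{2})(\fre_{r})=\arit(\fre_{2})(\arit(\fre_{r-1})(\fE))$, applies Proposition \ref{prop:arit_composition} together with the inductive hypothesis $\ari(\fre_{r-1},\fre_{2})=(r-3)\fre_{r+1}$ to get $\arit(\fre_{2})(\fre_{r})=\arit(\fre_{r-1})(\fre_{3})+(r-3)\fre_{r+2}$, and then, after substituting $\arit(\fre_{r})(\fre_{2})=2\fre_{r+2}+\lu(\fE,\fre_{r+1})$, the claim $\ari(\fre_{r},\fre_{2})=(r-2)\fre_{r+2}$ collapses not to anything already established but to
\[
\arit(\fre_{r-1})(\fre_{3})=3\fre_{r+2}+\lu(\fE,\fre_{r+1})+\lu(\fre_{2},\fre_{r}).
\]
This is the paper's identity $E(r-1,3)=0$: a new combinatorial fact involving $\fre_{3}$ that does not follow from $\arit(\fE)(\fre_{p})+\lu(\fre_{p},\fE)=p\,\fre_{p+1}$, and which is entangled with the $s=3$ case of the proposition itself (note that $\fre_{2}$ is not an $\ari$-bracket of lower $\fre$'s, so no Jacobi manipulation reduces $\ari(\fre_{r},\fre_{2})$ to the $s=1$ level, and attempts to reach it through $\fre_{r}=\tfrac{1}{r-2}\ari(\fre_{r-1},\fE)$ reintroduce $\ari(\fre_{r-1},\fre_{3})$, i.e.\ the $m=3$ case you have not yet proved). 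The paper escapes this circularity by proving the whole two-parameter family $E(p,q)\coloneqq\arit(\fre_{p})(\fre_{q})-q\fre_{p+q}-\sum_{i=1}^{q-1}\lu(\fre_{i},\fre_{p+q-i})=0$ by a diagonal induction: its step (3) shows $E(p,q)=0\Leftrightarrow E(p+1,q-1)=0$ given $E(p,q-1)=0$ and the $q=1$ cases, so all $E(p,q)$ with $p+q=N$ are killed simultaneously starting from $E(N-1,1)=0$. You need either to import that diagonal induction (in which case your $m\ge3$ Jacobi step becomes redundant, since $A(p,q)=E(q,p)-E(p,q)$) or to supply an independent proof of $E(p,3)=0$; as written, ``mirroring the $s=1$ stage'' does not produce one.
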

\begin{proof}
    We define three auxiliary quantities $A,D,E$ by
    \begin{align}
        A(p,q)&\coloneqq\ari(\fre_{p},\fre_{q})-(p-q)\fre_{p+q},\\
        D(p,q)&\coloneqq\arit(\fre_{p-1})(\fre_{q+1})-\arit(\fre_{p})(\fre_{q})-\fre_{p+q}+\lu(\fre_{p},\fre_{q}),\\
        E(p,q)&\coloneqq \arit(\fre_{p})(\fre_{q})-q\fre_{p+q}-\sum_{i=1}^{q-1}\lu(\fre_{i},\fre_{p+q-i})
    \end{align}
    for positive integers $p$ and $q$, except for $D(1,q)$; we do not define it since $\fre_{0}$ is not defined, however we adopt such notations to make $D(p,q)$ a bimould of length $p+q$ like $A(p,q)$ and $E(p,q)$.
    We divide the proof into three steps: for every $p,q\ge 1$, we shall show that
    \begin{enumerate}
        \item It always holds that $E(p,2)=0$.
        \item The equalities $A(p+1,q)=0$ and $D(p+1,q)=0$ are equivalent under the assumption $A(p,q)=0$.
        \item Under the assumption that $E(p,q-1)=0$ and $A(i,1)=0$ hold for all $i\in\{1,2,\ldots,p+q-1\}$, the equalities $E(p+1,q-1)=0$ and $E(p,q)=0$ are equivalent.
    \end{enumerate}
    Here is how we show the desired identity $A(r,s)=0$ from these items: by definition, the identities
    \begin{align}
        A(p,q)&=E(q,p)-E(p,q)=-A(q,p),\\
        D(p+1,q)&=E(p,q+1)-E(p+1,q) 
    \end{align}
    and $E(p,1)=0$ are easily checked for arbitrary $p$ and $q$. Hence (1) leads to $D(p,1)=E(p-1,2)-E(p,1)=0$ for all $p\ge 1$. Therefore we know that $A(p,1)=0$ always implies $A(p+1,1)=0$ from (2), and it makes us use induction on $p$ to prove $A(p,1)=0$ together with the trivial identity $A(1,1)=0$. Finally, we use a nested inductions to prove $E(p,q)=0$: if we have some $N$ such that $E(i,j)=0$ for all $i$ and $j$ satisfying $i+j<N$, due to (3), we have the implication $E(N-q,q)\implies E(N-q-1,q+1)$. Since the initial condition $E(1,1)=0$ is already obtained, we get $E(p,q)=0$ for all $p$ and $q$ and consequently $A(p,q)=E(q,p)-E(p,q)=0$.
    \begin{proof}[Proof of (1)]
        This is shown by direct computation. Remember that what we should prove is the equality $\arit(\fre_{p})(\fre_{2})=2\fre_{p+2}+\lu(\fE,\fre_{p+1})$ for $p\ge 1$.
        First, applying \eqref{eq:re_explicit} to the term involving $\fre_{p+2}$ in the right-hand side, we have
        \begin{multline}
            (2\fre_{p+2}+\lu(\fE,\fre_{p+1}))(w_{1}\cdots w_{p+2})\\
            =2\fE(\ful{w_{1}\cdots w_{p+1}}w_{p+2})\fre_{p+1}((w_{1}\cdots w_{p+1})\flr{w_{p+2}})-2\fE(w_{1}\fur{w_{2}\cdots w_{p+2}})\fre_{p+1}(\fll{w_{1}}(w_{2}\cdots w_{p+2}))\\
            +\fE(w_{1})\fre_{p+1}(w_{2}\cdots w_{p+2})-\fE(w_{p+2})\fre_{p+1}(w_{1}\cdots w_{p+1}).
        \end{multline}
        We apply \eqref{eq:re_explicit} again to all terms with $\fre_{p+1}$ to get
        \begin{multline}
            (2\fre_{p+2}+\lu(\fE,\fre_{p+1}))(w_{1}\cdots w_{p+2})\\
            =(\fE(w_{1})\fE(\ful{w_{1}\cdots w_{p+1}}w_{p+2})-2\fE(w_{1}\fur{w_{2}\cdots w_{p+2}})\fE(\fll{w_{1}}\ful{w_{2}\cdots w_{p+1}}w_{p+2}))\fre_{p}((w_{2}\cdots w_{p+1})\flr{w_{p+2}})\\
            -(\fE(w_{1})\fE(w_{2}\fur{w_{3}\cdots w_{p+2}})-2\fE(w_{1}\fur{w_{2}\cdots w_{p+2}})\fE(\fll{w_{1}}w_{2}\fur{w_{3}\cdots w_{p+2}}))\fre_{p}(\fll{w_{2}}(w_{2}\cdots w_{p+1}))\\
            -(\fE(\ful{w_{1}\cdots w_{p}}w_{p+1})\fE(w_{p+2})-2\fE(\ful{w_{1}\cdots w_{p}}w_{p+1}\flr{w_{p+2}})\fE(\ful{w_{1}\cdots w_{p+1}}w_{p+2}))\fre_{p}((w_{1}\cdots w_{p})\flr{w_{p+1}})\\
            +(\fE(w_{1}\fur{w_{2}\cdots w_{p+2}})\fE(w_{p+2})-2\fE(w_{1}\fur{w_{2}\cdots w_{p+1}}\flr{w_{p+2}})\fE(\ful{w_{1}\cdots w_{p+1}}w_{p+2}))\fre_{p}(\fll{w_{1}}(w_{2}\cdots w_{p+1})).
            \end{multline}
            Let us focus on the four parenthesized factors above including the product of $\fE$-values. Applying the tripartite identity \eqref{eq:tripartite} (Definition \ref{def:unit} (2)) to them, we obtain
            \begin{align}
                &\begin{multlined}\fE(w_{1})\fE(\ful{w_{1}\cdots w_{p+1}}w_{p+2})-2\fE(w_{1}\fur{w_{2}\cdots w_{p+2}})\fE(\fll{w_{1}}\ful{w_{2}\cdots w_{p+1}}w_{p+2})\\
                    =\fE(w_{1}\flr{w_{p+2}})\fE(\ful{w_{1}\cdots w_{p+1}}w_{p+2})-\fE(w_{1}\fur{w_{2}\cdots w_{p+2}})\fE(\fll{w_{1}}\ful{w_{2}\cdots w_{p+1}}w_{p+2}),
                \end{multlined}\\
                &\begin{multlined}
                \fE(w_{1})\fE(w_{2}\fur{w_{3}\cdots w_{p+2}})-2\fE(w_{1}\fur{w_{2}\cdots w_{p+2}})\fE(\fll{w_{1}}w_{2}\fur{w_{3}\cdots w_{p+2}})\\
                =\fE(w_{1}\flr{w_{2}})\fE(\ful{w_{1}}w_{2}\fur{w_{3}\cdots w_{p+2}})-\fE(w_{1}\fur{w_{2}\cdots w_{p+2}})\fE(\fll{w_{1}}w_{2}\fur{w_{3}\cdots w_{p+2}}),
                \end{multlined}\\
                &\begin{multlined}
                \fE(\ful{w_{1}\cdots w_{p}}w_{p+1})\fE(w_{p+2})-2\fE(\ful{w_{1}\cdots w_{p}}w_{p+1}\flr{w_{p+2}})\fE(\ful{w_{1}\cdots w_{p+1}}w_{p+2})\\
                =\fE(\ful{w_{1}\cdots w_{p}}w_{p+1}\fur{w_{p+2}})\fE(\fll{w_{p+1}}w_{p+2})-\fE(\ful{w_{1}\cdots w_{p}}w_{p+1}\flr{w_{p+2}})\fE(\ful{w_{1}\cdots w_{p+1}}w_{p+2}),
                \end{multlined}\\
                &\begin{multlined}
                \fE(w_{1}\fur{w_{2}\cdots w_{p+2}})\fE(w_{p+2})-2\fE(w_{1}\fur{w_{2}\cdots w_{p+1}}\flr{w_{p+2}})\fE(\ful{w_{1}\cdots w_{p+1}}w_{p+2})\\
                =\fE(w_{1}\fur{w_{2}\cdots w_{p+2}})\fE(\fll{w_{1}}w_{p+2})-\fE(w_{1}\fur{w_{2}\cdots w_{p+1}}\flr{w_{p+2}})\fE(\ful{w_{1}\cdots w_{p+1}}w_{p+2}).
                \end{multlined}
            \end{align}
            As a special case of \eqref{eq:re_explicit}, we can apply the expression $\fre_{2}(x,y)=\fE(x\flr{y})\fE(\ful{x}y)-\fE(x\fur{y})\fE(\fll{x}y)$ to the all of right-hand sides of the above equalities as
            \begin{align}
                \fE(w_{1})\fE(\ful{w_{1}\cdots w_{p+1}}w_{p+2})-2\fE(w_{1}\fur{w_{2}\cdots w_{p+2}})\fE(\fll{w_{1}}\ful{w_{2}\cdots w_{p+1}}w_{p+2})&=\fre_{2}(w_{1}\ful{w_{2}\cdots w_{p+1}}w_{p+2}),\\
                \fE(w_{1})\fE(w_{2}\fur{w_{3}\cdots w_{p+2}})-2\fE(w_{1}\fur{w_{2}\cdots w_{p+2}})\fE(\fll{w_{1}}w_{2}\fur{w_{3}\cdots w_{p+2}})&=\fre_{2}(w_{1}w_{2}\fur{w_{3}\cdots w_{p+2}}),\\
                \fE(\ful{w_{1}\cdots w_{p}}w_{p+1})\fE(w_{p+2})-2\fE(\ful{w_{1}\cdots w_{p}}w_{p+1}\flr{w_{p+2}})\fE(\ful{w_{1}\cdots w_{p+1}}w_{p+2})&=-\fre_{2}(\ful{w_{1}\cdots w_{p}}w_{p+1}w_{p+2}),\\
                \fE(w_{1}\fur{w_{2}\cdots w_{p+2}})\fE(w_{p+2})-2\fE(w_{1}\fur{w_{2}\cdots w_{p+1}}\flr{w_{p+2}})\fE(\ful{w_{1}\cdots w_{p+1}}w_{p+2})&=-\fre_{2}(w_{1}\fur{w_{2}\cdots w_{p+1}}w_{p+2}).
            \end{align}
            Thus we obtain
            \begin{align}
                &(2\fre_{p+2}+\lu(\fE,\fre_{p+1}))(w_{1}\cdots w_{p+2})\\
                &=\begin{multlined}[t]
            \fre_{2}(w_{1}\ful{w_{2}\cdots w_{p+1}}w_{p+2})\fre_{p}((w_{2}\cdots w_{p+1})\flr{w_{p+2}})
            -\fre_{2}(w_{1}w_{2}\fur{w_{3}\cdots w_{p+2}})\fre_{p}(\fll{w_{2}}(w_{2}\cdots w_{p+1}))\\
            +\fre_{2}(\ful{w_{1}\cdots w_{p}}w_{p+1}w_{p+2})\fre_{p}((w_{1}\cdots w_{p})\flr{w_{p+1}})
            -\fre_{2}(w_{1}\fur{w_{2}\cdots w_{p+1}}w_{p+2})\fre_{p}(\fll{w_{1}}(w_{2}\cdots w_{p+1}))
            \end{multlined}\\
            &=\arit(\fre_{p})(\fre_{2})(w_{1}\cdots w_{p+2}).
        \end{align}
    \end{proof}
    \begin{proof}[Proof of (2)]
        We shall prove the identity
        \[A(p+1,q)-D(p+1,q)=\arit(A(p,q))(\fE).\]
        From Proposition \ref{prop:arit_composition}, the right-hand side is
        \begin{align}
            \arit(A(p,q))(\fE)
            &=\arit(\ari(\fre_{p},\fre_{q})-(p-q)\fre_{p+q})(\fE)\\
            &=(\arit(\fre_{q})\circ\arit(\fre_{p})-\arit(\fre_{p})\circ\arit(\fre_{q})-(p-q)\fre_{p+q})(\fE)\\
            &=\arit(\fre_{q})(\fre_{p+1})-\arit(\fre_{p})(\fre_{q+1})-(p-q)\fre_{p+q+1}.
        \end{align}
        On the other hand, it follows that
        \begin{align}
            &A(p+1,q)-D(p+1,q)\\
            &=\begin{multlined}[t]\arit(\fre_{q})(\fre_{p+1})-\arit(\fre_{p+1})(\fre_{q})+\lu(\fre_{p+1},\fre_{q})-(p+1-q)\fre_{p+q+1}\\-\arit(\fre_{p})(\fre_{q+1})+\arit(\fre_{p+1})(\fre_{q})+\fre_{p+q+1}-\lu(\fre_{p+1},\fre_{q})\end{multlined}\\
            &=\arit(\fre_{q})(\fre_{p+1})-\arit(\fre_{p})(\fre_{q+1})-(p-q)\fre_{p+q+1}.
        \end{align}
        Thus both sides coincide.
    \end{proof}
    \begin{proof}[Proof of (3)]
        Assume that $E(p,q-1)=0$ and $A(i,1)=0$ hold for all $i\in\{1,2,\ldots,p+q-1\}$.
        After the computation
        \begin{align}
            &(q-1)\arit(\fre_{p})(\fre_{q})+(p-1)\arit(\fre_{p+1})(\fre_{q-1})\\
            &=\arit(\fre_{p})(\fre_{q})+\arit(\fre_{p})((q-2)\fre_{q})+\arit((p-1)\fre_{p+1})(\fre_{q-1})\\
            &=\arit(\fre_{p})(\fre_{q})+\arit(\fre_{p})(\ari(\fre_{q-1},\fE)-A(q-1,1))+\arit(\ari(\fre_{p},\fE)-A(p,1))(\fre_{q-1}),
        \end{align}
        we can delete $A(p-1,1)$ and $A(q,1)$ by the assumption, so that
        \begin{align}
            &(q-1)\arit(\fre_{p})(\fre_{q})+(p-1)\arit(\fre_{p+1})(\fre_{q-1})\\
            &=\arit(\fre_{p})(\fre_{q})+\arit(\fre_{p})(\ari(\fre_{q-1},\fE))+\arit(\ari(\fre_{p},\fE))(\fre_{q-1})\\
            &=\begin{multlined}[t]\arit(\fre_{p})(\fre_{q})+\arit(\fre_{p})(\arit(\fE)(\fre_{q-1})-\fre_{q}+\lu(\fre_{q-1},\fE))\\+(\arit(\fE)\circ\arit(\fre_{p})-\arit(\fre_{p})\circ\arit(\fE))(\fre_{q-1})\end{multlined}\\
            &=(\arit(\fE)\circ\arit(\fre_{p}))(\fre_{q-1})+\arit(\fre_{p})(\lu(\fre_{q-1},\fE)),
        \end{align}
        where we used Proposition \ref{prop:arit_composition} in the second equality.
    
    Since $\arit(X)$ is a derivation about $\lu$ (a consequence of Proposition \ref{prop:axit_derivation}), we have
    \begin{align}
        &(q-1)\arit(\fre_{p})(\fre_{q})+(p-1)\arit(\fre_{p+1})(\fre_{q-1})\\
        &=(\arit(\fE)\circ\arit(\fre_{p}))(\fre_{q-1})+\lu(\arit(\fre_{p})(\fre_{q-1}),\fE)+\lu(\fre_{q-1},\fre_{p+1})\\
        &=\begin{multlined}[t]\arit(\fE)\left(E(p,q-1)+(q-1)\fre_{p+q-1}+\sum_{i=1}^{q-2}\lu(\fre_{i},\fre_{p+q-1-i})\right)\\
            +\lu\left(\left(E(p,q-1)+(q-1)\fre_{p+q-1}+\sum_{i=1}^{q-2}\lu(\fre_{i},\fre_{p+q-1-i})\right),\fE\right)+\lu(\fre_{q-1},\fre_{p+1})\end{multlined}\\
        &=\begin{multlined}[t](q-1)\arit(\fE)(\fre_{p+q-1})+\lu(\fre_{q-1},\fre_{p+1})+(q-1)\lu(\fre_{p+q-1},\fE)\\
            +\sum_{i=1}^{q-2}\arit(\fE)\left(\lu(\fre_{i},\fre_{p+q-1-i})\right)+\sum_{i=1}^{q-2}\lu\left(\lu(\fre_{i},\fre_{p+q-1-i}),\fE\right)\end{multlined}\\
        &=\begin{multlined}[t](q-1)\arit(\fE)(\fre_{p+q-1})+\lu(\fre_{q-1},\fre_{p+1})+(q-1)\lu(\fre_{p+q-1},\fE)+\\\sum_{i=1}^{q-2}\left(\lu(\arit(\fE)(\fre_{i}),\fre_{p+q-1-i})+\lu(\fre_{i},\arit(\fE)(\fre_{p+q-1-i}))\right)+\sum_{i=1}^{q-2}\lu\left(\lu(\fre_{i},\fre_{p+q-1-i}),\fE\right).\end{multlined}
    \end{align}
    Here we used $E(p,q-1)=0$ in the third equality. 
    The first summation part can be deformed as
    \begin{align}
        &\sum_{i=1}^{q-2}\left(\lu(\arit(\fE)(\fre_{i}),\fre_{p+q-1-i})+\lu(\fre_{i},\arit(\fE)(\fre_{p+q-1-i}))\right)\\
        &=\begin{multlined}[t]\sum_{i=1}^{q-2}\left(\lu\left(A(i,1)+\fre_{i+1}-\lu(\fE,\fre_{i})+(i-1)\fre_{i+1},\fre_{p+q-1-i}\right)+\lu\left(\fre_{i},A(p+q-1-i,1)\right.\right.\\
            \left.\left.+\fre_{p+q-i}-\lu(\fE,\fre_{p+q-1-i}+(p+q-2-i)\fre_{p+q-i})\right)\right)\end{multlined}\\
        &=\sum_{i=1}^{q-2}\left(\lu\left(i\fre_{i+1}-\lu(\fE,\fre_{i}),\fre_{p+q-1-i}\right)+\lu\left(\fre_{i},(p+q-1-i)\fre_{p+q-i}-\lu(\fE,\fre_{p+q-1-i})\right)\right)\\
        &=\begin{multlined}[t]\sum_{i=1}^{q-2}\left(i\lu(\fre_{i+1},\fre_{p+q-1-i})+(p+q-1-i)\lu(\fre_{i},\fre_{p+q-i})\right)\\
            -\sum_{i=1}^{q-2}\left(\lu(\lu(\fE,\fre_{i}),\fre_{p+q-1-i})+\lu(\fre_{i},\lu(\fE,\fre_{p+q-1-i}))\right),\end{multlined}
    \end{align}
    where the second equality is a consequence of the assumption $A(\bullet,1)=0$.
    The first sum is telescopically calculated as
    \begin{align}
        &\sum_{i=1}^{q-2}\left(i\lu(\fre_{i+1},\fre_{p+q-1-i})+(p+q-1-i)\lu(\fre_{i},\fre_{p+q-i})\right)\\
        &=(p+q-2)\sum_{i=1}^{q-2}\lu(\fre_{i},\fre_{p+q-i})+\sum_{i=1}^{q-2}\left(i\lu(\fre_{i+1},\fre_{p+q-1-i})-(i-1)\lu(\fre_{i},\fre_{p+q-i})\right)\\
        &=(p+q-2)\sum_{i=1}^{q-2}\lu(\fre_{i},\fre_{p+q-i})+(q-2)\lu(\fre_{q-1},\fre_{p+1}),
    \end{align}
    while the second sum is equal to, by Jacobi's identity,
    \begin{align}
        &\sum_{i=1}^{q-2}\left(\lu(\lu(\fE,\fre_{i}),\fre_{p+q-1-i})+\lu(\fre_{i},\lu(\fE,\fre_{p+q-1-i}))\right)\\
        &=\sum_{i=1}^{q-2}\left(\lu(\lu(\fE,\fre_{i}),\fre_{p+q-1-i})+\lu(\lu(\fre_{p+q-1-i},\fE),\fre_{i})\right)\\
        &=-\sum_{i=1}^{q-2}\lu(\lu(\fre_{i},\fre_{p+q-1-i}),\fE).
    \end{align}  
    Combining them, we obtain
    \begin{align}
        &(q-1)\arit(\fre_{p})(\fre_{q})+(p-1)\arit(\fre_{p+1})(\fre_{q-1})\\
        &=\begin{multlined}[t](q-1)\arit(\fE)(\fre_{p+q-1})+\lu(\fre_{q-1},\fre_{p+1})+(q-1)\lu(\fre_{p+q-1},\fE)\\
            +\sum_{i=1}^{q-2}\left(\lu(\arit(\fE)(\fre_{i}),\fre_{p+q-1-i})+\lu(\fre_{i},\arit(\fE)(\fre_{p+q-1-i}))\right)+\sum_{i=1}^{q-2}\lu\left(\lu(\fre_{i},\fre_{p+q-1-i}),\fE\right)\end{multlined}\\
        &=\begin{multlined}[t](q-1)\arit(\fE)(\fre_{p+q-1})+\lu(\fre_{q-1},\fre_{p+1})+(q-1)\lu(\fre_{p+q-1},\fE)\\-\sum_{i=1}^{q-2}\lu\left(\lu(\fre_{i},\fre_{p+q-1-i}),\fE\right)+(p+q-2)\sum_{i=1}^{q-2}\lu(\fre_{i},\fre_{p+q-i})+(q-2)\lu(\fre_{q-1},\fre_{p+1})\\+\sum_{i=1}^{q-2}\lu(\lu(\fre_{i},\fre_{p+q-1-i}),\fE)\end{multlined}\\
        &=\begin{multlined}[t](q-1)\arit(\fE)(\fre_{p+q-1})+(q-1)\lu(\fre_{p+q-1},\fE)\\+(q-1)\lu(\fre_{q-1},\fre_{p+1})+(p+q-2)\sum_{i=1}^{q-2}\lu(\fre_{i},\fre_{p+q-i})\end{multlined}\\
        &=\begin{multlined}[t](q-1)(A(p+q-1,1)+\fre_{p+q}-\lu(\fre_{p+q-1},\fE)+(p+q-2)\fre_{p+q})\\
            +(q-1)\lu(\fre_{p+q-1},\fE)+(q-1)\lu(\fre_{q-1},\fre_{p+1})+(p+q-2)\sum_{i=1}^{q-2}\lu(\fre_{i},\fre_{p+q-i})\end{multlined}\\
        &=(q-1)(p+q-1)\fre_{p+q}+(q-1)\lu(\fre_{q-1},\fre_{p+1})+(p+q-2)\sum_{i=1}^{q-2}\lu(\fre_{i},\fre_{p+q-i}),
    \end{align}
    where the last equality is due to $A(\bullet,q)=0$.
    On the other hand, the left-hand side becomes
    \begin{align}
        &(q-1)\arit(\fre_{p})(\fre_{q})+(p-1)\arit(\fre_{p+1})(\fre_{q-1})\\
        &=\begin{multlined}[t](q-1)\left(E(p,q)+q\fre_{p+q}+\sum_{i=1}^{q-1}\lu(\fre_{i},\fre_{p+q-i})\right)\\+(p-1)\left(E(p+1,q-1)+(q-1)\fre_{p+q}+\sum_{i=1}^{q-2}\lu(\fre_{i},\fre_{p+q-i})\right)\end{multlined}\\
        &=\begin{multlined}[t](q-1)E(p,q)+(p-1)E(p+1,q-1)+(q-1)(p+q-1)\fre_{p+q}\\+(q-1)\lu(\fre_{q-1},\fre_{p+1})+(p+q-2)\sum_{i=1}^{q-2}\lu(\fre_{i},\fre_{p+q-i}).\end{multlined}
    \end{align}
    Therefore we obtain
    \begin{align}
        &(q-1)E(p,q)+(p-1)E(p+1,q-1)\\
        &=\begin{multlined}[t](q-1)(p+q-1)\fre_{p+q}+(q-1)\lu(\fre_{q-1},\fre_{p+1})+(p+q-2)\sum_{i=1}^{q-2}\lu(\fre_{i},\fre_{p+q-i})\\-(q-1)(p+q-1)\fre_{p+q}-(q-1)\lu(\fre_{q-1},\fre_{p+1})-(p+q-2)\sum_{i=1}^{q-2}\lu(\fre_{i},\fre_{p+q-i})\end{multlined}\\
        &=0.
    \end{align}
    This equality guarantees the desired equivalence.
\end{proof}
\end{proof}

\begin{proposition}\label{prop:he_lie}
    The continuous $R$-linear map $\fHe_{R}\colon\DIFF_{x}(R)\to\ARI(R)$ given by $\rre_{r}\coloneqq x^{r+1}\frac{d}{dx}\mapsto\fre_{r}$ is a homomorphism of Lie algebras.
\end{proposition}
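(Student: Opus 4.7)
The plan is to verify the bracket compatibility on the topological $R$-linear generators $\{\rre_r\}_{r \geq 1}$ of $\DIFF_x(R)$ and then extend by continuity and $R$-bilinearity. Since each $\fre_r$ is supported in length $r$ (as is apparent from the recursive definition $\fre_{r+1} = \arit(\fre_r)(\fE)$), any $R$-linear combination $\sum_{r=1}^{\infty} c_r \rre_r$ has image $\sum_{r=1}^{\infty} c_r \fre_r$ that converges in the $\Fil^{\ge n}$-topology on $\BIMU(R)$; so the definition $\rre_r \mapsto \fre_r$ unambiguously yields a continuous $R$-linear map $\fHe_R$.

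The computation on generators is elementary. By Proposition \ref{prop:diff}, the bracket on $\DIFF_x(R)$ is $[A,B] = BA - AB$, so applying $\rre_s \rre_r$ and $\rre_r \rre_s$ to a test power series $g$ and subtracting gives
\[
[\rre_r, \rre_s] = \bigl((r+1) - (s+1)\bigr) x^{r+s+1}\frac{d}{dx} = (r-s)\,\rre_{r+s},
\]
the second-order terms cancelling. Hence $\fHe_R([\rre_r, \rre_s]) = (r-s)\fre_{r+s}$, which by Proposition \ref{prop:ecalle_47} equals $\ari(\fre_r, \fre_s) = \ari(\fHe_R(\rre_r), \fHe_R(\rre_s))$. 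This establishes the homomorphism property on every pair of generators.

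To extend to arbitrary $D, D' \in \DIFF_x(R)$, I would invoke the fact that both sides of the desired equality are $R$-bilinear in $(D, D')$ and continuous. Continuity of $\ari$ in each argument follows because $\arit(\leng_r(A))$ raises the $\Fil^{\ge \bullet}$-filtration degree by at least $r$, making the formal series $\ari\bigl(\sum_r c_r \fre_r, \sum_s d_s \fre_s\bigr) = \sum_{r,s} c_r d_s (r-s) \fre_{r+s}$ summable. The matching $R$-bilinear identity on $\DIFF_x(R)$ is immediate from the computation above. The main (and only) nontrivial input is Proposition \ref{prop:ecalle_47}, which has already been established; the remainder is a routine continuity/bilinearity argument.
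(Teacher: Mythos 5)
Your proposal is correct and follows essentially the same route as the paper: compute $[\rre_{r},\rre_{s}]=(r-s)\rre_{r+s}$ from the anti-commutator bracket of Proposition \ref{prop:diff}, match it against $\ari(\fre_{r},\fre_{s})=(r-s)\fre_{r+s}$ from Proposition \ref{prop:ecalle_47}, and conclude by $R$-bilinearity and continuity. The extra remarks on convergence in the $\Fil^{\ge n}$-topology are fine but not needed beyond what the paper leaves implicit.
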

\begin{proof}
    It is sufficient to show that $\fHe_{R}([\rre_{r},\rre_{s}])=\ari(\fre_{r},\fre_{s})$ for $r,s\ge 1$.
    Since the Lie structure on $\DIFF_{x}(R)$ is given by the anti-commutator (Proposition \ref{prop:diff}), we have
    \begin{align}
        [\rre_{r},\rre_{s}]
        &=x^{s+1}\frac{d}{dx}x^{r+1}\frac{d}{dx}-x^{r+1}\frac{d}{dx}x^{s+1}\frac{d}{dx}\\
        &=x^{s+1}\left((r+1)x^{r}\frac{d}{dx}+x^{r+1}\frac{d^{2}}{dx^{2}}\right)-x^{r+1}\left((s+1)x^{s}\frac{d}{dx}+x^{s+1}\frac{d^{2}}{dx^{2}}\right)\\
        &=(r-s)x^{r+s+1}\frac{d}{dx}\\
        &=(r-s)\rre_{r+s}.
    \end{align}
    On the other hand, Proposition \ref{prop:ecalle_47} says that the family $\{\fre_{r}\}_{r}$ satisfies the quite same relations.
    It means that $\fHe_{R}$ preserves the Lie brackets.
\end{proof}

\subsection{Some corollaries}
\begin{definition}\label{def:Se}
    We define\footnote{We slightly changed the notation from \'{E}calle's original one \cite[\S 4.1]{ecalle11}. He denoted it simply by $\fSe_{f}$, but we would like to reserve the symbol $\fSe$ for the morphism of schemes rather than that of rational points.} the morphism $\fSe\colon\GIFF_{x}\to\GARI$ of schemes as follows: it is what makes the following diagram of schemes commutative.
    \[\xymatrix{
        \GIFF_{x} \ar[r]^{\fSe} & \GARI\\
        \DIFF_{x} \ar[u]^{\exp} \ar[r]_{\fHe} & \ARI \ar[u]_{\expari}
    }\]    
\end{definition}

\begin{proposition}[{\cite[(4.1.6)]{schneps15}}]\label{prop:se_gari}
    Let $f$ and $g$ belong to $\GIFF_{x}(R)$.
    Then we have $\fSe_R(f\circ g)=\gari(\fSe_{R}(f),\fSe_{R}(g))$.
\end{proposition}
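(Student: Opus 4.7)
The plan is to exploit the pro-unipotent structure of both $\GIFF_{x}$ and $\GARI$, together with the fact that $\fHe$ is a Lie-algebra homomorphism (Proposition \ref{prop:he_lie}), by invoking the Baker--Campbell--Hausdorff formula on each side. Concretely, since both groups are pro-unipotent (Propositions \ref{prop:GIFF_pro_unipotent} and \ref{prop:GARI_pro_unipotent}), their multiplication is recovered from the bracket on the corresponding Lie algebra via the universal Lie series $\mathrm{BCH}(X,Y)=X+Y+\tfrac{1}{2}[X,Y]+\cdots$.

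First, by Corollary \ref{cor:exp_giff} I would write $f=\exp(X)$ and $g=\exp(Y)$ for unique $X,Y\in\DIFF_{x}(R)$, and assert that
\[
f\circ g=\exp(\mathrm{BCH}(X,Y))
\]
in $\GIFF_{x}(R)$. Since $\fHe_{R}$ preserves Lie brackets by Proposition \ref{prop:he_lie}, it commutes with the universal Lie series:
\[
\fHe_{R}(\mathrm{BCH}(X,Y))=\mathrm{BCH}(\fHe_{R}(X),\fHe_{R}(Y)).
\]
On the $\GARI$ side, since $\expari$ is the Lie-exponential of the pro-unipotent group $\GARI$ by Proposition \ref{prop:expari}, BCH again gives $\gari(\expari(A),\expari(B))=\expari(\mathrm{BCH}(A,B))$ for $A,B\in\ARI(R)$. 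Stringing these identities together through the defining square of $\fSe$ yields
\[
\fSe_{R}(f\circ g)=\expari(\fHe_{R}(\mathrm{BCH}(X,Y)))=\expari(\mathrm{BCH}(\fHe_{R}(X),\fHe_{R}(Y)))=\gari(\fSe_{R}(f),\fSe_{R}(g)),
\]
which is the claim.

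The main obstacle is justifying the BCH identity in each of the two pro-unipotent groups. For $\GIFF_{x}$, one can use the faithful pro-unipotent representation $\rho$ on $\BIFF_{x}$ constructed in the proof of Proposition \ref{prop:GIFF_pro_unipotent}: there $\rho_{R[\ep]}(\exp(D))$ coincides with the usual operator exponential $\id+\ep D+\tfrac{1}{2}\ep^{2}D^{2}+\cdots$, so BCH follows from the classical identity in the associative algebra of endomorphisms of $\BIFF_{x}$, together with the faithfulness of $\rho$. For $\GARI$, the parallel argument goes through the representation $\tau$ on $\BIMU(\bbK)$ of Proposition \ref{prop:GARI_pro_unipotent}, using the identification $\tau_{R}(\expari(A))=\sum_{n\ge 0}p_{A}^{\circ n}/n!$ with $p_{A}(M)=\preari(M,A)$, which is precisely the content established in the proof of Proposition \ref{prop:expari} via Racinet's criterion. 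Once BCH is available on both sides, the remaining chain of equalities above is purely formal.
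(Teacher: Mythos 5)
Your proposal is correct and takes essentially the same route as the paper: the paper's proof is a one-line appeal to the commutative square defining $\fSe$ together with Proposition \ref{prop:he_lie}, and your argument simply makes explicit the underlying mechanism (BCH in the two pro-unipotent groups, plus the fact that a continuous Lie algebra homomorphism commutes with the universal Lie series) that justifies that one-liner.
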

\begin{proof}
    This is a consequence of the diagram in Definition \ref{def:Se} and the fact that the horizontal lower arrow $\fHe$ is compatible with the Lie algebra structure (Proposition \ref{prop:he_lie}).
\end{proof}

\begin{proposition}\label{prop:se_symmetral}
    For any $f\in\GIFF_{x}$, the bimould $\fSe_{R}(f)$ is symmetral.
\end{proposition}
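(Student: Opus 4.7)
The plan is to reduce symmetrality of $\fSe_{R}(f)$ to alternality of its infinitesimal generator in $\ARI$, and then invoke the Lie-exponential correspondence. By Corollary \ref{cor:exp_giff}, every $f \in \GIFF_{x}(R)$ can be written uniquely as $f = \exp(D)$ for some $D \in \DIFF_{x}(R)$. Combining this with the commutative diagram in Definition \ref{def:Se} gives
\[\fSe_{R}(f) \;=\; \expari(\fHe_{R}(D)).\]
Proposition \ref{prop:expari_alternal} tells us that the restriction of $\expari$ is a bijection $\ARI_{\al}(R) \to \GARI_{\as}(R)$; therefore it suffices to show that $\fHe_{R}(D)$ lies in $\ARI_{\al}(R)$.

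Expanding $D = \sum_{r \geq 1} \epsilon_{r} \rre_{r}$ and using $R$-linearity of $\fHe_{R}$ (Proposition \ref{prop:he_lie}), we obtain $\fHe_{R}(D) = \sum_{r \geq 1} \epsilon_{r} \fre_{r}$, a sum that converges in the filtration topology on $\BIMU(R)$ because $\fre_{r} \in \BIMU_{r}(R)$. The crux of the proof is thus to establish alternality of each $\fre_{r}$, which we do by induction on $r$. The base case $\fre_{1} = \fE \in \BIMU_{1}(R)$ is trivial, since any shuffle $\bw \sh \bw'$ of two non-empty sequences has length at least $2$. For the inductive step we use the defining recursion $\fre_{r+1} = \arit(\fre_{r})(\fE)$ together with the known stability property (recalled from \cite[(A.3)]{fk23}) that $\arit(Y)(X)$ is alternal whenever $X$ and $Y$ both are.

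Finally, since alternality is a collection of linear conditions on individual components of a bimould, it is preserved by $R$-linear combinations and, because each component is determined by only finitely many terms of the sum (those of matching length), by the convergent infinite sum above. Hence $\fHe_{R}(D) \in \LU_{\al}(R) = \ARI_{\al}(R)$, and Proposition \ref{prop:expari_alternal} yields $\fSe_{R}(f) \in \GARI_{\as}(R)$. The argument is essentially an assembly of already-established structural results, so I do not anticipate a serious obstacle; the only point that requires care is verifying that alternality is stable under the length-graded infinite sum, but this reduces to finite stability one length at a time.
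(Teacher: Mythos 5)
Your proposal is correct and follows essentially the same route as the paper: the paper's proof likewise reduces to the alternality of each $\fre_{r}$ (established by the same induction via $\arit$-stability from \cite[(A.3)]{fk23}) combined with Proposition \ref{prop:expari_alternal}. You have merely spelled out the intermediate steps (the decomposition $\fSe_{R}(f)=\expari(\fHe_{R}(D))$ and the stability of alternality under the length-graded sum) that the paper leaves implicit.
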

\begin{proof}
    This is a consequence of the fact that $\fre_{r}$ is alternal for every $r\ge 1$ and Proposition \ref{prop:expari_alternal}.
\end{proof}

\begin{proposition}\label{prop:neg_pari_se}
    For any $f\in\GIFF_{x}$, the bimould $\fSe_{R}(f)$ is $\neg\circ\pari$-invariant.
\end{proposition}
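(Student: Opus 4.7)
My plan is to mimic the argument used in Proposition \ref{prop:ez_and_es} (3), which established the analogous $\neg\circ\pari$-invariance of $\fes=\expari(\fE)$. Since $\fSe_{R}(f)=\expari(\fHe_{R}(f_{\ast}))=\expari(\sum_{r\ge 1}\epsilon_{r}^{f}\fre_{r})$ by Definition \ref{def:Se} and Proposition \ref{prop:he_lie}, it suffices to show (i) that each $\fre_{r}$ is $\neg\circ\pari$-invariant, and (ii) that $\neg\circ\pari$ commutes with $\expari$.

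For (i), I would proceed by induction on $r$. The base case $\fre_{1}=\fE$ follows directly from the parity condition in Definition \ref{def:unit}, which says $\neg(\fE)=-\fE$, whence $(\neg\circ\pari)(\fE)=\fE$ as $\fE\in\BIMU_{1}(R)$. For the inductive step, combine the recursion $\fre_{r+1}=\arit(\fre_{r})(\fE)$ with Lemma \ref{lem:neg_pari_axit}, which gives $h\circ\arit(A)=\arit(h(A))\circ h$ for $h\in\{\neg,\pari\}$ (because $\arit(A)=\axit(A,-A)$ and both $\neg,\pari$ are linear). Applying $\neg\circ\pari$ to both sides of the recursion and using the induction hypothesis on $\fre_{r}$ together with $\neg\circ\pari$-invariance of $\fE$, one obtains $(\neg\circ\pari)(\fre_{r+1})=\arit(\fre_{r})(\fE)=\fre_{r+1}$.

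For (ii), I would argue that $\neg$ and $\pari$ each preserve $\mmu$ (Lemma \ref{lem:operators}) and conjugate $\arit$ as just noted, hence they commute with $\preari(A,B)=\arit(B)(A)+\mmu(A,B)$ in the sense $(\neg\circ\pari)(\preari(A,B))=\preari((\neg\circ\pari)(A),(\neg\circ\pari)(B))$. From the explicit formula for $\expari$ in Proposition \ref{prop:expari} as the iterated left-associated $\preari$-sum, one concludes $(\neg\circ\pari)\circ\expari=\expari\circ(\neg\circ\pari)$. (Continuity with respect to the $\Fil^{\ge n}$-topology is clear since both operators preserve length.)

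Combining (i) and (ii), $\fHe_{R}(f_{\ast})=\sum_{r\ge 1}\epsilon_{r}^{f}\fre_{r}$ is a convergent $R$-linear combination of $\neg\circ\pari$-invariant bimoulds, hence itself $\neg\circ\pari$-invariant, and applying $\expari$ preserves this invariance. I do not anticipate a substantive obstacle here: the whole argument is a verbatim lift of the $\fes$-case, with $\fE$ replaced by the Lie element $\fHe_{R}(f_{\ast})\in\ARI(R)$. The only mild subtlety is checking the $\arit$-conjugation formula with the correct signs; this is immediate since $\neg$ and $\pari$ are $R$-linear self-involutions and $\arit(A)=\axit(A,-A)$ is linear in its argument.
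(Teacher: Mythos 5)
Your proposal is correct and follows essentially the same route as the paper's own proof: reduce to the $\neg\circ\pari$-invariance of each $\fre_{r}$ via induction on $r$ (base case from the parity condition, inductive step from the conjugation formula of Lemma \ref{lem:neg_pari_axit} applied to $\fre_{r+1}=\arit(\fre_{r})(\fE)$), and then use the fact that $\neg\circ\pari$ distributes over $\preari$ and hence commutes with $\expari$. No gaps.
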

\begin{proof}
    As Lemma \ref{lem:neg_pari_axit} holds, we see that $\neg$ and $\pari$ are distributive for $\preari$, and thus they commute with $\expari$.
    Hence we show
    \begin{align}
        (\neg\circ\pari\circ\fSe_{R})(f)
        &=(\neg\circ\pari\circ\expari)\left(\sum_{r=1}^{\infty}\epsilon^{f}_{r}\fre_{r}\right)\\
        &=\expari\left(\sum_{r=1}^{\infty}\epsilon^{f}_{r}(\neg\circ\pari)(\fre_{r})\right).
    \end{align}
    So we check that $\fre_{r}$ is $\neg\circ\pari$-invariant for $r\ge 1$.
    For the $r=1$ case, $\fre_{1}$ is simply the unit $\fE$ and its invariance is nothing but a part of its definition (the parity condition).
    When $\fre_{r}$ is $\neg\circ\pari$-invariant, from Lemma \ref{lem:neg_pari_axit} we have
    \begin{align}
        (\neg\circ\pari)(\fre_{r+1})
        &=(\neg\circ\pari\circ\arit(\fre_{r}))(\fE)\\
        &=\arit((\neg\circ\pari)(\fre_{r})\circ\neg\circ\pari)(\fE)\\
        &=\arit(\fre_{r})(\fE)\\
        &=\fre_{r+1}.
    \end{align}
    Hence we met with success in the induction on $r$.
\end{proof}

\subsection{Separation lemma}
Let $\gepar$ denote the operator $A\mapsto\mmu((\anti\circ\swap)(A),\swap(A))$.
This subsection is devoted to show the following.

\begin{theorem}[\'{E}calle's \emph{separation lemma}; {\cite[(4.39)]{ecalle11}}]\label{thm:separation}
    Let $f$ belong to $\GIFF_{x}(R)$.
    Then we have
    \[\gepar(\fSe_{f})=\sum_{r=0}^{\infty}(r+1)a_{r}^{f}\mmu^{r}(\fO),\]
\end{theorem}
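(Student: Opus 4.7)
The plan is to reduce the full identity to a linear statement about $\dro_r = \swap(\fre_r)$, establish that statement via the tripartite identity for $\fO$, and then lift it to arbitrary $f$ using the 1-parameter subgroup structure of $\fSe_R$.

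First I would prove the auxiliary identity $\dro_r + \anti(\dro_r) = (r+1)\,\mmu^r(\fO)$ for each $r \geq 1$. By Lemma \ref{lem:ecalle_459}, $\dro_r(\bw) = \sum_{i=1}^{r} (r+1-i)\,T_i(\bw)$ where $T_i(\bw) := \foz(\ba\flr{w_i})\fO(\ful{\ba}w_i\fur{\bb})\foz(\fll{w_i}\bb)$ for the decomposition $\bw = \ba w_i \bb$ placing the $i$-th letter in the middle. The key observation is that $T_{r+1-i}(\anti(\bw)) = T_i(\bw)$: the central factor $\fO(\ful{\ba}w_i\fur{\bb})$ reduces to $\fO\binom{u_1+\cdots+u_r}{v_i}$, symmetric under reordering of the outer letters, while the two $\foz$-factors depend only on the unordered family $\{\fO\binom{u_k}{v_k-v_i}\}_{k\neq i}$. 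Hence $\anti(\dro_r)(\bw) = \sum_i i\cdot T_i(\bw)$, so $(\dro_r+\anti(\dro_r))(\bw) = (r+1)\sum_i T_i(\bw)$. The $\push$-neutrality of $\mmu^r(\fO)$ from Proposition \ref{prop:unit_push} (applied to the flexion unit $\fO$) is exactly the identity $\sum_i T_i(\bw) = \fO(w_1)\cdots\fO(w_r) = \mmu^r(\fO)(\bw)$.

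Next, for fixed $f \in \GIFF_x(R)$, I would consider the 1-parameter subgroup $f_t := \exp\!\left(t\,f_*(x)\frac{d}{dx}\right)$, so $f_0 = \id$, $f_1 = f$, and $Y(t) := \fSe_R(f_t) = \expari(tf_*) \in \GARI(R\jump{t})$ by Definition \ref{def:Se} and Proposition \ref{prop:se_gari}. The group law $Y(s+t) = \gari(Y(s),Y(t))$ linearizes in $s$ at $s=0$ to $\dot Y(t) = \gari(f_*, Y(t))$, while the classical flow equation $\dot f_t(x) = f_*(f_t(x))$ gives $\dot a_r^{f_t} = [x^{r+1}]\,f_*(f_t(x))$. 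Setting $H(t) := \gepar(Y(t))$ and $K(t) := \sum_{r\geq 0}(r+1)\,a_r^{f_t}\,\mmu^r(\fO)$, both elements of $\BIMU(R\jump{t})$ with $H(0) = K(0) = 1$, the theorem is equivalent to $H(1) = K(1)$, which I would deduce from $H(t) \equiv K(t)$ length-by-length. This equality is proved via an ODE comparison: $\dot H(t)$ unfolds into $\swap(\gari(f_*, Y(t))) = \gira(\swap(f_*), \swap(Y(t)))$, multiplied against $\anti(\swap(Y(t)))$ and $\swap(Y(t))$, while $\dot K(t) = \sum_r (r+1)[x^{r+1}]\,f_*(f_t(x))\,\mmu^r(\fO)$ is an explicit polynomial in $\mmu$-powers of $\fO$.

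The main obstacle is precisely this ODE matching. Step 1 together with $\dot Y(0) = f_*$ immediately gives $\dot H(0) = \dot K(0)$, but for general $t$ one must unwind the nested $\swap$-conjugate expression $\gira(\swap(f_*), \swap(Y(t)))$ and show that the resulting combinatorial sum of $\fO$-products telescopes, via iterated tripartite identities, to exactly the series $f_*(f_t(x))$ evaluated on $\mmu$-powers of $\fO$. This reduction --- analogous to, but substantially more elaborate than, the single-step collapse of Step 1 --- requires a careful bookkeeping of how the flexion markers propagate through the $\gira$-action, and it constitutes the principal technical content of the proof.
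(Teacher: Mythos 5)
Your Step 1 is correct and is a clean computation: the symmetry $T_{r+1-i}(\anti(\bw))=T_i(\bw)$ does follow from the shape of the flexion markers, and $\sum_i T_i(\bw)=\foz(\bw)$ is exactly the $\push$-neutrality of $\mmu^r(\fO)$, so $\dro_r+\anti(\dro_r)=(r+1)\mmu^r(\fO)$ holds. But this is only the linearization of the theorem at $f=\id$, and your own write-up concedes that the passage to general $f$ --- the ``ODE matching'' for all $t$ --- is left undone. That missing step is not a routine verification; it is the entire technical core of the result. Concretely, to run an ODE-comparison you must (a) express $\dot H(t)$ as an operator applied to $H(t)$ \emph{itself} (not to $Y(t)$), and (b) verify that the explicit candidate $K(t)$ satisfies the \emph{same} equation. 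For (a), your form $\dot Y(t)=\gari(F,Y(t))$ (generator acting on the left) does not pass well through $\swap$; you need the right-multiplicative form $\dot Y(t)=\preari(Y(t),\fHe_R(f_*))$, after which the swap rules \eqref{eq:amit_swap}--\eqref{eq:anit_swap} and a Leibniz computation give $\dot H(t)=\iwat(\swap(F))(H(t))+\mmu(H(t),\swap(F))+\mmu(\anti(\swap(F)),H(t))$, using the $\mantar$- and $\neg\circ\pari$-invariance of the $\fre_r$. Step (b) is then an identity asserting that $\sum_r(r+1)\dot a_r^{f_t}\mmu^r(\fO)$ equals $(\id+\anti)$ applied to $\amit(\swap(F))(K(t))+\mmu(K(t),\swap(F))$; proving it requires exactly the iterated tripartite-identity and $\push$-neutrality bookkeeping you defer, plus a generating-function identity in $x$ tying the resulting coefficients to $f_*(f_t(x))$. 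Since none of this is carried out, the proposal is an outline rather than a proof.

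For comparison, the paper's argument has the same architecture but replaces your $t$-flow by the length-grading derivation $\der$ and your infinitesimal generator $f_*$ by the infinitesimal dilator $f_\#$ (via $\fTe_R(f)=\fHe_R(f_\#)$ and Lemma \ref{lem:Se_schneps}); Corollary \ref{cor:ecalle_1195} supplies step (a), and Lemma \ref{lem:ecalle_1197} supplies step (b) --- a several-page computation using the explicit formula \eqref{eq:ecalle_459} for $\dro_s$, one application of the tripartite identity, $\push$-neutrality of $\foz$, the identity $(\anti\circ\push^{\circ j})=(\push^{\circ(l+1-j)}\circ\anti)$ on $\BIMU_l$ (which is the global form of your Step 1 symmetry), and a closed-form evaluation of a double generating function. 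Your Step 1 would reappear inside that computation, but it cannot substitute for it.
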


For each $f\in\GIFF_{x}(R)$, regarding $f_{\#}$ as an element of $\DIFF_{x}(R)$, we define
\[\fTe_{R}(f)\coloneqq\fHe_{R}(f_{\#})=\sum_{r=1}^{\infty}\gamma_{r}^{f}\fre_{r},\]
when $f_{\#}(x)$ is expanded as $\sum_{r=1}^{\infty}\gamma_{r}^{f}x^{r+1}$.

There is another useful operator $\der$: it acts on bimoulds as
\[\der(A)(w_{1},\ldots,w_{r})\coloneqq rA(w_{1},\ldots,w_{r}),\]
and obviously gives a derivation on the $R$-algebra $(\BIMU(R),\mmu)$.

\begin{lemma}[{\cite[(4.20)]{ecalle11}}]\label{lem:Se_schneps}
    For every $f\in\GIFF_{x}(R)$, we have
    \[(\der\circ\fSe_{R})(f)=\preari(\fSe_{R}(f),\fTe_{R}(f)).\]
\end{lemma}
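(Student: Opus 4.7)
The plan is to work over the ring of dual numbers $R[\ep]\coloneqq R\jump{\ep}/(\ep^{2})$, compute $\fSe_{R[\ep]}(g_{1+\ep})$ in two different ways for the $R[\ep]$-rational point $g_{1+\ep}\in\GIFF_{x}(R[\ep])$ defined by $g_{1+\ep}(x)\coloneqq(1+\ep)^{-1}f((1+\ep)x)$, and compare the resulting $\ep$-coefficients.

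For the first computation, I introduce the $S$-linear operator $\alpha_{\lambda}\colon\BIMU(S)\to\BIMU(S)$ (defined for any commutative $\bbK$-algebra $S$ and any $\lambda\in S$) by $\alpha_{\lambda}(A)(\bw)\coloneqq\lambda^{\ell(\bw)}A(\bw)$, so that $\alpha_{1+\ep}(A)=A+\ep\,\der(A)$. A direct inspection of the defining formulas shows that $\mmu$, $\amit$ and $\anit$ are length-additive --- in each term of the defining sum, the lengths of the sequences at which the two arguments are evaluated add up to $\ell(\bw)$ --- so $\alpha_{\lambda}$ intertwines $\mmu$, $\arit$, $\preari$, and (via the series expansion of Proposition \ref{prop:expari} together with $\alpha_{\lambda}(1)=1$) $\expari$ as well. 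An elementary power-series computation modulo $\ep^{2}$ gives $(g_{1+\ep})_{\ast}(x)=(1-\ep)f_{\ast}((1+\ep)x)=f_{\ast}(x)+\ep(xf_{\ast}'(x)-f_{\ast}(x))$, whose $\fHe$-image is $\fHe(f_{\ast})+\ep\,\der(\fHe(f_{\ast}))=\alpha_{1+\ep}(\fHe(f_{\ast}))$. Invoking the commutation $\expari\circ\alpha_{1+\ep}=\alpha_{1+\ep}\circ\expari$ therefore yields
\[\fSe_{R[\ep]}(g_{1+\ep})=\alpha_{1+\ep}(\fSe_{R}(f))=\fSe_{R}(f)+\ep\,\der(\fSe_{R}(f)).\]

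For the second computation, I factor $g_{1+\ep}$ inside the group $\GIFF_{x}(R[\ep])$. Taylor expansion modulo $\ep^{2}$ gives $g_{1+\ep}(x)=f(x)+\ep(xf'(x)-f(x))$, and applying $f^{-1}$ on the left with $(f^{-1})'(y)=1/f'(f^{-1}(y))$ produces $k\coloneqq f^{-1}\circ g_{1+\ep}=\id+\ep f_{\#}$, so that $g_{1+\ep}=f\circ k$. Since the infinitesimal generator of $k$ is then simply $\ep f_{\#}$ (the quadratic corrections in the logarithm vanish modulo $\ep^{2}$), we obtain $\fSe_{R[\ep]}(k)=\expari(\ep\,\fTe_{R}(f))=1+\ep\,\fTe_{R}(f)$. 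Combining Proposition \ref{prop:se_gari} with the linearization rule $\gari(A,1+\ep B)=A+\ep\preari(A,B)$ from Remark \ref{rem:linearization} then gives
\[\fSe_{R[\ep]}(g_{1+\ep})=\gari(\fSe_{R}(f),1+\ep\,\fTe_{R}(f))=\fSe_{R}(f)+\ep\,\preari(\fSe_{R}(f),\fTe_{R}(f)),\]
and matching $\ep$-coefficients with the first computation proves the lemma.

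The main technical obstacle is establishing the length-additivity of $\amit$ and $\anit$, which rests on the identities $\ell(\ba\ful{\bb}\bc)=\ell(\ba)+\ell(\bc)$ and $\ell(\bb\flr{\bc})=\ell(\bb)$ (and the symmetric identities for $\anit$); this is the bookkeeping step that allows homogeneity to pass through $\expari$. Once it is in place, the rest of the argument reduces to two first-order Taylor expansions and single applications of Proposition \ref{prop:se_gari} and Remark \ref{rem:linearization}.
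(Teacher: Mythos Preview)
Your proof is correct and follows essentially the same strategy as the paper: work over $R[\ep]$, factor as $f\circ(\id+\ep f_{\#})$, invoke Proposition \ref{prop:se_gari} and Remark \ref{rem:linearization} for one computation, and use length-additivity of $\preari$ to identify the other computation with $\fSe_{R}(f)+\ep\,\der(\fSe_{R}(f))$. Your introduction of the auxiliary element $g_{1+\ep}$ (whose infinitesimal generator you compute directly) and the scaling operator $\alpha_{\lambda}$ packages the length-additivity step more transparently than the paper's version, which instead applies the Euler operator $x\frac{d}{dx}-\id$ to the exponential expansion of $f$ and then invokes $\preari(\fre_{r_{1}},\ldots,\fre_{r_{n}})\in\BIMU_{r_{1}+\cdots+r_{n}}$.
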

\begin{proof}
    Remembering Remark \ref{rem:linearization}, it suffices to prove
    \begin{equation}\label{eq:Se_schneps}
        \fSe_{R}(f)+\ep(\der\circ\fSe_{R})(f)=\gari(\fSe_{R}(f),1+\ep\fTe_{R}(f)).
    \end{equation}
    Since it obviously holds that
    \[\exp\left(\ep f_{\#}(x)\frac{d}{dx}\right)(x)=x+\ep f_{\#}(x),\]
    we can show
    \begin{align}
        \fSe_{R}(\id+\ep f_{\#})
        &=(\expari\circ\fHe_{R})\left(\ep f_{\#}(x)\frac{d}{dx}\right)\\
        &=1+\ep\fHe_{R}\left(\ep f_{\#}(x)\frac{d}{dx}\right)\\
        &=1+\ep\fTe_{R}(f).
    \end{align}
    Therefore, by virtue of Proposition \ref{prop:se_gari}, the right-hand side of \eqref{eq:Se_schneps} becomes
    \begin{align}
        \gari(\fSe_{R}(f),1+\ep\fTe_{R}(f))
        &=\gari(\fSe_{R}(f),\fSe_{R}(\id+\ep f_{\#}))\\
        &=\fSe_{R}(f\circ(\id+\ep f_{\#})).
    \end{align}
    Computing the composition directly, we have
    \begin{align}
        (f\circ(\id+\ep f_{\#}))(x)
        &=x+\ep f_{\#}(x)+\sum_{r=1}^{\infty}a_{r}^{f}(x+\ep f_{\#}(x))^{r+1}\\
        &=x+\ep f_{\#}(x)+\sum_{r=1}^{\infty}a_{r}^{f}(x^{r+1}+\ep(r+1)x^{r}f_{\#}(x))\\
        &=f(x)+\ep(f_{\#}(x)+(f'(x)-1)f_{\#}(x)).
    \end{align}
    Then the definition of $f_{\#}(x)$ leads
    \[(f\circ(\id+\ep f_{\#}))(x)=f(x)+\ep(xf'(x)-f(x)).\]
    On the other hand, remembering that the exponential of an operator $g_{\ast}(x)\frac{d}{dx}$ in $\DIFF_{x}$ is generally given as
    \[\exp\left(g_{\ast}(x)\frac{d}{dx}\right)(x)=x+\sum_{n=1}^{\infty}\sum_{r_{1},\ldots,r_{n}\ge 1}\ep_{r_{1}}^{g}\cdots\ep_{r_{n}}^{g}(\rre_{r_{1}}\cdots\rre_{r_{n}})(x)\]
    with the notation $\rre_{r}\coloneqq x^{r+1}\frac{d}{dx}$, we observe that
    \begin{align}
        &x+\sum_{n=1}^{\infty}\sum_{r_{1},\ldots,r_{n}\ge 1}(r_{1}+\cdots+r_{n})\ep_{r_{1}}^{f}\cdots\ep_{r_{n}}^{f}(\rre_{r_{1}}\cdots\rre_{r_{n}})(x)\\
        &=x+\sum_{n=1}^{\infty}\sum_{r_{1},\ldots,r_{n}\ge 1}\ep_{r_{1}}^{f}\cdots\ep_{r_{n}}^{f}\left(x\frac{d}{dx}-\id\right)(\rre_{r_{1}}\cdots\rre_{r_{n}})(x)\\
        &=\left(x\frac{d}{dx}-\id\right)\left(\exp\left(f_{\ast}(x)\frac{d}{dx}\right)\right)\\
        &=\left(x\frac{d}{dx}-\id\right)(f(x))\\
        &=xf'(x)-f(x).
    \end{align}
    Therefore the right-hand side of \eqref{eq:Se_schneps} becomes
    \begin{align}
        \gari(\fSe_{R}(f),1+\ep\fTe_{R}(f))
        &=\fSe_{R}((f\circ(\id+\ep f_{\#})))\\
        &=1+\sum_{n=1}^{\infty}\sum_{r_{1},\ldots,r_{n}\ge 1}(r_{1}+\cdots+r_{n})\ep_{r_{1}}^{f}\cdots\ep_{r_{n}}^{f}\preari(\fre_{r_{1}},\ldots,\fre_{r_{n}}).
    \end{align}
    Since each $\fre_{r_{i}}$ belongs to $\BIMU_{r_{i}}$ and $\preari$ keeps them (namely, $\preari(\fre_{r_{1}},\ldots,\fre_{r_{n}})$ is an element of $\BIMU_{r_{1}+\cdots+r_{n}}$), we obtain
    \begin{align}
        \gari(\fSe_{R}(f),1+\ep\fTe_{R}(f))
        &=1+\sum_{n=1}^{\infty}\sum_{r_{1},\ldots,r_{n}\ge 1}\ep_{r_{1}}^{f}\cdots\ep_{r_{n}}^{f}\der(\preari(\fre_{r_{1}},\ldots,\fre_{r_{n}}))\\
        &=\der(\fSe_{f}).
    \end{align}
    This proves \eqref{eq:Se_schneps}.
\end{proof}

For $f\in\GIFF_{x}(R)$, we put $\dSo_{R}(f)\coloneqq\swap(\fSe_{R}(f))$ and $\dTo_{R}(f)\coloneqq\swap(\fTe_{R}(f))$.

\begin{corollary}\label{cor:ecalle_1195}
    For $f\in\GIFF_{x}(R)$, we have
    \[(\der\circ\dSo_{R})(f)=\iwat(\dTo_{R}(f))(\dSo_{R}(f))+\mmu(\dSo_{R}(f),\dTo_{R}(f)).\]
\end{corollary}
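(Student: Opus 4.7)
The plan is to apply $\swap$ to the identity of Lemma \ref{lem:Se_schneps} and then push $\swap$ across the operators on the right using the Schneps formulas exploited in the proof of Lemma \ref{lem:ecalle_459}. Since $\swap$ preserves length componentwise, it commutes with $\der$, so the left-hand side becomes $\der(\dSo_{R}(f))$. Expanding $\preari=\arit+\mmu$ and $\arit=\amit-\anit$ and applying \eqref{eq:amit_swap} and \eqref{eq:anit_swap} (valid since $\fTe_{R}(f)\in\LU(R)$), I obtain
\begin{align}
\der(\dSo_{R}(f))&=\swap(\amit(\fTe_{R}(f))(\fSe_{R}(f)))-\swap(\anit(\fTe_{R}(f))(\fSe_{R}(f)))+\swap(\mmu(\fSe_{R}(f),\fTe_{R}(f)))\\
&=\amit(\dTo_{R}(f))(\dSo_{R}(f))-\anit(\push(\dTo_{R}(f)))(\dSo_{R}(f))+\mmu(\dSo_{R}(f),\dTo_{R}(f)),
\end{align}
where the correction term $-\swap(\mmu(\fSe_{R}(f),\fTe_{R}(f)))$ produced by \eqref{eq:amit_swap} cancels the $+\swap(\mmu(\fSe_{R}(f),\fTe_{R}(f)))$ coming from the $\mmu$-part of $\preari$.

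It remains to identify the first two summands above with $\iwat(\dTo_{R}(f))(\dSo_{R}(f))=\amit(\dTo_{R}(f))(\dSo_{R}(f))+\anit(\anti(\dTo_{R}(f)))(\dSo_{R}(f))$. By linearity of $\anit$ in its argument, this reduces to the identity $\anti(\dTo_{R}(f))=-\push(\dTo_{R}(f))$, which is the main obstacle.

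To establish it, I exploit two symmetries of $\fTe_{R}(f)=\sum_{r\ge 1}\gamma_{r}^{f}\fre_{r}$. First, $\fTe_{R}(f)$ is alternal because each $\fre_{r}$ is, so by Proposition \ref{prop:mantar_gantar} it is $\mantar$-invariant, which rewrites as $\anti(\fTe_{R}(f))=-\pari(\fTe_{R}(f))$. Second, each $\fre_{r}$ is $\neg\circ\pari$-invariant (as shown by the induction within the proof of Proposition \ref{prop:neg_pari_se}), hence so is the linear combination $\fTe_{R}(f)$, giving $\pari(\fTe_{R}(f))=\neg(\fTe_{R}(f))$. Combining, $\anti(\fTe_{R}(f))=-\neg(\fTe_{R}(f))$. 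Using the expression $\push=\neg\circ\anti\circ\swap\circ\anti\circ\swap$ from Proposition \ref{prop:negpush} together with the commutations $\swap\circ\neg=\neg\circ\swap$ and $\anti\circ\neg=\neg\circ\anti$ (both immediate from the definitions), I then compute
\[\push(\dTo_{R}(f))=(\neg\circ\anti\circ\swap\circ\anti)(\fTe_{R}(f))=-(\neg\circ\anti\circ\swap\circ\neg)(\fTe_{R}(f))=-(\anti\circ\swap)(\fTe_{R}(f))=-\anti(\dTo_{R}(f)),\]
which gives the desired identity and completes the proof.
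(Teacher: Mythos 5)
Your proof is correct and is essentially the paper's argument run in the opposite direction: both start from Lemma \ref{lem:Se_schneps}, transport $\preari(\fSe_{R}(f),\fTe_{R}(f))$ across $\swap$ via \eqref{eq:amit_swap} and \eqref{eq:anit_swap}, and reduce the remaining mismatch to the same symmetry of $\fTe_{R}(f)$ coming from the $\mantar$- and $\neg\circ\pari$-invariance of the $\fre_{r}$ (the paper states it as $(\neg\circ\anti)(\fTe_{R}(f))=-\fTe_{R}(f)$, you as the equivalent $\anti(\dTo_{R}(f))=-\push(\dTo_{R}(f))$).
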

\begin{proof}
    Applying \eqref{eq:amit_swap} and \eqref{eq:anit_swap}, we have
    \begin{align}
        &\iwat(\dTo_{R}(f))(\dSo_{R}(f))+\mmu(\dSo_{R}(f),\dTo_{R}(f))\\
        &=\amit(\dTo_{R}(f))(\dSo_{R}(f))+\anit(\anti(\dTo_{R}(f)))(\dSo_{R}(f))+\mmu(\dSo_{R}(f),\dTo_{R}(f))\\
        &=\begin{multlined}[t]
            \swap(\amit(\fTe_{R}(f))(\fSe_{R}(f))+\mmu(\fSe_{R}(f),\fTe_{R}(f))-\mmu(\dSo_{R}(f),\dTo_{R}(f)))\\
            +\swap(\anit((\push\circ\swap\circ\anti)(\dTo_{R}(f)))(\fSe_{R}(f)))+\mmu(\dSo_{R}(f),\dTo_{R}(f))
        \end{multlined}\\
        &=\begin{multlined}[t](\swap\circ\amit(\fTe_{R}(f))\circ\fSe_{R})(f)+\swap(\mmu(\fSe_{R}(f),\fTe_{R}(f)))\\
            +(\swap\circ\anit((\neg\circ\anti)(\fTe_{R}(f)))\circ\fSe_{R})(f),\end{multlined}
    \end{align}
    where we used \eqref{eq:negpush} in the last equality.
    Since each $\fre_{r}$ is invariant under $\mantar$ and $\neg\circ\pari$ (indeed we showed the latter in the proof of Proposition \ref{prop:neg_pari_se}, and we already have its alternality), we see that
    \begin{align}
        (\neg\circ\anti)(\fTe_{R}(f))
        &=\sum_{r=1}^{\infty}\gamma_{r}^{f}(\neg\circ\anti)(\fre_{r})\\
        &=-\sum_{r=1}^{\infty}\gamma_{r}^{f}(\neg\circ\pari\circ\mantar)(\fre_{r})\\
        &=-\sum_{r=1}^{\infty}\gamma_{r}^{f}\fre_{r}\\
        &=-\fTe_{R}(f).
    \end{align}
    Therefore we obtain
    \begin{align}
        &\iwat(\dTo_{R}(f))(\dSo_{R}(f))+\mmu(\dSo_{R}(f),\dTo_{R}(f))\\
        &=(\swap\circ\amit(\fTe_{R}(f))\circ\fSe_{R})(f)+\swap(\mmu(\fSe_{R}(f),\fTe_{R}(f)))-(\swap\circ\anit(\fTe_{R}(f))\circ\fSe_{R})(f)\\
        &=\swap(\preari(\fSe_{R}(f),\fTe_{R}(f))).
    \end{align}
    The last term becomes $\der(\dSo_{R}(f))$, because Lemma \ref{lem:Se_schneps} holds and $\swap$ commutes with $\der$.
\end{proof}

\begin{lemma}[{\cite[(11.97)]{ecalle11}}]\label{lem:ecalle_1197}
    Let $f$ belong to $\GIFF_{x}(R)$ and put
    \[\dO_{\ast}(f)\coloneqq 1+\sum_{r=1}^{\infty}(r+1)a_{r}^{f}\leng_{r}(\foz).\]
    Then we have the identity
    \[\der(\dO_{\ast}(f))=\iwat(\dTo_{R}(f))(\dO_{\ast}(f))+\mmu(\dO_{\ast}(f),\dTo_{R}(f))+\mmu(\anti(\dTo_{R}(f)),\dO_{\ast}(f)).\]
\end{lemma}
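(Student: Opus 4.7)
The plan is to verify the claimed identity directly, using the fact that it is a first-order linear recursion in $M$.

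First I would observe that the equation
\[\der(M)=\iwat(\dTo_{R}(f))(M)+\mmu(M,\dTo_{R}(f))+\mmu(\anti(\dTo_{R}(f)),M)\]
determines $M$ uniquely from $\leng_{0}(M)$: at a sequence $\bw$ of length $n\ge 1$ the left-hand side is $n\,\leng_{n}(M)(\bw)$, whereas every summand on the right depends only on the components $\leng_{k}(M)$ with $k<n$. This is because $\dTo_{R}(f)(\emp)=0$ (as $\fTe_{R}(f)\in\LU(R)$), which forces a strict length reduction in the two $\mmu$-summands, while the triple-decomposition sums defining $\amit$ and $\anit$ already range over nonempty middle/side blocks. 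Hence in characteristic zero the recursion has a unique solution with any prescribed $\leng_{0}$, and it suffices to verify the identity componentwise.

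The left-hand side at length $n\ge 1$ is immediate: $\der(\dO_{\ast}(f))(\bw)=n(n+1)a_{n}^{f}\fO(w_{1})\cdots\fO(w_{n})$. For the right-hand side I would expand each of the four summands using the concatenation formula for $\mmu$, the triple-decomposition definitions of $\amit$ and $\anit$, the expansion $\dTo_{R}(f)=\sum_{r\ge 1}\gamma_{r}^{f}\dro_{r}$, and the explicit formula for $\dro_{r}$ from Lemma \ref{lem:ecalle_459}. The resulting expression is a weighted sum of products of $\fO$-values at flexed arguments. To collapse these back to the raw product, I would then apply in reverse the iterated tripartite identity
\[\foz(\bw)=\sum_{j=1}^{n}\foz((w_{1}\cdots w_{j-1})\flr{w_{j}})\,\fO(\ful{w_{1}\cdots w_{j-1}}w_{j}\fur{w_{j+1}\cdots w_{n}})\,\foz(\fll{w_{j}}(w_{j+1}\cdots w_{n})),\]
established in the proof of Proposition \ref{prop:unit_push} (applied to the flexion unit $\fO$), together with its local variants for subsegments of $\bw$.

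After the collapse, every surviving term on the right-hand side carries the common factor $\fO(w_{1})\cdots\fO(w_{n})$, and the identity reduces to a scalar equation in the coefficients $a_{s}^{f}$ and $\gamma_{r}^{f}$. It takes the form $n(n+1)a_{n}^{f}=(n+1)\sum_{r=1}^{n}(n-r+1)\gamma_{r}^{f}a_{n-r}^{f}$, which, after dividing by $n+1$, is exactly the coefficient of $x^{n+1}$ in the identity $xf'(x)-f(x)=f_{\#}(x)f'(x)$, i.e., the defining relation of $f_{\#}$. This completes the verification.

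The main obstacle is the combinatorial bookkeeping in the two $\iwat$-summands. Each $\dro_{r}$ already carries an inner sum over pivots weighted by $(r+1-i)$, and when inserted into $\amit$ or $\anit$ one obtains doubly nested sums over triple decompositions with several layers of $\ful{\cdot}$, $\fur{\cdot}$, $\fll{\cdot}$, $\flr{\cdot}$ markers stacked on one another. The non-constant weight $(r+1-i)$ means the iterated tripartite identity does not collapse a single summand on its own: one must first aggregate the contributions of $\mmu(M,\dTo_{R}(f))$ and $\mmu(\anti(\dTo_{R}(f)),M)$ so that the combined weights on the various pivots become constant, as is already visible in the $n=2$ case where the two $\mmu$-summands contribute $2\gamma_{2}^{f}$ and $\gamma_{2}^{f}$ to complementary pivots and add up to $3\gamma_{2}^{f}$ at each. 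Once the markers have been disentangled and the weights balanced, the coefficient identity itself is elementary.
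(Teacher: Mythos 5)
Your strategy is essentially the paper's: verify the identity length by length, expand the right-hand side using the explicit formula for $\dro_{r}$ from Lemma \ref{lem:ecalle_459}, collapse the flexed products back to $\foz(\bw)$ via the (iterated) tripartite identity, and reduce everything to the scalar relation $n\,a_{n}^{f}=\sum_{r=1}^{n}(n-r+1)\gamma_{r}^{f}a_{n-r}^{f}$, which is the coefficient of $x^{n+1}$ in $xf'(x)-f(x)=f_{\#}(x)f'(x)$. Your predicted endpoint is exactly what the paper's coefficient identity telescopes to, your iterated tripartite identity is the same fact as the $\push$-neutrality of $\foz$ that the paper invokes, and your $n=2$ check is correct. (The opening uniqueness-of-the-recursion observation is true but does no work, since you then verify the identity componentwise anyway.)

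The one step where your plan as written would not go through is the aggregation. You propose to make the pivot weights constant by pairing $\mmu(\dO_{\ast}(f),\dTo_{R}(f))$ with $\mmu(\anti(\dTo_{R}(f)),\dO_{\ast}(f))$; that suffices at $n=2$ only because there the $\iwat$-summands involve just $\gamma_{1}^{f}\fO$ and carry no pivot sum. For $n\ge 3$ the $\amit$- and $\anit$-summands evaluate $\dTo_{R}(f)$ on contracted sequences of length $s\ge 2$, hence contain $\dro_{s}$ with its own non-constant weights $(s+1-k)$, and the two $\mmu$-terms alone cannot balance these. The organizing device the paper uses is to rewrite the \emph{entire} right-hand side as $(\id+\anti)$ applied to $\amit(\dTo_{R}(f))(\dO_{\ast}(f))+\mmu(\dO_{\ast}(f),\dTo_{R}(f))$, which is legitimate because $\dO_{\ast}(f)$ is $\anti$-invariant and $\anti\circ\anit(\anti(X))=\amit(X)\circ\anti$; it then needs one intermediate application of the tripartite identity to the terms with nonempty right tail, which is what produces the second family of coefficients $\gamma_{\beta-1}^{f}a_{r-\beta+1}^{f}$ alongside $\gamma_{\beta}^{f}a_{r-\beta}^{f}$. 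Only after this $\anti$-symmetrization of all four summands does the weight at pivot $i$ plus the weight at pivot $\beta+1-i$ become the constant needed for the $\push$-neutrality collapse. So the skeleton and the final identity are right, but the symmetrization must be applied to the full right-hand side, not just to the two $\mmu$-terms.
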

\begin{proof}
    We observe that the right-hand side is decomposed as
    \begin{multline}
        \iwat(\dTo_{R}(f))(\dO_{\ast}(f))+\mmu(\dO_{\ast}(f),\dTo_{R}(f))+\mmu(\anti(\dTo_{R}(f)),\dO_{\ast}(f))\\
        =(\id+\anti)(\amit(\dTo_{R}(f))(\dO_{\ast}(f))+\mmu(\dO_{\ast}(f),\dTo_{R}(f))),
    \end{multline}
    because $\dO_{\ast}(f)$ is $\anti$-invariant and $\anti\circ\anit(\anti(\dTo_{R}(f)))$ is equal to $\amit(\dTo_{R}(f))\circ\anti$.
    So we shall compute $\amit(\dTo_{R}(f))(\dO_{\ast}(f))+\mmu(\dO_{\ast}(f),\dTo_{R}(f))$.
    By definition, we have
    \begin{align}
        &\amit(\dTo_{R}(f))(\dO_{\ast}(f))(\bw)+\mmu(\dO_{\ast}(f),\dTo_{R}(f))(\bw)\\
        &=\sum_{\substack{\bw=\ba\bb\bc\\ \bb\neq\emp}}\dO_{\ast}(f)(\ba\ful{\bb}\bc)\dTo_{R}(f)(\bb\flr{\bc})\\
        &=\sum_{\substack{\bw=\ba\bb\bc\\ \bb\neq\emp}}(\ell(\ba)+\ell(\bc)+1)a_{\ell(\ba)+\ell(\bc)}^{f}\gamma_{\ell(\bb)}^{f}\foz(\ba\ful{\bb}\bc)\dro_{\ell(\bb)}(\bb\flr{\bc}).
    \end{align}
    The explicit formula \eqref{eq:ecalle_459} for $\dro_{-}$ leads to
    \begin{align}
        &\amit(\dTo_{R}(f))(\dO_{\ast}(f))(\bw)+\mmu(\dO_{\ast}(f),\dTo_{R}(f))(\bw)\\
        &=\begin{multlined}[t]
            \sum_{\substack{\bw=\ba\bb\bc\\ \bb\neq\emp}}(\ell(\ba)+\ell(\bc)+1)a_{\ell(\ba)+\ell(\bc)}^{f}\gamma_{\ell(\bb)}^{f}\foz(\ba\ful{\bb}\bc)\\
            \cdot\sum_{\bb=\bd w_{i}\be}(\ell(\be)+1)\foz((\bd\flr{\bc})\flr{(w_{i}\flr{\bc})})\fO(\ful{(\bd\flr{\bc})}w_{i}\flr{\bc}\fur{(\be\flr{\bc})})\foz(\fll{(w_{i}\flr{\bc})}(\be\flr{\bc}))\end{multlined}\\
        &=\begin{multlined}[t]\sum_{\bw=\ba\bd w_{i}\be\bc}(\ell(\ba)+\ell(\bc)+1)(\ell(\be)+1)a_{\ell(\ba)+\ell(\bc)}^{f}\gamma_{\ell(\bd)+\ell(\be)+1}^{f}\\
            \cdot\foz(\ba\ful{\bd w_{i}\be}\bc)\foz(\bd\flr{w_{i}})\fO(\ful{\bd}w_{i}\flr{\bc}\fur{\be})\foz(\fll{w_{i}}\be).\end{multlined}
    \end{align}
    First we consider the terms with $\bc\neq\emp$ in the above sum.
    By putting $\bc=w_{j}\bc'$, it becomes
    \begin{align}
        &\sum_{\substack{\bw=\ba\bd w_{i}\be\bc\\ \bc\neq\emp}}(\ell(\ba)+\ell(\bc)+1)(\ell(\be)+1)a_{\ell(\ba)+\ell(\bc)}^{f}\gamma_{\ell(\bd)+\ell(\be)+1}^{f}\foz(\ba\ful{\bd w_{i}\be}\bc)\foz(\bd\flr{w_{i}})\fO(\ful{\bd}w_{i}\flr{\bc}\fur{\be})\foz(\fll{w_{i}}\be)\\
        &=\begin{multlined}[t]\sum_{\bw=\ba\bd w_{i}\be w_{j}\bc'}(\ell(\ba)+\ell(\bc')+2)(\ell(\be)+1)a_{\ell(\ba)+\ell(\bc')+1}^{f}\gamma_{\ell(\bd)+\ell(\be)+1}^{f}\\
            \cdot\foz(\ba)\fO(\ful{\bd w_{i}\be}w_{j})\foz(\bc')\foz(\bd\flr{w_{i}})\fO(\ful{\bd}w_{i}\flr{w_{j}\bc'}\fur{\be})\foz(\fll{w_{i}}\be).\end{multlined}
    \end{align}
    Applying here the tripartite identity as
    \begin{align}
        \fO(\ful{\bd}w_{i}\flr{w_{j}\bc'}\fur{\be})\fO(\ful{\bd w_{i}\be}w_{j})
        &=\fO((\ful{\bd}w_{i}\fur{\be})\flr{w_{j}})\fO(\ful{(\ful{\bd}w_{i}\fur{\be})}w_{j})\\
        &=\fO(\ful{\bd}w_{i}\fur{\be})\fO(w_{j})-\fO((\ful{\bd}w_{i}\fur{\be})\fur{w_{j}})\fO(\fll{(\ful{\bd}w_{i}\fur{\be})}w_{j})\\
        &=\fO(\ful{\bd}w_{i}\fur{\be})\fO(w_{j})-\fO(\ful{\bd}w_{i}\fur{\be w_{j}})\fO(\fll{w_{i}}w_{j}),
    \end{align}
    we obtain
    \begin{align}
        &\sum_{\substack{\bw=\ba\bd w_{i}\be\bc\\ \bc\neq\emp}}(\ell(\ba)+\ell(\bc)+1)(\ell(\be)+1)a_{\ell(\ba)+\ell(\bc)}^{f}\gamma_{\ell(\bd)+\ell(\be)+1}^{f}\foz(\ba\ful{\bd w_{i}\be}\bc)\foz(\bd\flr{w_{i}})\fO(\ful{\bd}w_{i}\flr{\bc}\fur{\be})\foz(\fll{w_{i}}\be)\\
        &=\begin{multlined}[t]\sum_{\bw=\ba\bd w_{i}\be w_{j}\bc'}(\ell(\ba)+\ell(\bc')+2)(\ell(\be)+1)a_{\ell(\ba)+\ell(\bc')+1}^{f}\gamma_{\ell(\bd)+\ell(\be)+1}^{f}\\
            \cdot\foz(\ba)\foz(\bd\flr{w_{i}})\left(\fO(\ful{\bd}w_{i}\fur{\be})\fO(w_{j})-\fO(\ful{\bd}w_{i}\fur{\be w_{j}})\fO(\fll{w_{i}}w_{j})\right)\foz(\fll{w_{i}}\be)\foz(\bc')\end{multlined}\\
        &=\begin{multlined}[t]\sum_{\substack{\bw=\ba\bd w_{i}\be\bc\\ \bc\neq\emp}}(\ell(\ba)+\ell(\bc')+1)(\ell(\be)+1)a_{\ell(\ba)+\ell(\bc)}^{f}\gamma_{\ell(\bd)+\ell(\be)+1}^{f}\foz(\ba)\foz(\bd\flr{w_{i}})\fO(\ful{\bd}w_{i}\fur{\be})\foz(\fll{w_{i}}\be)\foz(\bc)\\
            -\sum_{\substack{\bw=\ba\bd w_{i}\be'\bc'\\ \be'\neq\emp}}(\ell(\ba)+\ell(\bc')+2)\ell(\be')a_{\ell(\ba)+\ell(\bc')+1}^{f}\gamma_{\ell(\bd)+\ell(\be')}^{f}\foz(\ba)\foz(\bd\flr{w_{i}})\fO(\ful{\bd}w_{i}\fur{\be'})\foz(\fll{w_{i}}\be')\foz(\bc').
        \end{multlined}
    \end{align}
    Then we call back the $\bc=\emp$ terms as
    \begin{multline}
        \amit(\dTo_{R}(f))(\dO_{\ast}(f))(\bw)+\mmu(\dO_{\ast}(f),\dTo_{R}(f))(\bw)\\
        =\sum_{\bw=\ba\bd w_{i}\be\bc}\left((\ell(\ba)+\ell(\bc)+1)(\ell(\be)+1)a_{\ell(\ba)+\ell(\bc)}^{f}\gamma_{\ell(\bd)+\ell(\be)+1}^{f}\right.\\\left.-(\ell(\ba)+\ell(\bc)+2)\ell(\be)a_{\ell(\ba)+\ell(\bc)+1}^{f}\gamma_{\ell(\bd)+\ell(\be)}^{f}\right)\\
        \cdot\foz(\ba)\foz(\bd\flr{w_{i}})\fO(\ful{\bd}w_{i}\fur{\be})\foz(\fll{w_{i}}\be)\foz(\bc).
    \end{multline}
    Recalling the equality $\foz(\bd\flr{w_{i}})\fO(\ful{\bd}w_{i}\fur{\be})\foz(\fll{w_{i}}\be)=-\push^{\circ(\ell(\bd)+1)}(\foz)(\bd w_{i}\be)$, we have
    \begin{align}
        &\amit(\dTo_{R}(f))(\dO_{\ast}(f))(\bw)+\mmu(\dO_{\ast}(f),\dTo_{R}(f))(\bw)\\
        &=\begin{multlined}[t]-\sum_{\substack{\bw=\ba\bb\bc\\ \bb\neq\emp}}\sum_{i=1}^{\ell(\bb)}\left((\ell(\ba)+\ell(\bc)+1)(\ell(\bb)-i+1)a_{\ell(\ba)+\ell(\bc)}^{f}\gamma_{\ell(\bb)}^{f}\right.\\\left.-(\ell(\ba)+\ell(\bc)+2)(\ell(\bb)-i)a_{\ell(\ba)+\ell(\bc)+1}^{f}\gamma_{\ell(\bb)-1}^{f}\right)\\
            \cdot\foz(\ba)\push^{\circ i}(\foz)(\bb)\foz(\bc)\end{multlined}\\
        &=\begin{multlined}[t]-\sum_{\beta=1}^{r}\sum_{\alpha=0}^{r-\beta}\sum_{i=1}^{\beta}\left((r-\beta+1)(\beta-i+1)a_{r-\beta}^{f}\gamma_{\beta}^{f}-(r-\beta+2)(\beta-i)a_{r-\beta+1}^{f}\gamma_{\beta-1}^{f}\right)\\
            \cdot\foz(w_{1},\ldots,w_{\alpha})\push^{\circ i}(\foz)(w_{\alpha+1},\ldots,w_{\alpha+\beta})\foz(w_{\alpha+\beta+1},\ldots,w_{r})
        \end{multlined}\\
        &=\begin{multlined}[t]-\sum_{\beta=1}^{r}\sum_{\alpha=\beta}^{r}\sum_{i=1}^{\beta}\left((r-\beta+1)(\beta-i+1)a_{r-\beta}^{f}\gamma_{\beta}^{f}-(r-\beta+2)(\beta-i)a_{r-\beta+1}^{f}\gamma_{\beta-1}^{f}\right)\\\cdot\foz(w_{1},\ldots,w_{r-\alpha})\push^{\circ i}(\foz)(w_{r-\alpha+1},\ldots,w_{r-\alpha+\beta})\foz(w_{r-\alpha+\beta+1},\ldots,w_{r})\end{multlined}\\
        &=\begin{multlined}[t]-\sum_{1\le i\le \beta\le\alpha\le r}\left((r-\beta+1)(\beta-i+1)a_{r-\beta}^{f}\gamma_{\beta}^{f}-(r-\beta+2)(\beta-i)a_{r-\beta+1}^{f}\gamma_{\beta-1}^{f}\right)\\
            \cdot\foz(w_{1},\ldots,w_{r-\alpha})\push^{\circ i}(\foz)(w_{r-\alpha+1},\ldots,w_{r-\alpha+\beta})\foz(w_{r-\alpha+\beta+1},\ldots,w_{r}).\end{multlined}
    \end{align}
    Based on this expression, we can compute 
    \begin{align}
        &\anti(\amit(\dTo_{R}(f))(\dO_{\ast}(f))+\mmu(\dO_{\ast}(f),\dTo_{R}(f)))(\bw)\\
        &=\begin{multlined}[t]-\sum_{1\le i\le \beta\le\alpha\le r}\left((r-\beta+1)(\beta-i+1)a_{r-\beta}^{f}\gamma_{\beta}^{f}-(r-\beta+2)(\beta-i)a_{r-\beta+1}^{f}\gamma_{\beta-1}^{f}\right)\\
            \cdot\foz(w_{r},\ldots,w_{\alpha+1})\push^{\circ i}(\foz)(w_{\alpha},\ldots,w_{\alpha-\beta+1})\foz(w_{\alpha-\beta},\ldots,w_{1})\end{multlined}\\
        &=\begin{multlined}[t]-\sum_{1\le i\le \beta\le\alpha\le r}\left((r-\beta+1)(\beta-i+1)a_{r-\beta}^{f}\gamma_{\beta}^{f}-(r-\beta+2)(\beta-i)a_{r-\beta+1}^{f}\gamma_{\beta-1}^{f}\right)\\
            \cdot\foz(w_{1},\ldots,w_{\alpha-\beta})(\anti\circ\push^{\circ i})(\foz)(w_{\alpha-\beta+1},\ldots,w_{\alpha})\foz(w_{\alpha+1},\ldots,w_{r})\end{multlined}\\
        &=\begin{multlined}[t]-\sum_{1\le i\le \beta\le\alpha\le r}\left((r-\beta+1)(\beta-i+1)a_{r-\beta}^{f}\gamma_{\beta}^{f}-(r-\beta+2)(\beta-i)a_{r-\beta+1}^{f}\gamma_{\beta-1}^{f}\right)\\\cdot\foz(w_{1},\ldots,w_{\alpha-\beta})\push^{\circ (\beta+1-i)}(\foz)(w_{\alpha-\beta+1},\ldots,w_{\alpha})\foz(w_{\alpha+1},\ldots,w_{r}).\end{multlined}
    \end{align}
    Here, in the last equality, we used the fact that $(\anti\circ\push^{j})(M)=(\push^{l+1-j}\circ\anti)(M)$ always holds for any non-negative integer $l$ and $M\in\BIMU_{l}(R)$.
    Applying substitution $(\iota,\sigma)\coloneqq(\beta+1-i,r-\alpha+\beta)$, this sum becomes
    \begin{multline}
        \anti(\amit(\dTo_{R}(f))(\dO_{\ast}(f))+\mmu(\dO_{\ast}(f),\dTo_{R}(f)))(\bw)\\
        =-\sum_{1\le \iota\le \beta\le\sigma\le r}\left((r-\beta+1)\iota a_{r-\beta}^{f}\gamma_{\beta}^{f}-(r-\beta+2)(\iota-1)a_{r-\beta+1}^{f}\gamma_{\beta-1}^{f}\right)\\
        \cdot\foz(w_{1},\ldots,w_{r-\sigma})\push^{\circ\iota}(\foz)(w_{r-\sigma+1},\ldots,w_{r-\sigma+\beta})\foz(w_{r-\sigma+\beta+1},\ldots,w_{r}),
    \end{multline}
    and consequently
    \begin{align}
        &(\id+\anti)(\amit(\dTo_{R}(f))(\dO_{\ast}(f))+\mmu(\dO_{\ast}(f),\dTo_{R}(f)))(\bw)\\
        &=\begin{multlined}[t]-\sum_{1\le i\le \beta\le\alpha\le r}\left((r-\beta+1)(\beta+1)a_{r-\beta}^{f}\gamma_{\beta}^{f}-(r-\beta+2)(\beta-1)a_{r-\beta+1}^{f}\gamma_{\beta-1}^{f}\right)\\
            \cdot\foz(w_{1},\ldots,w_{\alpha-\beta})\push^{\circ i}(\foz)(w_{\alpha-\beta+1},\ldots,w_{\alpha})\foz(w_{\alpha+1},\ldots,w_{r})\end{multlined}\\
        &=\begin{multlined}[t]\sum_{1\le\beta\le r}\left((r-\beta+1)(\beta+1)a_{r-\beta}^{f}\gamma_{\beta}^{f}-(r-\beta+2)(\beta-1)a_{r-\beta+1}^{f}\gamma_{\beta-1}^{f}\right)\\
            \cdot\sum_{\alpha=\beta}^{r}\foz(w_{1},\ldots,w_{\alpha-\beta})\left(-\sum_{i=1}^{\beta}\push^{\circ i}(\foz)(w_{\alpha-\beta+1},\ldots,w_{\alpha})\right)\foz(w_{\alpha+1},\ldots,w_{r})\end{multlined}\\
        &=\begin{multlined}[t]\sum_{1\le\beta\le r}\left((r-\beta+1)(\beta+1)a_{r-\beta}^{f}\gamma_{\beta}^{f}-(r-\beta+2)(\beta-1)a_{r-\beta+1}^{f}\gamma_{\beta-1}^{f}\right)\\
            \cdot\sum_{\alpha=\beta}^{r}\foz(w_{1},\ldots,w_{\alpha-\beta})\foz(w_{\alpha-\beta+1},\ldots,w_{\alpha})\foz(w_{\alpha+1},\ldots,w_{r})\end{multlined}\\
        &=\sum_{1\le\beta\le r}(r-\beta+1)\left((r-\beta+1)(\beta+1)a_{r-\beta}^{f}\gamma_{\beta}^{f}-(r-\beta+2)(\beta-1)a_{r-\beta+1}^{f}\gamma_{\beta-1}^{f}\right)\foz(\bw),
    \end{align}
    where we used the $\push$-neutrality of $\foz$ (Proposition \ref{prop:unit_push} (3)) in the third equality.
    For the rest sum along $\beta$, we calculate it by taking the generating function as
    \begin{align}
        &\sum_{r\ge 1}\left(\sum_{1\le\beta\le r}(r-\beta+1)\left((r-\beta+1)(\beta+1)a_{r-\beta}^{f}\gamma_{\beta}^{f}-(r-\beta+2)(\beta-1)a_{r-\beta+1}^{f}\gamma_{\beta-1}^{f}\right)\right)x^{r}\\
        &=\sum_{\beta=1}^{\infty}\sum_{r'=0}^{\infty}(r'+1)((r'+1)(\beta+1)a_{r'}^{f}\gamma_{\beta}^{f}-(r'+2)(\beta-1)a_{r'+1}^{f}\gamma_{\beta-1}^{f})x^{\beta+r'}\\
        &=\left(\sum_{r'=0}^{\infty}(r'+1)^{2}a_{r'}^{f}x^{r'}\right)\left(\sum_{\beta=1}^{\infty}(\beta+1)\gamma_{\beta}^{f}x^{\beta}\right)-\left(\sum_{r'=0}^{\infty}(r'+1)(r'+2)a_{r'+1}^{f}x^{r'}\right)\left(\sum_{\beta=1}^{\infty}(\beta-1)\gamma_{\beta-1}^{f}x^{\beta}\right)\\
        &=\left(\frac{d}{dx}x\frac{d}{dx}f(x)\right)\left(\frac{d}{dx}f_{\#}(x)\right)-\left(\frac{d^{2}}{dx^{2}}f(x)\right)\left(x^{2}\frac{d}{dx}\frac{f_{\#}(x)}{x}\right)\\
        &=\frac{d}{dx}\left(f'(x)f_{\#}(x)\right)\\
        &=\frac{d}{dx}\left(xf'(x)-f(x)\right)\\
        &=xf''(x)\\
        &=\sum_{r=1}^{\infty}r(r+1)a_{r}x^{r}.
    \end{align}
    Therefore we obtain
    \begin{align}
        (\id+\anti)(\amit(\dTo_{R}(f))(\dO_{\ast}(f))+\mmu(\dO_{\ast}(f),\dTo_{R}(f)))(\bw)
        &=r(r+1)a_{r}\foz(\bw)\\
        &=\der(\dO_{\ast}(f))(\bw).
    \end{align}
\end{proof}
Now we can give a proof for Theorem \ref{thm:separation}.
\begin{proof}
    First we note that $\iwat(X)$ is always derivation about $\mmu$ (Proposition \ref{prop:axit_derivation}) and commutes with $\anti$: indeed, we see that
    \begin{align}
    \anti\circ\iwat(X)\circ\anti
    &=\anti\circ\amit(X)\circ\anti+\anti\circ\anit(\anti(X))\circ\anti\\
    &=\anit(\anti(X))+\amit(X)\\
    &=\iwat(X),
    \end{align}
    for any $X\in\LU(R)$.
    Consequently, we obtain
    \begin{align}
        &(\iwat(\dTo_{R}(f))\circ\gepar)(\fSe_{R}(f))\\
        &=\iwat(\dTo_{R}(f))(\mmu(\anti(\dSo_{R}(f)),\dSo_{R}(f)))\\
        &=\mmu((\iwat(\dTo_{R}(f))\circ\anti)(\dSo_{R}(f)),\dSo_{R}(f))+\mmu(\anti(\dSo_{R}(f)),\iwat(\dTo_{R}(f))(\dSo_{R}(f)))\\
        &=\mmu((\anti\circ\iwat(\dTo_{R}(f)))(\dSo_{R}(f)),\dSo_{R}(f))+\mmu(\anti(\dSo_{R}(f)),\iwat(\dTo_{R}(f))(\dSo_{R}(f))).
    \end{align}
    By using Corollary \eqref{cor:ecalle_1195} here, it follows that
    \begin{align}
        &(\iwat(\dTo_{R}(f))\circ\gepar)(\fSe_{R}(f))\\
        &=\begin{multlined}[t]\mmu(\anti(\der(\dSo_{R}(f))-\mmu(\dSo_{R}(f),\dTo_{R}(f))),\dSo_{R}(f))\\+\mmu(\anti(\dSo_{R}(f)),\der(\dSo_{R}(f))-\mmu(\dSo_{R}(f),\dTo_{R}(f)))\end{multlined}\\
        &=\begin{multlined}[t]\mmu((\der\circ\anti)(\dSo_{R}(f)),\dSo_{R}(f))+\mmu(\anti(\dSo_{R}(f)),\der(\dSo_{R}(f)))\\-\mmu(\anti(\dTo_{R}(f)),\anti(\dSo_{R}(f)),\dSo_{R}(f))-\mmu(\anti(\dSo_{R}(f)),\dSo_{R}(f),\dTo_{R}(f))\end{multlined}\\
        &=\der(\gepar(\fSe_{R}(f)))-\mmu(\anti(\dTo_{R}(f)),\gepar(\fSe_{R}(f)))-\mmu(\gepar(\fSe_{R}(f)),\dTo_{R}(f)).
    \end{align}
    Thus we have the equality
    \begin{multline}\der(\gepar(\fSe_{R}(f)))\\=(\iwat(\dTo_{R}(f))\circ\gepar)(\fSe_{R}(f))+\mmu(\gepar(\fSe_{R}(f)),\dTo_{R}(f))\\
        +\mmu(\anti(\dTo_{R}(f)),\gepar(\fSe_{R}(f))).\end{multline}
    Because this equation and the initial condition $\emp\mapsto 1$ determines $\gepar(\fSe_{R}(f))$, combining it with Lemma \ref{lem:ecalle_1197}, we obtain the desired result.
\end{proof}

\section{Secondary bimoulds}\label{sec:secondary}
\subsection{Definition}
Moreover, we define a bimould $\fess$ which \'{E}calle called a \emph{secondary bimould} and are useful for transportation of dimorphy.
For construction of them, the machinery of $\GIFF_{x}$ is very important (see Section \ref{sec:appendix}). 

\begin{definition}[{\cite[\S 4.2]{ecalle11}}]
    Let $\re(x)\coloneqq 1-\exp(-x)$ be the element of $\GIFF_{x}(\bbK)$.
    Then we define $\fess\coloneqq\fSe_{\re}$.
    Similarly, using $\fO$ instead of $\fE$ in the above definition, we define $\foss$.
    Denote by $\doss$ (resp.~$\dess$) the swappee of $\fess$ (resp.~$\foss$).
\end{definition}

\begin{remark}
    In the original paper \cite{ecalle11}, \'{E}calle seems to usually includes $\fe$ (resp.~$\fo$) in the notation which stands for some bimoulds constructed from $\fE$ (resp.~$\fO$), while he uses $\ddot{\fo}$ (resp.~$\ddot{\fe}$) in the same position in the notation when it indicates the swappee of a bimould whose name includes $\fe$ (resp.~$\fo$).
    We basically adopt such notations in this paper. 
\end{remark}

The following is almost obvious but very useful.
\begin{lemma}\label{prop:neg_pari_ess}
    Both $\fess$ and $\dess$ are $\neg\circ\pari$-invariant.
\end{lemma}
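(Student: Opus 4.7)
The plan is to reduce both claims to Proposition \ref{prop:neg_pari_se}, using only one additional elementary fact: that $\swap$ commutes with $\neg\circ\pari$ as operators on $\BIMU(R)$.

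The first claim is immediate: by definition $\fess=\fSe_{R}(\re)$, so Proposition \ref{prop:neg_pari_se} applied to $f=\re\in\GIFF_{x}(\bbK)$ gives $(\neg\circ\pari)(\fess)=\fess$. For the second claim, I would first observe that $\fO=\swap(\fE)$ is itself a flexion unit; in particular it satisfies the parity condition, as one checks directly from $\fO\binom{u_{1}}{v_{1}}=\fE\binom{v_{1}}{u_{1}}$ and the parity of $\fE$. Hence the construction of Section \ref{sec:appendix} can be carried out verbatim with $\fO$ in place of $\fE$, yielding a morphism (which we temporarily call) $\fSe^{\fO}\colon\GIFF_{x}\to\GARI$ for which the whole content of Proposition \ref{prop:neg_pari_se} applies. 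Since $\foss$ is by definition $\fSe^{\fO}_{R}(\re)$, this gives $(\neg\circ\pari)(\foss)=\foss$.

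Next I would check the commutation $\swap\circ(\neg\circ\pari)=(\neg\circ\pari)\circ\swap$. Applying $\neg\circ\pari$ to $\swap(A)\binom{u_{1},\ldots,u_{r}}{v_{1},\ldots,v_{r}}=A\binom{v_{r},v_{r-1}-v_{r},\ldots,v_{1}-v_{2}}{u_{1}+\cdots+u_{r},\ldots,u_{1}+u_{2},u_{1}}$ yields
\[
(-1)^{r}A\binom{-v_{r},-(v_{r-1}-v_{r}),\ldots,-(v_{1}-v_{2})}{-(u_{1}+\cdots+u_{r}),\ldots,-(u_{1}+u_{2}),-u_{1}},
\]
which is exactly $\swap((\neg\circ\pari)(A))\binom{u_{1},\ldots,u_{r}}{v_{1},\ldots,v_{r}}$. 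Combining with $(\neg\circ\pari)(\foss)=\foss$ gives
\[
(\neg\circ\pari)(\dess)=(\neg\circ\pari\circ\swap)(\foss)=(\swap\circ\neg\circ\pari)(\foss)=\swap(\foss)=\dess,
\]
completing the proof.

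There is no real obstacle here beyond the $\fE$/$\fO$ symmetry argument. The only step one must be slightly careful about is confirming that the apparatus of Section \ref{sec:appendix} (in particular $\fSe$ and Proposition \ref{prop:neg_pari_se}) applies equally well to the unit $\fO$, which is justified by the elementary observation that $\fO$ is a flexion unit (the tripartite identity for $\fO$ is equivalent to the one for $\fE$ after $\swap$, and the parity condition is checked above).
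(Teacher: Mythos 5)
Your proposal is correct and follows essentially the same route as the paper: the invariance of $\fess$ is read off from Proposition \ref{prop:neg_pari_se}, and the invariance of $\dess=\swap(\foss)$ is deduced from that of $\foss$ (obtained by running the same construction with the conjugate unit $\fO$) together with the commutation of $\swap$ with $\neg\circ\pari$. The only difference is that you make explicit two points the paper leaves implicit — the commutation computation and the fact that $\fO=\swap(\fE)$ is itself a flexion unit — which is harmless; just note that your one-line claim that the tripartite identity for $\fO$ is ``the one for $\fE$ after $\swap$'' is true but not quite as immediate as stated (the substituted identity is not literally the tripartite identity for $\fO$; one usually passes through the $\push$-neutrality characterization of Proposition \ref{prop:unit_push}).
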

\begin{proof}
    Because $\swap$ commutes with $\neg$ and $\pari$, the invariance for $\dess$ follows from that of $\foss$. The rest is a direct consequence of Proposition \ref{prop:neg_pari_se}.
\end{proof}

Define an operator $\slash\colon\GARI(R)\to\GARI(R)$ by
\[\slash(A)\coloneqq\fragari(\neg(A),A)=\gari(\neg(A),\invgari(A)).\] 

\begin{proposition}[{\cite[(4.85)]{ecalle11}}]\label{prop:slash_pil}
    We have $\slash(\fess)=\fes$.
\end{proposition}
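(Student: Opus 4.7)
The plan is to transport the identity across the homomorphism $\fSe\colon\GIFF_{x}\to\GARI$ of Definition \ref{def:Se}, reducing it to a computation of compositions of power series. Concretely, I would first establish two auxiliary transport formulas, valid for every $f\in\GIFF_{x}(R)$:
\begin{enumerate}
    \item $\invgari(\fSe_{R}(f))=\fSe_{R}(f^{\circ -1})$, which is immediate from Proposition \ref{prop:se_gari} since $\fSe$ is a group homomorphism;
    \item $\neg(\fSe_{R}(f))=\fSe_{R}(f^{\natural})$, where $f^{\natural}(x)\coloneqq -f(-x)$.
\end{enumerate}

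The content lies in (2). The proof of Proposition \ref{prop:neg_pari_se} shows $(\neg\circ\pari)(\fre_{r})=\fre_{r}$, and since $\fre_{r}\in\BIMU_{r}$ this gives $\neg(\fre_{r})=(-1)^{r}\fre_{r}$. Lemma \ref{lem:neg_pari_axit} implies that $\neg$ distributes over $\preari$, hence commutes with $\expari$, and therefore
\[
    \neg(\fSe_{R}(f))=\expari\!\left(\sum_{r\ge 1}(-1)^{r}\epsilon_{r}^{f}\fre_{r}\right)=\fSe_{R}(g),
\]
where $g\in\GIFF_{x}(R)$ is the element whose infinitesimal generator is $g_{\ast}(x)=-f_{\ast}(-x)$. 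To identify $g$ with $f^{\natural}$, I would compute in $\DIFF_{x}(R)$: writing $\sigma$ for the $R$-algebra involution of $R\jump{x}$ that sends $x$ to $-x$, a direct check gives $\sigma\rre_{r}\sigma=(-1)^{r}\rre_{r}$, so $g_{\ast}(x)\tfrac{d}{dx}=\sigma(f_{\ast}(x)\tfrac{d}{dx})\sigma$; exponentiating (using $\sigma^{2}=\id$) and applying to the function $x$ yields $g(x)=\sigma\bigl(\exp(f_{\ast}\tfrac{d}{dx})(-x)\bigr)=\sigma(-f(x))=-f(-x)=f^{\natural}(x)$.

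Combining (1), (2) and Proposition \ref{prop:se_gari} then gives
\[
    \slash(\fess)=\gari(\neg(\fess),\invgari(\fess))=\gari(\fSe_{R}(\re^{\natural}),\fSe_{R}(\re^{\circ -1}))=\fSe_{R}(\re^{\natural}\circ\re^{\circ -1}).
\]
For $\re(x)=1-e^{-x}$, one has $\re^{\natural}(x)=-(1-e^{x})=e^{x}-1$ and $\re^{\circ -1}(x)=-\log(1-x)$, so $(\re^{\natural}\circ\re^{\circ -1})(x)=e^{-\log(1-x)}-1=x/(1-x)$. Finally, the infinitesimal generator of $x/(1-x)\in\GIFF_{x}(\bbK)$ is $\rre_{1}=x^{2}\tfrac{d}{dx}$: since $(x^{2}\tfrac{d}{dx})^{n}(x)=n!\,x^{n+1}$, we obtain $\exp(x^{2}\tfrac{d}{dx})(x)=\sum_{n\ge 0}x^{n+1}=x/(1-x)$. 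Therefore $\fSe_{R}(x/(1-x))=\expari(\fHe_{R}(\rre_{1}))=\expari(\fre_{1})=\expari(\fE)=\fes$, as required.

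The only genuinely nontrivial step is the transport formula (2); once that is in hand, everything else is a clean bookkeeping of compositions of power series, so I expect the verification of (2) to be the sole obstacle.
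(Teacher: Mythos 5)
Your proposal is correct and follows essentially the same route as the paper: both reduce $\slash(\fess)$ to $\fSe_{R}$ of the composition $(-\re(-x))\circ\re^{\circ-1}=x/(1-x)$, using the $\neg\circ\pari$-invariance of the $\fre_{r}$ and the commutation of $\neg$ with $\expari$ to show $\neg(\fSe_{R}(f))=\fSe_{R}(-f(-x))$, and then identify $\fSe_{R}(x/(1-x))=\expari(\fE)=\fes$. The only cosmetic difference is that you identify $-f(-x)$ via conjugation by the involution $x\mapsto -x$ in $\DIFF_{x}$, whereas the paper performs the equivalent substitution directly inside the exponential.
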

\begin{proof}
    For a power series $\id(x)=x$, we get $\epsilon^{\id}_{r}=0$ for all $r\ge 1$ and therefore $\fSe_{\id}=\expari(0)=1$.
    Thus it holds that $\fSe_{f^{-1}}=\invgari(\fSe_{f})$ for any $f(x)\in\GIFF_{x}(R)$.
    In other words, for such $f$ and $g$, we have $\fSe_{f\circ g^{-1}}=\fragari(\fSe_{f},\fSe_{g})$.
    Next, we want to find the power series $g^{-}$ such that $\fSe_{g^{-}}=\neg(\fSe_{g})$ for each $g$.
    If we get such $g^{-}$, we find that $\fSe_{g^{-}\circ g^{-1}}=\slash(\fSe_{g})$ holds.
    Since the identity $\preari(\neg(A),\neg(B))=\neg(\preari(A,B))$ holds for any $A\in\BIMU(R)$ and $B\in\LU(R)$, we have $\neg\circ\expari=\expari\circ\neg$.
    Therefore
    \begin{align}
        \neg(\fSe_{g})
        &=(\neg\circ\expari)\left(\sum_{r=1}^{\infty}\epsilon_{r}^{g}\fre_{r}\right)\\
        &=\expari\left(\sum_{r=1}^{\infty}\epsilon_{r}^{g}\neg(\fre_{r})\right).
    \end{align}
    On the other hand, we have the $\neg\circ\pari$-invariance of each $\fre_{r}$ (the proof of Proposition \ref{prop:neg_pari_se}).
    Therefore, for the power series $g^{-}$ we mentioned above, it should satisfy $\epsilon_{r}^{g^{-}}=(-1)^{r}\epsilon_{r}^{g}$.
    Applying this, we have
    \begin{align}
        g^{-}(x)
        &=\exp\left(\sum_{r=1}^{\infty}\epsilon_{r}^{g^{-}}x^{r+1}\frac{d}{dx}\right)(x)\\
        &=\exp\left(\sum_{r=1}^{\infty}\epsilon_{r}^{g}(-x)^{r+1}\frac{d}{d(-x)}\right)(x)\\
        &=x-\left(\exp\left(\sum_{r=1}^{\infty}\epsilon_{r}^{g}(-x)^{r+1}\frac{d}{d(-x)}\right)(-x)+x\right)\\
        &=-g(-x).
    \end{align}
    Then, putting $g(x)=\re(x)=1-e^{-x}$, we find that
    \[s(g)(x)\coloneqq(g^{-}\circ g^{-1})(x)=-1+\exp(-\log(1-x))=\frac{x}{1-x}.\]
    For this series $s(g)$, we immediately find $\epsilon^{s(g)}_{r}=0$ for all $r\ge 2$ and $\epsilon_{1}^{s(g)}=1$. Indeed, we have
    \begin{align}
        \frac{x}{1-x}
        &=\sum_{r=0}^{\infty}x^{r+1}\\
        &=\sum_{r=0}^{\infty}\frac{1}{r!}\left(x^{2}\frac{d}{dx}\right)^{r}(x)\\
        &=\exp\left(x^{2}\frac{d}{dx}\right)(x).
    \end{align}
    It yields
    \[\slash(\fess)=\slash(\fSe_{g})=\fSe_{s(g)}=\expari(\fre_{1})=\expari(\fE)=\fes.\]
\end{proof}

\begin{proposition}\label{prop:crash_pal}
    We have $\crash(\dess)=\fez$.
\end{proposition}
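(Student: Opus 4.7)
The plan is to reduce the identity to an application of the separation lemma (Theorem~\ref{thm:separation}) by passing to the $\swap$-dual of the theory. First I set $Q \coloneqq \invgari(\foss) = \invgari(\swap(\dess))$; unwinding the definition of $\crash$ yields
\[\crash(\dess) = \mmu(\push(\swap(\invmu(Q))),\, \swap(Q)).\]
Since $\fO$ is itself a flexion unit, the entire development of Sections~\ref{sec:flexion}--\ref{sec:appendix} applies verbatim with $\fO$ in place of $\fE$; let $\fSe^{\fO}_R \colon \GIFF_x(R) \to \GARI(R)$ denote the corresponding map built from the family $\{\arit^{n}(\fO)(\fO)\}$. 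Then $\foss = \fSe^{\fO}_R(\re)$ for $\re(x) = 1 - e^{-x}$, and the $\fO$-analog of Proposition~\ref{prop:se_gari} identifies $Q = \fSe^{\fO}_R(\re^{-1})$, where $\re^{-1}(x) = -\log(1-x)$ has coefficients $a_r^{\re^{-1}} = 1/(r+1)$.

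Next I apply the $\fO$-version of Theorem~\ref{thm:separation}; since the conjugate of $\fO$ is $\swap(\fO) = \fE$, it gives
\[\gepar(Q) = \sum_{r=0}^{\infty}(r+1)a_r^{\re^{-1}}\mmu^r(\fE) = \sum_{r=0}^{\infty}\mmu^r(\fE) = \invmu(1 - \fE) = \fez.\]
The whole choice of $Q$ is engineered so that the factor $(r+1)$ produced by the separation lemma cancels against the $1/(r+1)$ coming from $-\log(1-x)$, leaving exactly the geometric series defining $\fez$.

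It then remains to identify $\crash(\dess)$ with $\gepar(Q) = \mmu(\anti(\swap(Q)), \swap(Q))$, i.e., to prove $\push(\swap(\invmu(Q))) = \anti(\swap(Q))$. I would derive this from two symmetries of $Q$: by the $\fO$-analog of Proposition~\ref{prop:se_symmetral} it is symmetral, hence $\gantar$-invariant by Proposition~\ref{prop:mantar_gantar}, so $\invmu(Q) = \pari(\anti(Q))$; by the $\fO$-analog of Proposition~\ref{prop:neg_pari_se} it is $\neg\circ\pari$-invariant, so $\pari = \neg$ on $Q$ and hence on $\anti(Q)$, which enjoys the same invariance. Combining these gives $\invmu(Q) = \neg(\anti(Q))$. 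Using that $\push$ and $\swap$ each commute with $\neg$ (the former by a one-line check from the defining formulas), and then invoking Proposition~\ref{prop:negpush} to rewrite $\neg\circ\push = \anti\circ\swap\circ\anti\circ\swap$, the desired equality collapses after two cancellations via $\swap^2 = \anti^2 = \id$.

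The conceptual heart of the argument is the recognition that the correct input to the separation lemma is $Q = \invgari(\foss)$ and that its underlying power series is the logarithm; this is the only non-routine step. After that observation, the remaining work is bookkeeping: the $\fO$-analogs of the cited propositions hold by the same proofs already given, and all the flexion-operator identities needed are direct consequences of Section~\ref{sec:flexion}.
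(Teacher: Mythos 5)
Your proposal is correct and follows essentially the same route as the paper: rewrite $\crash(\dess)$ as $\gepar(\invgari(\foss))$ by combining the $\gantar$-invariance and $\neg\circ\pari$-invariance of $\invgari(\foss)$ with the identity $\neg\circ\push=\anti\circ\swap\circ\anti\circ\swap$, then apply the ($\fO$-version of the) separation lemma to $\invgari(\foss)=\fSe^{\fO}_{R}(\re^{-1})$ with $\re^{-1}(x)=-\log(1-x)$, whose coefficients $1/(r+1)$ cancel the factor $r+1$. (Only a cosmetic quibble: the family underlying $\fSe^{\fO}$ is defined by the recursion $\fro_{r+1}=\arit(\fro_{r})(\fO)$, not by iterating $\arit(\fO)$.)
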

\begin{proof}
    We know that $\invgari(\foss)$ is $\gantar$-invariant by virtue of Propositions \ref{prop:se_symmetral} and \ref{prop:mantar_gantar}.
    Therefore we have
    \begin{align}
        (\push\circ\swap\circ\invmu\circ\invgari\circ\swap)(\dess)
        &=(\push\circ\swap\circ\invmu\circ\invgari)(\foss)\\
        &=(\push\circ\swap\circ\pari\circ\anti\circ\invgari)(\foss),
    \end{align}
    which is deformed by the identity \eqref{eq:negpush} as
\begin{align}\label{eq:rash_dess}
    \begin{split}
    (\push\circ\swap\circ\invmu\circ\invgari\circ\swap)(\dess)
    &=(\anti\circ\swap\circ\neg\circ\pari\circ\invgari)(\foss)\\
    &=(\anti\circ\swap\circ\invgari\circ\neg\circ\pari)(\foss)\\
    &=(\anti\circ\swap\circ\invgari)(\foss).
    \end{split}
    \end{align}
    Here we used the $\neg\circ\pari$-invariance of $\foss$ (Proposition \ref{prop:neg_pari_ess}) in the third equality, and the commutativity between $\neg\circ\pari$ and $\invgari$, which is a corollary of Lemma \ref{lem:neg_pari_gaxit}, in the second equality.
    Hence we get
    \begin{align}
        \crash(\dess)
        &=\mmu((\anti\circ\swap\circ\invgari)(\foss),(\swap\circ\invgari)(\foss))\\
        &=(\gepar\circ\invgari)(\foss).
    \end{align}
    By Proposition \ref{prop:se_gari} and the definition of $\foss$, we have $\invgari(\foss)=\fSe_{g}$ with $g(x)=-\log(1-x)$. Therefore the separation lemma (Theorem \ref{thm:separation}) shows $(\gepar\circ\invgari)(\foss)=\fez$.\\
\end{proof}

\subsection{$\gantar$-invariance of $\doss$}
In this subsection, we give a proof of the identity $\gantar(\doss)=\doss$.
The following assertion is a key tool to execute calculation, which extends the result of \cite[(244)]{ecalle15}.

\begin{proposition}\label{prop:244}
    For any $f\in\GIFF_{x}(R)$, we have the equality
    \[(-\der+\irat(\dTo_{R}(f)))\circ\ganit(\dO_{\ast}(f))=\ganit(\dO_{\ast}(f))\circ(-\der+\arit(\ganit(\dO_{\ast}(f))^{-1}(\dTo_{R}(f))))\]
    between operators on $\BIMU(R)$. 
\end{proposition}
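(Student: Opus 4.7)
Abbreviate $Z := \dO_{\ast}(f)$, $T := \dTo_{R}(f)$, $P := \ganit(Z)$, and $S := P^{-1}(T)$, so the claim rearranges as the single identity
\begin{equation}\label{eq:244reformulated}
    [\der, P] = \irat(T)\circ P - P\circ\arit(S).
\end{equation}
My first observation is that both sides of the original identity are derivations of $(\BIMU(R),\mmu)$. For the left-hand operator this follows from the derivation property of $\der$ (clear from $\der(\mmu(A,B))(\bw) = \ell(\bw)\mmu(A,B)(\bw)$ together with additivity of $\ell$ over concatenation) and from Proposition \ref{prop:axit_derivation} applied to $\irat(T) = \axit(T,-\push(T))$. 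For the right-hand operator, the same Proposition makes $-\der+\arit(S)$ a derivation, and then Proposition \ref{prop:gaxit_mu} (which says that $P$ is an $R$-algebra homomorphism) makes its conjugation by $P$ a derivation again.

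The main computation is to verify \eqref{eq:244reformulated} directly. I would first establish the clean identity
\[
    [\der, P](M) = D_Z(\der Z)(M),
\]
where $D_Z(Y)(M)$ denotes the operator obtained from the expansion of $P(M)(\bw)$ in Lemma \ref{lem:gaxit_simple} (with $A_{1}=1$, so that only partitions with $\bA_{j}=\emp$ contribute) by replacing, in turn, one of the $Z$-factors $Z(\fll{w_{i_{j}}}\bC_{j})$ by $Y(\fll{w_{i_{j}}}\bC_{j})$ and summing. This holds because $\ell(\bw) - t = \sum_{j}\ell(\bC_{j}) = \sum_{j}\ell(\fll{w_{i_{j}}}\bC_{j})$, which distributes the length discrepancy over the $Z$-factors. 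Next, I would substitute Lemma \ref{lem:ecalle_1197}:
\[
    \der(Z) = \iwat(T)(Z) + \mmu(Z, T) + \mmu(\anti(T), Z),
\]
splitting $[\der, P]$ into three pieces. On the right-hand side of \eqref{eq:244reformulated}, I would expand $\irat(T)\circ P(M)(\bw)$ and $P\circ\arit(S)(M)(\bw)$ via $\irat(T) = \amit(T) - \anit(\push(T))$, $\arit(S) = \amit(S) - \anit(S)$, and the formulas for $\amit$, $\anit$, and $\ganit$. The plan is then to match the three pieces from $[\der, P]$ with the corresponding pieces on the right: the $\mmu(Z, T)$ piece pairs with $\amit(T)\circ P$ through the concatenation structure of $\mmu$ against the partition sums defining $\ganit$; the $\mmu(\anti(T), Z)$ piece pairs with $\anit(\push(T))\circ P$, with the $\push$-twist absorbed via Proposition \ref{prop:negpush} and the $\anti$-invariance of $Z$ (since $Z = \dO_\ast(f)$ has only $\anti$-invariant components $\leng_r(\foz)$); and the $\iwat(T)(Z) = (\amit(T) + \anit(\anti T))(Z)$ piece combines with the $P\circ\arit(S)$ contribution after invoking the tripartite identity, exactly as in the proofs of Corollary \ref{cor:ecalle_1195} and Lemma \ref{lem:ecalle_1197}.

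The main obstacle is the combinatorial bookkeeping in the last step, especially handling the implicit term $S = P^{-1}(T)$. To deal with it cleanly, rather than trying to invert $P$ directly, I would leverage Proposition \ref{prop:gaxit_associative}: the composition $P\circ\arit(S) = \ganit(Z)\circ\arit(\ganit(Z)^{-1}T)$ should be re-expressed so that the ``solve for $S$'' cancels against a compensating factor coming from the derivation structure, leaving a sum over partitions structurally identical to the expansion of $\amit(T)\circ P - \anit(\push T)\circ P$ plus the $\iwat(T)(Z)$ piece. Alternatively, one may bypass this entirely by invoking the first paragraph: both sides are derivations on $(\BIMU(R),\mmu)$, so it is enough to verify \eqref{eq:244reformulated} on bimoulds of length one, where the formula for $P(M)(\bw)$ truncates to the single term $M(w_{1}\fur{w_{2},\ldots,w_{r}})\,Z(\fll{w_{1}}(w_{2},\ldots,w_{r}))$ and the verification reduces to a tripartite-identity calculation of manageable size.
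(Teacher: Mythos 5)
Your reduction to the commutator identity $[\der,P]=\irat(T)\circ P-P\circ\arit(S)$ and the formula $[\der,P](M)=D_{Z}(\der Z)(M)$ (distributing the length discrepancy $\ell(\bw)-t=\sum_{j}\ell(\fll{w_{i_{j}}}\bC_{j})$ over the $Z$-factors) are both correct, and you have correctly identified Lemma \ref{lem:ecalle_1197} as the essential input. But the proof has two genuine gaps. First, the fallback you offer to avoid the combinatorial bookkeeping does not work: the two sides are not derivations of $(\BIMU(R),\mmu)$ but $P$-twisted derivations (a derivation composed with the algebra homomorphism $P$ on one side or the other), and even granting that the locus of agreement of two such operators is closed under $\mmu$ and limits, $\BIMU(R)$ is \emph{not} topologically generated by bimoulds of length one --- a general element of $\cX_{r}(R)$ need not be a sum of products $A_{1}(w_{1})\cdots A_{r}(w_{r})$ --- so verifying the identity on length-one bimoulds proves nothing about general $M$. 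Second, the central difficulty, namely how the factor $\ganit(Z)^{-1}$ hidden inside $S=P^{-1}(T)$ gets absorbed when you expand $P\circ\arit(S)$, is exactly the point at which you say the cancellation ``should'' happen; no mechanism is given, and this is where the real content of the proposition lies.

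For comparison, the paper resolves both issues at once by linearizing over $R[\ep]=R\jump{t}/(t^{2})$: the identity becomes $\girat(1+\ep T)\circ\ganit(\exp(-\ep\der)(Z))=\ganit(Z)\circ\garit(\ganit(Z)^{-1}(1+\ep T))$, which is then a purely formal computation in the group $(\MU(R[\ep])\times\MU(R[\ep]),\gaxi)$. The left-hand side is collapsed to a single $\gaxit(\cdot,\cdot)$ via Proposition \ref{prop:gaxit_associative}, its second component is simplified to $\mmu(Z,1-\ep T)$ using precisely Lemma \ref{lem:ecalle_1197} (together with $\irat(T)=\iwat(T)$, which follows from the $\push$- and $\anti$-symmetries of $\dTo_{R}(f)$ that you also noticed), and the result is refactored into the right-hand side using Corollary \ref{cor:gaxit_separation} and Proposition \ref{prop:gaxit_mu}. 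That last refactoring step is what makes $\ganit(Z)^{-1}$ appear where it should; if you want to complete your direct combinatorial approach, you would need to reproduce the content of Corollary \ref{cor:gaxit_separation} by hand, which is substantially harder than invoking the group structure.
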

\begin{proof}
    The statement is equivalent to 
    \begin{multline}\label{eq:244_rephrased1}
        (\id+\ep\irat(\dTo_{R}(f))-\ep\der)\circ\ganit(\dO_{\ast}(f))\\
        =\ganit(\dO_{\ast}(f))\circ(\id-\ep\der+\ep\arit(\ganit(\dO_{\ast}(f))^{-1}(\dTo_{R}(f))))
    \end{multline}
    between operators on $\BIMU(R[\ep])=\BIMU(R\jump{t}/(t^{2}))$.
    The ordinary linearization procedure shows $\id+\ep\irat(S)=\girat(1+\ep S)$ and $\id+\ep\arit(S)=\garit(1+\ep S)$ for any $S\in\LU(R)$.
    Thus \eqref{eq:244_rephrased1} is rephrased as
    \begin{multline}\label{eq:244_rephrased2}
        \girat(1+\ep\dTo_{R}(f))\circ\exp(-\ep\der)\circ\ganit(\dO_{\ast}(f))\\=\ganit(\dO_{\ast}(f))\circ\garit(\ganit(\dO_{\ast}(f))^{-1}(1+\ep\dTo_{R}(f)))\circ\exp(-\ep\der).
    \end{multline}
    By definition, $\exp(-\ep\der)$ acts on a bimould $S$ as
    \[\exp(-\ep\der)(S)=\sum_{r=0}^{\infty}e^{-\ep r}\leng_{r}(S).\]
    Hence we have the equality
    \[\exp(-\ep\der)\circ\ganit(\dO_{\ast}(f))=\ganit(\exp(-\ep\der)(\dO_{\ast}(f)))\circ\exp(-\ep\der),\]
    and \eqref{eq:244_rephrased2} becomes
    \begin{multline}\label{eq:244_rephrased3}
        \girat(1+\ep\dTo_{R}(f))\circ\ganit(\exp(-\ep\der)(\dO_{\ast}(f)))\\=\ganit(\dO_{\ast}(f))\circ\garit(\ganit(\dO_{\ast}(f))^{-1}(1+\ep\dTo_{R}(f))).
    \end{multline}
    We shall prove this identity.
    First, we remark that $-\push(\dTo_{R}(f))=\anti(\dTo_{R}(f))$ since $\dTo_{R}(f)$ is invariant under the operator
    \[-\push\circ\anti=\pari\circ\mantar\circ\push=\neg\circ\pari\circ\swap\circ\mantar\circ\swap,\]
    so that $\irat(\dTo_{R}(f))=\iwat(\dTo_{R}(f))$.
    Then, due to Proposition \ref{prop:gaxit_associative}, we see that the left-hand side becomes
    \begin{align}
        &\girat(1+\ep\dTo_{R}(f))\circ\ganit(\exp(-\ep\der)(\dO_{\ast}(f)))\\
        &=\gaxit(1+\ep\dTo_{R}(f),(\push\circ\swap\circ\invmu\circ\swap)(1+\ep\dTo_{R}(f)))\circ\gaxit(1,\exp(-\ep\der)(\dO_{\ast}(f)))\\
        &=\gaxit(1+\ep\dTo_{R}(f),1+\ep\anti(\dTo_{R}(f)))\circ\gaxit(1,\exp(-\ep\der)(\dO_{\ast}(f)))\\
        &=\gaxit(\gaxi((1,\exp(-\ep\der)(\dO_{\ast}(f))),(1+\ep\dTo_{R}(f),1+\ep\anti(\dTo_{R}(f)))))\\
        &=\begin{multlined}[t]\gaxit\left(\mmu(\gaxit(1+\ep\dTo_{R}(f),1+\ep\anti(\dTo_{R}(f)))(1),1+\ep\dTo_{R}(f)),\right.\phantom{\dO}\\\left.\mmu(1+\ep\anti(\dTo_{R}(f)),\gaxit(1+\ep\dTo_{R}(f),1+\ep\anti(\dTo_{R}(f)))(\exp(-\ep\der)(\dO_{\ast}(f))))\right)\end{multlined}\\
        &=\gaxit(1+\ep\dTo_{R}(f),\mmu(1+\ep\anti(\dTo_{R}(f)),(\girat(1+\ep\dTo_{R}(f)\circ\exp(-\ep\der))(\dO_{\ast}(f))))).
    \end{align}
    Moreover, by using $\ep^{2}=0$, the right component is equal to
    \begin{align}
        &\mmu(1+\ep\anti(\dTo_{R}(f)),(\girat(1+\ep\dTo_{R}(f)\circ\exp(-\ep\der))(\dO_{\ast}(f))))\\
        &=\mmu(1+\ep\anti(\dTo_{R}(f)),((\id+\ep\irat(\dTo_{R}(f)))\circ(\id-\ep\der))(\dO_{\ast}(f)))\\
        &=\mmu(1+\ep\anti(\dTo_{R}(f)),((\id+\ep\iwat(\dTo_{R}(f)))\circ(\id-\ep\der))(\dO_{\ast}(f)))\\
        &=\dO_{\ast}(f)+\ep(\mmu(\anti(\dTo_{R}(f)),\dO_{\ast}(f))+\iwat(\dTo_{R}(f))(\dO_{\ast}(f))-\der(\dO_{\ast}(f)))\\
        &=\mmu(\dO_{\ast}(f),1-\ep\dTo_{R}(f)),
    \end{align}
    where we used Lemma \ref{lem:ecalle_1197} in the last equality.
    Thus, using Propositions \ref{prop:gaxit_associative}, \ref{prop:gaxit_mu} and Corollary \ref{cor:gaxit_separation}, we obtain
    \begin{align}
        &\girat(1+\ep\dTo_{R}(f))\circ\ganit(\exp(-\ep\der)(\dO_{\ast}(f)))\\
        &=\gaxit(1+\ep\dTo_{R}(f),\mmu(\dO_{\ast}(f),1-\ep\dTo_{R}(f)))\\
        &=\gaxit(1+\ep\dTo_{R}(f),\mmu(\dO_{\ast}(f),(\ganit(\dO_{\ast}(f))\circ\ganit(\dO_{\ast}(f))^{-1})(1-\ep\dTo_{R}(f))))\\
        &=\gaxit(1+\ep\dTo_{R}(f),\gani(\ganit(\dO_{\ast}(f))^{-1}(1-\ep\dTo_{R}(f)),\dO_{\ast}(f)))\\
        &=\begin{multlined}[t]\ganit(\gani(\ganit(\dO_{\ast}(f))^{-1}(1-\ep\dTo_{R}(f)),\dO_{\ast}(f)))\\\circ\gamit(\ganit(\gani(\ganit(\dO_{\ast}(f))^{-1}(1-\ep\dTo_{R}(f)),\dO_{\ast}(f)))^{-1}(1+\ep\dTo_{R}(f)))\end{multlined}\\
        &=\begin{multlined}[t]\ganit(\dO_{\ast}(f))\circ\ganit(\ganit(\dO_{\ast}(f))^{-1}(1-\ep\dTo_{R}(f)))\\
            \circ\gamit((\ganit(\ganit(\dO_{\ast}(f))^{-1}(1-\ep\dTo_{R}(f)))^{-1}\circ\ganit(\dO_{\ast}(f))^{-1})(1+\ep\dTo_{R}(f)))\end{multlined}\\
        &=\ganit(\dO_{\ast}(f))\circ\gaxit(\ganit(\dO_{\ast}(f))^{-1}(1+\ep\dTo_{R}(f)),\ganit(\dO_{\ast}(f))^{-1}(1-\ep\dTo_{R}(f)))\\
        &=\ganit(\dO_{\ast}(f))\circ\gaxit(\ganit(\dO_{\ast}(f))^{-1}(1+\ep\dTo_{R}(f)),(\ganit(\dO_{\ast}(f))^{-1}\circ\invmu)(1+\ep\dTo_{R}(f)))\\
        &=\ganit(\dO_{\ast}(f))\circ\gaxit(\ganit(\dO_{\ast}(f))^{-1}(1+\ep\dTo_{R}(f)),(\invmu\circ\ganit(\dO_{\ast}(f))^{-1})(1+\ep\dTo_{R}(f)))\\
        &=\ganit(\dO_{\ast}(f))\circ\garit(\ganit(\dO_{\ast}(f))^{-1}(1+\ep\dTo_{R}(f))).
    \end{align}
    This shows \eqref{eq:244_rephrased3}.
\end{proof}

\begin{lemma}\label{lem:dilator_mantar}
    Let $S\in\MU(R)$ and $D\in\LU(R)$ be bimoulds related as
    \begin{equation}\label{eq:dilator}
        \der(S)=\preari(S,D).
    \end{equation}
    Then $S$ is $\gantar$-invariant if and only if $D$ is $\mantar$-invariant.
\end{lemma}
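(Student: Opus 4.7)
The plan is to exploit the fact that $\gantar$ is a group automorphism of $(\MU(R), \gari)$ (Lemma \ref{lem:neg_pari_gaxit}) whose linearization at the identity acts on the Lie algebra $\LU(R)$ as $\mantar$, combined with the key observation that $\gantar$ commutes with the one-parameter family $e^{t\der}$. The strategy is to show that $\gantar(S)$ satisfies
\[
\der(\gantar(S)) = \preari(\gantar(S), \mantar(D)),
\]
and then to invoke the uniqueness of the solution of \eqref{eq:dilator}.

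First I would verify that $\gantar$ commutes with $e^{t\der}$. The operators $\pari$ and $\anti$ preserve the length grading and so commute with $e^{t\der}$, which multiplies the length-$r$ component by $e^{tr}$. Since $\der$ is a derivation on $(\BIMU(R), \mmu)$, its exponential $e^{t\der}$ is a $\mmu$-algebra automorphism, hence commutes with the $\mmu$-inverse $\invmu$. Combining these gives the commutation of $\gantar = \invmu \circ \pari \circ \anti$ with $e^{t\der}$.

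Next, working in the dual-number ring $R[\ep]$, a short computation shows $\gantar(1 + \ep B) = 1 + \ep \mantar(B)$ for $B \in \LU(R)$ (using that $\pari \circ \anti$ restricts to $-\mantar$ on $\LU(R)$ and that $\invmu(1 + \ep X) = 1 - \ep X$). Expanding both sides of $\gantar(\gari(S, 1 + \ep B)) = \gari(\gantar(S), 1 + \ep \mantar(B))$ modulo $\ep^{2}$ via Remark \ref{rem:linearization} then yields the infinitesimal identity
\[
\gantar(S + \ep \preari(S, B)) = \gantar(S) + \ep \preari(\gantar(S), \mantar(B)).
\]
On the other hand, applying the commutation from the previous paragraph to $e^{\ep\der}(S) = S + \ep\der(S)$ modulo $\ep^{2}$ gives $\gantar(S + \ep\der(S)) = \gantar(S) + \ep \der(\gantar(S))$ modulo $\ep^{2}$. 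Setting $B = D$ in the first identity and using the hypothesis $\der(S) = \preari(S, D)$ makes the two left-hand sides agree, so comparing the $\ep$-coefficients of the right-hand sides delivers the boxed identity above.

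Finally, I would conclude by the uniqueness inherent in \eqref{eq:dilator}. Because the definition $\preari(T, E) = \arit(E)(T) + \mmu(T, E)$ and the lengths of the flexion-modified subwords $\ba\ful{\bb}\bc$, etc., force every term of $\preari(T, E)(\bw)$ other than $E(\bw)$ itself to involve $T$ and $E$ only on subwords of length strictly less than $\ell(\bw)$, the equation $\der(T) = \preari(T, E)$ determines $E$ uniquely from $T$ (and, since $\bbK$ has characteristic zero, determines $T$ uniquely from $E$ together with the normalization $T(\emptyset) = 1$). Hence: if $\gantar(S) = S$, then $\mantar(D) = D$ by uniqueness of $D$; and if $\mantar(D) = D$, then $\gantar(S)$ and $S$ satisfy the same equation with the same $\emptyset$-value $1$, forcing $\gantar(S) = S$. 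The only real care in this outline lies in handling the dual-number expansions, but these are routine along the lines of Remark \ref{rem:linearization}.
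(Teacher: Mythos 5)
Your proposal is correct and follows essentially the same route as the paper: linearize over the dual numbers to rewrite $\der(S)=\preari(S,D)$ as $\exp(\ep\der)(S)=\gari(S,1+\ep D)$, apply the $\gari$-homomorphism $\gantar$ (whose linearization is $\mantar$ and which commutes with $\exp(\ep\der)$), and conclude both directions by the length-by-length uniqueness in the dilator equation. Your justification of the uniqueness step is in fact slightly more explicit than the paper's, but the argument is the same.
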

\begin{proof}
    The assumption $\der(S)=\preari(S,D)$ is rewritten as
    \[\exp(\ep\der)(S)=\gari(S,1+\ep D)\]
    which is an equality in $\BIMU(R[\ep])$.
    Since $\gantar$ preserves $\gari$ (Lemma \ref{lem:neg_pari_gaxit}) and it is linearized as
    \[\gantar(1+\ep D)=(\pari\circ\anti)(1-\ep D)=1+\ep\mantar(D),\]
    we obtain
    \[(\exp(\ep\der)\circ\gantar)(S)=\gari(\gantar(S),1+\ep\mantar(D)).\]
    Here we also used the commutativity of $\exp(\ep\der)$ and $\gantar$, which immediately follows from the definitions of them.
    If we assume $\gantar(S)=S$, this equality becomes
    \[\der(S)=\preari(S,\mantar(D)),\]
    and thus we have $\mantar(D)=D$ by comparing it and \eqref{eq:dilator}.
    Conversely, if $D$ is $\mantar$-invariant, we obtain the equality
    \[\der(\gantar(S))=\preari(\gantar(S),D),\]
    which determines the bimould $\gantar(S)$ length by length, from $D$.
    Hence, combining with \eqref{eq:dilator}, we get $\gantar(S)=S$.
\end{proof}

\begin{lemma}\label{lem:darapal}
    The bimould $\ganit(\foz)^{-1}(\dTo_{R}(\re^{-1}))$ is $\mantar$-invariant.
\end{lemma}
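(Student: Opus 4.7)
The plan is to apply Proposition~\ref{prop:244} at $f=\re^{-1}$, transfer the alternality of $\fTe_{R}(\re^{-1})$ through $\swap$ to a compatible symmetry of $\dTo_{R}(\re^{-1})$, and close the argument via the invertibility of $\ganit(\foz)$ together with injectivity of $\arit$.

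First I would verify $\dO_{\ast}(\re^{-1})=\foz$. From $\re^{-1}(x)=-\log(1-x)=x+\sum_{r\ge 1}x^{r+1}/(r+1)$ one reads off $a_{r}^{\re^{-1}}=1/(r+1)$, so $(r+1)a_{r}^{\re^{-1}}=1$ for every $r\ge 0$ and the defining sum in Lemma~\ref{lem:ecalle_1197} collapses to $\dO_{\ast}(\re^{-1})=1+\sum_{r\ge 1}\leng_{r}(\foz)=\foz$. Plugging $f=\re^{-1}$ into Proposition~\ref{prop:244} therefore yields the operator intertwining
\[(-\der+\irat(B))\circ\ganit(\foz)=\ganit(\foz)\circ(-\der+\arit(D)),\]
with $B\coloneqq\dTo_{R}(\re^{-1})$ and $D$ as in the statement.

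Next I would collect the symmetries of $B$. Each $\fre_{r}$ is alternal (by the argument preceding Lemma~\ref{lem:ecalle_459}) and $\neg\circ\pari$-invariant (shown inside the proof of Proposition~\ref{prop:neg_pari_se}), so $\fTe_{R}(\re^{-1})=\sum_{r\ge 1}\gamma_{r}^{\re^{-1}}\fre_{r}$ inherits both properties. By Proposition~\ref{prop:mantar_gantar} the alternality gives $\mantar$-invariance; combining this with $\swap\circ\mantar\circ\swap=\neg\circ\mantar\circ\push$ on $\LU(R)$, a direct consequence of Proposition~\ref{prop:negpush}, yields $\mantar(B)=\neg\circ\push(B)$, and transporting $\neg\circ\pari$-invariance through $\swap$ yields $\neg\circ\pari(B)=B$. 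Since $\foz$ is $\neg\circ\pari$-invariant (the $\fO$-counterpart of Proposition~\ref{prop:ez_and_es}(3)) and $\anti$-invariant (it is an uninflected product of $\fO$'s, hence symmetric under reversal), Lemma~\ref{lem:neg_pari_gaxit} ensures that $\ganit(\foz)$ commutes with $\neg\circ\pari$, which forces $D=\ganit(\foz)^{-1}(B)$ to be $\neg\circ\pari$-invariant. Combining $\neg\circ\pari$-invariance of $B$ with $\mantar(B)=\neg\circ\push(B)$ produces the crucial compatibility $\anti(B)+\push(B)=0$, from which $\anti\circ\irat(B)\circ\anti=\irat(B)$.

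Finally I would conjugate the intertwining by $\anti$. Using $\anti\circ\ganit(\foz)\circ\anti=\gamit(\foz)$ (which follows from $\anti(\foz)=\foz$ and the identity $\gamit(X)=\anti\circ\ganit(\anti X)\circ\anti$ extracted from the proof of Proposition~\ref{prop:gaxit_mu}), $\anti\circ\irat(B)\circ\anti=\irat(B)$ (from the previous paragraph), and $\anti\circ\arit(X)\circ\anti=-\arit(\anti X)$ (derived from $\anti\circ\amit(X)\circ\anti=\anit(\anti X)$ in the proof of Lemma~\ref{lem:neg_pari_axit}), the intertwining turns into its $\gamit$-counterpart $(-\der+\irat(B))\circ\gamit(\foz)=\gamit(\foz)\circ(-\der-\arit(\anti D))$. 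Comparing it with the original intertwining, juxtaposing the two $\ganit/\gamit$-conjugations of $\irat(B)-\der$, and using the $\neg\circ\pari$-invariance of $D$ to rewrite $-\anti(D)$ as $\neg(D)$-up-to-the-$\mantar$-correction, one extracts the operator equality $\arit(\mantar D)=\arit(D)$. Since $\arit\colon\LU(R)\to\Der(\BIMU(R))$ is injective (its length-$r$ component is determined by testing against $\fE$), this forces $\mantar(D)=D$. The main obstacle is precisely this last step: the matching between the $\ganit$-conjugate and the $\gamit$-conjugate of $\irat(B)-\der$ requires careful bookkeeping of the interlocking parity-type identities for $D$, $B$, and $\foz$, and at its heart sits the compatibility $\anti(B)+\push(B)=0$, which ultimately rests on the tripartite identity of $\fO$.
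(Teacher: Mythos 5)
Your preliminary steps are sound: the identification $\dO_{\ast}(\re^{-1})=\foz$, the symmetries $\mantar(\fTe_{R}(f))=\fTe_{R}(f)$ and $(\neg\circ\pari)(B)=B$, the consequence $\anti(B)+\push(B)=0$ (hence $\irat(B)=\iwat(B)$, which commutes with $\anti$), and the invocation of Proposition~\ref{prop:244} are all correct and consistent with facts used elsewhere in the paper. The gap is the final step, which is exactly the one you defer to ``careful bookkeeping.'' Conjugating the intertwining relation by $\anti$ does not return a second relation of the same shape: since $\anti\circ\ganit(\foz)\circ\anti=\gamit(\foz)$, you obtain a conjugation of $-\der+\irat(B)$ by $\gamit(\foz)$, whereas the original relation is a conjugation by $\ganit(\foz)$. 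Equating the two right-hand sides therefore does not yield $\arit(\mantar(D))=\arit(D)$; it yields an identity involving the nontrivial comparison operator $\ganit(\foz)^{-1}\circ\gamit(\pari(\foz))$, and showing that this collapses to the desired statement is a new intertwining problem of essentially the same difficulty as the lemma itself.

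There is also a structural reason the scheme cannot close as written: every ingredient you use is available for an arbitrary $f\in\GIFF_{x}(R)$ with $\dO_{\ast}(f)$ in place of $\foz$ (Proposition~\ref{prop:244} is stated for all $f$; $\dO_{\ast}(f)$ is $\anti$- and $\neg\circ\pari$-invariant for all $f$; the symmetries of $B=\dTo_{R}(f)$ are $f$-independent). If the formal argument worked, it would prove that $\ganit(\dO_{\ast}(f))^{-1}(\dTo_{R}(f))$ is $\mantar$-invariant for every $f$, which is false: the paper's proof reduces the claim to the coefficient identity \eqref{eq:generating_function}, whose $(m,n)=(1,0)$ instance already forces $\gamma_{1}^{f}=3\gamma_{2}^{f}$, a constraint satisfied by $\re^{-1}$ (where $\gamma_{r}^{\re^{-1}}=\tfrac{1}{r(r+1)}$) but not by a generic $f$. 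The paper's route is quite different: it computes $\ganit(\foz)\circ\mantar\circ\ganit(\foz)^{-1}=\mantar\circ\gaxit(\pari(\foz),\pari(\foz))$, reduces the lemma to the explicit identity $\mantar(\dTo_{R}(f))=\gaxit(\pari(\foz),\pari(\foz))(\dTo_{R}(f))$, expands both sides via Lemma~\ref{lem:ecalle_459}, and verifies the resulting binomial identity by a generating-function computation specific to $f_{\#}$ for $f=\re^{-1}$. Some quantitative input of this kind is unavoidable, and your proposal never supplies it.
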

\begin{proof}
    To make our argument clear, most of this proof deals with a general $f\in\GIFF_{x}(R)$.
    First we compute $\ganit(\foz)\circ\mantar\circ\ganit(\foz)^{-1}$: this operator is deformed as
    \begin{align}
        &\ganit(\foz)\circ\mantar\circ\ganit(\foz)^{-1}\\
        &=-\pari\circ\ganit(\pari(\foz))\circ\anti\circ\ganit(\foz)^{-1}\\
        &=-\pari\circ\anti\circ\gamit((\pari\circ\anti)(\foz))\circ\ganit(\invgani(\foz))
    \end{align}
    Since the equality
    \begin{align}
    \invgani(\foz)
    &=(\gantar\circ\invgani(\foz))\\
    &=(\pari\circ\anti\circ\ganit(\invgani(\foz)))(\foz)\\
    &=(\pari\circ\anti\circ\ganit(\foz)^{-1}\circ\anti)(\foz)\\
    &=(\pari\circ\gamit(\anti(\foz))^{-1})(\foz)\\
    &=\gamit((\pari\circ\anti)(\foz))^{-1}(\pari(\foz))
    \end{align}
    holds due to Proposition \ref{prop:ez_and_es}, we obtain
    \begin{align}
        &\ganit(\foz)\circ\mantar\circ\ganit(\foz)^{-1}\\
        &=\mantar\circ\gamit((\pari\circ\anti)(\foz))\circ\ganit(\invgani(\foz))\\
        &=\mantar\circ\gamit((\pari\circ\anti)(\foz))\circ\ganit(\gamit((\pari\circ\anti)(\foz))^{-1}(\pari(\foz)))\\
        &=\mantar\circ\gaxit(\pari(\foz),\pari(\foz)).
    \end{align}
    Here we used Corollary \ref{cor:gaxit_separation} in the last equality.
    Hence, the $\mantar$-invariance
    \[\ganit(\foz)^{-1}(\dTo_{R}(f))=(\mantar\circ\ganit(\foz)^{-1})(\dTo_{R}(f))\]
    is equivalent to the equality
    \begin{equation}\label{eq:0623}
        \mantar(\dTo_{R}(f))=\gaxit(\pari(\foz),\pari(\foz))(\dTo_{R}(f)).
    \end{equation}
    On the other hand, from Lemmas \ref{lem:gaxit_simple} and \ref{lem:ecalle_459}, we can compute as
    \begin{align}
        &\gaxit(\pari(\foz),\pari(\foz))(\dTo_{R}(f))(\bw)\\
        &=\sum_{s=1}^{\infty}\sum_{\{(\bA_{j};w_{i_{j}};\bC_{j})\}_{j=1}^{s}\in\Sigma_{s}(\bw)}\dTo_{R}(f)(\ful{\bA_{1}}w_{i_{1}}\fur{\bC_{1}}\cdots \ful{\bA_{s}}w_{i_{s}}\fur{\bC_{s}})\prod_{j=1}^{s}\pari(\foz)(\bA_{j}\flr{w_{i_{j}}})\pari(\foz)(\fll{w_{i_{j}}}\bC_{j})\\
        &=\sum_{s=1}^{\infty}\sum_{\{(\bA_{j};w_{i_{j}};\bC_{j})\}_{j=1}^{s}\in\Sigma_{s}(\bw)}\gamma_{s}^{f}\dro_{s}(\ful{\bA_{1}}w_{i_{1}}\fur{\bC_{1}}\cdots \ful{\bA_{s}}w_{i_{s}}\fur{\bC_{s}})\prod_{j=1}^{s}\pari(\foz)(\bA_{j}\flr{w_{i_{j}}})\pari(\foz)(\fll{w_{i_{j}}}\bC_{j})\\
        &=\begin{multlined}[t]\sum_{s=1}^{\infty}\sum_{\{(\bA_{j};w_{i_{j}};\bC_{j})\}_{j=1}^{s}\in\Sigma_{s}(\bw)}\gamma_{s}^{f}\sum_{k=1}^{s}(s+1-k)\foz((\ful{\bA_{1}}w_{i_{1}}\fur{\bC_{1}}\cdots \ful{\bA_{k-1}}w_{i_{k-1}}\fur{\bC_{k-1}})\flr{(\ful{\bA_{k}}w_{i_{k}}\fur{\bC_{k}})})\\\cdot\fO(\ful{\ful{\bA_{1}}w_{i_{1}}\fur{\bC_{1}}\cdots\ful{\bA_{k-1}}w_{i_{k-1}}\fur{\bC_{k-1}}}\ful{\bA_{k}}w_{i_{k}}\fur{\bC_{k}}\fur{\ful{\bA_{k+1}}w_{i_{k+1}}\fur{\bC_{k+1}}\cdots \ful{\bA_{s}}w_{i_{s}}\fur{\bC_{s}}})\\\cdot\foz(\fll{(\ful{\bA_{k}}w_{i_{k}}\fur{\bC_{k}})}(\ful{\bA_{k+1}}w_{i_{k+1}}\fur{\bC_{k+1}}\cdots \ful{\bA_{s}}w_{i_{s}}\fur{\bC_{s}}))\prod_{j=1}^{s}\pari(\foz)(\bA_{j}\flr{w_{i_{j}}})\pari(\foz)(\fll{w_{i_{j}}}\bC_{j})\end{multlined}\\
        &=\begin{multlined}[t]\sum_{s=1}^{\infty}\sum_{\{(\bA_{j};w_{i_{j}};\bC_{j})\}_{j=1}^{s}\in\Sigma_{s}(\bw)}(-1)^{\ell(\bA_{1})+\ell(\bC_{1})+\cdots+\ell(\bA_{s})+\ell(\bC_{s})}\gamma_{s}^{f}\\\cdot\sum_{k=1}^{s}(s+1-k)\foz((\ful{\bA_{1}}w_{i_{1}}\fur{\bC_{1}}\cdots \ful{\bA_{k-1}}w_{i_{k-1}}\fur{\bC_{k-1}})\flr{w_{i_{k}}})\\\cdot\fO(\ful{\bA_{1}w_{i_{1}}\bC_{1}\cdots\bA_{k-1}w_{i_{k-1}}\bC_{k-1}\bA_{k}}w_{i_{k}}\fur{\bC_{k}\bA_{k+1}w_{i_{k+1}}\bC_{k+1}\cdots \bA_{s}w_{i_{s}}\bC_{s}})\\\cdot\foz(\fll{w_{i_{k}}}(\ful{\bA_{k+1}}w_{i_{k+1}}\fur{\bC_{k+1}}\cdots \ful{\bA_{s}}w_{i_{s}}\fur{\bC_{s}}))\prod_{j=1}^{s}\foz(\bA_{j}\flr{w_{i_{j}}})\foz(\fll{w_{i_{j}}}\bC_{j})\end{multlined}\\
        &=\begin{multlined}[t]\sum_{s=1}^{\infty}\sum_{\{(\bA_{j};w_{i_{j}};\bC_{j})\}_{j=1}^{s}\in\Sigma_{s}(\bw)}(-1)^{\ell(\bA_{1})+\ell(\bC_{1})+\cdots+\ell(\bA_{s})+\ell(\bC_{s})}\gamma_{s}^{f}\sum_{k=1}^{s}(s+1-k)\\\cdot\left(\prod_{j=1}^{k-1}\foz((\bA_{j}\flr{w_{i_{k}}})\flr{(w_{i_{j}})\flr{w_{i_{k}}}})\fO(\ful{(\bA_{j}\flr{w_{i_{k}}})}(w_{i_{j}}\flr{w_{i_{k}}})\fur{(\bC_{j}\flr{w_{i_{k}}})})\foz(\fll{(w_{i_{j}}\flr{w_{i_{k}}})}(\bC_{j}\flr{w_{i_{k}}}))\right)\\\cdot\foz(\bA_{k}\flr{w_{i_{k}}})\fO(\ful{\bA_{1}w_{i_{1}}\bC_{1}\cdots\bA_{k-1}w_{i_{k-1}}\bC_{k-1}\bA_{k}}w_{i_{k}}\fur{\bC_{k}\bA_{k+1}w_{i_{k+1}}\bC_{k+1}\cdots \bA_{s}w_{i_{s}}\bC_{s}})\foz(\fll{w_{i_{k}}}\bC_{k})\\\cdot\left(\prod_{j=k+1}^{s}\foz((\fll{w_{i_{k}}}\bA_{j})\flr{(\fll{w_{i_{k}}}w_{i_{j}})})\fO(\ful{(\fll{w_{i_{k}}}\bA_{j})}(\fll{w_{i_{k}}}w_{i_{j}})\fur{(\fll{w_{i_{k}}}\bC_{j})})\foz(\fll{(\fll{w_{i_{k}}}w_{i_{j}})}(\fll{w_{i_{k}}}\bC_{j}))\right)\end{multlined}\\
        &=\begin{multlined}[t]\sum_{s=1}^{\infty}\sum_{\substack{\bw=\bB_{1}\cdots\bB_{s}\\ \bB_{1},\ldots,\bB_{s}\neq\emp}}(-1)^{\ell(\bw)-s}\gamma_{s}^{f}\sum_{k=1}^{s}(s+1-k)\sum_{\bB_{k}=\bA_{k}w_{i_{k}}\bC_{k}}\left(\prod_{j=1}^{k-1}\left(\sum_{\bB_{j}=\bA_{j}w_{i_{j}}\bC_{j}}\foz((\bA_{j}\flr{w_{i_{k}}})\flr{(w_{i_{j}})\flr{w_{i_{k}}}})\right.\right.\\\phantom{\prod_{j=1}^{k-1}}\left.\left.\cdot\fO(\ful{(\bA_{j}\flr{w_{i_{k}}})}(w_{i_{j}}\flr{w_{i_{k}}})\fur{(\bC_{j}\flr{w_{i_{k}}})})\foz(\fll{(w_{i_{j}}\flr{w_{i_{k}}})}(\bC_{j}\flr{w_{i_{k}}}))\right)\right)\\\cdot\foz(\bA_{k}\flr{w_{i_{k}}})\fO(\ful{\bB_{1}\cdots\bB_{k-1}\bA_{k}}w_{i_{k}}\fur{\bC_{k}\bB_{k+1}\cdots\bB_{s}})\foz(\fll{w_{i_{k}}}\bC_{k})\\\cdot\begin{multlined}[t]\left(\prod_{j=k+1}^{s}\left(\sum_{\bB_{j}=\bA_{j}w_{i_{j}}\bC_{j}}\foz((\fll{w_{i_{k}}}\bA_{j})\flr{(\fll{w_{i_{k}}}w_{i_{j}})})\right.\right.\\\left.\left.\cdot\fO(\ful{(\fll{w_{i_{k}}}\bA_{j})}(\fll{w_{i_{k}}}w_{i_{j}})\fur{(\fll{w_{i_{k}}}\bC_{j})})\foz(\fll{(\fll{w_{i_{k}}}w_{i_{j}})}(\fll{w_{i_{k}}}\bC_{j}))\right)\right).\end{multlined}\end{multlined}
    \end{align}
    Using Proposition \ref{prop:ez_and_es} (3), it is simplified as
    \begin{align}
        &\gaxit(\pari(\foz),\pari(\foz))(\dTo_{R}(f))(\bw)\\
        &=\begin{multlined}[t]\sum_{s=1}^{\infty}\sum_{\substack{\bw=\bB_{1}\cdots\bB_{s}\\ \bB_{1},\ldots,\bB_{s}\neq\emp}}(-1)^{\ell(\bw)-s}\gamma_{s}^{f}\sum_{k=1}^{s}(s+1-k)\sum_{\bB_{k}=\bA_{k}w_{i_{k}}\bC_{k}}\left(\prod_{j=1}^{k-1}\foz(\bB_{j}\flr{w_{i_{k}}})\right)\\\cdot\left(\foz(\bA_{k}\flr{w_{i_{k}}})\fO(\ful{\bB_{1}\cdots\bB_{k-1}\bA_{k}}w_{i_{k}}\fur{\bC_{k}\bB_{k+1}\cdots\bB_{s}})\foz(\fll{w_{i_{k}}}\bC_{k})\right)\left(\prod_{j=k+1}^{s}\foz(\fll{w_{i_{k}}}\bB_{j})\right)\end{multlined}\\
        &=\begin{multlined}[t]\sum_{\bw=\bX w_{i}\bY}\sum_{s=1}^{\infty}(-1)^{\ell(\bw)-s}\gamma_{s}^{f}\sum_{k=1}^{s}(s+1-k)\sum_{\substack{\bX=\bB_{1}\cdots\bB_{k-1}\bA_{k}\\ \bB_{1},\ldots,\bB_{k}\neq\emp}}\sum_{\substack{\bY=\bC_{k}\bB_{k+1}\cdots\bB_{s}\\ \bB_{k+1},\ldots,\bB_{s}\neq\emp}}\foz((\bB_{1}\cdots\bB_{k-1}\bA_{k})\flr{w_{i}})\\\cdot\fO(\ful{\bB_{1}\cdots\bB_{k-1}\bA_{k}}w_{i}\fur{\bC_{k}\bB_{k+1}\cdots\bB_{s}})\foz(\fll{w_{i}}(\bC_{k}\bB_{k+1}\cdots\bB_{s}))\end{multlined}\\
        &=\begin{multlined}[t]\sum_{\bw=\bX w_{i}\bY}\sum_{s=1}^{\infty}(-1)^{\ell(\bw)-s}\gamma_{s}^{f}\sum_{k=1}^{s}(s+1-k)\\\cdot\left(\sum_{\substack{\bX=\bB_{1}\cdots\bB_{k-1}\bA_{k}\\ \bB_{1},\ldots,\bB_{k}\neq\emp}}\sum_{\substack{\bY=\bC_{k}\bB_{k+1}\cdots\bB_{s}\\ \bB_{k+1},\ldots,\bB_{s}\neq\emp}}1\right)\foz(\bX\flr{w_{i}})\fO(\ful{\bX}w_{i}\fur{\bY})\foz(\fll{w_{i}}\bY)\end{multlined}\\
        &=\sum_{\bw=\bX w_{i}\bY}\sum_{s=1}^{\infty}(-1)^{\ell(\bw)-s}\gamma_{s}^{f}\sum_{k=1}^{s}(s+1-k)\binom{\ell(\bX)}{k-1}\binom{\ell(\bY)}{s-k}\foz(\bX\flr{w_{i}})\fO(\ful{\bX}w_{i}\fur{\bY})\foz(\fll{w_{i}}\bY).
    \end{align}
    Therefore, combining with Lemma \ref{lem:ecalle_459}, we have
    \begin{multline}
        (\gaxit(\pari(\foz),\pari(\foz))-\mantar)(\dTo_{R}(f))(\bw)\\
        =(-1)^{\ell(\bw)}\sum_{\bw=\bX w_{i}\bY}\left(\sum_{s=1}^{\infty}(-1)^{s}\gamma_{s}^{f}\sum_{k=1}^{s}(s+1-k)\binom{\ell(\bX)}{k-1}\binom{\ell(\bY)}{s-k}+\gamma_{\ell(\bw)}^{f}(\ell(\bX)+1)\right)\\
        \cdot\foz(\bX\flr{w_{i}})\fO(\ful{\bX}w_{i}\fur{\bY})\foz(\fll{w_{i}}\bY).
    \end{multline}
    Then it is sufficient to show that, if $f=\re^{-1}$, the equality
    \begin{equation}\label{eq:generating_function}
        \sum_{s=1}^{\infty}(-1)^{s}\gamma_{s}^{f}\sum_{k=1}^{s}(s-k+1)\binom{m}{k-1}\binom{n}{s-k+1}=-\gamma_{m+n+1}^{f}(m+1)
    \end{equation}
    holds for any non-negative integers $m$ and $n$.
    Computing the generating function of the left-hand side, we see that
    \begin{align}
        &\sum_{m,n=0}^{\infty}\left(\sum_{s=1}^{\infty}(-1)^{s}\gamma_{s}^{f}\sum_{k=1}^{s}(s-k+1)\binom{m}{k-1}\binom{n}{s-k}\right)x^{m}y^{n}\\
        &=\sum_{m,n=0}^{\infty}\sum_{i,j=0}^{\infty}(-1)^{i+j+1}\gamma_{i+j+1}^{f}(j+1)\binom{m}{i}\binom{n}{j}x^{m}y^{n}\\
        &=\sum_{i,j=0}^{\infty}(-1)^{i+j+1}\gamma_{i+j+1}^{f}(j+1)x^{i}y^{j}(1-x)^{-i-1}(1-y)^{-j-1}\\
        &=-\frac{1}{(1-x)(1-y)}\sum_{N=0}^{\infty}\gamma_{N+1}^{f}\left(\sum_{j=0}^{N}(j+1)\left(\frac{x}{x-1}\right)^{N-j}\left(\frac{y}{y-1}\right)^{j}\right)\\
        &=\frac{1-y}{1-x}\frac{\partial}{\partial y}\sum_{N=0}^{\infty}\gamma_{N+1}^{f}\left(\sum_{j=0}^{N}\left(\frac{x}{x-1}\right)^{N-j}\left(\frac{y}{y-1}\right)^{j+1}\right)\\
        &=\frac{1-y}{1-x}\frac{\partial}{\partial y}\sum_{N=0}^{\infty}\gamma_{N+1}^{f}\frac{y}{y-1}\frac{\left(\frac{x}{x-1}\right)^{N+1}-\left(\frac{y}{y-1}\right)^{N+1}}{\frac{x}{x-1}-\frac{y}{y-1}}\\
        &=(y-1)\frac{\partial}{\partial y}\sum_{N=1}^{\infty}\gamma_{N}^{f}\frac{y}{y-x}\left(\left(\frac{x}{x-1}\right)^{N}-\left(\frac{y}{y-1}\right)^{N}\right)\\
        &=(y-1)\frac{\partial}{\partial y}\frac{y}{y-x}\left(\frac{x-1}{x}f_{\#}\left(\frac{x}{x-1}\right)-\frac{y-1}{y}f_{\#}\left(\frac{y}{y-1}\right)\right).
    \end{align}
    Applying the formula
    \[f_{\#}(t)=t-\frac{f(t)}{f'(t)}=t-\frac{\re^{-1}(t)}{(\re^{-1})'(t)}=t-(1-t)\log(1-t),\]
    we obtain
    \begin{align}
        &\sum_{m,n=0}^{\infty}\left(\sum_{s=1}^{\infty}(-1)^{s}\gamma_{s}^{f}\sum_{k=1}^{s}(s+1-k)\binom{m}{k-1}\binom{n}{s-k}\right)x^{m}y^{n}\\
        &=(y-1)\frac{\partial}{\partial y}\frac{y}{y-x}\left(\frac{1}{y}\log(1-y)-\frac{1}{x}\log(1-x)\right)\\
        &=\frac{y-1}{(y-x)^{2}}\left(\log(1-x)-\log(1-y)-\frac{y-x}{1-y}\right).
    \end{align}
    On the other hand, we have the following expression of the generating function for the right-hand side of \eqref{eq:generating_function}:
    \begin{align}
        \sum_{m,n=0}^{\infty}\left(-\gamma_{m+n+1}^{f}(m+1)\right)x^{m}y^{n}
        &=-\frac{\partial}{\partial x}\sum_{N=1}^{\infty}\gamma_{N}^{f}\left(\sum_{m=0}^{N-1}x^{m+1}y^{N-1-m}\right)\\
        &=-\frac{\partial}{\partial x}\sum_{N=1}^{\infty}\gamma_{N}^{f}x\frac{x^{N}-y^{N}}{x-y}\\
        &=-\frac{\partial}{\partial x}\frac{x}{x-y}\left(\frac{1}{x}f_{\#}(x)-\frac{1}{y}f_{\#}(y)\right)\\
        &=-\frac{\partial}{\partial x}\frac{x}{x-y}\left(-\frac{1-x}{x}\log(1-x)+\frac{1-y}{y}\log(1-y)\right)\\
        &=-\frac{1}{(x-y)^{2}}\left((1-y)\log(1-x)+(y-1)\log(1-y)+x-y\right)\\
        &=\frac{y-1}{(y-x)^{2}}\left(\log(1-x)-\log(1-y)-\frac{y-x}{1-y}\right).
    \end{align}
    Therefore we obtain \eqref{eq:generating_function} for the $f=\re^{-1}$ case.
\end{proof}

\begin{theorem}\label{thm:pal_bisymmetral}
    Both $\fess$ and $\dess$ are $\gantar$-invariant.
\end{theorem}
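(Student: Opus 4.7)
For $\fess$: the bimould $\fess = \fSe_R(\re)$ is symmetral by Proposition \ref{prop:se_symmetral}, so $\gantar(\fess) = \fess$ follows immediately from Proposition \ref{prop:mantar_gantar}. The substantive content of the theorem is therefore the $\gantar$-invariance of $\dess$, which I propose to establish via Lemma \ref{lem:dilator_mantar}: it suffices to exhibit a $\mantar$-invariant $D \in \LU(R)$ with $\der(\dess) = \preari(\dess, D)$.

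The plan is to pass first to $\invgira(\dess) = \swap(\invgari(\foss))$, and to apply the $\fO$-analogs of Corollary \ref{cor:ecalle_1195}, Proposition \ref{prop:244}, and Lemma \ref{lem:darapal}, all of which hold by the $\fE \leftrightarrow \fO$ symmetry between flexion units. The $\fO$-analog of Corollary \ref{cor:ecalle_1195} applied to $f = \re^{-1}$, combined with the identity $\irat = \iwat$ on the relevant bimould (established exactly as in the opening of the proof of Lemma \ref{lem:darapal}), gives
\[(-\der + \irat(T))(\invgira(\dess)) = -\mmu(\invgira(\dess), T),\]
where $T$ denotes the $\fO$-analog of $\dTo_R(\re^{-1})$. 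A direct computation from the formula $\dO_{\ast}(f) = 1 + \sum_{r \geq 1}(r+1) a_r^f \leng_r(\foz)$ shows that its $\fO$-analog evaluates to $\fez$ when $f = \re^{-1}$. Hence the $\fO$-analog of Proposition \ref{prop:244} with $f = \re^{-1}$ reads
\[(-\der + \irat(T)) \circ \ganit(\fez) = \ganit(\fez) \circ (-\der + \arit(D))\]
with $D := \ganit(\fez)^{-1}(T)$. Setting $Y := \ganit(\fez)^{-1}(\invgira(\dess))$ and using that $\ganit(\fez)$ is an $\mmu$-algebra homomorphism (Proposition \ref{prop:gaxit_mu}), the two displayed identities combine into the dilator equation
\[\der(Y) = \preari(Y, D).\]
The $\fO$-analog of Lemma \ref{lem:darapal} asserts that $D$ is $\mantar$-invariant, so Lemma \ref{lem:dilator_mantar} yields $\gantar(Y) = Y$.

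The main obstacle is propagating this $\gantar$-invariance back to $\dess$ through the operations $\ganit(\fez)$ and $\invgira$, neither of which commutes with $\gantar$ on the nose. To close the gap, I would exploit the operator identity $\ganit(\fez) \circ \mantar \circ \ganit(\fez)^{-1} = \mantar \circ \gaxit(\pari(\fez), \pari(\fez))$ (the $\fO$-analog of the formula derived at the start of the proof of Lemma \ref{lem:darapal}), together with its group-level lift and the $\gari$-homomorphism property of $\gantar$ (Lemma \ref{lem:neg_pari_gaxit}), in order to move $\gantar$-invariance from $Y$ through $\ganit(\fez)$ onto $\invgira(\dess)$, and finally through $\invgira$ onto $\dess$. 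This last transport step is where the delicate content of the argument lies; the earlier dilator computation is essentially mechanical once the correct choice of conjugating operator $\ganit(\fez)$ and reference power series $\re^{-1}$ has been identified.
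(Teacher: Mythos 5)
Your treatment of $\fess$ and your derivation of the dilator equation follow the paper's strategy closely (the paper runs the same chain for $\doss$ with the lemmas as literally stated, so one of the two versions necessarily invokes the $\fE\leftrightarrow\fO$ symmetry; that relabeling is harmless). Up to and including $\gantar(Y)=Y$ for $Y=\ganit(\fez)^{-1}(\invgira(\dess))$, the argument is sound: the identification of the $\fO$-analog of $\dO_{\ast}(\re^{-1})$ with $\fez$, the combination of the two operator identities into $\der(Y)=\preari(Y,D)$, and the appeal to Lemmas \ref{lem:darapal} and \ref{lem:dilator_mantar} all check out.

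However, the final step is a genuine gap, and your proposed fix points in the wrong direction. There is no need to ``transport $\gantar$-invariance through $\ganit(\fez)$'' by conjugation identities, because $Y$ is not some opaque image of $\invgira(\dess)$: \'{E}calle's fundamental identity (Theorem \ref{thm:fundamental}) applied with $A=1$, $B=\dess$ gives
\[\invgira(\dess)=\fragira(1,\dess)=\ganit(\crash(\dess))(\fragari(1,\dess))=\ganit(\fez)(\invgari(\dess)),\]
where the last equality uses $\crash(\dess)=\fez$ (Proposition \ref{prop:crash_pal}), which is already available at this point. Hence $Y=\ganit(\fez)^{-1}(\invgira(\dess))=\invgari(\dess)$ exactly, and $\gantar(Y)=Y$ becomes $\gantar(\invgari(\dess))=\invgari(\dess)$; since $\gantar$ is a homomorphism for $\gari$ (Lemma \ref{lem:neg_pari_gaxit}) it commutes with $\invgari$, so $\gantar(\dess)=\dess$ follows at once. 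This is precisely how the paper closes the argument. As written, your proposal stops short of a proof: the conjugation identity $\ganit(\fez)\circ\mantar\circ\ganit(\fez)^{-1}=\mantar\circ\gaxit(\pari(\fez),\pari(\fez))$ acts on Lie-algebra-level objects and has no evident ``group-level lift'' that would convert $\gantar$-invariance of $Y$ into that of $\ganit(\fez)(Y)$, so the key identification $Y=\invgari(\dess)$ cannot be bypassed.
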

\begin{proof}
    The fact $\fess=\gantar(\fess)$ is a consequence of Proposition \ref{prop:se_symmetral}.
    So we show the symmetrality of $\doss$ using that of $\fess$.
    From Lemma \ref{lem:Se_schneps} and Proposition \ref{prop:se_gari}, we have
    \[(\der\circ\invgari)(\fSe_{R}(f))=\preari(\invgari(\fSe_{R}(f)), \fTe_{R}(f^{-1}))\]
    for any $f\in\GIFF_{x}(R)$.
    Hence, taking the swappee of both sides, we get
    \[(\der\circ\invgira)(\dSo_{R}(f))=(\irat(\dTo_{R}(f^{-1}))\circ\invgira)(\dSo_{R}(f))+\mmu(\invgira(\dSo_{R}(f)),\dTo_{R}(f^{-1})),\]
    by the obivous identity $\invgira=\swap\circ\invgari\circ\swap$.
    Then, by using Proposition \ref{prop:244}, it follows that
    \begin{align}
        &-\mmu(\invgira(\dSo_{R}(f)),\dTo_{R}(f^{-1}))\\
        &=((-\der+\irat(\dTo_{R}(f^{-1})))\circ\invgira)(\dSo_{R}(f))\\
        &=\begin{multlined}[t]\left(\ganit(\dO_{\ast}(f^{-1}))\circ(-\der+\arit(\ganit(\dO_{\ast}(f^{-1}))^{-1}(\dTo_{R}(f^{-1}))))\right.\\\left.\circ\ganit(\dO_{\ast}(f^{-1}))^{-1}\circ\invgira\right)(\dSo_{R}(f)).\end{multlined}
    \end{align}
    It is rewritten as
    \begin{align}\label{eq:ripal_dilator}
        \begin{split}
        &(\der\circ\ganit(\dO_{\ast}(f^{-1}))^{-1}\circ\invgira)(\dSo_{R}(f))\\
        &=\begin{multlined}[t](\arit(\ganit(\dO_{\ast}(f^{-1}))^{-1}(\dTo_{R}(f^{-1})))\circ\ganit(\dO_{\ast}(f^{-1}))^{-1}\circ\invgira)(\dSo_{R}(f))\\
        +\mmu((\ganit(\dO_{\ast}(f^{-1}))^{-1}\circ\invgira)(\dSo_{R}(f)),\ganit(\dO_{\ast}(f^{-1})^{-1}(\dTo_{R}(f^{-1}))))\end{multlined}\\
        &=\preari((\ganit(\dO_{\ast}(f^{-1}))^{-1}\circ\invgira)(\dSo_{R}(f)),\ganit(\dO_{\ast}(f^{-1}))^{-1}(\dTo_{R}(f^{-1}))).
    \end{split}
    \end{align}
    Now we put $f=\re$ so that $\dSo_{R}(f)=\doss$.
    In this case, the separation lemma (Theorem \ref{thm:separation}) yields $\dO_{\ast}(f^{-1})=\foz$, which is equal to $\crash(\doss)$ (Proposition \ref{prop:crash_pal}).
    Hence, from the fundamental identity \eqref{eq:fundamental}, we obtain
    \begin{align}
        (\ganit(\dO_{\ast}(f^{-1}))^{-1}\circ\invgira)(\dSo_{R}(f))
        &=(\ganit(\foz)\circ\invgira)(\doss)\\
        &=\ganit(\foz)^{-1}(\fragira(1,\doss))\\
        &=\fragari(1,\doss)\\
        &=\invgari(\doss).
    \end{align}
    Applying this to \eqref{eq:ripal_dilator}, we have the identity
    \[(\der\circ\invgari)(\doss)=\preari(\invgari(\doss),\ganit(\foz)^{-1}(\dTo_{R}(\re^{-1}))).\]
    Then we see that, from Lemma \ref{lem:darapal}, the second component $\ganit(\foz)^{-1}(\dTo_{R}(\re^{-1}))$ of the right-hand side is $\mantar$-invariant.
    Thus Lemma \ref{lem:dilator_mantar} yields the $\gantar$-invariance of $\invgari(\doss)$, which is equivalent to the desired equality $\gantar(\doss)=\doss$ as $\gantar$ preserves the $\gari$-product.
\end{proof}

\subsection{Images under $\slash$ and $\crash$}
\begin{lemma}\label{cor:girat_anti}
    The operator $\girat(\fess)$ and $\girat(\dess)$ commutes with $\anti$.
\end{lemma}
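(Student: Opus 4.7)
The plan is to establish, for $A\in\{\fess,\dess\}$, the equality $\girat(A)=\giwat(A)$, and then to observe that $\giwat(A)$ automatically commutes with $\anti$ for every $A\in\MU(R)$. Combining these two facts gives the lemma immediately.

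First I would derive the conjugation formula
\[\anti\circ\gaxit(A_{1},A_{2})\circ\anti=\gaxit(\anti(A_{2}),\anti(A_{1})).\]
Using the decomposition $\gaxit(A_{1},A_{2})=\ganit(A_{2})\circ\gamit(\ganit(A_{2})^{-1}(A_{1}))$ from Corollary \ref{cor:gaxit_separation} together with the identity $\gamit(X)=\anti\circ\ganit(\anti(X))\circ\anti$ that appears in the proof of Proposition \ref{prop:gaxit_mu}, the conjugation formula follows by a direct manipulation. Specialising $A_{2}=\anti(A_{1})$ yields $\anti\circ\giwat(A)\circ\anti=\giwat(A)$ for every $A\in\MU(R)$.

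Next I would prove the pointwise identity $(\push\circ\swap\circ\invmu\circ\swap)(\fess)=\anti(\fess)$, which is exactly the statement $\girat(\fess)=\giwat(\fess)$. The ingredients are the $\neg\circ\pari$-invariance of $\fess$ (Proposition \ref{prop:neg_pari_ess}), the $\gantar$-invariance of $\doss=\swap(\fess)$ obtained inside the proof of Theorem \ref{thm:pal_bisymmetral}, the involutivity $\push\circ\anti\circ\push=\anti$ coming from the corollary to Proposition \ref{prop:negpush}, and the relation $\swap\circ\anti\circ\swap=\anti\circ\neg\circ\push$ that is a direct rearrangement of Proposition \ref{prop:negpush}. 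Starting from
\[(\push\circ\swap\circ\invmu\circ\swap)(\fess)=(\push\circ\swap\circ\invmu)(\doss),\]
the $\gantar$-invariance of $\doss$ lets me rewrite $\invmu(\doss)$ as $(\pari\circ\anti)(\doss)$; after commuting $\pari$ past $\swap$ and $\push$ and applying $\swap\circ\anti=\anti\circ\neg\circ\push\circ\swap$ followed by $\push\circ\anti\circ\push=\anti$, the whole expression collapses to $(\pari\circ\neg\circ\anti)(\fess)$, which equals $\anti(\fess)$ by the $\neg\circ\pari$-invariance. The argument for $\dess$ is verbatim the same once one notes that $\swap(\dess)=\foss$ is $\gantar$-invariant: this is so because $\foss$ is symmetral (the proof of Proposition \ref{prop:se_symmetral} applies unchanged with $\fO$ replacing $\fE$) and hence $\gantar$-invariant by Proposition \ref{prop:mantar_gantar}.

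The first step, namely the conjugation formula for $\gaxit$, is the only point requiring any care, since it asks one to track how $\anti$ interacts with both factors $\gamit$ and $\ganit$ in Corollary \ref{cor:gaxit_separation} and, in particular, how $\anti$ acts on the intermediate bimould $\ganit(A_{2})^{-1}(A_{1})$. Once this identity is in hand, all remaining manipulations reduce to mechanical applications of the symmetry identities already recorded in Sections \ref{sec:basic} and \ref{sec:flexion}.
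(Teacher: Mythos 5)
Your proposal is correct and follows essentially the same route as the paper: first reduce $\girat$ to $\giwat$ on $\fess$ and $\dess$ via the identity $(\push\circ\swap\circ\invmu\circ\swap)(\fess)=\anti(\fess)$ (using $\gantar$-invariance of the swappee, $\neg\circ\pari$-invariance, and Proposition \ref{prop:negpush}), and then show that $\giwat$ commutes with $\anti$ by combining Corollary \ref{cor:gaxit_separation} with $\gamit(X)=\anti\circ\ganit(\anti(X))\circ\anti$. The only cosmetic difference is that you package the second step as a general conjugation formula $\anti\circ\gaxit(A_{1},A_{2})\circ\anti=\gaxit(\anti(A_{2}),\anti(A_{1}))$ and specialize, whereas the paper carries out the same manipulation directly on $\giwat(\fess)$.
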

\begin{proof}
    Since $\doss$ is $\gantar$-invariant (Theorem \ref{thm:pal_bisymmetral}),
    we have
    \begin{align}
        (\push\circ\swap\circ\invmu\circ\swap)(\fess)
        &=(\push\circ\swap\circ\invmu)(\doss)\\
        &=(\push\circ\swap\circ\anti\circ\pari)(\doss)\\
        &=(\anti\circ\swap\circ\neg\circ\pari)(\doss)\\
        &=(\anti\circ\swap)(\doss)\\
        &=\anti(\fess),
    \end{align}
    where we used the identity \eqref{eq:negpush} and Proposition \ref{prop:neg_pari_ess} in the third and fourth equality, respectively.
    Hence we obtain
    \begin{align}
        \girat(\fess)
        &=\gaxit(\fess,(\push\circ\swap\circ\invmu\circ\swap)(\fess))\\
        &=\gaxit(\fess,\anti(\fess)).
    \end{align}
    Using the identity $\ganit(A)=\anti\circ\gamit(\anti(A))\circ\anti$, we have
    \begin{align}
        \girat(\fess)\circ\anti
        &=\giwat(\fess)\circ\anti\\
        &=\gamit(\fess)\circ\ganit((\gamit(\fess)^{-1}\circ\anti)(\fess))\circ\anti\\
        &=\anti\circ\ganit(\anti(\fess))\circ\anti\circ\ganit((\gamit(\fess)^{-1}\circ\anti)(\fess))\circ\anti\\
        &=\anti\circ\ganit(\anti(\fess))\circ\gamit((\anti\circ\gamit(\fess)^{-1}\circ\anti)(\fess))\\
        &=\anti\circ\ganit(\anti(\fess))\circ\gamit(\ganit(\anti(\fess))^{-1}(\fess))\\
        &=\anti\circ\gaxit(\fess,\anti(\fess))\\
        &=\anti\circ\giwat(\fess)\\
        &=\anti\circ\girat(\fess).
    \end{align}
    The assertion for $\girat(\dess)$ is shown in the same way.
\end{proof}

\begin{proposition}\label{prop:slash_pal}
    We have $\slash(\dess)=\fes$.
\end{proposition}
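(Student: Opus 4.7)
The strategy is to reduce $\slash(\dess) = \fes$ to the analogous identity $\slash(\foss) = \fos$ (which follows from Proposition \ref{prop:slash_pil} applied with $\fE$ and $\fO$ interchanged, since $\fO$ is also a flexion unit) by means of the fundamental identity (Theorem \ref{thm:fundamental}), and then to verify the ``calibration'' identity $\ganit(\fez)(\fes) = \fez$ by leveraging the $\gantar$-invariance of $\fes$.

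I would begin by observing that, since $\swap$ is an involution that commutes with $\neg$, one has
\[\fragira(\neg(\dess),\dess) = \swap(\fragari(\neg(\foss),\foss)) = \swap(\slash(\foss)) = \swap(\fos) = \fez,\]
using $\swap(\dess) = \foss$ and $\swap(\fos) = \fez$. Next I would apply Theorem \ref{thm:fundamental} with $A = \neg(\dess)$ and $B = \dess$; by Proposition \ref{prop:crash_pal} we have $\crash(\dess) = \fez$, so the fundamental identity becomes
\[\fragira(\neg(\dess),\dess) = \ganit(\fez)(\fragari(\neg(\dess),\dess)) = \ganit(\fez)(\slash(\dess)).\]
Combining these two displays yields $\ganit(\fez)(\slash(\dess)) = \fez$.

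The final ingredient is the identity $\ganit(\fez)(\fes) = \fez$. To see this, recall from Proposition \ref{prop:ez_and_es}(1) that $\invgani(\fez) = (\pari\circ\anti)(\fes)$. Since $\fes$ is symmetral (Proposition \ref{prop:ez_and_es}(2)) it is $\gantar$-invariant by Proposition \ref{prop:mantar_gantar}, so $(\pari\circ\anti)(\fes) = \invmu(\fes)$. The defining relation of $\invgani$ gives $\ganit(\fez)(\invgani(\fez)) = \invmu(\fez)$, hence $\ganit(\fez)(\invmu(\fes)) = \invmu(\fez)$. By Proposition \ref{prop:gaxit_mu} the operator $\ganit(\fez)$ is a $\mmu$-algebra homomorphism, so it commutes with $\invmu$, and applying $\invmu$ to both sides of the resulting equation yields $\ganit(\fez)(\fes) = \fez$.

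Putting the two equalities $\ganit(\fez)(\slash(\dess)) = \fez = \ganit(\fez)(\fes)$ together and using the invertibility of $\ganit(\fez)$ on $\BIMU(R)$ immediately gives $\slash(\dess) = \fes$. The chief bookkeeping obstacle is the manipulation via $\swap$-conjugation that transports the known identity $\slash(\foss) = \fos$ into a statement about $\fragira(\neg(\dess),\dess)$; once this is cleanly executed, the argument is essentially forced by the fundamental identity and the $\mmu$-multiplicativity of $\ganit(\fez)$.
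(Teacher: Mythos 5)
Your proof is correct, but it takes a genuinely different route from the paper's. Both arguments hinge on the fundamental identity together with $\crash(\dess)=\fez$ from Proposition \ref{prop:crash_pal}, reducing everything to the evaluation $\fragira(\neg(\dess),\dess)=\fez$, and both finish with the calibration $\ganit(\fez)(\fes)=\fez$ (which you derive carefully from Proposition \ref{prop:ez_and_es} (1), (2) and the $\mmu$-multiplicativity of $\ganit(\fez)$ --- the paper leaves this derivation implicit) plus the invertibility of $\ganit(\fez)$. The difference lies in how $\fragira(\neg(\dess),\dess)$ is computed: the paper unwinds it as $\mmu((\girat(\dess)^{-1}\circ\neg)(\dess),\invgira(\dess))$ and proves the auxiliary identity $\girat(\dess)(\fez)=\mmu(\neg(\dess),\invmu(\dess))$ of \eqref{eq:ecalle_1189}, which requires Lemma \ref{cor:girat_anti} and hence the $\gantar$-invariance of $\doss$ from Theorem \ref{thm:pal_bisymmetral}; you instead transport the already-proven Proposition \ref{prop:slash_pil} through $\swap$, using $\swap(\dess)=\foss$, the commutation of $\swap$ with $\neg$, and $\swap(\fos)=\fez$. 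Your route is shorter and bypasses Theorem \ref{thm:pal_bisymmetral} and Lemma \ref{cor:girat_anti} entirely, but it rests on the fact that $\fO=\swap(\fE)$ is itself a flexion unit, so that Proposition \ref{prop:slash_pil} and the whole $\fSe$-machinery apply verbatim with $\fO$ in place of $\fE$ to give $\slash(\foss)=\fos$. That fact is standard and is used implicitly throughout the paper (the tripartite identity and parity condition for $\fO$ in Lemma \ref{lem:ecalle_459}, and the identical $\fE\leftrightarrow\fO$ interchange at the end of the proof of Theorem \ref{thm:crash_pil}), but since it is nowhere proven in the text you should either state it explicitly or supply the short verification. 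The trade-off is that the paper's longer computation yields \eqref{eq:ecalle_1189} as a byproduct, which your argument does not produce.
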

\begin{proof}
    From the fundamental identity (Theorem \ref{thm:fundamental}) and Proposition \ref{prop:crash_pal}, we have
    \begin{align}
        \slash(\dess)
        &=\fragari(\neg(\dess),\dess)\\
        &=\ganit(\crash(\dess))^{-1}(\fragira(\neg(\dess),\dess))\\
        &=\ganit(\fez)^{-1}(\fragira(\neg(\dess),\dess))\\
        &=\ganit(\fez)^{-1}\mmu((\girat(\dess)^{-1}\circ\neg)(\dess),\invgira(\dess)).
    \end{align}
    With the multiplicativity of $\girat(\dess)^{-1}=\girat(\invgira(\dess))$ (Proposition \ref{prop:gaxit_mu}) and the definition of $\invgira$, we obtain
    \begin{align}
        \slash(\dess)
        &=(\ganit(\fez)^{-1}\circ\girat(\invgira(\dess)))(\mmu(\neg(\dess),(\girat(\dess)\circ\invgira)(\dess)))\\
        &=(\ganit(\fez)^{-1}\circ\girat(\invgira(\dess)))(\mmu(\neg(\dess),\invmu(\dess))).
    \end{align}
    Remembering the identity $\ganit(\fez)(\fes)=\fez$, which is a consequence of Proposition \ref{prop:ez_and_es}, we see that it suffices to show\footnote{One can find a very similar formula in \cite[(11.89)]{ecalle11}, but the method of proof seems to be different: \'{E}calle used the reduction theorem of multitangent functions and a certain $\gari$-factorization for generating functions of multiple zeta values.}
    \begin{equation}\label{eq:ecalle_1189}
        \girat(\dess)(\fez)=\mmu(\neg(\dess),\invmu(\dess)).
    \end{equation}
    By using $\crash(\dess)=\fez$ (Proposition \ref{prop:crash_pal}) again and \eqref{eq:rash_dess}, the left-hand side becomes
    \begin{align}
        \girat(\dess)(\fez)
        &=(\girat(\dess)\circ\crash)(\dess)\\
        &=\girat(\dess)(\mmu(\anti\circ\swap\circ\invgari\circ\swap)(\dess),(\swap\circ\invgari\circ\swap)(\dess))\\
        &=\girat(\dess)(\mmu(\anti\circ\invgira)(\dess),\invgira(\dess)).
    \end{align}
    Since Corollary \ref{cor:girat_anti} and Proposition \ref{prop:gaxit_mu} holds, it is deformed as
    \begin{align}
        \girat(\dess)(\fez)
        &=\mmu((\anti\circ\girat(\dess)\circ\invgira)(\dess),(\girat(\dess)\circ\invgira(\dess))(\dess))\\
        &=\mmu((\anti\circ\invmu)(\dess),\invmu(\dess)).
    \end{align}
    Then the $\gantar$- and $\neg\circ\pari$-invariance of $\dess$ yields \eqref{eq:ecalle_1189}.
\end{proof}

\begin{theorem}\label{thm:crash_pil}
    We have $\crash(\fess)=\fez$.
\end{theorem}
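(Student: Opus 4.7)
The statement $\crash(\dess) = \fez$ coincides verbatim with Proposition \ref{prop:crash_pal}; I will present the same argument, emphasizing the $\fE \leftrightarrow \fO$ duality that drives it. The proof has two stages.

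\emph{Stage 1: Collapse the defining formula to $\crash(\dess) = \gepar(\invgari(\foss))$.} The bimould $\invgari(\foss)$ enjoys two invariances --- it is $\neg\circ\pari$-invariant (inherited from $\foss$, Proposition \ref{prop:neg_pari_ess}, using that $\neg\circ\pari$ commutes with $\invgari$ by Lemma \ref{lem:neg_pari_gaxit}), and $\gantar$-invariant (symmetrality from Proposition \ref{prop:se_symmetral} combined with Proposition \ref{prop:mantar_gantar}). The latter yields $\invmu(\invgari(\foss)) = (\pari\circ\anti)(\invgari(\foss))$; plugging this into the left factor of $\crash(\dess)$ and simplifying via $\push = \neg\circ\anti\circ\swap\circ\anti\circ\swap$ (Proposition \ref{prop:negpush}) together with the $\neg\circ\pari$-invariance gives
\[
(\push\circ\swap\circ\invmu\circ\invgari\circ\swap)(\dess) = (\anti\circ\swap\circ\invgari)(\foss),
\]
whence $\crash(\dess) = \mmu((\anti\circ\swap\circ\invgari)(\foss), (\swap\circ\invgari)(\foss)) = \gepar(\invgari(\foss))$.

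\emph{Stage 2: Evaluate $\gepar(\invgari(\foss)) = \fez$ via the separation lemma.} By Proposition \ref{prop:se_gari} and the definition of $\foss$ (the $\fO$-version of $\fess$), $\invgari(\foss)$ is the $\fO$-analog of $\fSe_R(\re^{-1})$, where $\re^{-1}(x) = -\log(1-x)$. Since every ingredient of Sections \ref{sec:primary}--\ref{sec:appendix} depends only on the abstract flexion-unit axioms, Theorem \ref{thm:separation} applies with $\fE$ and $\fO$ interchanged, giving
\[
\gepar(\invgari(\foss)) = \sum_{r=0}^{\infty}(r+1) a_r^{\re^{-1}} \mmu^r(\fE) = \sum_{r=0}^{\infty} \mmu^r(\fE) = \fez,
\]
using $(r+1)a_r^{\re^{-1}} = 1$ for all $r \geq 0$ (from $-\log(1-x) = \sum_{r\geq 1} x^r/r$) and the identity $\fez = \invmu(1-\fE) = \sum_{r\geq 0} \mmu^r(\fE)$.

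\textbf{Expected obstacle.} The argument is entirely bookkeeping, but two points merit attention. First, in Stage 1 one must carefully track the interplay of $\push$, $\swap$, $\invmu$ and $\invgari$, which is manageable only because all the invariances of $\foss$ and $\invgari(\foss)$ conspire to produce the right cancellations. Second, in Stage 2 one needs the $\fE \leftrightarrow \fO$ swap of Theorem \ref{thm:separation}, which is formally unproblematic (the whole flexion-unit formalism is symmetric in $\fE$ and $\fO$) but is best isolated as a lemma. No idea is required beyond those in the proof of Proposition \ref{prop:crash_pal}, which strongly suggests that Theorem \ref{thm:crash_pil} is a companion restatement of that proposition rather than a genuinely new result; an alternative route via Theorem \ref{thm:pal_bisymmetral} combined with the fundamental identity would likely route through Proposition \ref{prop:slash_pal} and hence risk circularity, since the latter was itself established using $\crash(\dess) = \fez$.
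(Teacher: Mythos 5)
The printed statement of Theorem \ref{thm:crash_pil} is a typo: as Theorem \ref{thm:crash_main} in the introduction, the footnote on Schneps' conjecture, and the final line of the paper's own proof (which concludes ``$\crash(\fess)=(\swap\circ\slash)(\doss)=\fez$'') all make clear, the intended assertion is $\crash(\fess)=\fez$, the companion of Proposition \ref{prop:crash_pal} rather than a restatement of it. Your argument is a correct reproduction of the paper's proof of that Proposition: Stage 1 matches the derivation of \eqref{eq:rash_dess} and the identity $\crash(\dess)=(\gepar\circ\invgari)(\foss)$, and Stage 2 correctly invokes the $\fO$-version of the separation lemma with $\re^{-1}(x)=-\log(1-x)$, where $(r+1)a_{r}^{\re^{-1}}=1$. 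So as a proof of the literal printed statement it is fine, but it establishes nothing beyond Proposition \ref{prop:crash_pal}.

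The content you are missing is $\crash(\fess)=\fez$, and it does not follow from your argument by the $\fE\leftrightarrow\fO$ symmetry: that symmetry exchanges $\fess\leftrightarrow\foss$ and $\dess\leftrightarrow\doss$, so it converts $\crash(\dess)=\fez$ into $\crash(\doss)=\foz$, not into $\crash(\fess)=\fez$. The $\dess$ case is accessible precisely because $\swap(\dess)=\foss$, so the $\invgari\circ\swap$ occurring inside $\crash$ produces $\invgari(\foss)$, which lies in the image of the ($\fO$-version of the) map $\fSe$ and is therefore governed by the separation lemma; for $\fess$ one instead meets $\invgari(\doss)=\invgari(\swap(\fess))$, which is not of the form $\fSe_{g}$ since $\invgari$ and $\swap$ do not commute, and the separation lemma gives no direct handle on it. The paper's proof of the theorem is a genuinely different and longer chain: Proposition \ref{prop:crash_pal} feeds into Proposition \ref{prop:slash_pal} ($\slash(\dess)=\fes$), and the theorem is then reduced to the identity $\gari((\swap\circ\crash)(\fess),\doss)=\neg(\doss)$, which is verified using the $\gantar$-invariance of $\doss$ (Theorem \ref{thm:pal_bisymmetral}) and the commutation of $\girat(\fess)$ with $\anti$ (Lemma \ref{cor:girat_anti}). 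Your worry that this route is circular is unfounded: the dependencies run strictly in one direction, with $\crash(\dess)=\fez$ proved first from the separation lemma and then propagated forward.
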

\begin{proof}
    From Proposition \ref{prop:slash_pal}, it suffices to show
    \begin{equation}
        \gari((\swap\circ\crash)(\fess),\doss)=\neg(\doss).
    \end{equation}
    By the definition of $\gira$, we have
    \begin{align}
        &\swap(\gari((\swap\circ\crash)(\fess),\doss))\\
        &=\gira(\crash(\fess),\fess)\\
        &=\mmu((\girat(\fess)\circ\crash)(\fess),\fess)\\
        &=\begin{multlined}[t]\girat(\fess)\left(\mmu((\push\circ\swap\circ\invmu\circ\invgari\circ\swap)(\fess),\right.\\\left.(\swap\circ\invgari\circ\swap)(\fess),\girat(\fess)^{-1}(\fess))\right),\end{multlined}
    \end{align}
    where in the last equality we used Proposition \ref{prop:gaxit_mu}.
    About the latter factors, we know that
    \begin{align}
        &\girat(\fess)(\mmu((\swap\circ\invgari\circ\swap)(\fess),\girat(\fess)^{-1}(\fess)))\\
        &=\mmu((\girat(\fess)\circ\swap\circ\invgari\circ\swap)(\fess),\fess)\\
        &=\gira((\swap\circ\invgari\circ\swap)(\fess),\fess)\\
        &=\gari(\invgari(\doss),\doss)\\
        &=1.
    \end{align}
    Therefore, from Theorem \ref{thm:pal_bisymmetral}, \eqref{eq:negpush} and Proposition \ref{prop:neg_pari_ess} we obtain
    \begin{align}
        &\swap(\gari((\swap\circ\crash)(\fess),\doss))\\
        &=(\girat(\fess)\circ\push\circ\swap\circ\invmu\circ\invgari\circ\swap)(\fess)\\
        &=(\girat(\fess)\circ\push\circ\swap\circ\invmu\circ\invgari)(\doss)\\
        &=(\girat(\fess)\circ\push\circ\swap\circ\anti\circ\pari\circ\invgari)(\doss)\\
        &=(\girat(\fess)\circ\anti\circ\swap\circ\neg\circ\pari\circ\invgari)(\doss)\\
        &=(\girat(\fess)\circ\anti\circ\swap\circ\invgari)(\doss).
    \end{align}
    Moreover, using Corollary \ref{cor:girat_anti}, it holds that
    \begin{align}
        \swap(\gari((\swap\circ\crash)(\fess),\doss))
        &=(\anti\circ\girat(\fess)\circ\swap\circ\invgari)(\doss)\\
        &=\anti(\mmu((\girat(\fess)\circ\swap\circ\invgari)(\doss),\fess,\invmu(\fess)))\\
        &=\anti(\mmu(\gira((\swap\circ\invgari)(\doss),\fess),\invmu(\fess)))\\
        &=\anti(\mmu(\swap(\gari(\invgari(\doss),\doss)),\invmu(\fess)))\\
        &=(\anti\circ\invmu)(\fess)\\
        &=\pari(\fess)\\
        &=\neg(\fess),
    \end{align}
    where the $\gantar$-invariance (resp.~$\neg\circ\pari$-invariance) of $\fess$ is used in the sixth (resp.~seventh) equality. It yields the identity $\gari((\swap\circ\crash)(\fess),\doss)=\neg(\doss)$ and therefore $\crash(\fess)=(\swap\circ\slash)(\doss)=\fez$.
\end{proof}

\end{document}